\newsavebox\MBox
\setlist[enumerate,1]{label={(\alph*)}}
\newtheorem{thm}{Theorem}
\newtheorem{theorem}{Theorem}[section]
\newtheorem{prp}{Proposition}
\newtheorem{proposition}[theorem]{Proposition}
\newtheorem{lemma}[theorem]{Lemma}
\newtheorem{corollary}[theorem]{Corollary}
\theoremstyle{definition}
\newtheorem{definition}[theorem]{Definition}
\newtheorem{notation}[theorem]{Notation}
\newtheorem{terminology}[theorem]{Terminology}
\theoremstyle{remark}
\newtheorem{remark}[theorem]{Remark}
\newtheorem{example}[theorem]{Example}
\newcommand*{\Q}{\mathbb{Q}}
\newcommand*{\R}{\mathbb{R}}
\newcommand*{\Z}{\mathbb{Z}}
\newcommand*{\N}{\mathbb{N}}
\newcommand{\C}{\mathbb{C}}
\newcommand*{\E}{\mathbb{E}}
\newcommand*{\F}{\mathbb{F}}
\newcommand*{\A}{\mathbb{A}}
\newcommand*{\T}{\mathcal{T}}
\newcommand*{\fq}{\mathfrak q}
\newcommand*{\fm}{\mathfrak m}
\newcommand*{\fp}{\mathfrak p}
\newcommand*{\fu}{\mathfrak u}
\renewcommand*{\L}{\mathbb{L}}
\newcommand*{\mS}{\mathcal{S}}
\newcommand*{\mM}{\mathcal{M}}
\newcommand*{\mJ}{\mathcal{J}}
\newcommand*{\mI}{\mathcal{I}}
\newcommand*{\mO}{\mathcal{O}}
\newcommand*{\mW}{\mathcal{W}}
\renewcommand*{\a}{\alpha}
\renewcommand*{\b}{\beta}
\newcommand*{\g}{\gamma}
\renewcommand*{\d}{\delta}
\newcommand*{\D}{\Delta}
\newcommand*{\La}{\Lambda}
\newcommand*{\s}{\sigma}
\renewcommand*{\t}{\tau}
\newcommand{\w}{\omega}
\newcommand{\W}{\Omega}
\newcommand*{\z}{\zeta}
\newcommand*{\emp}{\varnothing}
\newcommand*{\G}{\Gamma}
\newcommand*{\pa}{\partial}
\renewcommand*{\D}{\Delta}
\renewcommand*{\S}{\Sigma}
\renewcommand*{\phi}{\varphi}
\newcommand*{\we}{\wedge}
\renewcommand{\tilde}{\widetilde}
\DeclareMathOperator{\coker}{\text{coker}}
\DeclarePairedDelimiterX\braket[2]{\langle}{\rangle}{#1 \delimsize\vert #2}
\newcommand{\pull}[1]{{#1}^\star}
\newcommand{\expinv}[1]{{#1}^\diamond}
\newcommand{\mdl}[1]{
    \ifnum #1=1
        {\mM_{k_1+1,I}({\b_1})}
        \else
    \ifnum #1=2
        {\mM_{k_2+1,J}({\b_2})}
        \else
    {\mM_{k+1,l}(\b)}\fi\fi
}
\newcommand{\qor}{Q}
\newcommand{\oqb}[1]{
    \ifnum #1=-3
        {\fq^{\b}_{-1,l}}
        \else
    \ifnum #1=-2
        {\fq^{\b_2}_{-1,|J|}}
        \else
    \ifnum #1=-1
        {\fq^{\b_1}_{-1,|I|}}
        \else
    \ifnum #1=1
        {\fq^{\b_1}_{k_1,|I|}}
        \else
    \ifnum #1=0
        {\fq^{\b_0}_{1,0}}
        \else
    \ifnum #1=2
        {\fq^{\b_2}_{k_2,|J|}}
    \else
    {\fq^{\b}_{k,l}}\fi\fi\fi\fi\fi\fi
}
\newcommand{\oq}[1]{
    \ifnum #1=-1
        {\fq_{-1,l}}
        \else
    \ifnum #1=1
        {\fq_{(1:3)+1+(3:3),|I|}}
        \else
    \ifnum #1=0
        {\fq^{\b_0}_{1,0}}
        \else
    \ifnum #1=2
        {\fq_{(2:3),|J|}}
    \else
    {\fq_{k,l}}\fi\fi\fi\fi
}
\newcommand{\om}[1]{
    \ifnum #1=-1
        {\fm^\g_{-1}}
        \else
    \ifnum #1=1
        {\fm^\g_{|(1:3)|+1+|(3:3)|}}
        \else
    \ifnum #1=0
        {\fm^{\g}_{1}}
        \else
    \ifnum #1=2
        {\fm^\g_{|(2:3)|}}
    \else
    {\fm^\g_{k}}\fi\fi\fi\fi
}
\newcommand{\omb}[1]{
    \ifnum #1=-1
        {\fm^{\b,\g}_{-1}}
        \else
    \ifnum #1=1
        {\fm^{\b,\g}_{(1:3)+1+(3:3)}}
        \else
    \ifnum #1=0
        {\fm^{\b,\g}_{1}}
        \else
    \ifnum #1=2
        {\fm^{\b,\g}_{(2:3)}}
    \else
    {\fm^{\b,\g}_{k}}\fi\fi\fi\fi
}
\newcommand{\orientor}[1]{\ensuremath{#1}-orientor}
\newcommand{\eorientor}[1]{\ensuremath{#1}-endo-orientor}
\newcommand{\Otm}{O}
\newcommand{\efield}{\mathbb E}
\newcommand{\target}{{\mathcal T}}
\DeclareMathOperator{\Ima}{Im}
\newcommand{\Id}{\text{Id}}
\newcommand{\bu}{\bullet}
\newcommand{\da}{^\dagger}
\newcommand{\cort}[1]{\mathcal K_{#1}}
\newcommand{\zcort}[1]{\mathbb K_{#1}}
\newcommand{\ort}[1]{\mathcal L_{#1}}
\newcommand{\zort}[1]{\mathbb L_{#1}}
\newcommand{\eort}{\mathcal E}
\newcommand{\rort}{\mathcal{R}}
\newcommand{\lort}{\mathcal{L}}
\tikzset{
  symbol/.style={
    draw=none,
    every to/.append style={
      edge node={node [sloped, allow upside down, auto=false]{$#1$}}}
  }
}
\title{$A_\infty$ relations in orientor calculus on moduli of stable disk maps}
\author[O. Kedar]{Or Kedar}
\address{Institute of Mathematics\\ Hebrew University, Givat Ram\\Jerusalem, 91904, Israel } \email{or.kedar@mail.huji.ac.il}
\author[J. Solomon]{Jake P. Solomon}
\address{Institute of Mathematics\\ Hebrew University, Givat Ram\\Jerusalem, 91904, Israel } \email{jake@math.huji.ac.il}
\begin{document}

\keywords{$A_\infty$ algebra, Lagrangian, $J$-holomorphic, stable map, Maslov class, non-orientable, local system, orientor, symplectic fibration}
\subjclass[2010]{53D37, 53D40 (Primary) 55N25, 53D12, 58J32 (Secondary)}
\date{Nov. 2022}
\begin{abstract}
Let $L\subset X$ be a not necessarily orientable relatively $Pin$ Lagrangian submanifold in a symplectic manifold $X$. Evaluation maps of moduli spaces of $J$-holomorphic disks with boundary in $L$ may not be relatively orientable. To deal with this problem, we introduce the notion of an orientor. Interactions between the natural operations on orientors are governed by orientor calculus. Orientor calculus gives rise to a family of orientors on moduli spaces of $J$-holomorphic stable disk maps with boundary in $L$ that satisfy natural relations. In a sequel, we use these orientors and relations to construct the Fukaya $A_\infty$ algebra of $L.$ 
\end{abstract}
\begin{comment}
\begin{abstract}
Let $L\subset X$ be a not necessarily orientable relatively $Pin$ Lagrangian submanifold in a symplectic manifold $X$. Let $\rort_L$ be the local system of graded non-commutative rings generated by local orientations of $L$, and
consider the ring $A^*(L;\rort_L)$ of differential forms with values in the local system $\rort_L$. We construct a family of cyclic unital curved $A_\infty$ structures on $A^*(L;\rort_L)$, parameterized on the cohomology of $X$ relative to $L$. The family satisfies properties analogous to the axioms of Gromov-Witten theory. On account of the non-orientability of $L,$ moduli spaces of $J$-holomorphic disks with boundary in $L$ are not necessarily orientable. Moreover, the evaluation maps of such moduli spaces are not necessarily relatively orientable. To deal with this problem, we introduce the category of orientors, and a new calculus of orientors.
\end{abstract}
\end{comment}
\maketitle
\pagestyle{plain}
\tableofcontents

\section{Introduction}
\label{introduction section}
\subsection{Overview}
\label{overview section}
Let $L\subset X$ be a not necessarily orientable relatively $Pin^\pm$ Lagrangian submanifold in a symplectic manifold $(X,\w)$. It is of fundamental importance in symplectic geometry to associate to $L$ a Fukaya $A_\infty$ algebra, which encodes the geometry of the moduli space of $J$-holomorphic stable disk maps with boundary in $L$. Previous work in the non-orientable case is limited, and in particular, it applies only over fields of characteristic $2$ or when $L$ is orientable relative to a local system on $X$. See Section~\ref{Previous developments section}. The present paper and its sequel~\cite{non-orientable-A-infty} construct a Fukaya $A_\infty$ algebra for $L$ over the field $\R$ in a way that does not require $L$ to be orientable. 

The main difficulty in our construction is the relative non-orientability of the evaluation maps of moduli spaces of $J$-holomorphic stable disk maps with boundary in $L$.
This leads us to introduce the notion of an orientor of a map between manifolds, to which the present paper is devoted. To use orientors effectively, we first study the properties of pull-back orientors and induced orientors on boundaries and quotients. These properties are then used to construct a family of orientors $\qor^\b_{k,l}$ of the evaluation maps of moduli spaces of $J$-holomorphic stable disk maps with $k+1$ boundary marked points and $l$ interior marked points. The orientors $\qor^\b_{k,l}$ satisfy a series of equations resembling the $A_\infty$ relations and the cyclic property of the structure of maps of an $A_\infty$ algebra. See Theorems~\ref{boundary of q theorem introduction},~\ref{cyclic theorem introduction} and~\ref{boundary of q k=-1 introduction theorem}. The orientors $\qor^\b_{k,l}$ for $\b$ fixed and $k,l,$ varying are related by forgetful maps, as formulated in Theorem~\ref{factorization q through forgetful theorem introduction}. In~\cite{non-orientable-A-infty} we establish relevant properties of pushforward of differential forms along an orientor and use the orientors $\qor^\b_{k,l}$ to construct a cyclic unital $A_\infty$ algebra structure on differential forms on $L$ with values in a local system of rings. We expect that the orientors $\qor^\b_{k,l}$ can also be used to construct $A_\infty$ algebra structures on the Morse or singular co-chains of $L.$ Furthermore, we expect the orientors $\qor^\b_{k,l}$ can be generalized to construct from $L$ an object of the Fukaya category of $X.$ 

Building on the work of~\cite{Elad1,Sara1,Sara2,Sara3}, we plan to use the $A_\infty$ algebra of $L$ to define open Gromov-Witten invariants for $L$ and to study the structure of these invariants. When $L$ is fixed by an anti-symplectic involution, and $\dim L = 2,$ we expect the open Gromov-Witten invariants of $L$ to recover Welschinger's real enumerative invariants~\cite{Welschinger-invariants-lower-bounds}. When $\dim L > 2$ or when $L$ is not fixed by anti-symplectic involution, it appears that $A_\infty$ algebra of $L$ plays an essential role in the definition of invariants.

Lagrangian submanifolds arise naturally as the real points of smooth complex projective varieties that are invariant under complex conjugation. Natural constructions in algebraic geometry, such as blowups and quotients, give rise to non-orientable Lagrangians.  Examples of computations of open Gromov-Witten-Welschinger invariants for non-orientable Lagrangian submanifolds of dimension $2$ appear in~\cite{Horev,Kharlamov-new-logarithmic-equivalence,Kharlamov-logarithmic-equivalence,Kharlamov-enumeration-rational-curves,Kharlamov-logarithmic-asymptotics,Kharlamov-toric-surfaces,Kharlamov-degree-3,Kharlamov-small-non-toric,Kharlamov-degree-2,Kharlamov-degree-2-erratum}.

\subsection{Previous developments}\label{Previous developments section}
In \cite{Fukaya}, a construction of the Fukaya $A_\infty$ algebra structure on a version of singular chains of $L$ with $\Q$ coefficients is provided when $L$ is orientable. %In~\cite{Fukaya-cyclic-symmetry}, a construction of the Fukaya $A_\infty$ algebra structure on the differential forms of $L$ is given. The differential form construction is significantly simpler and also makes it possible to incorporate cyclic symmetry in the construction. Cyclic symmetry plays a crucial role in open Gromov-Witten theory as developed in~\cite{Elad1,Fukaya-3-folds,Sara2,Sara3}.
In~\cite{Fukaya-Spherically-positive}, the construction of the Fukaya $A_\infty$ algebra structure on singular chains is extended to the non-orientable case when $X$ is spherically positive, using coefficients in $\Z/2$. The spherically positive assumption is used to force stable maps with automorphisms into sufficiently high codimension that they do not lead to denominators when pushing-forward chains by the evaluation maps of moduli spaces. Since the order of automorphism groups can be even, such denominators are not allowable when working with $\Z/2$ coefficients.
However, for planned applications to open Gromov-Witten theory, it is essential to work over a field of characteristic zero.

Given a local system of $1$-dimensional vector spaces $\T$ on $X$, it should be possible to construct a version of the Fukaya category in which objects arise from Lagrangian submanifolds $L \subset X$ that are relatively oriented with respect to $\T.$ By relative orientation, we mean an isomorphism from $\T|_L$ to the orientation local system of $L.$ In~\cite{Seidel-Lefschetz}, such a construction is carried out when the first Chern class $c_1(X)$ is $2$-torsion, for a local system $\T$ that arises naturally in the context of gradings. The relative orientation of $L$ with respect to $\T$ forces the Maslov index $\mu : H_2(X,L) \to \Z$ to take on only even values. It follows that the evaluation maps of moduli spaces of $J$-holomorphic disks are relatively orientable~\cite{JakePhD}, so the main difficulties in the construction of the present work do not arise.

In the construction of Floer homology for two orientable Lagrangians $L_1,L_2,$ that intersect cleanly given in~\cite[Section 3.7.5]{Fukaya}, a local system arises when the intersection $L_1 \cap L_2$ is not orientable. The symplectic topology of non-orientable Lagrangians has been studied extensively in~\cite{Audin, Dai-Lag-surfaces,Evans-Klein-nonsqueeze,Givental-Lag-embedding, Nemirovski-Klein-in-space,Nemirovski-Klein-homology-class,Polterovich-surgery, Rezchikov,Shevchishin-Klein,Shevchishin-Smirnov-projective-plane}.

\subsection{Orientors}\label{Orientors section introduction}
Fix a commutative ring $\A$. Throughout the paper, a local system of graded $\A$-modules will be called simply a local system. For a graded $\A$-module $V$ and an orbifold with corners $M$, denote by $\underline V$ the globally constant sheaf over $M$ with fiber $V$. Moreover, write $\tilde M$ for the orientation double cover of $M$, considered as a $\Z/2$ principle bundle. For a map $f:M\to P,$ define
\[
\cort{f}:=\hom_{\Z/2}(\tilde M,f^*\tilde P)\times_{\Z/2}\A.
\] 
In other words, $\cort{f}$ is the relative orientation local system of $f$, concentrated in degree $-\dim M+\dim P$.
Let $Q,K$ be local systems over $M,P,$ respectively. An \orientor{f} $F$ of $Q$ to $K$ is a morphism
\[
F:Q\to \cort{f}\otimes f^*K.
\]
A few examples of orientors are in place. A relative orientation $\mO^f$ of $f$ may be regarded as an \orientor{f} $\phi^{\mO^f}$ of $\underline\A$ to $\underline\A$. See Definition~\ref{orientation as orientor}. Any morphism of local systems over $M$ is an \orientor{\Id_M}.

We list a few important operations on orientors. The precise statements appear in Section~\ref{orientors section}. If $g:P\to N$ is another smooth map, $R$ is a local system over $N$ and $G$ is a \orientor{g} of $K$ to $R$, then $F,G$ may be composed into a \orientor{g\circ f} $G\bu F$ of $Q$ to $R$ as in Definition~\ref{orientor composition}. The composition is associative, as shown in Lemma~\ref{composition of orientors is associative}. If $A,B$ are local systems over $P$, the orientor $F$ extends to an \orientor{f} ${}^AF^B$ of $f^*A\otimes Q\otimes f^*B$ to  $A\otimes Q\otimes B$. 
If $f$ is a relatively oriented map, with relative orientation $\mO^f$, then $G$ may be pulled back to a \orientor{g\circ f} denoted $\expinv{\left(f,\mO^f\right)}G$, given by
\[
\expinv{\left(f,\mO^f\right)}G=G\bu\phi^{\mO^f}.
\]
See Definition \ref{pullback of orientor}.
Moreover,
consider a pullback diagram of orbifolds as follows.
\begin{equation*}\label{generic pullback diagram introduction}
\begin{tikzcd}
M\times_NP\ar[r,"r"]\ar[d,"q"]&P\ar[d,"g"]\\M\ar[r,"f"]&N
\end{tikzcd}    
\end{equation*}
A \orientor{g} $G$ may be pulled back to a \orientor{q} denoted $\expinv{(r/f)}G$ as in Definition~\ref{pullback of orientor by pullback-diagram}. Finally, if $g:P\to M$ and $B\subset \pa_g P\xrightarrow{\iota}P$ is a closed and open subset of the vertical boundary of $P$ relative to $g$, there exists an \orientor{\iota|_B} $\pa_g$,
which induces a notion of boundary of a \orientor{g} $G$, which is the \orientor{g\circ\iota|_B} $\pa G$, given by
\[(-1)^{|G|}G\bu \pa_g.\] See Definition \ref{Boundary of orientor}. 

\subsection{Orientors for moduli spaces of stable maps}\label{orientors for moduli introduction section}
Let $(X,\w)$ be a symplectic manifold and $L\subset X$ be a connected relatively $Pin^\pm$ Lagrangian. Let $\fp$ be a relative $Pin^\pm$ structure on $L$. Denote by $\mu:H_2(X,L;\Z)\to \Z$ the Maslov index. See~\cite{Maslov} for a review on the Maslov index. Denote by $\w:H_2(X,L;\Z)\to \R$ the symplectic volume. Let $J$ be an $\w$-tame complex structure on $X$. Let $\Pi$ be a quotient of $H_2(X,L;\Z)$ by a possibly trivial subgroup contained in the kernel of the map $\mu\oplus \w:H_2(X,L;\Z)\to \Z\oplus \R$. Thus, the homomorphisms $\mu,\w$ descend to $\Pi$. Denote by $\b_0\in \Pi$ the zero element.

In fact, throughout the paper we work in a generalized setting of a symplectic fibration $\pi^X:X\to \W$ over a manifold with corners $\W$ and a relatively $Pin^\pm$ exact Lagrangian subfibration $L \subset X$. The group $\Pi$ is generalized accordingly. See Section~\ref{families target definition section} for details. The freedom of having arbitrary $\W$ allows, among others, a natural construction of pseudoisotopies of $A_\infty$ algebras as well as higher pseudoisotopies. Allowing general $\W$ should also prove useful in the context Lefschetz fibrations as developed, for example, in~\cite{Seidel-Lefschetz}.

In what follows, one may assume that $\W$ is a point. Occasionally, we clarify in parentheses what is meant in the case of general $\W.$
Let $\pi^L:=\pi^X|_L:L\to \W$, and denote by $\lort_{L}$ the relative orientation local system of $\pi^L$ concentrated in degree $-1$. Set
\[
\rort_L:=\bigoplus_{j\in \Z}\lort_L^{\otimes j}.
\]Here, negative powers correspond to the dual local system. Denote by \[m:\rort_L\otimes \rort_L\to~\rort_L,\quad 1_L:\underline \A\to \rort_L\] the tensor product and the inclusion in degree $0$, respectively, which provide $\rort_L$ the structure of a local system of unital graded non-commutative $\A$-algebras. 

Let $(k,l,\b)\in \Z_{\geq-1}\times \Z_{\geq0}\times \Pi$. Let $J$ be an $\w$-tame (vertical) almost complex structure on $X$.
Denote by $\mM_{k+1,l}(\b):=\mM_{k+1,l}(X,L,J,\b)$ the moduli space of (vertical) $J$-holomorphic genus-zero stable maps with boundary, of degree $\b$, with $k+1$ boundary marked points and $l$ interior marked points. For $0\leq i\leq k$, denote by \[evb_i^{(k,l,\b)}:\mM_{k+1,l}(\b)\to L\] the evaluation at the $i$th boundary point. We often omit part or all of the superscripts $(k,l,\b)$ when they are clear from the context. 
By vertical stable map, we mean a map that maps entirely to one fiber of $\pi^X : X \to \W,$ that is, whose composition with $\pi^X : X \to \W$ is constant. So, we obtain a map 
\[\pi^{\mM_{k+1,l}(\b)}:\mM_{k+1,l}(\b)\to \W\] 
that forgets everything except the fiber of $\pi^X : X \to \Omega$ to which a vertical $J$-holomorphic stable map maps. To streamline the exposition, we will assume that $\mdl{3}$ is a smooth orbifold with corners and $evb_0^\b$ is a submersion. These assumptions hold in a range of important examples~\cite[Example 1.5]{Sara1}. Our construction of \orientor{evb_0}s applies to arbitrary symplectic manifolds and Lagrangian submanifolds by the theory of the virtual fundamental class being developed by several authors~\cite{Fu09a,FO19,FO20,HW10,HWZ21} as explained in Section~\ref{families target definition section}. The orientor analogs of the unit and divisor axioms of Gromov-Witten theory given in Theorem~\ref{factorization q through forgetful theorem introduction} require compatibility of the virtual fundamental class with the forgetful map of interior marked points. This has not yet been fully worked out in the Kuranishi structure formalism.

Set
\begin{align*}
ev:=ev^\b:=(evb_1,...,evb_k):&\mM_{k+1,l}(\b)\to \underbrace{L\times_\W\cdots\times_\W L}_{k\text{ times}},\\E:=E^k:=E_L^k:=&ev^*\left(\underset{j=1}{\overset{k}\boxtimes}\rort\right).
\end{align*}
In Section~\ref{rort_L section}, we construct a family of maps covering $evb_0^{(k,l,\b)}$ of $E^k_L$ to~$\rort_L$ of degree $2-k-2l$
\[
\qor_{k,l}^\b:=\qor_{k,l}^{\left(X,L,J;\b\right)}:E_L^k\to \cort{evb_0}\otimes(evb_0)^*\rort_L
\] indexed by
\[(k,l,\b)\in\Big(\Z_{\geq0}\times\Z_{\geq0}\times \Pi\Big)\setminus\Big\{ (0,0,\b_0),(1,0,\b_0),(2,0,\b_0),(0,1,\b_0)\Big\}.\]
In the exceptional cases, the moduli spaces are empty.
These orientors are used in \cite{non-orientable-A-infty} to construct operators $\fm_k$ which give an $A_\infty$ deformation of the differential graded algebra of differential forms on $L$ with values in $\rort_L$ when $\A$ is an $\R$-algebra. Similar arguments should allow the construction of $A_\infty$ algebras deforming the Morse or singular co-chain complex of $L$ with $\rort_L$ local coefficients. The orientors $\qor_{k,l}^\b$ are surjective for $k\geq 1$ and injective for $k=0, 1$.
Moreover, we prove the following theorems.

Let $k\geq-1,l\geq 0,\b\in\Pi$. 
Let $\pa \mM_{k+1,l}(\b)$ denote the boundary and denote by $\iota:\pa \mM_{k+1,l}(\b)\to \mM_{k+1,l}(\b)$ the canonical map. Fix partitions $k_1+k_2=k+1,\b_1+\b_2=\b$ and $I\dot\cup J=[l]$, such that $k_1>0$ if $k\geq 0$. When $k\geq 0$, let $0< i\leq k_1$. When $k=-1$ let $i=0$. Let
$B_{i,k_1,k_2,I,J}\left(\b_1,\b_2\right)\subset\pa \mM_{k+1,l}(\b)$ denote the closure of the locus of two component stable maps, described as follows. One component has degree $\b_1$ and the other component has degree $\b_2$. The first component carries the boundary marked points labeled $0,...,i-1, i+k_2,...,k,$ and the interior marked points labeled by~$I$. The second component carries the boundary marked points labeled $i,...,i+k_2-1$ and the interior marked points labeled by $J$. The two components are joined at the $i$th boundary marked point on the first component and the $0$th boundary marked point on the second. There exists a canonical local diffeomorphism, called the gluing map,
\[
\vartheta_{i,k_1,k_2,\b_1,\b_2,I,J}:\mM_{k_1+1,I}(\b_1)_{evb_i^{\b_1}}\times_{evb_0^{\b_2}}\mM_{k_2+1,J}(\b_2)\to B_{i,k_1,k_2,I,J}(\b_1,\b_2),
\]
which we abbreviate as $\vartheta$. Denote by $\mO^\vartheta_c$ the canonical relative orientation of $\vartheta$ as a local diffeomorphism. Denote by $p_1,p_2$ the projections on the first and second factors of the fiber product, respectively.

\begin{thm}\label{boundary of q theorem introduction}
Let $k,l\geq0$ and $\b\in\Pi.$ Let $k_1,k_2\geq0$ be such that $k_1+k_2=k+1$, let $I\dot\cup J=[l]$ and let $\b_1+\b_2=\b$. Set\[
E_1=\underset{j=1}{\overset{i-1}\boxtimes}\left(evb_j^{\b_1}\right)^*\rort_L,\qquad E_3=\underset{j=i+k_2}{\overset{k}\boxtimes}\left(evb_{j-k_2+1}^{\b_1}\right)^*\rort_L.
\]
The following equation of \orientor{\left(evb_0^{\b}\circ\iota\circ\vartheta\right)}s of $\vartheta^*\iota^*{ev^\b}^*\left(\underset{j=1}{\overset{k}\boxtimes}\rort_L\right)$
to $\rort_L$ holds.
\begin{equation*}
\expinv{\left(\vartheta,\mO^\vartheta_c\right)}(\pa \qor_{k,l}^\b)=(-1)^s\qor_{k_1,I}^{\b_1}\bu {}^{E_1}\left(\expinv{\left(p_2/evb_i^{\b_1}\right)}\qor_{k_2,J}^{\b_2}\right)^{E_3}
\end{equation*}
Here,
\[
s=i+ik_2+k+\d\mu(\b_1)\mu(\b_2),
\]
where $\d\in \{0,1\}$ is $0$ if exactly when $\mathfrak p$ is a relative $Pin^+$ structure.
\end{thm}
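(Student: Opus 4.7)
The strategy is to reduce the identity to a local statement on the open stratum of the boundary component $B_{i,k_1,k_2,I,J}(\b_1,\b_2)$, and to verify that both sides are orientors assembled from the same relative $Pin^\pm$ datum on the two irreducible components, with the discrepancy accounted for by the announced sign $s$.

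First I would expand the left-hand side using Definition~\ref{Boundary of orientor}, writing
\[
\pa\qor_{k,l}^\b = (-1)^{|\qor_{k,l}^\b|}\,\qor_{k,l}^\b\bu\pa_{evb_0^\b},
\]
and note that $|\qor_{k,l}^\b|=2-k-2l$ already contributes the summand $k$ to $s$ modulo $2$. Next I would pull back along the local diffeomorphism $\vartheta$, using Definition~\ref{pullback of orientor} together with the canonical relative orientation $\mO^\vartheta_c$, transporting the calculation to the fiber product $\mM_{k_1+1,I}(\b_1){}_{evb_i^{\b_1}}\!\times_{evb_0^{\b_2}}\mM_{k_2+1,J}(\b_2)$. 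Along $B_{i,k_1,k_2,I,J}(\b_1,\b_2)$, the tangent space of $\mM_{k+1,l}(\b)$ decomposes via $\vartheta$ into the tangent spaces of the two factor moduli spaces, glued along $T_xL$ at the gluing point, producing a canonical identification of $\cort{evb_0^\b\circ\iota\circ\vartheta}$ with the appropriate combination of the relative orientation lines of $evb_0^{\b_1}$, $evb_i^{\b_1}$, and $evb_0^{\b_2}$. I would then match both sides factor by factor, invoking Lemma~\ref{composition of orientors is associative} and the compatibility of the extension operation ${}^A(\cdot)^B$ with composition and pullback established in Section~\ref{orientors section}. The resulting identity of underlying orientation data follows from the construction of $\qor^\b_{k,l}$ in Section~\ref{rort_L section}, since the relative $Pin^\pm$ structure $\fp$ on $L$ restricts coherently to the two irreducible components.

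The principal obstacle is the sign bookkeeping. I expect the summand $i$ to arise from the intrinsic boundary orientation of $B_{i,k_1,k_2,I,J}$ relative to the outward normal at the $i$-th boundary marked point; the summand $ik_2$ to come from Koszul-type rearrangement of the factors of $\rort_L^{\boxtimes k}$, since the $k_2$ evaluations on component two must be permuted past the tail tensor factors coming from component one; and the summand $k$ to be produced by the expansion of $\pa$ above. The subtlest contribution is $\d\mu(\b_1)\mu(\b_2)$, reflecting the $Pin^\pm$-dependent gluing law for relative $Pin^\pm$ structures across the boundary node: in the $Pin^+$ case ($\d=1$), transporting $\fp$ through the node introduces a twist by the product of Maslov indices, whereas in the $Pin^-$ case this twist is trivial. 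I plan to confirm this contribution by reducing to a test family in which $\b_1,\b_2$ are varied by classes of each parity of Maslov index and comparing with the $Pin^\pm$ orientation formulas of~\cite{JakePhD,Sara1}, since these constraints pin down the sign uniquely.
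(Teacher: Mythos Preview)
Your proposal has the right overall shape --- pull back $\pa\qor$ along $\vartheta$ and decompose into contributions from the two components --- but it misses the structural ingredient that drives the paper's proof and contains errors in the sign bookkeeping.

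The paper does not work with $\qor$ as a monolithic object. It is built as $\qor_{k,l}^\b = \mJ_{k+1,l}^\b \bu evb_0^*(\mS^\b \bu m_k) \bu \mI_k$ (Definition~\ref{q orientor definition}), and the proof proceeds by establishing separate boundary formulas for $\mI_k$ (Lemma~\ref{Boundary of mI lemma}) and for $\mJ$ (Lemma~\ref{boundary of mJ lemma}), then combining them. The $\mJ$ part rests on Theorem~\ref{expinv of boundary of J by thetabeta theorem}, which in turn factors through two independent lemmas: the orientation of the gluing map on the moduli of marked disks $\mW_{k+1,l}$ (Lemma~\ref{moduli of disks orientation}) and the gluing of Cauchy--Riemann $Pin$ boundary value problems (Lemma~\ref{axiomb1b2}). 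Your ``tangent space decomposition'' and ``match factor by factor'' skip this layered structure; in particular you never engage with the twistor $\mS^\b$ or the boundary transports $c_{ij}$, whose interaction produces essential cancellations (the $\mS^{\b_2}$ appearing in Lemma~\ref{Boundary of mI lemma} cancels against the one in Lemma~\ref{boundary of mJ lemma}).

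Your sign attributions are largely off. The entire block $i+ik_2+k$ arises in one place, Lemma~\ref{moduli of disks orientation}, as a purely combinatorial statement about ordered boundary points on a disk (ultimately~\cite[Proposition 2.8]{Sara1}); it has nothing to do with Koszul rearrangement of the $\rort_L$ tensor factors, which are handled by the ${}^{E_1}(\cdot)^{E_3}$ extensions and contribute no explicit sign. The $\d\mu(\b_1)\mu(\b_2)$ term comes from Lemma~\ref{axiomb1b2}, and you have the convention reversed: $\d=0$ for $Pin^+$ and $\d=1$ for $Pin^-$, as stated in the theorem.
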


Theorem \ref{boundary of q theorem introduction} is proven in Section \ref{proof of q-relations}. The theorem can be visualized in the following diagram.
% \footnote{The meaning of $\partial$ and $\theta^\diamond$ in the diagram is not clear.}
An arrow corresponding to a map $f$ and an \orientor{f} $F$ is labeled by a vector $\begin{pmatrix}F\\f\end{pmatrix}$.
\[\begin{tikzcd}
	& {\begin{smallmatrix}\left(ev^\beta\circ\iota \circ \vartheta\right)^*\left(\underset{j=1}{\overset{k}\boxtimes}\rort_L\right)\\\downarrow\\\mathcal M_{k_1+1,I}(\beta_1)\times_L\mathcal M_{k_2+1,J}(\beta_2)\end{smallmatrix}} && {\begin{smallmatrix}\left(ev^{\beta_2}\right)^*\left(\underset{j=i}{\overset{i+k_2-1}\boxtimes}\rort_L\right)\\\downarrow\\\mathcal M_{k_2+1,J}(\beta_2)\end{smallmatrix}} \\
	{\begin{smallmatrix}\left(ev^\beta\circ\iota\right)^*\left(\underset{j=1}{\overset{k}\boxtimes}\rort_L\right)\\\downarrow\\B(\beta)\end{smallmatrix}} \\
	& {\begin{smallmatrix}\left(ev^{\beta_1}\right)^*\left(\underset{j=1}{\overset{k_1}\boxtimes}\rort_L\right)\\\downarrow\\\mathcal M_{k_1+1,I}(\beta_1)\end{smallmatrix}} && {\begin{smallmatrix}\rort_L\\\downarrow\\L\end{smallmatrix}} \\
	{\begin{smallmatrix}\left(ev^\beta\right)^*\left(\underset{j=1}{\overset{k}\boxtimes}\rort_L\right)\\\downarrow\\\mathcal M_{k+1,l}(\beta)\end{smallmatrix}} \\
	& {\begin{smallmatrix}\rort_L\\\downarrow\\L\end{smallmatrix}}
	\arrow[swap,"{\begin{pmatrix}\partial_{evb_0}\\\iota\end{pmatrix}}", from=2-1, to=4-1]
	\arrow["{\begin{pmatrix}\phi^{\mO^\vartheta_c}\\\vartheta\end{pmatrix}}"{description}, from=1-2, to=2-1]
	\arrow["{p_2}"{description}, from=1-2, to=1-4]
	\arrow["{\begin{pmatrix}\qor_{k_2,J}^{\beta_2}\\evb_0^{\beta_2}\end{pmatrix}}"', from=1-4, to=3-4]
	\arrow["{\begin{pmatrix}{}^{E_1}\left(\expinv{\left(p_2/evb^{\beta_1}_i\right)}\qor_{k_2,J}^{\beta_2}\right)^{E_3}\\p_1\end{pmatrix}}"{description}, from=1-2, to=3-2]
	\arrow["{evb_i^{\beta_1}}"{description}, from=3-2, to=3-4]
	\arrow["{\begin{pmatrix}\qor_{k_1,I}^{\b_1}\\evb_0^{\beta_1}\end{pmatrix}}"{description}, from=3-2, to=5-2]
	\arrow["{\begin{pmatrix}\qor_{k,l}^{\beta}\\evb_0^{\beta}\end{pmatrix}}"{description}, from=4-1, to=5-2]
\end{tikzcd}\]
\begin{prp}\label{energy zero q theorem introduction}
In case $(k,l,\b)\in\{(2,0,\b_0),(0,1,\b_0)\}$, the map \[evb_0^{\b_0}=\cdots=evb_k^{\b_0}\]
is a diffeomorphism, and we have
\begin{align*}
    \qor_{2,0}^{\b_0}&=\expinv{\left(evb^{\b_0}_0\right)}m,\\
    \qor_{0,1}^{\b_0}&=\expinv{\left(evb^{\b_0}_0\right)}1_L.
\end{align*}
\end{prp}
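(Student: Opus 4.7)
The plan is first to identify the moduli spaces explicitly. A $J$-holomorphic stable disk map of degree $\b_0$ with boundary in $L$ has zero symplectic area, hence is constant, with image a single point of $L$. In both exceptional cases $(k+1,l) \in \{(3,0),(1,1)\}$, the number of marked points is exactly enough to rigidify the domain against the full automorphism group $\mathrm{PSL}(2,\R)$ of the disk. Consequently, $\mM_{k+1,l}(\b_0)$ is canonically diffeomorphic to $L$, with the diffeomorphism realized by any evaluation map; in particular all the $evb_i^{\b_0}$ coincide. As a sanity check, the virtual dimension formula $\dim L + \mu(\b) + k - 2 + 2l$ gives $\dim L$ in both cases, so $\cort{evb_0^{\b_0}}$ is concentrated in degree $0$ and canonically trivialized; and the degree $2-k-2l$ of $\qor_{k,l}^{\b_0}$ is $0$ in both cases, matching the degrees of $m$ and $1_L$.

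Next, I would unpack the construction of $\qor^{\b_0}_{k,l}$ from Section~\ref{rort_L section} in these two degenerate cases. Since $\mM_{k+1,l}(\b_0)\cong L$ is zero-dimensional relative to the base, the general recipe for building $\qor_{k,l}^\b$, which combines the canonical \orientor{evb_0^\b} arising from the geometry of the moduli space with the algebra structure of $\rort_L$, has no moduli input to integrate against and reduces entirely to its algebraic part. For $(2,0,\b_0)$, the two boundary evaluation inputs pair into a source $E_L^2=(evb_1^{\b_0})^*\rort_L\otimes (evb_2^{\b_0})^*\rort_L$, and after identifying $\mM_{3,0}(\b_0)\cong L$ via $evb_0^{\b_0}$ this is the pullback of $\rort_L\otimes\rort_L$; the natural map to $\rort_L$ is then $m$. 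For $(0,1,\b_0)$, the source $E_L^0=\underline{\A}$ is trivial, and the natural map $\underline\A\to\rort_L$ is $1_L$.

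Finally, I would apply the pullback $\expinv{\left(evb_0^{\b_0}\right)}$ (well-defined since $evb_0^{\b_0}$ is a diffeomorphism, hence carries a canonical relative orientation) to $m$ and $1_L$ to produce \orientor{evb_0^{\b_0}}s with the required source and target, matching the two stated formulas. The main obstacle, and essentially the only nontrivial part of the proof, is sign bookkeeping: verifying that the normalization chosen in Section~\ref{rort_L section} when defining $\qor^{\b_0}_{k,l}$ at the exceptional cases is such that no compensating sign appears. Once the canonical trivialization of $\cort{evb_0^{\b_0}}$ is fixed consistently with the conventions governing $\phi^{\mO}$ and $\bu$, the identities of the proposition follow directly from the construction.
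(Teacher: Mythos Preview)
Your approach is correct and matches the paper's: the paper proves this as Lemma~\ref{q for bottom cases b0 lemma} by unwinding Definition~\ref{q orientor definition} of $\qor_{k,l}^\b$ in these cases, noting that $\mS^{\b_0}=\Id$, $\mI_k=\Id$, $\Psi_{\b_0}$ is trivial, and $J_{k+1,l}^{\b_0}=\phi_{evb_0}$ (Remark~\ref{J for bottom cases b0 remark}), then using Lemma~\ref{commutativity vertical pullback vs horizontal pullback orientors} to rewrite $(\phi_{evb_0})^\rort\bu evb_0^*m_k$ as $\expinv{evb_0}m_k$. Your sketch identifies exactly this reduction; to turn it into a full proof you just need to name these specific trivializations rather than leaving them as ``sign bookkeeping.''
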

The proof of Proposition \ref{energy zero q theorem introduction} is immediate from Lemma~\ref{q for bottom cases b0 lemma}.

Extend the indices of the boundary marked points to $\Z$ cyclicly. In particular, $evb_{k+1}=evb_0$.
Let $c_{ij}:(evb_j)^*\rort_L\to (evb_i)^*\rort_L$ be the isomorphism arising from parallel transport along the boundary of the disk in the direction of the orientation induced from the complex orientation. See Definition~\ref{boundary transport} for details.
Let $Fi:=Fi^\b_{k+1,l}:\mM_{k+1,l+1}(\b)\to \mM_{k+1,l}(\b)$ denote the map that forgets the $l+1$st interior marked point, and stabilizes the resulting map. Similarly, let $Fb:=Fb^\b_{k+1,l}:\mM_{k+2,l}(\b)\to \mM_{k+1,l}(\b)$ denote the map that forgets the $k+1$st boundary marked point, and stabilizes the resulting map. The maps $Fi,Fb,$ have canonical relative orientations $\mO^{Fi}, \mO^{Fb}$, respectively. See Definition~\ref{canonical orientations for cyclic and forgetful maps definition} and Remark~\ref{canonical orientations for cyclic and forgetful maps remark}.
\begin{thm}\label{factorization q through forgetful theorem introduction}
Let $k\geq -1,l\geq 0$ and $\b\in \Pi$. The following equation of \orientor{evb_0^{l+1}}s holds.
\[
\qor_{k,l+1}^\b=\expinv{(Fi,\mO^{Fi})}\qor_{k,l}^\b.
\]
Additionally, for $k\geq0$, the following equation of \orientor{evb_0^{k+1}}s of $E^{k+1}$ to $\rort$ holds.
\[
\qor_{k+1,l}^\b=m\bu \left(\expinv{\left(Fb_{k+1,l}\right)}\qor_{k,l}^\b\right)^\rort\bu \,\,{}^{Fb^*E^k}\left(c_{k+2,k+1}\right)
\]
\end{thm}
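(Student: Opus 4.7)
The plan is to unpack the definition of $\qor_{k,l}^\b$ from Section~\ref{rort_L section} and verify the two identities by direct comparison, making essential use of the fact that both forgetful maps are relatively oriented submersions: $Fi$ has a $2$-dimensional fiber with canonical complex orientation, while $Fb$ has a $1$-dimensional fiber given by the position of the forgotten boundary marked point on the boundary circle of the domain disk. The canonical orientations $\mO^{Fi}$, $\mO^{Fb}$ supplied by Definition~\ref{canonical orientations for cyclic and forgetful maps definition} are the key inputs.

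For the first identity, the main observation is that $Fi$ is compatible with all boundary evaluations: $evb_j^{(k,l+1,\b)}=evb_j^{(k,l,\b)}\circ Fi$ for every $0\leq j\leq k$, and consequently $ev^{(k,l+1,\b)}=ev^{(k,l,\b)}\circ Fi$. Hence the source $E^k$ and the target $evb_0^{*}\rort_L$ both pull back faithfully under $Fi$, and the two sides of the asserted equation are \orientor{evb_0^{(k,l+1,\b)}}s with identical source and target. It remains to verify that the relative orientation data defining $\qor_{k,l+1}^\b$ (coming from the relative $Pin^\pm$ structure $\fp$ and the Maslov index of $\b$) agrees, under composition, with $\mO^{Fi}$ and the analogous data defining $\qor_{k,l}^\b$. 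Since the $Pin$ structure and Maslov index are intrinsic to the underlying disk map and unchanged by adding an interior marked point, this compatibility reduces to a standard comparison of the canonical orientation on the fiber of $Fi$ with the difference of relative orientations of the two evaluation maps $evb_0^{(k,l,\b)}$ and $evb_0^{(k,l+1,\b)}$.

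The second identity is subtler, because the evaluation $evb_{k+1}^{(k+1,l,\b)}$ does not factor through $Fb$: forgetting a boundary marked point loses an evaluation to $L$. Consequently the source $E^{k+1}$ carries an extra factor $(evb_{k+1}^{\b})^{*}\rort_L$ absent from $Fb^{*}E^{k}$. The plan is to show that this extra factor is accounted for exactly by the parallel transport $c_{k+2,k+1}$ followed by the ring multiplication $m$. On the open stratum of $\mM_{k+2,l}(\b)$ where the $(k+1)$st boundary marked point is distinct from all others, the fiber of $Fb$ is an arc of the boundary circle of the domain disk, and the canonical orientation $\mO^{Fb}$ coincides with the orientation of this arc induced by the complex orientation of the disk. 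The transport $c_{k+2,k+1}$ along this arc therefore identifies $(evb_{k+1}^{\b})^{*}\rort_L$ with $(evb_0^{\b})^{*}\rort_L$ in a way compatible with $\mO^{Fb}$. One then checks that $m$ combines the $\rort_L$-output of $\expinv{(Fb)}\qor_{k,l}^\b$ with this transported factor to recover the value of $\qor_{k+1,l}^\b$. Both sides extend continuously from this open stratum to all of $\mM_{k+2,l}(\b)$, which suffices.

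The main obstacle is sign and orientation bookkeeping, particularly for the second identity: one must coordinate the orientation of the disk boundary, the relative $Pin^\pm$ structure $\fp$, the canonical orientation $\mO^{Fb}$, the grading shift built into $\cort{Fb}$, and the sign convention used in $m$ and $c_{ij}$. I expect this to be handled by an auxiliary lemma identifying $\mO^{Fb}$ with the relative orientation produced from the boundary transport $c_{k+2,k+1}$ along the forgetful fiber. Once that lemma is in place, both identities become a direct comparison of \orientor{evb_0}s with equal source and target, settled by a local calculation at a configuration where the extra marked point is generic.
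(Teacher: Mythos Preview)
Your plan is correct in outline and would succeed, but the paper's proof is considerably shorter because it exploits the factorization
\[
\qor^\b_{k,l} \;=\; \mJ^\b_{k+1,l}\bu evb_0^*(\mS^\b\bu m_k)\bu \mI_k
\]
from Definition~\ref{q orientor definition} and treats each factor separately. The middle factor is pulled back from $L$ and so commutes trivially with both forgetful maps. For the orientation core $\mJ^\b$ (equivalently $J^\b$), compatibility with $Fi$ and $Fb$ is already established as Lemma~\ref{properties of J lemma}, which in turn rests on the essential-fiber-product diagrams~\eqref{forgetting boundary point is functorial Ev diagram} and Lemma~\ref{tilde J forgetting marked point compatibility lemma}. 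For $\mI$, the only input is the one-line observation
\[
\mI_{k+1}\;=\;\big((Fb)^*\mI_k\big)^{evb_0^*\rort}\bu {}^{Fb^*E^k}c_{0,k+1},
\]
immediate from Definition~\ref{mI definition} and $(Fb)^*c_{0j}=c_{0j}$ for $j\le k$. Assembling these three facts gives the theorem; this is the entire content of Remark~\ref{forgetful maps on mI implies factorization of q remark}.

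The main conceptual difference from your plan is that your proposed auxiliary lemma would merge the roles of $\mO^{Fb}$ and $c_{k+2,k+1}$, whereas in the paper these are kept entirely separate. The relative orientation $\mO^{Fb}$ lives in $\cort{Fb}$ and enters only through the pullback $\expinv{(Fb)}$ applied to $J^\b$; it is absorbed wholesale by Lemma~\ref{properties of J lemma} and never interacts with parallel transport in $\rort_L$. The transport $c_{k+2,k+1}$ is a degree-$0$ isomorphism of local systems over the moduli space and appears only in the combinatorial factor $\mI_{k+1}$. Keeping these two roles disjoint is precisely what dissolves the sign bookkeeping you anticipate into a formal composition of already-proved lemmas, with no local calculation at a generic configuration required.
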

Theorem~\ref{factorization q through forgetful theorem introduction} is proven in Remark~\ref{forgetful maps on mI implies factorization of q remark}.

Let $\efield_L:=\pi^L_*\rort_L$ the sheaf pushforward along $\pi^L$, considered as a local system over $\W$. We construct a surjective \orientor{L} of $\rort_L$ to $\efield_L$ of degree $1-n$,
\[
\Otm:=\Otm^L:\rort_L\to \cort{L}\otimes \left(\pi^L\right)^*{\efield_L}.
\]
Informally, $\Otm$ splits off one copy of $\lort_L$, shifts it by degree $1-n$ to $\cort{L}$, and maps the remaining tensor products to $\efield_L$ depending on whether they admit a (vertical) section.
See Definition~\ref{Otm definition} for the precise definition. Denote by $\Otm_{odd}$ the map that equals to $\Otm$ on the odd parts of $\rort_L$ and vanishes on the even parts of $\rort_L$. The orientors $\Otm,\Otm_{odd}$ are used to define integration of forms on $L$ with values in $\rort_L$. In particular, $\Otm_{odd}$ is used to define the cyclic structure on the $A_\infty$ algebra in \cite[Section 5]{non-orientable-A-infty}.

Denote by $f:\mM_{k+1,l}(\b)\to \mM_{k+1,l}(\b)$ the map that cyclicly shifts the boundary points $(z_0,...,z_k)$ as follows $z_k\to z_{k-1}\to \cdots\to z_0\to z_k$. The map $f$ is a diffeomorphism. Set
\begin{align*}
    ev^{cyc}:=(evb_1,...,evb_k, evb_0):&\mM_{k+1,l}(\b)\to \underbrace{L\times_\W\cdots\times_\W L}_{k+1\text{ times}},\\
    E^{cyc}:=&\left(ev^{cyc}\right)^*\rort_L^{\boxtimes k+1}.
\end{align*}
Let $T$ be the \orientor{f} of $E^{cyc}$ to $E^{cyc}$ given as follows.
\[
T(a_0\otimes...\otimes a_k\otimes)=(-1)^{|a_0|\cdot\sum_{j=1}^k|\a_j|}\mO^f_c\otimes a_1\otimes \cdots\otimes a_k\otimes a_0,
\]
where $\mO^f_c$ is the canonical relative orientation of $f$ as a local diffeomorphism. See Section~\ref{local diff orientation}.
\begin{comment}
Denote by 
\begin{gather*}
    \tau\footnote{Replace with  \rort^{\boxtimes k+1}}:\left(\underset{j=1}{\overset{k}\boxtimes}\rort_L \right)\boxtimes \rort_L\to \rort_L\boxtimes\left(\underset{j=1}{\overset{k}\boxtimes}\rort_L \right),\\
    a\boxtimes b\mapsto (-1)^{|a|\cdot|b|}b\boxtimes a,
\end{gather*}
the graded symmetry isomorphism.
\end{comment}
\begin{thm}\label{cyclic theorem introduction}
The following equation of \orientor{\pi^{\mM_{k+1,l}(\b)}}s of ${E^{cyc}}$ to $\efield_L$ holds.
\[
\Otm_{odd}\bu m\bu\left(\qor_{k,l}^\b\otimes \Id\right)\bu T=(-1)^k\Otm_{odd} \bu m\bu \left(\qor_{k,l}^\b\otimes \Id\right)
\]
\end{thm}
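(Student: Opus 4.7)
The plan is to reduce the identity to two ingredients: a geometric cyclic equivariance of $\qor_{k,l}^\b$ under the self-diffeomorphism $f$ of $\mM_{k+1,l}(\b)$, and an algebraic cyclic-trace property of $\Otm_{odd}\bu m$ on $\rort_L$. Both sides are \orientor{\pi^{\mM_{k+1,l}(\b)}}s of $E^{cyc}$ to $\efield_L$, since $T$ covers $f$ and $\pi^{\mM_{k+1,l}(\b)}\circ f=\pi^{\mM_{k+1,l}(\b)}$, so the comparison is well-posed.

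First, I would prove, as a preliminary lemma, that $\qor_{k,l}^\b$ is cyclically equivariant with respect to $f$. Since $f$ cyclicly relabels the boundary marked points, $evb_i\circ f=evb_{i+1\bmod(k+1)}$. The construction of $\qor_{k,l}^\b$ in Section~\ref{rort_L section} uses only data intrinsic to the underlying curve together with the parallel-transport isomorphisms $c_{ij}$ along the boundary, so pulling back $\qor_{k,l}^\b$ by $f$ using $\mO^f_c$ yields the orientor obtained from $\qor_{k,l}^\b$ by cyclicly relabeling its $k$ input factors, up to an explicit Koszul sign and a product of boundary transports $c_{ij}$.

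Using this equivariance, the LHS, evaluated on a section $a_0\otimes\cdots\otimes a_k$ of $E^{cyc}$, is rewritten so that the cyclic shift introduced by $T$ is absorbed into the equivariance of $\qor_{k,l}^\b$. Comparing with the RHS, the identity then reduces to a cyclic-trace property of $\Otm_{odd}\bu m$ on $\rort_L^{\otimes(k+1)}$: the composition is invariant, up to the sign $(-1)^k$ and appropriate Koszul signs, under cyclically permuting its $k+1$ tensor factors. This property follows from the explicit description of $\Otm$ in Definition~\ref{Otm definition}, which splits off and shifts one copy of $\lort_L$ to $\cort{L}$, together with the fact that $\Otm_{odd}$ selects the odd-graded piece of $\rort_L$, on which $m$ behaves as a symmetric trace pairing in the graded sense.

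The main obstacle is sign bookkeeping. Contributions come from the Koszul prefactor in the definition of $T$, the canonical relative orientation $\mO^f_c$, the orientor composition sign in $G\bu F$, the boundary transports $c_{ij}$, and the graded multiplication $m$. I expect all degree-dependent signs to cancel and leave the purely combinatorial sign $(-1)^k$, which records the $k$ adjacent transpositions needed to realize the cyclic shift $f$. Verifying this cancellation is essentially mechanical given the equivariance lemma and the cyclic-trace identity, but will require a careful diagrammatic computation, and I expect it to be the most technical step of the proof.
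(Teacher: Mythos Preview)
Your outline is broadly on the right track — reduce to an equivariance of $\qor_{k,l}^\b$ under $f$ and then handle the remaining algebra — and this is also the skeleton of the paper's proof. However, the paper's decomposition is finer, and your identification of the two ingredients misplaces where the real content lies.

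The paper does not prove a single ``cyclic equivariance of $\qor$'' lemma. Instead it splits $\qor=\mJ\bu(\mS^\b\bu m_k)\bu\mI$ and proves two separate equivariances (Lemmas~\ref{rotation of mI from 0 to 1} and~\ref{rotation of mJ from 0 to 1}). The sign $(-1)^k$ comes entirely from the $\mJ$-lemma, and ultimately from property~\ref{disk space orientors:boundary permutation} of Lemma~\ref{disk space orientors with properties existence lemma}: the orientation of the moduli of marked disks changes by $(-1)^k$ under cyclic shift of the $k+1$ boundary points. This is a geometric fact, not a Koszul residue from ``$k$ adjacent transpositions'' of graded tensor factors; your heuristic for the sign is misleading and would not survive the computation.

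More importantly, your second ingredient — a ``cyclic-trace property of $\Otm_{odd}\bu m$ on $\rort_L^{\otimes(k+1)}$'' — is not what is actually used, and as stated is not even well-posed ($\Otm_{odd}\bu m$ is a map out of $\rort_L^{\otimes 2}$). In the paper, $\Otm_{odd}$ plays a much more modest role: it forces both sides to vanish on the sublocal system $\rort''\subset E^{cyc}$ of total degree congruent to $\mu(\b)$, so one may restrict to the complementary parity piece $\rort'$. The point is that the $\mI$-lemma (Lemma~\ref{rotation of mI from 0 to 1}) is \emph{only valid on $\rort'$}; the proof uses the parity $\sum r_j\equiv\mu(\b)+1$ explicitly (see equation~\eqref{tau embarks mS^b}). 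So the ``odd'' in $\Otm_{odd}$ is not supplying a trace property but enforcing the parity hypothesis under which the cyclic equivariance of the $\mI$-part holds at all. If you try to package things as you propose, you will find that the purported equivariance of $\qor$ fails on $\rort''$, and you will not be able to isolate a clean algebraic statement about $\Otm_{odd}\bu m$ that rescues it.
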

Theorem \ref{cyclic theorem introduction} is proven in Section \ref{proof of the cyclic equation section}. The theorem can be visualized in the following diagram.
\[\begin{tikzcd}
	{\begin{smallmatrix}E^{cyc}\\\downarrow\\\mM_{k+1,l}(\b)\end{smallmatrix}} && {\begin{smallmatrix}E^{cyc}\\\downarrow\\\mM_{k+1,l}(\b)\end{smallmatrix}} \\
	\\
	{\begin{smallmatrix}\rort_L\otimes \rort_L\\\downarrow\\L\end{smallmatrix}} && {\begin{smallmatrix}\rort_L\otimes\rort_L\\\downarrow\\L\end{smallmatrix}} \\
	& {\begin{smallmatrix}\efield_L\\\downarrow\\\W\end{smallmatrix}}
	\arrow["{\begin{pmatrix}T\\f\end{pmatrix}}", from=1-1, to=1-3]
	\arrow["{\begin{pmatrix}\qor_{k,l}^\b\otimes \Id\\evb_0\end{pmatrix}}", from=1-3, to=3-3]
	\arrow["{\begin{pmatrix}\Otm_{odd}\bu m\\\pi^L\end{pmatrix}}", from=3-3, to=4-2]
	\arrow[swap,"{\begin{pmatrix}\qor_{k,l}^\b\otimes \Id\\evb_0\end{pmatrix}}", from=1-1, to=3-1]
	\arrow[swap,"{\begin{pmatrix}\Otm_{odd}\bu m\\\pi^L\end{pmatrix}}", from=3-1, to=4-2]
	\arrow[equal, from=3-1, to=3-3]
\end{tikzcd}\]
In Section \ref{rort_L section}, we construct a family of \orientor{\pi^{\mM_{0,l}(\b)}}s of $\underline \A$ to $\efield_L$ of degree $4-n-2l$
\[
\qor_{-1,l}^\b:\underline\A\to \cort{\pi^{\mM_{0,l}(\b)}}\otimes \left(\pi^{\mM_{0,l}(\b)}\right)^*\efield_L
\]
indexed by 
\[
(l,\b)\in\left(\Z_{\geq0}\times \Pi\right)\setminus\Big\{(0,\b_0),(1,\b_0)\Big\}.
\]
In the exceptional cases, the moduli spaces are empty. In~\cite{non-orientable-A-infty} these orientors are used to construct an element of the Novikov ring $\fm_{-1}$ and the following theorem gives rise to a relation involving $\fm_{-1},$ the cyclic pairing and $\fm_0.$ In the case that $L$ is orientable, the element $\fm_{-1}$ plays an important role in the definition of open Gromov-Witten invariants given in~\cite{Fukaya-3-folds,Sara1}. We plan to use the element $\fm_{-1}$ defined here along with the $A_\infty$ structure to define open Gromov-Witten invariants for non-orientable $L.$
% \footnote{Add to the beginning the anticipated use of $A_\infty$ to define Gromov-Witten invariants}

\begin{comment}
Let $\Pi^{ad}\subset\Pi$ be the subset of degrees such that $\pi^L_*\lort_L^{\mu(\b)+1}$ is nonzero. When $L$ is (vertically) orientable then $\Pi^{ad}=\Pi$, and otherwise $\Pi^{ad}$ is the subset where $\mu$ takes odd values.
For $\b\notin \Pi^{ad}$ we have $\qor^\b_{-1,l}\equiv0$. This is inevitable, since $\pi^{\mM_{0,l}(\b)}$ is relatively non-orientable in this case. See Lemma~\ref{Jake Thesis lemma}.
\end{comment}

Fix $l\geq 0$ and $\b\in\Pi$. Fix partitions $\b_1+\b_2=\b$ and $I\dot\cup J=[l]$. Recall the boundary component $B_{-1,I,J}(\b_1,\b_2):=B_{0,0,0,I,J}(\b_1,\b_2)\subset \pa \mM_{0,l}(\b)$, and the gluing map $\vartheta:\mM_{1,I}(\b_1)_{evb_0^{\b_1}}\times_{evb_0^{\b_2}}\mM_{1,J}(\b_2)\to B_{-1,I,J}(\b_1,\b_2)$. Let $\mO^\vartheta_c$ denote the canonical relative orientation of $\vartheta$ as a local diffeomorphism.
Let $\pi^{\mM_{0,I}(\b_1)\times_L\mM_{0,J}(\b_2)}$ denote the composition of $\pi^L$ and the canonical map $\mM_{0,I}(\b_1)\times_L\mM_{0,J}(\b_2)\to L$ given by the fiber product.
\begin{thm}\label{boundary of q k=-1 introduction theorem}
Let $l\geq 0$ and $\b\in \Pi$. Let $I\dot\cup J=[l]$ and $\b_1+\b_2=\b$. The following equation of \eorientor{\pi^{\mM_{0,I}(\b_1)\times_L\mM_{0,J}(\b_2)}}s of $\A$ holds.
\[\expinv{\left(\vartheta,\mO^\vartheta_c\right)}\left(\pa \qor_{-1,l}^\b\right)=-\Otm \bu m\bu
\left(\qor_{0,I}^{\b_1}\right)^{\rort_L}\bu \expinv{\left(p_2/evb_0^{\b_1}\right)}\qor_{0,J}^{\b_2}
\]
\end{thm}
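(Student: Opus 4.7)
The plan is to reduce the statement to the $k=0$ case of Theorem~\ref{boundary of q theorem introduction} via the forgetful map $Fb:\mM_{1,l}(\b)\to\mM_{0,l}(\b)$ that forgets the unique boundary marked point, combined with the orientor $\Otm$.

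Based on the construction in Section~\ref{rort_L section}, the orientor $\qor_{-1,l}^\b$ is built from $\qor_{0,l}^\b$ and $\Otm$ using the relatively oriented forgetful map $(Fb,\mO^{Fb})$, so that a defining identity of the schematic form
\[
\expinv{(Fb,\mO^{Fb})}\,\qor_{-1,l}^\b \;=\; \Otm\bu\qor_{0,l}^\b
\]
holds, up to signs prescribed by the convention for $\mO^{Fb}$. My first step is to apply $\pa$ to both sides. Since $\Otm$ lives on $L$ and $\pa\Otm=0$ in the relevant sense, the boundary passes through to the $\qor_{0,l}^\b$ factor, yielding an expression for $\expinv{(Fb,\mO^{Fb})}\,\pa\qor_{-1,l}^\b$ in terms of $\Otm\bu\pa\qor_{0,l}^\b$.

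Next I would invoke Theorem~\ref{boundary of q theorem introduction} in the case $k=0$. The constraint $k_1+k_2=1$ with $k_1>0$ forces $(k_1,k_2)=(1,0)$ and $i=1$, so for each $I\dot\cup J=[l]$ and $\b_1+\b_2=\b$ the boundary decomposition on the stratum $B_{1,1,0,I,J}(\b_1,\b_2)\subset\pa\mM_{1,l}(\b)$ involves $\qor_{1,I}^{\b_1}$ and $\qor_{0,J}^{\b_2}$. Under $Fb$, this stratum projects onto $B_{-1,I,J}(\b_1,\b_2)\subset\pa\mM_{0,l}(\b)$, with the gluing maps and their canonical relative orientations compatibly intertwined. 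I then apply the second identity in Theorem~\ref{factorization q through forgetful theorem introduction} to rewrite $\qor_{1,I}^{\b_1}$ in terms of $\qor_{0,I}^{\b_1}$, the multiplication $m$, and the boundary transport $c_{2,1}$. The resulting $m$ is precisely the one appearing in the RHS of the target statement; combined with $\Otm$ it produces the $\Otm\bu m$ prefactor, and the two surviving copies of $\qor_{0,\bullet}^{\b_\bullet}$ arrange themselves as $\bigl(\qor_{0,I}^{\b_1}\bigr)^{\rort_L}\bu\expinv{(p_2/evb_0^{\b_1})}\qor_{0,J}^{\b_2}$ via the operations on orientors from Section~\ref{orientors section}.

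The main obstacle I anticipate is sign accounting. The sign from Theorem~\ref{boundary of q theorem introduction} in the case $k=0,\;k_2=0,\;i=1$ equals $(-1)^{1+\d\mu(\b_1)\mu(\b_2)}$, while the target statement has overall sign $-1$ and contains no explicit $\d\mu(\b_1)\mu(\b_2)$ term. The discrepancy must cancel against signs produced when $\pa$ is passed through the pullback $\expinv{(Fb,\mO^{Fb})}$, together with signs introduced by the conversion via Theorem~\ref{factorization q through forgetful theorem introduction} and by the boundary transport $c_{2,1}$; the $\d\mu(\b_1)\mu(\b_2)$ term in particular should be absorbed through the orientation conventions governing $\mO^{Fb}$ and $\Otm$, which are sensitive to the parity of $\mu$. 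Verifying this cancellation, alongside the compatibility of $Fb$ with the gluing maps at the level of the canonical relative orientations $\mO^\vartheta_c$ on $\mM_{1,l}(\b)$ and $\mM_{0,l}(\b)$, will be the bulk of the technical work.
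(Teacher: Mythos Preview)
Your approach differs from the paper's, and there is a genuine gap at the very first step.

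\textbf{The gap.} You assert that ``based on the construction in Section~\ref{rort_L section}, the orientor $\qor_{-1,l}^\b$ is built from $\qor_{0,l}^\b$ and $\Otm$ using the relatively oriented forgetful map $(Fb,\mO^{Fb})$.'' This is not how $\qor_{-1,l}^\b$ is defined. Definition~\ref{q orientor definition} sets
\[
\qor_{-1,I}^\b=\bigl(J_{0,I}^\b\bigr)^{\efield}\bu (\pi^{\mM})^*\bigl(\Psi^{\efield}_{\mu(\b)+1}\bu m_0\bigr)
\]
directly in terms of the \orientor{\pi^{\mM_{0,I}(\b)}} $J_{0,I}^\b$ from Definition~\ref{fundamental orientor definition}; there is no passage through $\qor_{0,l}^\b$ or $Fb$. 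The orientors $J_{0,I}^\b$ (source $(\pi^\mM)^*\mathcal X_L^{\mu(\b)+1}$) and $J_{1,I}^\b$ (source $(evb_0^\b)^*\zort L^{\otimes\mu(\b)}$) are genuinely different objects; Lemma~\ref{properties of J lemma} relates $J$'s along $Fb$ only for $k\ge 0$, and the $Fb$-relation in Theorem~\ref{factorization q through forgetful theorem introduction} is likewise stated for $k\ge 0$. So your ``defining identity'' $\expinv{(Fb,\mO^{Fb})}\qor_{-1,l}^\b=\Otm\bu\qor_{0,l}^\b$ is an unproven lemma, and establishing it would require unwinding exactly the structures (the map $e$, the passage from $\phi$ to $\phi^L$, the $\mathcal X_L^{\mu(\b)+1}$ versus $\zort L^{\otimes\mu(\b)}$ bookkeeping) that the paper handles elsewhere.

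\textbf{How the paper proceeds.} The paper does not reduce to the $k=0$ case of Theorem~\ref{boundary of q theorem introduction}. Instead it proves a separate gluing statement at the level of $J$, namely Theorem~\ref{expinv of boundary of J by thetabeta theorem k=-1}, which computes $\expinv{(\vartheta^\b)}\pa J_{0,l}^\b$ directly (Section~\ref{Gluing proof section k=-1}). The proof of Theorem~\ref{boundary of q k=-1 introduction theorem} in Section~\ref{proof of q-relations k=-1} then unpacks $\pa\qor_{-1,l}^\b$ via Lemma~\ref{boundary of composition orientor}, substitutes Theorem~\ref{expinv of boundary of J by thetabeta theorem k=-1}, and simplifies using Corollary~\ref{trice iterated splitor corollary}, Remark~\ref{exchange Psi on rort with eort remark}, and the definition of $\Otm$; the two $J_{1,\bullet}^{\b_\bullet}$ factors on the right assemble into $\qor_{0,\bullet}^{\b_\bullet}$ because $\qor_{0,I}^{\b}=\mJ_{1,I}^\b\bu m_0$.

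\textbf{On the sign worry.} Your concern about the $\delta\mu(\b_1)\mu(\b_2)$ term is moot: for $\b\notin\Pi^{ad}$ both sides of the theorem vanish (the $\Otm$ on the right kills the output by the definition of $w$), while for $\b\in\Pi^{ad}$ one of $\mu(\b_1),\mu(\b_2)$ is necessarily even (see the remark following Lemma~\ref{axiomb1b2 k=-1}), so $\delta\mu(\b_1)\mu(\b_2)\equiv 0\pmod 2$. This is why neither $\delta$ nor an admissibility hypothesis appears in the statement.
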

Theorem \ref{boundary of q k=-1 introduction theorem} is proven in Section \ref{proof of q-relations k=-1}. The theorem can be visualized in the following diagram.
\[\begin{tikzcd}
	& {\begin{smallmatrix}\underline\A\\\downarrow\\\mathcal M_{1,I}(\beta_1)\times_L\mathcal M_{1,J}(\beta_2)\end{smallmatrix}} && {\begin{smallmatrix}\underline\A\\\downarrow\\\mathcal M_{1,J}(\beta_2)\end{smallmatrix}} \\
	{\begin{smallmatrix}\underline\A\\\downarrow\\B(\beta)\end{smallmatrix}} \\
	& {\begin{smallmatrix}\left(evb_0^{\beta_1}\right)^*\rort_L\\\downarrow\\\mathcal M_{1,I}(\beta_1)\end{smallmatrix}} && {\begin{smallmatrix}\rort_L\\\downarrow\\L\end{smallmatrix}} \\
	{\begin{smallmatrix}\underline\A\\\downarrow\\\mathcal M_{0,l}(\beta)\end{smallmatrix}} & {\begin{smallmatrix}\rort_L\otimes \rort_L\\\downarrow\\L\end{smallmatrix}} \\
	& {\begin{smallmatrix}\efield_L\\\downarrow\\\W\end{smallmatrix}}
	\arrow[swap,"{\begin{pmatrix}\partial_{\pi^M}\\\iota\end{pmatrix}}", from=2-1, to=4-1]
	\arrow["{\begin{pmatrix}\phi^{\mO^\vartheta_c}\\\vartheta\end{pmatrix}}"{description}, from=1-2, to=2-1]
	\arrow["{p_1}"{description}, from=1-2, to=1-4]
	\arrow[swap,"{\begin{pmatrix}\qor_{0,J}^{\beta_2}\\evb_0^{\beta_2}\end{pmatrix}}"', from=1-4, to=3-4]
	\arrow["{\begin{pmatrix}\expinv{\left(p_1/evb^{\beta_1}_i\right)}\qor_{0,J}^{\beta_2}\\p_1\end{pmatrix}}", from=1-2, to=3-2]
	\arrow["{evb_0^{\beta_1}}"{description}, from=3-2, to=3-4]
	\arrow["{\begin{pmatrix}\left(\qor_{0,I}^{\beta_1}\right)^{\rort_L}\\evb_0^{\beta_1}\end{pmatrix}}", from=3-2, to=4-2]
	\arrow["{\begin{pmatrix}O\bu m\\\pi^L\end{pmatrix}}", from=4-2, to=5-2]
	\arrow["{\begin{pmatrix}\qor_{-1,l}^{\beta}\\\pi^{\mM}\end{pmatrix}}"{description}, from=4-1, to=5-2]
\end{tikzcd}\]

\subsection{Base change}
Recall that our results hold for a general symplectic fibration $\pi^X:X\to \W$ and an exact relatively $Pin^\pm$ Lagrangian subfibration, with an appropriate group $\Pi$ to which the degrees of stable maps belong.
\begin{comment}
An important benefit of allowing an arbitrary manifold with corners $\W$ lies in the naturality of our construction. That is, a smooth map $\xi:\W'\to \W$ gives rise to natural relations between the above-mentioned orientors for $\W$ and $\W'$, as follows.
\end{comment}
Fix a smooth map $\xi:\W'\to \W$. For any submersion $\pi^M:M\to \W$ we abbreviate $\xi^*M:={\W'}_\xi\times_{\pi^M}M$, so we have a pullback diagram as follows.
\begin{equation}\label{generic pullback families diagram introduction}\begin{tikzcd}
\xi^*M\ar[d,swap,"\pi^{\xi^*M}"]\ar[r,"\xi^M"]&M\ar[d,"\pi^M"]\\\W'\ar[r,swap,"\xi"]&\W.
\end{tikzcd}
\end{equation}
It holds that $\pi^{\xi^*X}:\xi^*X\to \W'$ is a symplectic fibration, and $\xi^*L\subset\xi^*X$ is a relatively $Pin^\pm$ exact Lagrangian subfibration. See Corollary~\ref{pullback of exact manifold is exact naturality corollary}.
Moreover, there exist canonical identifications
\[
\xi^*\mM_{k+1,l}(X,L,J,\b)\simeq\mM_{k+1,l}(\xi^*X,\xi^*L,\xi^*J,\b)
\]
that respect the evaluation maps $evb_i,evi_j$ and the maps $Fb,Fi,f$.

According to Definition~\ref{pullback fiber product orientation convention} applied to diagram~\eqref{generic pullback families diagram introduction} with $M=L$, there exists a canonical isomorphism of local systems of $\A$-algebras
\[
\pull{\xi}_{\rort}:\left(\xi^{L}\right)^*\rort_L\to\rort_{\xi^*L}.
\]
See Section~\ref{proof of q-naturality families}. Set
\[\pull\xi_{\efield}:=\pi^L_*\pull\xi_{\rort}:\xi^*\efield_L\to \efield_{\xi^*L}.\]
We think of $\pull\xi_\rort$ and $\pull\xi_\efield$ as \orientor{\Id_{\xi^*L} \text{ and } \Id_{\W'}}s, respectively.
\begin{prp}\label{naturality of Otm families introduction}
With the above notations, the following diagram is commutative.
\[
\begin{tikzcd}
\xi^*\rort_L\ar[rr,"\pull\xi_{\rort}"]\ar[d,swap,"\expinv\xi \Otm^L"]&&\rort_{\xi^*L}\ar[d,"\Otm^{\xi^*L}"]\\
\cort{\pi^{\xi^*L}}\otimes \xi^*\eort_{L}\ar[rr,swap,"1\otimes (\pi^L)^*\pull\xi_{\efield}"]&&\cort{\xi^*L}\otimes \eort_{\xi^*L}
\end{tikzcd}
\]
That is, the following equation of \orientor{\pi^{\xi^*L}}s holds.
\[
\pull\xi_{\efield}\bu \expinv\xi \Otm^L=\Otm^{\xi^*L}\bu \pull\xi_{\rort}.
\]
\end{prp}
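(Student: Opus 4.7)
The plan is to decompose $\Otm^L$ along the grading $\rort_L = \bigoplus_{j \in \Z}\lort_L^{\otimes j}$ and verify naturality summand-by-summand. Because $\pull\xi_\rort$ is a morphism of local systems of graded $\A$-algebras, it respects this grading, so it suffices to check the stated identity on each $\lort_L^{\otimes j}$ separately.

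On a fixed summand $\lort_L^{\otimes j}$, I would write $\Otm^L$ as a composition of three elementary pieces, following the informal description preceding Definition~\ref{Otm definition}: (a) the canonical factorization $\lort_L^{\otimes j} \cong \lort_L \otimes \lort_L^{\otimes(j-1)}$; (b) the degree-$(1-n)$ shift that identifies the first factor with $\cort{\pi^L}$; and (c) the unit of the $(\pi^L)^*\!\dashv\!\pi^L_*$ adjunction that sends $\lort_L^{\otimes(j-1)}$ into $(\pi^L)^*\efield_L$, or zero, subject to the vertical-section case-split. Piece (a) commutes with $\pull\xi_\rort$ because the latter is multiplicative. Piece (b) is handled by the canonical identification $(\xi^L)^*\cort{\pi^L} \to \cort{\pi^{\xi^*L}}$ arising from the pullback square~\eqref{generic pullback families diagram introduction} with $M = L$, furnished by Definition~\ref{pullback fiber product orientation convention}, together with the compatibility of $\pull\xi_\rort$ with the analogous identification for $\lort$.

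The key step is piece (c), which is a smooth base change statement for local systems along
\[
\begin{tikzcd}
\xi^*L \ar[r,"\xi^L"] \ar[d,swap,"\pi^{\xi^*L}"] & L \ar[d,"\pi^L"] \\
\W' \ar[r,swap,"\xi"] & \W.
\end{tikzcd}
\]
The base change isomorphism $\xi^* \pi^L_* \lort_L^{\otimes(j-1)} \xrightarrow{\sim} \pi^{\xi^*L}_*(\xi^L)^*\lort_L^{\otimes(j-1)}$ intertwines the adjunction units on $L$ and on $\xi^*L$, and by the defining equation $\pull\xi_\efield = \pi^L_*\pull\xi_\rort$ it realizes exactly the bottom row of the proposition's square on the summand indexed by $j$. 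Combining (a)--(c), both compositions in the proposition agree as \orientor{\pi^{\xi^*L}}s on each $\lort_L^{\otimes j}$, and the proposition follows.

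The main anticipated obstacle is the bookkeeping in piece (b): tracking the degree shift $1-n$ and the canonical identifications and signs implicit in the pullback-of-orientor operation $\expinv\xi$ of Definition~\ref{pullback of orientor by pullback-diagram}, and confirming that they line up with the shift built into $\Otm^{\xi^*L}$. A secondary point is to verify that the vertical-section case-split in the definition of $\Otm$ transports correctly, but this is immediate from the definition $\pull\xi_\efield = \pi^L_*\pull\xi_\rort$, since vanishing summands on one side correspond to vanishing summands on the other.
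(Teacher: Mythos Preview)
Your approach is correct and essentially coincides with the paper's. Your pieces (a), (b), (c) are precisely the three factors $\Psi_1$, $e$, and $w$ in the formal definition $\Otm^L = e^{\efield} \bu {}^{\lort}w \bu \Psi_1$ (Definition~\ref{Otm definition}); the paper's proof simply cites ``the definitions and equation~\eqref{naturality of e families equation}'', the latter being exactly the naturality of $e$ under base change --- your piece (b) --- while naturality of $\Psi_1$ and $w$ (your (a) and (c)) are treated as immediate from the construction of $\pull\xi_\rort$ and $\pull\xi_\efield = \pi^L_*\pull\xi_\rort$.
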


Let $k\geq0,l\geq 0$ and $\b\in \Pi$. Abbreviate \[\mM:=\mM_{k+1,l}(X,L,J,\b),\qquad \mM':=\mM_{k+1,l}(\xi^*X,\xi^*L,\xi^*J,\b).\] Let 
\[
\pull{\xi}_E:=\underset{j=1}{\overset{k}\boxtimes} {\left(evb_j^{\b}\right)}^*\pull\xi_\rort:{\xi^{\mM}}^*E^k_L\to E^k_{\xi^*L}.
\]
We think of $\pull\xi_E$ as an \orientor{\Id_{\mM'}}.

\begin{thm}\label{naturality of Q families introduction}
The following diagram is commutative.
\[
\begin{tikzcd}
{\xi^{\mM}}^*E^k_L\ar[rr,"\pull\xi_E"]\ar[d,swap,"\expinv{\xi}\qor^{\left(X,L,J;\b\right)}_{k+1,l}"]&&E^k_{\xi^*L}\ar[d,"\qor^{\left(\xi^*X,\xi^*L,\xi^*J;\b\right)}_{k+1,l}"]\\
\cort{evb_0^{\W'}}\otimes {\xi^{\mM}}^*\left(evb_0^{\W}\right)^*\rort_L\ar[rr,swap,"1\otimes evb_0^*\left(\pull{\xi}_\rort\right)"]&&\cort{evb_0^{\W'}}\otimes\left(evb_0^{\W'}\right)^*\rort_{\xi^*L}
\end{tikzcd}
\]
That is, the following equation of orientors holds.
\[
\pull\xi_\rort\bu\expinv{\xi}\qor^{\left(X,L,J;\b\right)}_{k+1,l}=\qor^{\left(\xi^*X,\xi^*L,\xi^*J;\b\right)}_{k+1,l}\bu \pull\xi_E.
\]
Moreover, the following diagram is commutative.
\[
\begin{tikzcd}
\underline\F\ar[drr,"\qor_{-1,l}^{\left(\xi^*X,\xi^*L,\xi^*J;\b\right)}"]\ar[d,swap,"\expinv{\xi}\qor^{\left(X,L,J;\b\right)}_{-1,l}"]\\\cort{\pi^{\mM'}}\otimes {\pi^{\mM'}}^*\xi^*\efield_L\ar[rr,swap,"1\otimes {\pi^{\mM'}}^*\pull\xi_{\efield}"]
&&\cort{\pi^{\mM'}}\otimes {\pi^{\mM'}}^*\efield_{\xi^*L}
\end{tikzcd}
\]That is, the following equation of orientors holds.
\[
\pull\xi_{\efield}\bu \expinv\xi \qor^{\left(X,L,J;\b\right)}_{-1,l}=\qor^{\left(\xi^*X,\xi^*L,\xi^*J;\b\right)}_{-1,l}
\]
\end{thm}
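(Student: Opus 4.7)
The plan is to verify naturality directly from the construction of $\qor_{k,l}^\b$ and $\qor_{-1,l}^\b$ given in Section~\ref{rort_L section}. These orientors are assembled from a small collection of geometric ingredients: the relative $Pin^\pm$ structure $\fp$ on $L$, the evaluation maps $evb_j$, the local system $\rort_L$ with its multiplication $m$ and unit $1_L$, the boundary transport isomorphisms $c_{ij}$, and the universal linearized Cauchy--Riemann data on $\mM_{k+1,l}(\b)$. I will show that each of these ingredients is natural under $\xi$, and that the operations on orientors---composition $\bu$, tensor extension ${}^AF^B$, base-change pullback $\expinv{(\cdot)}$, and boundary $\pa$---all commute with $\expinv{\xi}$ along the Cartesian square~\eqref{generic pullback families diagram introduction}. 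The desired equations then follow by matching both sides term by term.

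First I would collect the elementary naturality facts. The relative $Pin^\pm$ structure $\fp$ pulls back canonically to a relative $Pin^\pm$ structure $\xi^*\fp$ on $\xi^*L\subset\xi^*X$, parallel to Corollary~\ref{pullback of exact manifold is exact naturality corollary} for exactness. The canonical identification $\xi^*\mM_{k+1,l}(X,L,J,\b)\simeq\mM_{k+1,l}(\xi^*X,\xi^*L,\xi^*J,\b)$ is compatible with the evaluation maps $evb_i$, the interior and boundary forgetful maps $Fi,Fb$, the cyclic shift $f$, the gluing maps $\vartheta$, and the stratification by $B_{i,k_1,k_2,I,J}(\b_1,\b_2)$. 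The local system $\rort_L$ pulls back to $\rort_{\xi^*L}$ via $\pull\xi_\rort$, which is a morphism of local systems of graded $\A$-algebras and therefore intertwines $m$ and $1_L$ with their counterparts on $\xi^*L$. Finally, boundary parallel transport is local and hence commutes with $\xi$, so $c_{ij}$ is natural.

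Next I would establish the formal compatibility of $\expinv\xi$ with the orientor operations: for any diagram assembled from pullback squares over $\xi$, one has $\expinv\xi(F\bu G)=\expinv\xi F\bu\expinv\xi G$ (up to the canonical relative orientations of the two squares), $\expinv\xi({}^AF^B)={}^{\xi^*A}(\expinv\xi F)^{\xi^*B}$, $\expinv\xi(\pa F)=\pa(\expinv\xi F)$, and $\expinv\xi$ commutes with $\expinv{(r/f)}$ by the universal property of iterated pullbacks. These are purely bookkeeping identities following from the definitions in Section~\ref{orientors section}. With these in hand, one unpacks the recursive construction of $\qor_{k,l}^\b$: each step is either a composition, a tensor extension, a base-change pullback, or a boundary operation applied to one of the natural ingredients enumerated above. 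Consequently the pullback orientor $\expinv\xi\qor_{k,l}^{(X,L,J;\b)}$ and the orientor $\qor_{k,l}^{(\xi^*X,\xi^*L,\xi^*J;\b)}$ are constructed by the same sequence of operations, related precisely by $\pull\xi_E$ on the source and $\pull\xi_\rort$ on the target.

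For the second diagram, the orientor $\qor_{-1,l}^\b$ is constructed from $\qor_{0,l}^\b$ by composition with $m$ and $\Otm$ followed by passage to the pushforward along $\pi^{\mM_{0,l}(\b)}$, as foreshadowed by Theorem~\ref{boundary of q k=-1 introduction theorem}. Combining the already established naturality of $\qor_{0,l}^\b$ with Proposition~\ref{naturality of Otm families introduction} and with the naturality of $m$ and $1_L$, one obtains the second commutative square. The main obstacle I anticipate is bookkeeping: verifying that the orientor attached to the universal linearized Cauchy--Riemann operator---the single nontrivial geometric input underlying $\qor_{k,l}^\b$---is natural under base change. This reduces to the fact that both the Cauchy--Riemann operator and the $Pin^\pm$ trivialization data used to orient its determinant line pull back canonically along $\xi$, so no new geometric content is required beyond the care needed to match signs and canonical isomorphisms.
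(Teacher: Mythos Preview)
Your approach for $k\geq 0$ is essentially the paper's: reduce to naturality of the constituent pieces $\mI_k$, $m_k$, $\mS^\b$, $\Psi_\b$, and $J_{k+1,l}^\b$, then use that $\expinv\xi$ distributes over $\bu$ and tensor extensions. The paper packages this as Lemma~\ref{naturality of cij families lemma} (giving naturality of $\mI$), Remark~\ref{naturality of m families} (for $m$ and $\Psi$), and Lemma~\ref{naturality of J families} (for $J$, whose proof in turn isolates the Cauchy--Riemann step as Lemma~\ref{naturality of tilde I families}). Your identification of the determinant-line orientor as the one genuinely geometric input is exactly right.

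For $k=-1$ there is a concrete error. You say $\qor_{-1,l}^\b$ is constructed from $\qor_{0,l}^\b$ by composing with $m$ and $\Otm$, citing Theorem~\ref{boundary of q k=-1 introduction theorem}. But that theorem describes the \emph{boundary} of $\qor_{-1,l}^\b$, not its definition; the actual construction (Definition~\ref{q orientor definition}) is
\[
\qor_{-1,I}^\b=\left(J_{0,I}^\b\right)^{\efield}\bu\left(\pi^\mM\right)^*\!\left(\Psi^{\efield}_{\mu(\b)+1}\bu m_0\right),
\]
which involves neither $\qor_{0,l}^\b$ nor $\Otm$. The boundary relation does not determine $\qor_{-1,l}^\b$ uniquely, so you cannot deduce naturality of $\qor_{-1,l}^\b$ from naturality of its boundary. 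The fix is simply to apply your own strategy to the correct definition: naturality of $J_{0,I}^\b$ is the $k=-1$ case of Lemma~\ref{naturality of J families}, and naturality of $\Psi^{\efield}_{\mu(\b)+1}$ and $m_0$ are formal (Remark~\ref{naturality of m families}).
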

Theorem~\ref{naturality of Q families introduction} and Proposition~\ref{naturality of Otm families introduction} are proven in Section~\ref{proof of q-naturality families}.

\subsection{Outline}
In Section~\ref{conventions section} we review algebraic notations, orbifolds background and orientation conventions. Section~\ref{orientors complete section} develops the general theory of orientors and orientor calculus. Section~\ref{moduli spaces section} is devoted to the discussion of families of Lagrangian submanifolds in symplectic manifolds and related moduli spaces of stable maps. Sections~\ref{Moduli orientation - Conventions}-\ref{Gluing map signs section} construct basic \orientor{evb_0^\b}s and state relations between them for different triplets $(k,l,\b)$. The proofs of these relations appear in Sections~\ref{conventions for boundary section}-\ref{naturality of J families section}. Section~\ref{orientors of local systems of algebras section} builds on the results of Section~\ref{Moduli orientation} and extends the basic \orientor{evb_0^\b}s to \orientor{evb_0^\b}s which resemble $A_\infty$ operators.

\subsection{Acknowledgments}
The authors are grateful to M.~Abouzaid, E.~Kosloff, P.~Seidel and S.~Tukachinsky, for helpful conversations. The authors were partially funded by ERC starting grant 337560 as well as ISF grants 569/18 and 1127/22.

\section{Conventions}
\label{conventions section} 
\subsection{Notations}
In the following sections we work in the category of orbifolds with corners, indicated by the Latin capital letters $M,N,P,X,Y$, and smooth amps between them, indicated by $f,g, h$ etc. For a comprehensive guide for the category of orbifolds with corners, we recommend \cite{Sara-corners}.
Throughout this paper, we fix a commutative ring $\A$.

\begin{notation}[Abuse of notation in equations of natural numbers]
Let $M,N$ be manifolds and $f:M\to N$ be a smooth map.
Let $Q,S$ be graded local systems over $M$ and let $F:Q\to S$ be a morphism of degree $\deg F$ and let $q\in Q$ be of degree $\deg q$. 
As stated in the introduction, a local system of graded $\A$-modules will be referred to as a local system. A morphism of local systems might be referred to as a map.
Let $\a\in A(M;Q)$ be a differential form. Let $\b$ be a homology class of a symplectic manifold $X$ relative to a Lagrangian $L$. 
In integral expressions (mostly used as exponents of the number $-1$):
\begin{enumerate}
    \item we write $m$ (or $M$) for the dimension of the corresponding capital-letter orbifold $M$;
    \item we write $f$ for  ${\text{rdim}\,f=\dim M-\dim N}$, the relative dimension of $f$;
    \item we write $q$ for $\deg q$ and we write $F$ for $\deg F$;
    \item we write $\a$ for $|\a|$ which is the degree of $\a$;
    \item we write $\b$ for the Maslov Index $\mu(\b)$.
\end{enumerate}
\end{notation}
\subsection{Graded algebra}
Throughout the paper we write $x=_2y$ to denote $x\equiv y\mod 2.$
\begin{definition}[Tensor product]
\label{tensor product}
Let $\A$ be a ring.
Let $A,B,C,D$ be graded $\A$-modules with valuations (or local systems of graded $\A$-modules over an orbifold with corners). Let $F:A\to C, G:B\to D$ be linear maps of degrees $|F|,|G|$. Let $a,b$ be homogeneous elements of $A,B,$ respectively.
\begin{enumerate}
\item The sign $\otimes$ means the completed tensor product with respect to the valuations.
\item The tensor product of differential graded algebras with valuations is again a differential graded algebra with valuation in the standard way.
For \[a\in A,b\in B\] the differential is 
\[
d_{A\otimes B}(a_0\otimes b_0)=(d_A a_0)\otimes b_0+(-1)^{a_0}a_0\otimes d_Bb_0,
\]
and for \[\{a_i\}_{i=1}^\infty\in A,\{b_j\}_{j=1}^\infty\in B,\]
  the valuation is defined as follows\begin{align*}
\\\nu_{A\otimes B}\left(\sum_{i,j}a_i\otimes b_j\right)=\inf_{a_i\otimes b_j\neq 0}\left(\nu(a_i)+\nu(b_j)\right).
\end{align*}

\item The symmetry isomorphism $\tau_{A,B}$ is given by
\[
A\otimes B\overset{\t_{A,B}}\to B\otimes A,\quad a\otimes b\mapsto (-1)^{ab}b\otimes a.
\]
\item \textit{Tensor product of $\A$-algebras:}\\If $A,B$ are graded $\A$-algebras (or local systems of graded $\A$-algebras) with multiplication $(\cdot_A,\cdot_B)$ then the graded $\A$-algebra $A\otimes B:=A\otimes_\A B$ is defined as the graded tensor product, with multiplication
    \[
    (a_1\otimes b_1)\cdot (a_2\otimes b_2)=(-1)^{b_1a_2}(a_1\cdot_A a_2)\otimes (b_1\cdot_B b_2).
    \]
    \item \textit{Functoriality of tensor product:}\\ 
    The tensor product of two maps is given by
    \[
    F\otimes G:A\otimes B\to C\otimes D,\qquad F\otimes G(a\otimes b)=(-1)^{|G|a}Fa\otimes Gb.
    \]
    
\end{enumerate}
\end{definition}

\begin{lemma}
\label{symmetry isomorphism distributivity lemma}
Let $A,B,C$ be graded $\A$-modules.
Then as maps $A\otimes B\otimes C\to B\otimes C\otimes A$ the following equation holds.
\[
\tau_{A,B\otimes C}=(\Id_B\otimes\tau_{A,C})\circ (\tau_{A,B}\otimes \Id)
\]
\end{lemma}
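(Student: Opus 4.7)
The plan is to verify the identity directly on homogeneous pure tensors $a \otimes b \otimes c \in A \otimes B \otimes C$, since both maps are $\A$-linear and determined by their values on such elements. I would apply each side to the element and check that the resulting Koszul signs coincide. The key input is the definition of $\tau$ and the functoriality convention for tensor products of maps in Definition~\ref{tensor product}.

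For the left-hand side, I would regard $b \otimes c$ as a homogeneous element of $B \otimes C$ of degree $|b|+|c|$ and apply $\tau_{A, B\otimes C}$ directly to obtain
\[
\tau_{A, B\otimes C}(a \otimes b \otimes c) = (-1)^{|a|(|b|+|c|)} \, b \otimes c \otimes a .
\]
For the right-hand side, the calculation proceeds in two steps. First, since $|\Id_C|=0$, the functoriality sign $(-1)^{|\Id_C|\cdot|a\otimes b|}$ disappears and we get
\[
(\tau_{A,B} \otimes \Id_C)(a \otimes b \otimes c) = (-1)^{|a||b|} \, b \otimes a \otimes c.
\]
Next, since $|\tau_{A,C}|=0$, the functoriality sign $(-1)^{|\tau_{A,C}|\cdot |b|}$ again disappears, and applying $\Id_B \otimes \tau_{A,C}$ yields an additional factor of $(-1)^{|a||c|}$, producing $(-1)^{|a||b|+|a||c|}\, b \otimes c \otimes a$.

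The two expressions then match by the distributivity $|a|(|b|+|c|) = |a||b|+|a||c|$ of multiplication over addition of integers. There is no real obstacle here, only bookkeeping of Koszul signs; the main point worth flagging in the write-up is that the two factors of the functoriality sign from Definition~\ref{tensor product}(5) vanish because $\Id_B$ and $\Id_C$ have degree zero, so no hidden signs contaminate the computation. In the local system (sheaf) setting the same argument applies stalkwise, so no additional work is required.
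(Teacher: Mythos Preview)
Your proof is correct and follows exactly the same approach as the paper: both evaluate each side on a homogeneous pure tensor $a\otimes b\otimes c$ and compare the resulting Koszul signs. Your write-up is in fact slightly more explicit than the paper's, since you point out why the functoriality signs from Definition~\ref{tensor product}(e) vanish.
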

\begin{proof}
Let $a\in A,b\in B,c\in C$ be of homogeneous degree. Then
\[
\tau_{A,B\otimes C}(a\otimes b\otimes c)=(-1)^{a(b+c)}b\otimes c\otimes a,
\]
and
\[
(\Id_B\otimes\tau_{A,C})\circ (\tau_{A,B}\otimes \Id)(a\otimes b\otimes c)=(-1)^{ab}(\Id_B\otimes\tau_{A,C})(b\otimes a\otimes c)=(-1)^{ab+ac}b\otimes c\otimes a.
\]
\end{proof}
\begin{proposition}[Koszul signs]\label{Koszul signs}
With the previous notation, if $F':C\to C',G':D\to D'$ are maps leaving $C,D$ respectively, of degrees $|F'|,|G'|$, then
\begin{align}
(G\otimes F)\circ\tau_{A,B}=(-1)^{|F||G|}\tau_{C,D}\circ(F\otimes G).\\
(F'\otimes G')\circ(F\otimes G)=(-1)^{|F||G'|}(F'\circ F)\otimes (G'\circ G).
\end{align}
\end{proposition}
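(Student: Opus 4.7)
The plan is to verify both identities by direct computation on homogeneous elementary tensors, using the two sign conventions introduced in Definition~\ref{tensor product}: the symmetry $\tau_{A,B}(a\otimes b)=(-1)^{ab}b\otimes a$, and the tensor of maps $(F\otimes G)(a\otimes b)=(-1)^{|G|\,a}F(a)\otimes G(b)$. Since both sides of each identity are $\A$-linear, it suffices to check the equalities on $a\otimes b$ with $a\in A,\ b\in B$ homogeneous, and then read off the exponents modulo $2$.

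For the first identity, I would compute the left-hand side as
\[
(G\otimes F)(\tau_{A,B}(a\otimes b)) = (-1)^{ab}(G\otimes F)(b\otimes a) = (-1)^{ab+|F|\,b}\,G(b)\otimes F(a),
\]
and the right-hand side as
\[
(-1)^{|F||G|}\tau_{C,D}\bigl((-1)^{|G|\,a}F(a)\otimes G(b)\bigr)=(-1)^{|F||G|+|G|\,a+(|F|+a)(|G|+b)}G(b)\otimes F(a).
\]
Expanding the product $(|F|+a)(|G|+b)$ and cancelling the repeated terms $|F||G|$, $|G|\,a$ modulo $2$ leaves the exponent $ab+|F|\,b$, matching the left-hand side.

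For the second identity, applying the map tensor rule twice yields
\[
(F'\otimes G')(F\otimes G)(a\otimes b)=(-1)^{|G|\,a+|G'|(|F|+a)}F'F(a)\otimes G'G(b),
\]
while the right-hand side gives
\[
(-1)^{|F||G'|}\bigl((F'F)\otimes(G'G)\bigr)(a\otimes b)=(-1)^{|F||G'|+(|G'|+|G|)\,a}F'F(a)\otimes G'G(b).
\]
The two exponents agree mod $2$ after expanding $|G'|(|F|+a)$ and cancelling like terms. There is no obstacle to speak of: this is a mechanical verification of Koszul signs, and the only subtlety is bookkeeping of the four contributions $|F||G|$, $|G|\,a$, $|G'|\,a$ and $|F||G'|$ in each computation.
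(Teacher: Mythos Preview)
Your proof is correct and follows essentially the same approach as the paper's own proof: evaluate both sides on a homogeneous elementary tensor $a\otimes b$, apply the conventions $\tau_{A,B}(a\otimes b)=(-1)^{ab}b\otimes a$ and $(F\otimes G)(a\otimes b)=(-1)^{|G|a}Fa\otimes Gb$, and compare exponents modulo $2$. The only cosmetic difference is that the paper computes each side without the prefactor and then checks the discrepancy equals $|F||G|$ (respectively $|F||G'|$), whereas you absorb the prefactor into the right-hand side from the outset; the arithmetic is identical.
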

\begin{proof}
We evaluate at $a,b\in A\otimes B$.
For the first equality, we have
\[
G\otimes F\circ\tau_{A,B}(a\otimes b)=(-1)^{ab}G\otimes F(b\otimes a)=(-1)^{(F+a)b}Gb\otimes Fa,
\]
and
\[
\tau_{C,D}\circ F\otimes  G(a\otimes b)=(-1)^{Ga}\tau_{C,D}Fa\otimes Gb=(-1)^{Ga+(F+a)(G+b)}Gb\otimes Fa.
\]
However, 
\[
(F+a)b+Ga+(F+a)(G+b)=_2FG.
\]
For the second equality,
\[
(F'\otimes G')\circ(F\otimes G)(a\otimes b)=(-1)^{Ga}(F'\otimes G') Fa\otimes Gb=(-1)^{Ga+G'(F+a)}F'Fa\otimes G'Gb,
\]
and
\[
(F'\circ F)\otimes (G'\circ G)(a\otimes b)=(-1)^{(G'+G)a}F'Fa\otimes G'Gb.
\]
However, 
\[
Ga+G'(F+a)+(G'+G)a=_2FG'.
\]
\end{proof}
\begin{definition}
\label{dual graded vector space definition}
Let $A=\bigoplus_{i\in \Z}A_i$ be a graded $\A$-module. \textbf{The dual space $A^\vee$ of $A$} is given by
\[
A^\vee :=\bigoplus_{i\in \Z}A_i^\vee, 
\]where $A_i^\vee$ is the space of linear maps from $A_i$ to $\A$. Denote by $\nu_A:A\otimes A^\vee\to \A$ the pairing $a\otimes a^\vee\to a^\vee(a)$. 
\end{definition}
\begin{remark}
Treating $\a$ as the free module of dimension one over $\A$, $\A^\vee\simeq \A$. The pairing $\A\otimes\A^\vee\to \A$ is an isomorphism.
\end{remark}
\begin{definition}
Let $T,K$ be graded $\A$-modules and let $S$ be a graded $\A$-algebra. Assume that $\mu:S\otimes K\to K$ is a module-structure. We define \textbf{the left (resp. right) $T$-extension of $\mu$} to be a module-structure of $T\otimes K$ (resp. $K\otimes T$) as follows.
\[
{}^T\mu(s\otimes t\otimes k)=(-1)^{st}t\otimes \mu(s\otimes k),
\]
\[
\mu^T(s\otimes k\otimes t)=\mu(s\otimes k)\otimes t.
\]
We further define an $S$-module structure on $K^\vee$ by
\[
\left(\mu^\vee(s\otimes v^\vee)\right)\otimes v=(-1)^{sv^\vee}v^\vee\left(\mu(s\otimes v)\right).
\]
\end{definition}
\begin{definition}
\label{dual graded linear map definition}
Let $F:X\to Y$ be a graded linear map. \textbf{The dual map $F^\vee$ of $F$} is the graded linear map 
\begin{align*}
    F^\vee:Y^\vee&\to X^\vee\\
    \left(F^\vee y^\vee\right)(x)&=(-1)^{|F||x|}y^\vee(Fx).
\end{align*}
\end{definition}
\begin{remark}
\label{dual map diagram definition remark}
Let $F:X\to Y$ be a graded linear map. Then the following diagram is commutative.
\[
\begin{tikzcd}
X\otimes Y^\vee\ar[r,"F\otimes \Id"]\ar[d,"\Id\otimes F^\vee"]&Y\otimes Y^\vee\ar[d,"\nu_Y"]\\X\otimes X^\vee\ar[r,"\nu_X"]&\A
\end{tikzcd}
\]
\end{remark}
\begin{proof}
Use the Koszul signs of Lemma \ref{Koszul signs}.
\end{proof}

For a set $A$, denote the constant map by $\pi^A:A\to *$.
For two sets $A,B$, we denote their product and corresponding projections as follows.
\[
\begin{tikzcd}
A\times B\ar[r,"\pi^{A\times B}_B"]\ar[d,swap,"\pi^{A\times B}_A"]&B\ar[d,"\pi^B"]\\A\ar[r,swap,"\pi^A"]&*
\end{tikzcd}
\]
When it causes no confusion, we might write $\pi_A,\pi_B$ for the projections.

For two lists $B_1=(v_1,...,v_n),B_2=(w_1,...,w_m)$, denote by $B_1\circ B_2$ the concatenation $(v_1,...,v_n,w_1,...,w_m).$

\subsection{Orbifolds with corners}
\label{orbifolds}
We use the definition of orbifolds with corners from~\cite{Sara-corners}. We also use the definitions of smooth maps, strongly smooth maps, boundary and fiber products of orbifolds with corners given there. In particular, for $M$ an orbifold with corners, $\pa M$ is again an orbifold with corners, and it comes with a natural map $\iota_M:\pa M\to M$. In the special case of manifolds with corners, our definition of boundary coincides with~\cite[Definition 2.6]{Joyce}, our smooth maps coincide with weakly smooth maps in~\cite[Definition 2.1(a)]{Joyce-generalization}, and our strongly smooth maps are as in~\cite[Definition 2.1(e)]{Joyce-generalization}, which coincides with smooth maps in~\cite[Definition 3.1]{Joyce}. We say a map of orbifolds is a submersion if it is a strongly smooth submersion in the sense of~\cite{Sara-corners}. In the special case of manifolds with corners, our submersions coincide with submersions in~\cite[Definition 3.2(iv)]{Joyce} and with strongly smooth horizontal submersions in~\cite[Definition 19(a)]{}. For a strongly smooth map $f:M\to N$, we use the notion of vertical boundary $\pa_f M\subset\pa M$ defined in~\cite[Section 2.1.1]{Sara-corners}, which extends to orbifolds with corners the definition of~\cite[Section 4]{Joyce} for manifolds with corners. We write $\iota_f:\pa_fM\to M$ for the restriction of $\iota$ to $\pa_fM$. When $f$ is a submersion, the vertical boundary is the fiberwise boundary, that is, $\pa_fM=\coprod_{y\in N}\pa(f^{-1}(y))$. If $\pa N=\emp$, then $\pa_fM=\pa M$. A strongly smooth map of orbifolds $f:M\to N$ induces a strongly smooth map $f|_{\pa_fM}=f\circ \iota_f:\pa_fM\to N$, called the restriction to the vertical boundary. If $f$ is a submersion, then the restriction $f|_{\pa_fM}$ is also a submersion. As usual, diffeomorphisms are smooth maps with a smooth inverse. We use the notion of transversality from~\cite[Section 3]{Sara-corners}, which is induced from transversality of maps of manifolds with corners as defined in~\cite[Definition 6.1]{Joyce}. In particular, any smooth map is transverse to a submersion. Weak fiber products of strongly smooth transverse maps exist by~\cite[Lemma 5.3]{Sara-corners}. Below, we omit the adjective `weak' for brevity.

Extending the definition of corners of~\cite{Joyce} to corners over a map, we continue with the following definition.
\begin{definition}[Vertical corners]\label{vertical corners definition}
Let $r\geq0$, and write $S_r$ for the symmetric group on $r$ elements, the group of bijections $\s:\{1,...,r\}\to \{1,...,r\}$. Let $f:M\to N$ be a strongly smooth map. By abuse of notation, in the current definition, we also denote by $f$ the restriction of $f$ to $\pa^rM$. There is a natural free action of $S_r$ on $\pa_f^rM$ by permuting the order of local vertical boundary components. We define the \textbf{vertical $r-$corners} $C^r_f(M)$ of $M$ to be
\[
C^r_f(M):=\pa_f^rM/S_r.
\]
\end{definition}
\begin{definition}[Orbifold over a manifold]
Let $\W$ be a manifold with corners. An \textbf{orbifold (resp. manifold) with corners over $\W$} is an orbifold (resp. manifold) with corners $A$ and a submersion $\pi^A:A\to \W$. An \textbf{orbifold (resp. manifold) over $\W$} is an orbifold (resp. manifold) with corners $A$ and a submersion $\pi^A:A\to \W$ with empty vertical boundary $\pa_{\pi^A}A=\emp.$
\end{definition}
\begin{definition}\label{boundary factorization}
Let $M\overset{f}\to P\overset{g}\to N$ be such that $g\circ f$ is a proper submersion. In particular, $g$ is a proper submersion. we say \textbf{$f$ factorizes through the boundary of~$g$} if there exists a map $\iota_g^*f$ such that the following diagram is a fiber-product.
\[
\begin{tikzcd}
\pa_{g\circ f}M\ar[r,"\iota_{g\circ f}"]\ar[d,swap,dashed, "\iota_g^* f"]&M\ar[d,"f"]\\
\pa_g P\ar[r,swap,"\iota_g"]&P
\end{tikzcd}
\]
\end{definition}
\begin{remark}
If $f$ is a (proper) submersion, and $f$ factorizes through the boundary of $g$, then $\iota_g^*f$ is also a (proper) submersion.
\end{remark}
\begin{definition}
Let $M$ be a topological space. A subset $M'\subset M$ is called \textbf{essential} if it is open, dense and for any $x\in M$ there exists a local basis consisting of open sets $U$ such that the inclusion map $\iota:U\cap M'\to U$ induces a bijection on connected components.
\end{definition}
\begin{remark}
If $M$ is locally connected, then the local basis in the above definition may be taken to consist of connected open sets $U$, and thus $U\cap M'$ is connected.
\end{remark}
\begin{example}
Let $M$ be an orbifold with corners and let $D\subset M$ denote either a suborbifold of codimension $\geq 2$, or the image of the inclusion of the boundary $\iota:\pa M\to M$. Then $M\setminus D$ is essential.
\end{example}
\begin{definition}
Consider a square diagram of orbifolds with corners, as follows.
\[
\begin{tikzcd}
S\ar[d,"q"]\ar[r,"r"]&P\ar[d,"g"]\\M\ar[r,"f"]&N
\end{tikzcd}
\]
$S$ is called an \textbf{essential fiber-product of $M,P$ over $N$} if there exist essential subsets $S',M',P',N'$ of $S,M,P,N$, respectively, such that the images of $S',M',P'$ under $q,r,f,g$ are in $M',P',N'$, and the diagram
\[
\begin{tikzcd}
S'\ar[d,"q"]\ar[r,"r"]&P'\ar[d,"g"]\\M'\ar[r,"f"]&N'
\end{tikzcd}
\]
is a fiber-product.
\end{definition}
\begin{definition}
\label{essential boundary factorization}
we say \textbf{$f$ essentially factorizes through the boundary of~$g$} if Definition \ref{boundary factorization} holds with an \textit{essential fiber-product}.
\[
\begin{tikzcd}
\pa_{g\circ f}M\ar[r,"\iota_{g\circ f}"]\ar[d,swap,dashed, "\iota_g^* f"]&M\ar[d,"f"]\\
\pa_g P\ar[r,swap,"\iota_g"]&P
\end{tikzcd}
\]
\end{definition}

We make an extensive use of the following lemma in Section \ref{Moduli orientation}, to allow orientation calculations on essential subsets of the moduli spaces.
\begin{lemma}[Unique extension of $\Z/2$-maps]
\label{unique extension of bundle maps}
Let $M$ be a locally connected topological space, and let $P,Q$ be $\Z/2$-bundles over $M$. Let $M'\subset M$ be an essential subset. Let $f:P|_{M'}\to Q|_{M'}$ be a map of $\Z/2$ bundles over $M'$. Then $f$ extends uniquely to a map $f:P\to Q$ of bundles over $M$.

In particular, if $f,g:P\to Q$ satisfy $f|_{M'}=g|_{M'}$ then $f=g$.
\end{lemma}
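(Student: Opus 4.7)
The plan is a local-to-global argument. The rigidity I will exploit is that a $\Z/2$-bundle map between trivialized $\Z/2$-bundles over a connected base is determined by a single element of $\Z/2$: after trivialization such a map corresponds to a continuous, hence locally constant, $\Z/2$-valued function, which on a connected set is a single constant. Thus on any connected trivializing open set, a $\Z/2$-bundle map is determined by its value at any one point of the base.

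Uniqueness follows immediately from this rigidity. Given two extensions $\tilde f_1,\tilde f_2:P\to Q$, for any $x\in M$ I would pick a connected open neighborhood $U\ni x$ over which both $P$ and $Q$ trivialize; this is possible since $M$ is locally connected and the bundles are locally trivial. Under chosen trivializations, $\tilde f_1|_U$ and $\tilde f_2|_U$ correspond to constants $\epsilon_1,\epsilon_2\in\Z/2$. Since $M'$ is dense, $U\cap M'\neq\emp$, and at any point of $U\cap M'$ both extensions agree with $f$, so $\epsilon_1=\epsilon_2$ and therefore $\tilde f_1=\tilde f_2$ on $U$.

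For existence, I would build the extension locally and glue. For each $x\in M$, I would invoke the essential property of $M'$ together with the local connectedness of $M$ (as noted in the remark following the definition of essential subset) to pick a connected open neighborhood $U_x\ni x$ with $U_x\cap M'$ connected, shrunk further if necessary so that $P$ and $Q$ trivialize over $U_x$. Under fixed trivializations, $f|_{U_x\cap M'}$ is a locally constant $\Z/2$-valued function on the connected set $U_x\cap M'$, hence a single constant $\epsilon_x\in\Z/2$; I define the local extension $\tilde f|_{U_x}$ by that same constant. To glue, I would observe that on any connected component $C$ of an overlap $U_x\cap U_y$, which is open by local connectedness, the set $C\cap M'$ is non-empty by density of $M'$, and the two local extensions both agree with $f$ there, hence agree on all of $C$ by the uniqueness argument applied on $C$. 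Summing over components, the local extensions patch consistently to a global $\Z/2$-bundle map $\tilde f:P\to Q$ extending $f$.

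The only delicate point, which I would flag as the main obstacle, is precisely the use of the essential property in the existence step: without the bijection on connected components, $U_x\cap M'$ could be disconnected with possibly different values of $f$ on different components, and no coherent constant $\epsilon_x$ could be chosen. Density and local connectedness alone suffice for uniqueness, but the full essential condition is what makes existence go through.
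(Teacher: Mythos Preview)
Your proposal is correct and follows essentially the same approach as the paper: trivialize $P$ and $Q$ over a connected open $U$ with $U\cap M'$ connected (using the essential hypothesis), observe that the bundle map is then a single constant in $\Z/2$, and extend. Your write-up is in fact more detailed than the paper's, which omits the explicit gluing argument; the only imprecision is the phrase ``shrunk further if necessary'' --- since shrinking an arbitrary $U$ need not keep $U\cap M'$ connected, you should instead first choose a trivializing neighborhood and then pick $U$ from the essential local basis inside it.
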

\begin{proof}
Let $x\in M\setminus M'$ be a point. There exists a connected neighborhood $x\in U\subset~M$ such that $P|_U,Q|_U$ are trivial and that the intersection $U\cap M'$ is connected. Therefore, under trivializations of $P|_U,Q|_U$, the map $f|_{U\cap M'}$ is constant. Therefore, it extends uniquely to $U$.
\end{proof}
\begin{remark}
By Lemma \ref{unique extension of bundle maps}, in what follows all Lemmas that concern fiber-products or boundary factorization hold for essential fiber-products and essential boundary factorization. \end{remark}
\begin{notation}
generally, for a set $X$ and a topological space $M$, we write $\underline X$ for the trivial bundle over $M$ with fiber $X$.
\end{notation}
% \begin{lemma}[Unique extension of $\Z/2$-maps]
% \label{unique extension of bundle maps}
% Let $M$ be an orbifold with corners, and let $P,Q$ be $\Z/2$ bundles over $M$. Let $D\subset M$ denote either a suborbifold of co-dimension $\geq2$, or the image of the inclusion of the boundary $\iota:\pa M\to M$.
% Let $f:P|_{M\setminus D}\to Q|_{M\setminus D}$ be a map of $\Z/2$ bundles over ${M\setminus D}$. Then $f$ extends uniquely to a map $f:P\to Q$ of bundles over $M$.

% In particular, if $f,g:P\to Q$ satisfy 
% \[
% f|_{M\setminus D}=g|_{M\setminus D},
% \]
% then $f\equiv g$.
% \end{lemma}
% \begin{proof}
% Let $x\in D$ be a point. There exists a connected neighborhood $x\in U\subset M$ such that $P|_U,Q|_U$ are trivial. Since $U$ is connected and $D$ is either of co-dimension $\geq 2$ or the image of the boundary of $M$, the set $U\setminus D$ is also connected. Therefore, under trivializations of $P|_U,Q|_U$, the map $f|_{U\setminus D}$ is constant. Therefore, it extends uniquely to $D$.
% \end{proof}

\subsection{Orientation conventions}\label{orientations section}
We follow the conventions of \cite{Sara-corners} concerning manifolds with corners. In particular, we relatively orient boundary and fiber products as detailed in the following. For an orbifold with corners $M$, we consider the orientation double cover $\tilde M$ as a graded $\Z/2$-bundle, concentrated in degree $\deg \tilde M=\dim M$.
\begin{definition}
 Let $M\overset f\to N$ be a map. We define the \textbf{relative orientation bundle} of $f$ to be the $\Z/2$-bundle over $M$ given by
\[
\zcort{f}:=  Hom_{\Z/2}(\tilde M,f^*\tilde N).
\]

A \textbf{local relative orientation} is a section $\mO:U\to \zcort{f}|_U$ over an open subset $U\subset M$.
A \textbf{relative orientation} is a global section $\mO:M\to\zcort{f}$.
\end{definition}
Note that it is concentrated in degree $-\text{rdim}\, f=-m+n$.

\begin{definition}
 The \textbf{orientation bundle} of an orbifold $M$ is defined to be the relative orientation bundle of the constant map $M\to pt$,
\[
\zcort{M}:=Hom_{\Z/2}(\tilde M,\underline{\Z/2})=\tilde M^\vee,
\]
A \textbf{(local) orientation} for $M$ is (local) orientation relative to the constant map $M\to pt$.
\end{definition}
Note that it is concentrated in degree $-\dim M$.

We now relatively-orient chosen operations on orbifolds. 
\subsubsection{Local diffeomorphism}
    \label{local diff orientation}
    \begin{definition}
    Let $f:M\to N$ be a local diffeomorphism. The differential $df$ is regarded as a bundle map $df:TM\to f^*TN$. Its exterior power induces a $\Z/2$-bundle map $[\La^{top}df]:\tilde M\to f^*\tilde N$. It can be thought of as a section $\mO^f_c\in Hom(\tilde M,f^*\tilde N)$ called \textbf{the canonical relative orientation of} $f$. In particular, $\zcort{f}$ is canonically trivial.
    
    Moreover, given a map $g:N\to P$, there is a \textbf{pullback map}
    \[
        \pull{f}:f^*\zcort{g}\to \zcort{g\circ f}
    \]
    given by composition on the right with $\mO^f_c$.
\end{definition}

\subsubsection{Composition}
\label{Composition orientation convention}
\begin{definition}
\label{composition isomorphism}
Let $M\overset{f}\longrightarrow P\overset{g}\longrightarrow N$ be two maps. There is a canonical isomorphism
    \[
    \zcort{f}\otimes f^*\zcort{g}\simeq \zcort{g\circ f}
    \] called the \textbf{composition isomorphism}
    given by
    \begin{align*}
    Hom_{\Z/2}(\tilde M,f^*\tilde P)\otimes f^*Hom_{\Z/2}(\tilde P, g^*\tilde N)&\to Hom_{\Z/2}(\tilde M,(g\circ f)^*\tilde N),\\ \mO^f\otimes f^*\mO^g&\mapsto f^*\mO^g\circ\mO^f.
    \end{align*}
\end{definition}
\begin{remark}\label{canonical orientation of local diffeomorphisms composition remark}
If $f,g$ are local diffeomorphisms, then by the chain rule
\[
\mO_c^{g\circ f}=f^*\mO_c^g\circ\mO_c^f.
\]
That is $\pull{f}\circ\pull{g}=\pull{(g\circ f)}.$
\end{remark}
\begin{notation}
By abuse of notation, we may omit the pullback notation $f^*$ if it causes no confusion, such as
\[
\mO^f\otimes \mO^g=\mO^f\otimes f^*\mO^g,\qquad \mO^g\circ\mO^f=f^*\mO^g\circ \mO^f.
\]
\end{notation}
\begin{remark}
\label{pullback by diffeomorphism base independent}
In the setting of Definition \ref{Composition orientation convention}, if $t:T\to M$ is a local diffeomorphism, then the following diagram is commutative.
\[
\begin{tikzcd}
a^*\zcort{f}\otimes t^*f^*\zcort{g}\ar[d,swap,"\pull{t}\otimes\Id"]\ar[r,"comp."]&t^*\zcort{g\circ f}\ar[d,"\pull{t}"]\\
\zcort{f\circ t}\otimes (f\circ t)^*\zcort{g}\ar[r,swap,"comp."]&\zcort{g\circ f\circ t}
\end{tikzcd}
\]
This follows from the associativity of composition.
\end{remark}

\begin{notation}
To ease notations, we notate this isomorphism as equality.
\end{notation}

\subsubsection{Relative orientation of boundary}
\label{relative orientation of boundary}
    Let $M\overset{f}\longrightarrow N$ be a proper submersion. As explained in Section \ref{orbifolds}, the boundary of $M$ can be divided into horizontal and vertical components with respect to $f$. Let $p\in \pa_fM$ be a point in the vertical boundary and $x_1,...,x_{m-1}\in T_p\pa_fM$ be a basis, such that 
    \[
    df_{\iota_f(p)}\circ {(d\iota_f)}_p(x_{i})=0,  \quad i=n+1,...,m-1.
    \]Let $x_1^\vee,...,x_{m-1}^\vee$ be the dual basis. Let $\nu_{out}$ be an outwards-pointing vector in $T_pM$.
    We define the \textbf{canonical relative orientation of the boundary} to be
    \begin{align}
    \label{Canonical relative orientation of boundary}\mO_c^{\iota_f}|_p:=\Big[x_1^\vee\we...&\we x_{m-1}^\vee\bigotimes {(d\iota_f)}_p(x_1)\we...\we {(d\iota_f)}_p(x_n)\we\\ \notag&\we\nu_{out}\we {(d\iota_f)}_p(x_{n+1})\we...\we {(d\iota_f)}_p(x_{m-1})\Big].
    \end{align}
    \begin{remark}\label{boundary orientation comparison}
Consider the constant map $pt:M\to pt$. The total boundary $\pa M$ is the same as $\pa_{pt}M$. Therefore, we get a relative orientation for the inclusion of the total boundary $\iota_M:\pa M\to M$. By virtue of Definition \ref{local diff orientation}, it is possible to relatively orient $\pa_fM$ as an open subset of $\pa M$. That is indeed done in \cite{Sara-corners}: with the previous notations, the relative orientation that is used there is
\[
o_c^{\iota_f}|_p:=\left[x_1^\vee\we...\we x_{m-1}^\vee\bigotimes \nu_{out}\we {(d\iota_f)}_p(x_1)\we...\we {(d\iota_f)}_p(x_{m-1})\right].
\]
These relative orientations satisfy 
\[
o_c^{\iota_f}=(-1)^{N}\mO_c^{\iota_f}.
\]
\end{remark}
\begin{remark}
\label{composition with canonical orientation of boundary}
Given a relative orientation $\mO^f\in \zcort{f}|_p$ at point $p\in M$, for vectors $x_1,...,x_{m-1}\in T_p\pa_fM$ that form a basis, we have
\[
\mO^f\circ\mO_c^{\iota_f}(x_1\we ...\we x_{m-1})=(-1)^{f-1}\mO^f\left({(d\iota_f)}_p(x_1)\we ...\we {(d\iota_f)}_p(x_{m-1})\we \nu_{out}\right)
\]
\end{remark}
\subsubsection{Essential fiber product}
\begin{definition}\label{pullback fiber product orientation convention}
Let $M\overset{f}\rightarrow N\overset{g}\leftarrow P$ be transversal smooth maps of orbifolds with corners. Consider the following fiber product diagram.
\begin{equation}
\label{generic pullback diagram}
\begin{tikzcd}
M\times_N P\ar[r,"r"]\ar[d,"q"]&P\ar[d,"g"]\\M\ar[r,"f"]&N
\end{tikzcd}
\end{equation}

There is a canonical isomorphism from the relative orientation bundle of $q$ to the pullback of the relative orientation bundle of $g$. It is called \textbf{the pullback by r over f} and denoted
\[
\pull{(r/f)}:r^*\zcort{g}\simeq \zcort{q}.
\]
It is given as follows.
Let $(m,p)\in M\times P$ be such that $f(m)=g(p)$. 
Let $\mO^N,\mO^M,\mO^g$ be local orientations of $N,M,g$ in neighborhoods of $f(m),m,p$, respectively.
Define $\mO^P:=\mO^N\circ \mO^g$.
By the transversality assumption,
\[
F:=df_m\oplus -dg_p:T_mM\oplus T_pP\to T_{f(m)}N
\]
is surjective, and by definition of fiber product, there is a canonical isomorphism
\[
\psi:=dq_{(m,p)}\oplus dr_{(m,p)}:T_{(m,p)}(M\times_N P)\to \ker(F).
\]
Therefore, there exists a short exact sequence
\[
\begin{tikzcd}
0\ar[r]&T_{(m,p)}(M\times_NP)\ar[r,"\psi"]&T_mM\oplus T_pP\ar[r,"F"]&T_{f(m)}N\ar[r]&0.
\end{tikzcd}
\]
Splitting the short exact sequence, we get an isomorphism
\[
T_mM\oplus T_pP\xrightarrow{\Psi} T_{(m,p)}(M\times_NP)\oplus T_{f(m)}N.
\]
We define a local orientation $\mO^{M}\times \mO^g$ of $M\times_NP$ at $(m,p)$ to be the orientation for which $\Psi$ has sign $(-1)^{NP}$, and subsequently we define a local orientation $\pull{(r/f)}(\mO^g)$ of $q$ to satisfy the following equation.
\[
\mO^{M}\times\mO^g=\mO^M\circ \pull{(r/f)}(\mO^g).
\]
% Therefore, there is an isomorphism
% \begin{equation*}
% \phi:r^*\zcort{P}\otimes q^*\zcort{M}\simeq(f\circ q)^*\zcort{N}\otimes \zcort{M\times_NP} 
% \end{equation*}
% which is clearly smooth.
% There exists a unique isomorphism
% \begin{equation}
% \pull{(r/f)}:r^*\zcort{g}\to \zcort{q}   
% \label{defining property of pullback orientation}
% \end{equation}
% such that the following diagram is commutative
% \[
% \begin{tikzcd}
% r^*\zcort{g}\otimes (g\circ r)^*\zcort{N}\otimes q^*\zcort{M}\ar[dd,"\pull{(r/f)}\otimes 1\otimes 1"]\ar[r,"comp."]&r^*\zcort{P}\otimes q^*\zcort{M}\ar[d,"\phi"]\\
% &
% (f\circ q)^*\zcort{N} \otimes \zcort{M\times_NP}\ar[d,"comp."]\\
% \zcort{q}\otimes (f\circ q)^*\zcort{N} \otimes q^*\zcort{M}\ar[r,"\tau_{q,N}\otimes (-1)^N"]&
% (f\circ q)^*\zcort{N} \otimes \zcort{q}\otimes q^*\zcort{M}
% \end{tikzcd}
% \]
\end{definition}
\begin{definition}
If
\[
\begin{tikzcd}
S\ar[r,"r"]\ar[d,"q"]&P\ar[d,"g"]\\M\ar[r,"f"]&N
\end{tikzcd}
\]
is an essential fiber product, the pullback by $r$ over $f$ that is defined on the essential part extends uniquely to a map $\zcort{q}\to r^*\zcort{g}$ by Lemma \ref{unique extension of bundle maps}. This map is called the \textbf{pullback by $r$ over $f$.} 
\end{definition}
\subsubsection{Relatively oriented commutative diagram}
\begin{definition}\label{enhanced pullback fiber product orientation convention}
Let 
\[
\begin{tikzcd}
O\ar[r,"r"]\ar[d,"q"]&P\ar[d,"g"]\\M\ar[r,"f"]&N
\end{tikzcd}
\] be a commutative diagram such that $r,f$ are relatively oriented, with relative orientations $\mO^r,\mO^f$. There exists a canonical isomorphism from the relative orientation bundle of $q$ to the pullback of the relative orientation bundle of $g$, with respect to $\mO^r,\mO^f$. It is called \textbf{the pullback by $r$ over $f$ (with respect to $\mO^r,\mO^f$)}, and denoted
\[
\pull{\left(r/f\right)}:r^*\zcort{g}\simeq \zcort{q}.
\]
It is given as follows. Let $\mO^g$ be a local section of $r^*\zcort{g}$. The pullback $\pull{\left(r/f\right)}\mO^g$ is the unique local relative orientation of $q$ satisfying
\[
\mO^f\circ \pull{\left(r/f\right)}\mO^g=(-1)^{fg}\mO^g\circ \mO^r.
\]
Note that we omit the relative orientations of $r,f$ in the notation, as they will be fixed and declared in any situation in which we use this notation.
\end{definition}
\begin{remark}
In the case the above diagram is a pullback diagram, and $\mO^r=\pull{\left(q/g\right)}\mO^f$, Lemma~\ref{vertical pullback vs horizontal pullback} implies that both definitions coincide.
\end{remark}
\subsubsection{Base extension}
The following definition is an important example of Definition \ref{pullback fiber product orientation convention}
\begin{definition}
\label{pulled to product oreintation definition}
Let $\W$ be a manifold with corners, let $A, X, Y$ be orbifolds with corners over $\W$ and let $f:X\to Y$ be a map over $\W$. Consider the following fiber product diagrams.
\[
\begin{tikzcd}
A\times_\W X\ar[r,"\pi^{A\times X}_X"]\ar[d,"\Id\times f"]&X\ar[d,"f"]\\A\times_\W Y\ar[r,"\pi^{A\times Y}_Y"]&Y
\end{tikzcd}\qquad 
\begin{tikzcd}
 X\times_\W A\ar[r,"\pi^{X\times A}_X"]\ar[d,"f\times\Id"]&X\ar[d,"f"]\\ Y\times_\W A\ar[r,"\pi^{Y\times A}_Y"]&Y
\end{tikzcd}
\]
Assume $f$ is relatively oriented. Then $\Id\times f$ and $f\times\Id$ are relatively oriented, and we denote the pullbacks by $\pi^{A\times X}_X$ over $\pi^{A\times Y}_Y$ and by $\pi^{X\times A}_X$ over $\pi^{Y\times A}_Y$ of a relative orientation $\mO^f$ of $f$ by
\[
1\times \mO^f:=\pull{\left(\pi^{A\times X}_X/\pi^{A\times Y}_Y\right)}\mO^f, \qquad  \mO^f\times 1:=\pull{\left(\pi^{X\times A}_X/\pi^{Y\times A}_Y\right)}\mO^f, 
\]respectively.
\end{definition}
\subsubsection{Quotient}
\label{Quotient orientation convention}
Let $G$ be an oriented Lie group with orientation $\mO^G$ and Lie algebra $\mathfrak g$. Denote by $\mO^\mathfrak g:=\mO^G|_1$ the orientation of $\mathfrak g$ at the unit element $1\in G$. For a point $g\in G$, denote by $R_g:G\to G$ be multiplication on the right by $g$. Then $\mO^G|_g=R_{g^{-1}}^*\mO^\mathfrak g$. 

Let $M$ be an orbifold with corners. Let $\Phi:M\times G\to M$ be a smooth proper right-action with finite stabilizers and let ${q:M\to M/G}$ be the quotient. The map $\pi_M:M\times G\to M$ carries the pullback orientation
\[
\mO^\pi_c:=\pull{\left(\pi^{M\times G}_G/\pi^M\right)}\mO^G.
\]
For a point $p\in M$, there is a short exact sequence
\begin{equation}
0\rightarrow\mathfrak{g}\rightarrow T_pM\xrightarrow{dq_p}T_{q(p)}(M/G)\rightarrow0.
\label{quotient exact sequence orientation}
\end{equation}
Moreover, the following diagram is a fiber-product.
\begin{equation}\label{action quotient fiber-product diagram}
\begin{tikzcd}
M\times G\ar[r,"\Phi"]\ar[d,"\pi_M"]&M\ar[d,"q"]\\M\ar[r,"q"]&M/G
\end{tikzcd} 
\end{equation}

\begin{definition}\label{action quotient orientation definition}
\textbf{The canonical relative orientation $\mO^q_c$} of the quotient map $q$ is defined to be the relative orientation provided by the short exact sequence~\eqref{quotient exact sequence orientation}, with respect to the orientation $\mO^G|_1$ of $\mathfrak g$. One may check with methods analog to the proofs that appear in Section~\ref{orientations calculus proofs} that
\begin{equation}
\mO^\pi_c=\pull{\left(\Phi/q\right)}\mO^q_c.
\label{projection orientation is pullback of quotient equation}
\end{equation}

\textbf{The canonical relative orientation of the action $\Phi$} is defined to be the pullback \[
\mO^\Phi_c:=(-1)^{G}\pull{\left(\pi_M/q\right)}\mO^q_c.
\]
\end{definition}

% For points $p\in M, g\in G$, the rows and columns of the following diagram are exact
% \begin{equation}
% \begin{tikzcd}
% &&0\ar[d]&0\ar[d]&\\
% &0\ar[r]\ar[d]&\mathfrak g\ar[d]\ar[r]&\mathfrak g\ar[d]\ar[r]&0\\
% 0\ar[r]&\mathfrak{g}\ar[r]\ar[d]&T_{(p,g)}(M\times G)\ar[r,"d\Phi"]\ar[d,swap,"d\pi_M"]&T_{p\cdot g}M\ar[r]\ar[d,"d  q"]&0\\
% 0\ar[r]&\mathfrak{g}\ar[r]\ar[d]&T_pM\ar[r,swap,"d  q"]\ar[d]&T_{q(p)}(M/G)\ar[r]\ar[d]&0\\
% &0&0&0&
% \end{tikzcd}  
% \end{equation}
\begin{remark}
Let $\mO^{\mathfrak g}\in\zcort{\mathfrak g}$ be an orientation of $G$. The orientation $\mO^q_c$ is calculated as follows. Choose a basis $v_1,...,v_g\in \mathfrak g$, such that $\mO^{\mathfrak g}=[v_1^\vee\we\cdots\we v_g^\vee]$ where $v_1^\vee,...,v_g^\vee$ is the dual basis. For a point $p\in M$ and tangent vectors  ${x_1,...,x_{m-g}\in T_pM}$ such that
 \[
 (x_1,..,x_{m-g},d\Phi_p(v_1),...,d\Phi_p(v_g)) 
 \]is a basis of $T_pM$, we denote the dual basis as $x_1^\vee,...,x_{m-g}^\vee,d\Phi_p(v_1)^\vee,...,d\Phi_p(v_g)^\vee$. 
Then \[\mO^q_c|_p:=\left[x_1^\vee\we...\we x_{m-g}^\vee\we d\Phi_p(v_1)^\vee\we ...\we d\Phi_p(v_g)^\vee\bigotimes d  q_p (x_1)\we...\we d  q_p (x_{m-g})\right].\]
\end{remark}
One may check the following remark using methods analog to the proofs that appear in Section~\ref{orientations calculus proofs}. The first equation is simply equation~\eqref{projection orientation is pullback of quotient equation}.
\begin{remark}\label{action orientation pullback}
Let $\mO^{\mathfrak g}\in\zcort{\mathfrak g}$ be an orientation of $G$. The relative orientations $\mO^\pi_c,\mO^\Phi_c$ are calculated as follows. Let $\mO^\mathfrak g\in \zcort{\mathfrak g}$ be an orientation of the Lie algebra. Choose a basis $v_1,...,v_g\in \mathfrak g$ such that $\mO^\mathfrak g=[v_1^\vee\we\cdots\we v_g^\vee]$ where $v_1^\vee,...,v^\vee_g$ is the dual basis. For a point $(p,g)\in M\times G$ and a basis $x_1,...,x_m\in T_pM$, we have
\begin{align*}
\mO^\pi_c|_{(p,g)}(x_1,...,x_m,v_1,...,v_g)&=\left(d\pi_M(x_1),...,d\pi_M(x_m)\right),
\\\mO^\Phi_c|_{(p,g)}(x_1,...,x_m,v_1,...,v_g)&=\left(d\Phi_g(x_1),...,d\Phi_g(x_m)\right).
\end{align*}
where $\Phi_g:M\to M$ is multiplication by $g$ on the right.
\end{remark}

\subsection{Orientations calculus}
\label{orientations calculus section}
The following lemmas describe, in particular, how to orient complex structures from simpler ones. The proofs are provided in Section \ref{orientations calculus proofs}
\begin{lemma}
\label{pullback by diffeomorphism of composition raw lemma}
In the setting of Definition \ref{pullback fiber product orientation convention}, assume $f$ is a local diffeomorphism. Then $r$ is a local diffeomorphism as well, and if $a:N\to A$ is a map, then the following diagram is commutative.
\begin{equation}
\label{pullback by diffeomorphism}
\begin{tikzcd}
r^*\zcort{g}\otimes q^*f^*\zcort{a}\ar[d,swap,"\pull{(r/f)}\otimes q^*\pull{f}"]\ar[r,equal,"comp."]&r^*\zcort{a\circ g}\ar[d,"\pull{r}"]\\
\zcort{q}\otimes q^*\zcort{a\circ f}\ar[r,equal,"comp."]
&\zcort{a\circ g\circ r}
\end{tikzcd}
\end{equation}
\end{lemma}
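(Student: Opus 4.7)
The plan is to reduce the commutativity of the diagram to a single pointwise identity relating $\pull{(r/f)}\mO^g$, $\mO^f_c$ and $\mO^r_c$, and then to deduce that identity from the defining property of $\pull{(r/f)}$.

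First, I would verify that $r$ is a local diffeomorphism. At a point $(m,p)\in M\times_N P$, the tangent space is $\{(x,y)\in T_mM\oplus T_pP : df_m(x) = dg_p(y)\}$. Since $df_m$ is invertible, the projection $dr_{(m,p)}$ onto the $P$-component is a bijection, so $r$ is a local diffeomorphism and $\mO^r_c$ is defined. With $r$ a local diffeomorphism, $\pull r$ makes sense as it appears in the right column of the diagram.

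Second, I would chase an arbitrary local section $\mO^g\otimes\mO^a$ of $r^*\zcort{g}\otimes q^*f^*\zcort a$ around both paths of the square, unfolding the pullback-by-diffeomorphism maps $\pull f$, $\pull r$ and the two composition isomorphisms using Definition~\ref{local diff orientation} and \S\ref{Composition orientation convention}. Both paths yield sections of $\zcort{a\circ g\circ r}$ that share the common right-hand factor $r^*g^*\mO^a$, and after cancelling this factor the commutativity of the diagram becomes equivalent to the single identity
\[
r^*\mO^g \circ \mO^r_c \;=\; q^*\mO^f_c \circ \pull{(r/f)}\mO^g
\]
in $\zcort{g\circ r}=\zcort{f\circ q}$. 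This reduction shows that the lemma is really a statement about the three bundles $\zcort{f}, \zcort{g}, \zcort{r}$ alone, and amounts to the $a=\Id_N$ case of the diagram.

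Third, I would establish this identity by precomposing both sides with a chosen local orientation $\mO^N$ of $N$. Setting $\mO^M := \mO^N\circ\mO^f_c$ and $\mO^P := \mO^N\circ\mO^g$, the defining property of $\pull{(r/f)}\mO^g$ from Definition~\ref{pullback fiber product orientation convention} gives
\[
\mO^M \times \mO^g \;=\; \mO^N \circ \mO^f_c \circ \pull{(r/f)}\mO^g,
\]
where $\mO^M\times\mO^g$ denotes the orientation of $M\times_N P$ obtained from the exact-sequence splitting $\Psi$. Independently, since $r$ is a local diffeomorphism, the pullback $\pull r\mO^P=\mO^N\circ\mO^g\circ\mO^r_c$ is another orientation of $M\times_N P$. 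The desired identity then follows once one shows that these two orientations of the fiber product agree.

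The main obstacle is this last sign reconciliation. Since $f$ is a local diffeomorphism, $\text{rdim}\,f=0$ and $\dim(M\times_N P)=\dim P$, so one must track the $(-1)^{NP}$ prescription of Definition~\ref{pullback fiber product orientation convention} together with the Koszul signs introduced by composition, and verify that the splitting $\Psi:T_mM\oplus T_pP\to T_{(m,p)}(M\times_N P)\oplus T_{f(m)}N$ reduces, via the invertibility of $df_m$, to a block form whose first component is exactly $dr_{(m,p)}^{-1}$. Once the identity is checked at a point in the essential part, Lemma~\ref{unique extension of bundle maps} extends it over all of $M\times_N P$.
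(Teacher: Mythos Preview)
Your proposal is correct and follows essentially the same approach as the paper: reduce the diagram to the single identity $\mO^g\circ\mO^r_c=\mO^f_c\circ\pull{(r/f)}\mO^g$, then verify this by observing that when $df_m$ is invertible the splitting $\Psi$ has block form with $dr^{-1}$ in the upper-right, so that with the orientations $\mO^M=\mO^N\circ\mO^f_c$ and $\mO^{M\times_NP}=\mO^N\circ\mO^g\circ\mO^r_c$ the splitting has sign exactly $(-1)^{NP}$, which by Definition~\ref{pullback fiber product orientation convention} gives $\mO^M\times\mO^g=\mO^N\circ\mO^g\circ\mO^r_c$ and hence the identity.
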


\begin{lemma}
\label{pullback of vertical local diffeomorphism lemma}
In the setting of Definition \ref{pullback fiber product orientation convention}, assume $g$ is a local diffeomorphism. Then $q$ is a local diffeomorphism as well, and
\begin{equation}
\label{pullback of vertical diffeomorphism}
\pull{(r/f)}(\mO^g_c)=\mO^q_c.
\end{equation}
\end{lemma}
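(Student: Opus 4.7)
The plan is to verify $\pull{(r/f)}\mO^g_c = \mO^q_c$ pointwise via an explicit basis computation. First I would observe that $q$ itself is a local diffeomorphism: since $g$ is one, $dg_p$ is an isomorphism at every $p$, hence $F := df\oplus(-dg) \colon T_mM\oplus T_pP \to T_{f(m)}N$ is already surjective through its second factor, so $\ker F$ has dimension $\dim M$, and the identification $\psi\colon T_{(m,p)}(M\times_NP) \xrightarrow{\sim} \ker F$ composed with the first-coordinate projection shows that $dq$ is an isomorphism. Consequently both $\mO^q_c$ and $\pull{(r/f)}\mO^g_c$ are global sections of the $\Z/2$-bundle $\zcort{q}$, and it suffices to check equality at one point per connected component.

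At $(m,p)$ I would choose bases $x_1,\dots,x_m$ of $T_mM$ and $y_1,\dots,y_n$ of $T_pP$, and set $\mO^M := [x_1\we\cdots\we x_m]$ and $\mO^N := [dg(y_1)\we\cdots\we dg(y_n)]$. By the definition of $\mO^g_c$ as the top exterior power of $dg$, the induced orientation $\mO^P := \mO^N\circ\mO^g_c$ has $(y_j)$ as a positive basis. Letting $w_i := (dq)^{-1}(x_i)$, the basis $(w_i)$ is positive with respect to $\mO^M\circ\mO^q_c$. Writing $df(x_i) = \sum_j a_{ij}\,dg(y_j)$ and using commutativity of the pullback square yields $dr(w_i) = \sum_j a_{ij}\,y_j$. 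Choosing the splitting $s(v) = (0,-dg^{-1}(v))$ of $F$ produces the explicit isomorphism
\[
\Psi(x,y) = \bigl((dq)^{-1}(x),\ df(x)-dg(y)\bigr),
\]
whose matrix in the bases $(x_i,y_j)$ and $(w_i,v_j := dg(y_j))$ is $\left(\begin{smallmatrix} I_m & 0\\ A & -I_n\end{smallmatrix}\right)$, with determinant $(-1)^N$.

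Finally, the requirement $\mathrm{sgn}(\Psi) = (-1)^{NP}$ from Definition~\ref{pullback fiber product orientation convention} forces the basis $(w_i,v_j)$ to have sign $(-1)^{NP-N} = (-1)^{N(P-1)}$ with respect to $\mO^M\times\mO^g_c\we\mO^N$. The crucial input is that $g$ is a local diffeomorphism, so $\dim P = \dim N$ and $(-1)^{N(P-1)} = (-1)^{N(N-1)} = 1$; hence $(w_i)$ is positive with respect to $\mO^M\times\mO^g_c = \mO^M\circ\pull{(r/f)}\mO^g_c$, agreeing with its positivity with respect to $\mO^M\circ\mO^q_c$. Cancelling $\mO^M$ gives the desired equality at $(m,p)$, and continuity of the two sections propagates it globally. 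The main obstacle is the sign bookkeeping; the clinching identity $(-1)^{NP} = (-1)^{N^2} = (-1)^N$ matching the block-matrix determinant is precisely the payoff of the local-diffeomorphism hypothesis on $g$.
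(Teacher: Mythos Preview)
Your proof is correct and follows essentially the same approach as the paper: you use the identical splitting $\Psi(x,y)=((dq)^{-1}x,\,df(x)-dg(y))$, compute its determinant as $(-1)^N$, and invoke $\dim P=\dim N$ to match the defining sign $(-1)^{NP}$. The only difference is that you spell out explicit bases where the paper works directly with the block matrix, but the substance is the same.
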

\begin{remark}
\label{pullback to product of vertical local diffeomorphism  remark}
As a consequence of Lemma \ref{pullback of vertical local diffeomorphism lemma}, in the setting of Definition \ref{pulled to product oreintation definition}, the following equations hold.
\[
1\times\mO_c^f=\mO^{\Id\times f}_c, \qquad \mO^f_c\times 1=\mO^{f\times\Id}_c.
\]
\end{remark}

\begin{lemma}
\label{vertical pullback vs horizontal pullback}
In the setting of Definition \ref{pullback fiber product orientation convention}, let $\mO^f,\mO^g$ be local orientations of $f,g$ at points $m,p$ in $M,P$, respectively. Then
\[
\mO^f\circ \pull{(r/f)}\mO^g=(-1)^{fg}\mO^g\circ \pull{(q/g)}\mO^f.
\]
\end{lemma}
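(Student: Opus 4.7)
The plan is to verify the claim fiberwise at a point $(m,p) \in M \times_N P$ by a direct calculation in adapted bases. Setting $n := f(m) = g(p)$, choose a basis $e_1, \ldots, e_{\dim N}$ of $T_n N$ together with $df$- and $dg$-lifts $\tilde e_i \in T_m M$ and $\hat e_i \in T_p P$, and complete these with bases $w_1, \ldots, w_{\dim M - \dim N}$ of $\ker df_m$ and $v_1, \ldots, v_{\dim P - \dim N}$ of $\ker dg_p$. The natural basis of $T_{(m,p)}(M \times_N P)$ is then $\{(\tilde e_i, \hat e_i), (w_j, 0), (0, v_k)\}$; denote its oriented top form by $[\omega]$. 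Normalize local representatives of $\mO^N$, $\mO^g$, $\mO^f$ to act on these bases by sending them to $[1]$, $[e_1 \wedge \cdots \wedge e_{\dim N}]$, and $[e_1 \wedge \cdots \wedge e_{\dim N}]$ respectively, and let $\mO^M := \mO^N \circ \mO^f$, $\mO^P := \mO^N \circ \mO^g$ denote the induced orientations used in Definition~\ref{pullback fiber product orientation convention}.

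For the left-hand side, the splitting $s(e_i) = (\tilde e_i, 0)$ expresses $\Psi^{-1}$ as a block matrix that, after a single column subtraction, becomes a block permutation matrix whose sign is computed via an inversion count and simplifies to $(-1)^{NP}$ using $N^2 \equiv N \pmod{2}$. By Definition~\ref{pullback fiber product orientation convention} this says $[\omega]$ represents $\mO^M \times \mO^g$ exactly; the defining equation then yields $\pull{(r/f)}\mO^g([\omega]) = [\tilde e_1 \wedge \cdots \wedge w_{\dim M - \dim N}]$ and consequently $(\mO^f \circ \pull{(r/f)}\mO^g)([\omega]) = [e_1 \wedge \cdots \wedge e_{\dim N}]$. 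The analogous computation with $\Psi'$ and splitting $s'(e_i) = (\hat e_i, 0)$ produces a block matrix of sign $(-1)^{N + MP + NP}$ in our bases; comparing to the required convention value $(-1)^{NM}$ shows that $[\omega]$ represents $\mO^P \times \mO^f$ up to a factor $(-1)^\eta$ with $\eta := N + NM + MP + NP$, and hence $(\mO^g \circ \pull{(q/g)}\mO^f)([\omega]) = (-1)^\eta [e_1 \wedge \cdots \wedge e_{\dim N}]$.

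The claim therefore reduces to the mod-$2$ identity $fg + \eta \equiv 0 \pmod{2}$, which follows by expanding
\[
fg = (M-N)(P-N) = MP - MN - NP + N^2 \equiv MP + MN + NP + N \pmod{2}
\]
and summing with $\eta$: the cross terms cancel in pairs and $N^2 + N \equiv 0 \pmod 2$. The main bookkeeping obstacle is the careful sign computation for $\Psi'^{-1}$, whose block permutation picks up more inversions than its counterpart for $\Psi^{-1}$; this asymmetry is precisely what produces the factor $(-1)^{fg}$ in the statement. Independence of both determinants from the choice of splitting may be verified at the outset, since any two splittings differ by a map into $\ker F$, manifesting as a unipotent shear of determinant $+1$.
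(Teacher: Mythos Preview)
Your overall strategy and the final sign computation match the paper's: both arrive at the discrepancy exponent $\eta = N + NM + MP + NP$ and verify $fg + \eta \equiv 0 \pmod 2$ by the same expansion of $(M-N)(P-N)$.

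There is, however, a genuine gap in your setup. The choice of a basis $e_1,\ldots,e_{\dim N}$ of $T_nN$ together with \emph{both} $df$-lifts $\tilde e_i\in T_mM$ and $dg$-lifts $\hat e_i\in T_pP$, and the completion by bases of $\ker df_m$ and $\ker dg_p$, presupposes that $df_m$ and $dg_p$ are each surjective. The hypothesis in Definition~\ref{pullback fiber product orientation convention} is only transversality, i.e.\ $df_m(T_mM)+dg_p(T_pP)=T_nN$; neither differential need be surjective on its own, so in general neither the lifts nor the asserted basis $\{(\tilde e_i,\hat e_i),(w_j,0),(0,v_k)\}$ of $T_{(m,p)}(M\times_N P)$ exist.

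The paper sidesteps this by never choosing adapted bases. It takes an arbitrary splitting
\[
\begin{pmatrix}\alpha&\beta\\df&-dg\end{pmatrix}:TM\oplus TP\to T(M\times_NP)\oplus TN
\]
for the $(r/f)$ side, observes that swapping the domain summands and negating the bottom row yields a splitting
\[
\begin{pmatrix}\beta&\alpha\\dg&-df\end{pmatrix}:TP\oplus TM\to T(M\times_NP)\oplus TN
\]
for the $(q/g)$ side, and computes the sign change of this swap as $(-1)^{N+MP}$ directly at the level of oriented determinants. This works for arbitrary transversal $f,g$. To repair your argument you could either restrict to the case where both maps are submersions (and argue separately that this suffices for the applications you need), or replace your explicit basis by the paper's splitting-swap comparison.
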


\begin{lemma}\label{horizontal pullback composition}
Assume that the squares in the following diagram are pullbacks.
\[
\begin{tikzcd}
M\times_NP\ar[r,"r"]\ar[d,"q"]&P\ar[r,"t"]\ar[d,"g"]&T\ar[d,"b"]\\
M\ar[r,"f"]&N\ar[r,"a"]&K
\end{tikzcd}
\]
Then, the rectangle is also a pullback diagram, and
\[
\pull{\left((t\circ r)/(a\circ f)\right)}=\pull{(r/f)}\circ r^*\pull{(t/a)}.
\]
\end{lemma}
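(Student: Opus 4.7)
The plan is to prove the statement in two parts: first that the outer rectangle is a pullback, and second that the orientation identity holds.

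For the first part, the fact that the outer rectangle is a pullback whenever the two inner squares are pullbacks is the classical pasting lemma, which holds in the category of orbifolds with corners since weak fiber products satisfy the universal property on the essential (transverse) part; I would cite it from~\cite{Sara-corners} or give the short diagram-chase argument: a point of $M\times_K T$ over $M$ and $T$ agreeing on $K$ uniquely lifts to $N\times_K T = P$ via the right pullback, hence to $M\times_N P$ via the left pullback.

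For the orientation identity, my strategy is to reduce everything to the defining equation of $\pull{(-/-)}$ in Definition~\ref{pullback fiber product orientation convention}. Fix a point $y=(m,p)\in M\times_N P$, identified with $(m,t(p))\in M\times_K T$ under the pasting identification, and choose local orientations $\mO^M,\mO^N,\mO^K$ of $M,N,K$ at the relevant points and a local relative orientation $\mO^b$ of $b$ at $t(p)$. Set $\mO^T:=\mO^K\circ\mO^b$ and $\mO^g:=\pull{(t/a)}\mO^b$. By the definition applied to the right-hand square, $\mO^N\circ\mO^g$ coincides with the fiber-product orientation $\mO^N\times \mO^b$ on $P$ obtained from the short exact sequence
\[
0\to T_pP\to T_{f(m)}N\oplus T_{t(p)}T\to T_{(a\circ f)(m)}K\to 0.
\]
By the definition applied to the left-hand square with $\mO^g$, the relative orientation $\pull{(r/f)}\mO^g=\pull{(r/f)}\pull{(t/a)}\mO^b$ is characterized by
\[
\mO^M\circ\pull{(r/f)}\pull{(t/a)}\mO^b=\mO^M\times\mO^g,
\]
using the short exact sequence for the inner-left fiber product. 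Similarly, $\pull{((t\circ r)/(a\circ f))}\mO^b$ is characterized by $\mO^M\circ\pull{((t\circ r)/(a\circ f))}\mO^b=\mO^M\times\mO^b$ via the short exact sequence for the outer fiber product. Since $\mO^M$ is nonvanishing, the desired equality reduces to
\[
\mO^M\times\mO^g=\mO^M\times\mO^b\qquad\text{on } M\times_N P\cong M\times_K T.
\]

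The final and only nontrivial step is the comparison of these two orientations on the same space, which amounts to showing that two splittings of a composite short exact sequence yield the same orientation. The canonical isomorphism $T_y(M\times_N P)\xrightarrow{\sim} T_y(M\times_K T)$ sends $(v,w)$ with $df(v)=dg(w)$ to $(v,dt(w))$; writing $w=(u,z)$ via the identification $T_pP=\{(u,z):da(u)=db(z)\}$ we have $u=df(v)$ and $z=dt(w)$. Thus the splitting for the outer SES is obtained from the splittings of the two inner SES's by stacking, and the only bookkeeping is that the sign $(-1)^{NP}$ from the left-inner splitting and the sign $(-1)^{KT}$ inherited in $\mO^P=\mO^N\times\mO^b$ combine to give exactly the sign $(-1)^{KT}$ of the outer splitting. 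The main obstacle, and the only delicate part of the argument, is this sign bookkeeping: writing $\mO^P$ as a product inside the definition of $\mO^M\times\mO^g$ produces nested splitting signs that must be compared term-by-term to those of the outer splitting. Once this is verified using Lemma~\ref{vertical pullback vs horizontal pullback} (to flip one of the splittings if needed) and the associativity of composition of relative orientations from Definition~\ref{composition isomorphism}, the identity $\pull{((t\circ r)/(a\circ f))}=\pull{(r/f)}\circ r^*\pull{(t/a)}$ follows on the essential part, and extends uniquely to all of $M\times_N P$ by Lemma~\ref{unique extension of bundle maps}.
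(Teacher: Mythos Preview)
Your reduction is correct and matches the paper's strategy: after noting the pasting lemma, one fixes $\mO^M$, defines $\mO^g:=\pull{(t/a)}\mO^b$, and is left with verifying that $\mO^M\times\mO^g$ on $M\times_NP$ equals $\mO^M\times\mO^b$ on $M\times_KT$ under the canonical identification. However, the sentence in which you dispatch the sign bookkeeping is not a proof. Your claim that ``the sign $(-1)^{NP}$ from the left-inner splitting and the sign $(-1)^{KT}$ inherited in $\mO^P=\mO^N\times\mO^b$ combine to give exactly the sign $(-1)^{KT}$ of the outer splitting'' is wrong as stated: stacking the two splittings does not produce the outer splitting up to a simple product of these two signs, because rearranging the summands $TN,TP,TM,TT,TK$ into the shape required by Definition~\ref{pullback fiber product orientation convention} introduces further permutation signs. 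Neither Lemma~\ref{vertical pullback vs horizontal pullback} (which swaps the role of the two projections in a \emph{single} square) nor the associativity of Definition~\ref{composition isomorphism} does this work for you.

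The paper's proof carries out exactly the computation you are eliding. It builds an explicit splitting $C$ for the outer sequence from splittings $A,B$ of the two inner sequences, embeds all three into maps of $TN\oplus TP\oplus TM\oplus TT$, and shows via elementary row/column operations and block-triangular form that the signs of $A,B$ together with the permutation signs collapse to $\operatorname{sgn}(C)=(-1)^{TK}$. That parity computation---roughly a dozen $\pmod 2$ terms that have to be cancelled---is the entire content of the lemma. Your write-up is a correct outline but stops precisely at the point where the actual argument begins; you need either to perform that sign chase or to give an honest replacement for it.
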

\begin{lemma}\label{vertical pullback composition}
Assume that the squares in the following diagrams are pullbacks.
\[
\begin{tikzcd}
M\times_NP\ar[r,"r"]\ar[d,"q"]&P\ar[d,"g"]\\
M\ar[r,"f"]\ar[d,"t"]&N\ar[d,"a"]\\
T\ar[r,"b"]&K   
\end{tikzcd}
\]
Then the rectangle is also a pullback diagram, and the following diagram is commutative.
\[
\begin{tikzcd}
r^*\zcort{a\circ g}\ar[rr,"\pull{(r/b)}"]\ar[d,equal]&&\zcort{t\circ q}\ar[d,equal]\\
r^*\zcort{g}\otimes r^*g^*\zcort{a}\ar[rr,"\pull{(r/f)}\otimes \pull{(f/b)}"]&&\zcort{q}\otimes q^*\zcort{t}
\end{tikzcd}
\]
\end{lemma}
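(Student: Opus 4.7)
The plan is to verify the commutativity of the diagram pointwise, by picking a point $p\in M\times_NP$ and set $m=q(p)$, $p_1=r(p)$, $n=f(m)=g(p_1)$, $\tilde t=t(m)$, and $k=b(\tilde t)=a(n)$. Choose local orientations $\mO^M,\mO^N,\mO^P,\mO^T,\mO^K$ of the five base spaces at the appropriate points. These induce local relative orientations $\mO^f,\mO^g,\mO^a,\mO^b,\mO^t$ of each map via the composition isomorphism of Definition~\ref{composition isomorphism} (so $\mO^K=\mO^N\circ\mO^a$, etc.). Denote $\mO^{a\circ g}:=\mO^g\circ\mO^a$. Since the composition isomorphism is the vertical equality in the diagram, the content of the lemma is the single equation
\[
\pull{(r/b)}(\mO^g\circ\mO^a)=\pull{(f/b)}\mO^a\circ \pull{(r/f)}\mO^g,
\]
where the composition on the right is the composition isomorphism applied to $\pull{(r/f)}\mO^g\otimes \pull{(f/b)}\mO^a$. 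Tensoring the two degree-zero isomorphisms $\pull{(r/f)}$ and $q^*\pull{(f/b)}$ introduces no Koszul sign, so this is indeed the right-hand side of the diagram.

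Next, the plan is to translate each vertical pullback via its defining property from Definition~\ref{pullback fiber product orientation convention}. For the top square, $\pull{(r/f)}\mO^g$ is characterized by $\mO^M\circ\pull{(r/f)}\mO^g=\mO^M\times\mO^g$, where $\mO^M\times\mO^g$ is the orientation of $M\times_NP$ coming from the short exact sequence
\[
0\to T_p(M\times_NP)\to T_mM\oplus T_{p_1}P\xrightarrow{df\oplus -dg}T_nN\to 0,
\]
with the splitting carrying sign $(-1)^{NP}$. Analogously, $\pull{(f/b)}\mO^a$ is characterized through the bottom-square SES with sign $(-1)^{KN}$, and $\pull{(r/b)}\mO^{a\circ g}$ is characterized through the rectangle SES
\[
0\to T_p(M\times_NP)\to T_{\tilde t}T\oplus T_{p_1}P\xrightarrow{db\oplus -d(a\circ g)}T_kK\to 0,
\]
with sign $(-1)^{KP}$.

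The core step is then to decompose the rectangle SES as an iterated extension of the two smaller SESs. Using $T_mM=\{(x,y)\in T_{\tilde t}T\oplus T_nN:db(x)=da(y)\}$, every element $(z,w)\in T_mM\oplus T_{p_1}P$ with $df(z)=dg(w)$ is parametrized by $(z_1,w)\in T_{\tilde t}T\oplus T_{p_1}P$ with $db(z_1)=d(a\circ g)(w)$. A standard snake-lemma style argument exhibits a splitting of the rectangle SES as the concatenation of a splitting of the top SES and of the bottom SES, and from this one reads off that the orientations $\mO^T\times(\mO^M\times\mO^g)$ (interpreting $\mO^M=\mO^T\times\mO^a$ via the bottom pullback) and $\mO^T\times\mO^{a\circ g}$ coincide up to a sign that accounts for interchanging the $T_nN$ and $T_{p_1}P$ factors. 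Unwinding the definitions and dividing by $\mO^T$ on the left converts this into the required equality of pulled-back relative orientations. Finally, uniqueness via Lemma~\ref{unique extension of bundle maps} extends the identity over the essential locus to all of $M\times_NP$.

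I expect the main obstacle to be the sign bookkeeping in the decomposition of the rectangle SES: tracking the discrepancy between the splitting signs $(-1)^{NP}$, $(-1)^{KN}$, $(-1)^{KP}$ together with the Koszul signs of Proposition~\ref{Koszul signs} produced by the tensor decomposition $r^*\zcort{a\circ g}=r^*\zcort{g}\otimes q^*f^*\zcort{a}$, while using $\dim M=\dim T+\dim N-\dim K$ and $\dim(M\times_NP)=\dim T+\dim P-\dim K$ to collapse everything modulo $2$. Lemma~\ref{vertical pullback vs horizontal pullback} and Remark~\ref{pullback by diffeomorphism base independent} are available to reshuffle vertical and horizontal pullbacks if the direct computation becomes unwieldy, giving an alternative path through iterated applications of Lemma~\ref{horizontal pullback composition}.
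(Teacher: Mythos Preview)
Your plan is essentially the paper's approach: reduce to the pointwise equation $\pull{(r/b)}(\mO^a\circ\mO^g)=\pull{(f/b)}(\mO^a)\circ\pull{(r/f)}(\mO^g)$, set $\mO^M=\mO^T\times\mO^a$ and $\mO^{M\times_NP}=\mO^M\times\mO^g$, and verify that the rectangle splitting gives the right sign; the paper carries out exactly this computation, but very explicitly, by writing the three splittings as $2\times 2$ block matrices $A,B,C$, embedding them into $4\times 4$ block matrices on $TM\oplus TN\oplus TT\oplus TP$, and tracking the sign through explicit elementary row and column operations to obtain $\operatorname{sgn}(C)=(-1)^{PK}$. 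One small caution: with the paper's composition convention (Definition~\ref{composition isomorphism}), $\mO^g\otimes\mO^a\mapsto\mO^a\circ\mO^g$, so your notation $\mO^{a\circ g}:=\mO^g\circ\mO^a$ is reversed---make sure to use the paper's order when you do the actual sign chase, or the parities will not match.
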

% \begin{lemma}\label{product quotient orientation lemma}
% Let $G$ be a Lie-group, let $M$ be an orbifold with corners and let $\Phi:M\times G\to M$ be a smooth free proper right-action. Denote by $\pi:M\to M/G$ the projection. Let $\W$ be an orbifold with corners. Let $G$ act on $\W\times M$ by $(t,m)\cdot g=(t,m\cdot g)$. Denote by 
% \[\phi:(\W\times M)/G\to \W\times M/G,\qquad [t,m]\mapsto (t,[m])\]
% the canonical map. $\phi$ is a diffeomorphism. Consider the following pullback diagram.
% \[
% \begin{tikzcd}
% \W\times M\ar[r,"\pi_{M}"]\ar[d,"\pi^\W"]\ar[dd,bend right=60,swap,"1\times \pi"]&M\ar[dd,"\pi"]\\(\W\times M)/G\ar[d,"\phi"]&\\\W\times M/G\ar[r,"\pi_{M/G}"]&M/G
% \end{tikzcd}
% \]
% Denote by $\mO^\pi,\mO^{\pi^\W}$ the relative orientations of $\pi,\pi^\W,$ as projections, from Definition~\ref{Quotient orientation convention}.
% The two relative orientations of $1\times \pi$ are equal:
% \[
% \pull{\left(\pi_M/\pi_{M/G}\right)}\mO^\pi=\mO_c^{\phi}\circ \mO^{\pi^\W}.
% \]
% \end{lemma}
% The proof of Lemma \ref{product quotient orientation lemma} is immediate and is omitted from this paper.
\begin{lemma}[Boundary factorization]\label{boundary factorization orientation}
Let $M\overset{f}\to P\overset{g}\to N$ be such that $f$ and $g\circ f$ are proper submersions. Assume that $f$ factorizes through the boundary of $g$ in the sense of Definition~\ref{boundary factorization}. That is, we have the following pullback diagram.
\[
\begin{tikzcd}
\pa_{g\circ f}M\ar[r,"\iota_{g\circ f}"]\ar[d,swap,dashed, "{\iota_g}^* f"]&M\ar[d,"f"]\\
\pa_g P\ar[r,swap,"\iota_g"]&P
\end{tikzcd}
\]
Then 
\begin{equation}
\mO_c^{\iota_g}\circ \pull{\left(\iota_{g\circ f}/\iota_g\right)}(\mO^f)=\mO^f\circ\mO_c^{\iota_{g\circ f}}.
\label{orientation boundary commutation}    
\end{equation}
\end{lemma}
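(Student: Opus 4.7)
The proof is pointwise. Fix $p \in \pa_{g\circ f} M$, set $q := \iota_{g\circ f}(p)$, $\bar p := (\iota_g^* f)(p)$, $\bar q := \iota_g(\bar p) = f(q)$. Since both sides of~\eqref{orientation boundary commutation} are sections of $\zcort{f \circ \iota_{g\circ f}} = \zcort{\iota_g \circ \iota_g^* f}$, verifying the identity at every $p$ suffices, and both sides are $\Z/2$-module maps $\tilde{\pa_{g\circ f} M}|_p \to \tilde P|_{\bar q}$.

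My plan is to build an adapted basis respecting the pullback structure. At $\bar p$, choose $y_1, \dots, y_{P-1} \in T_{\bar p}\pa_g P$ so that $y_1, \dots, y_n$ project under $dg_{\bar q} \circ (d\iota_g)_{\bar p}$ to a basis of $T_{g(\bar q)} N$ and $y_{n+1}, \dots, y_{P-1}$ lie in its kernel. Since $\iota_g^* f$ is a submersion (being a pullback of the submersion $f$), lift these to $\tilde y_i \in T_p \pa_{g\circ f} M$, and complete the basis with $z_1, \dots, z_{m-P} \in \ker d(\iota_g^* f)_p$, chosen so that $(d\iota_{g\circ f})_p(z_j)$ form a basis of $\ker df_q$. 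This basis is adapted for formula~\eqref{Canonical relative orientation of boundary} applied to $g \circ f$. The crucial geometric input is that an outward normal $\nu^M_{out} \in T_q M$ to $\pa_{g\circ f} M$ can be chosen so that $df_q(\nu^M_{out}) = \nu^P_{out}$ is outward to $\pa_g P$: this uses both that $f$ is a submersion (giving freedom to modify $\nu^M_{out}$ modulo $T_p \pa_{g\circ f} M$) and that the boundary-factorization square is a pullback (ensuring the pushforward points outward).

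With this setup I would evaluate both sides on the orientation class $[\tilde y_1 \wedge \cdots \wedge \tilde y_{P-1} \wedge z_1 \wedge \cdots \wedge z_{m-P}]$. For $\mO^f \circ \mO_c^{\iota_{g\circ f}}$, formula~\eqref{Canonical relative orientation of boundary} applied to $g\circ f$ produces a specific top form on $T_q M$ with $\nu^M_{out}$ placed in position $n+1$, which $\mO^f$ then evaluates using its identification with an orientation of $\ker df_q$. Since $df_q$ sends the adapted horizontal vectors and $\nu^M_{out}$ to the corresponding vectors over $\bar p$ and to $\nu^P_{out}$, the output equals the class produced by formula~\eqref{Canonical relative orientation of boundary} applied to $g$ at $\bar p$ on $[y_1 \wedge \cdots \wedge y_{P-1}]$, i.e.\ $\mO_c^{\iota_g}([y_1 \wedge \cdots \wedge y_{P-1}])$. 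For $\mO_c^{\iota_g} \circ \pull{(\iota_{g\circ f}/\iota_g)}(\mO^f)$, I would unpack $\pull{(\iota_{g\circ f}/\iota_g)}(\mO^f)$ via Definition~\ref{pullback fiber product orientation convention} and the short exact sequence attached to the pullback square; the defining property of the pullback is exactly that this relative orientation of $\iota_g^* f$ sends the chosen class to $[y_1 \wedge \cdots \wedge y_{P-1}] \in \tilde{\pa_g P}|_{\bar p}$, whereupon $\mO_c^{\iota_g}$ returns the same class in $\tilde P|_{\bar q}$.

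I expect the main obstacle to be the careful sign bookkeeping: formula~\eqref{Canonical relative orientation of boundary} places $\nu_{out}$ in a distinguished slot, so propagating $\nu^M_{out}$ through the computation of $\mO^f \circ \mO_c^{\iota_{g\circ f}}$ forces positional shifts depending on $\text{rdim}(f)$ and $\dim N$, and analogous shifts must be tracked on the other side. A more formal alternative is to apply Lemma~\ref{vertical pullback vs horizontal pullback} to the pullback square, rewriting the left side as $(-1)^{\text{rdim}(f)} \mO^f \circ \pull{(\iota_g^* f / f)}(\mO_c^{\iota_g})$, and thereby reducing the claim to the auxiliary identity $\mO_c^{\iota_{g\circ f}} = (-1)^{\text{rdim}(f)} \pull{(\iota_g^* f / f)}(\mO_c^{\iota_g})$ in $\zcort{\iota_{g\circ f}}$; this reorganization isolates the sign computation but does not avoid the adapted-basis analysis.
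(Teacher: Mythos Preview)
Your plan is correct and is, at bottom, the same argument the paper gives: both proofs choose an adapted basis of $T_p\pa_{g\circ f}M$ projecting onto a basis of $T_{\bar p}\pa_gP$ and exploit that an outward normal $\nu^M_{out}$ can be taken with $df(\nu^M_{out})=\nu^P_{out}$. Where you propose to evaluate both sides of~\eqref{orientation boundary commutation} directly on formula~\eqref{Canonical relative orientation of boundary}, the paper instead packages the comparison into the explicit splitting map
\[
C=\begin{pmatrix}0&t\\ d\iota_g&-df\end{pmatrix}:T\pa_gP\oplus TM\longrightarrow T\pa_{g\circ f}M\oplus TP
\]
of the short exact sequence defining the fiber product, and computes $\mathrm{sgn}(C)$ with respect to the boundary orientations $\mO^{\pa P}=\mO^P\circ\mO_c^{\iota_g}$, $\mO^{\pa M}=\mO^M\circ\mO_c^{\iota_{g\circ f}}$. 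The sign bookkeeping you anticipate is exactly this computation: the paper carries it out by introducing the auxiliary isomorphisms $A:\R\oplus T\pa_{g\circ f}M\to TM$ and $B:\R\oplus T\pa_gP\to TP$ (inserting the respective outward normals), observing that each has sign $(-1)^{N}$ because formula~\eqref{Canonical relative orientation of boundary} places $\nu_{out}$ in slot $n+1$, and deducing $\mathrm{sgn}(C)=(-1)^{P(P-1)}=1$. Your alternative via Lemma~\ref{vertical pullback vs horizontal pullback} is also legitimate and reduces the statement to an identity of the same difficulty; the paper does not take that detour.
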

\begin{lemma}\label{orientation of boundary of product}
Let $A,X, Y$ be orbifolds with corners over $\W$ and let $f:X\to Y$ be a submersion over $\W$. Consider the following pullback diagram.
\[
\begin{tikzcd}
\pa_{(\Id\times f)}(A\times_\W X)\ar[r,"\iota_f^*\pi_X"]\ar[d,swap,"\iota_{(\Id\times f)}"]&\pa_f X\ar[d,"\iota_f"]\\
A\times_\W X\ar[d,swap,"\Id\times f"]\ar[r,"\pi_X"]&X\ar[d,"f"]\\A\times_\W Y\ar[r,swap,"\pi_Y"]&Y
\end{tikzcd}
\]
Then
\[
\pull{\left(\iota^*_f\pi_X/\pi_X\right)}\mO^{\iota_f}_c=\mO^{\iota_{(\Id\times f)}}_c.
\]
\end{lemma}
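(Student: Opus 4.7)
The plan is to reduce the claimed equality of relative orientations to a pointwise verification, using the explicit formula~\eqref{Canonical relative orientation of boundary} for the canonical boundary orientation and Definition~\ref{pullback fiber product orientation convention} for the pullback orientation. Fix a point $(a,x)\in \pa_{(\Id\times f)}(A\times_\W X)$; by the pullback structure of the top square, this corresponds to $a\in A$ and $x\in \pa_f X$ with $\pi^A(a)=\pi^X(x)$. I would choose a basis $x_1,\dots,x_{m-1}$ of $T_x\pa_f X$ adapted to $f$ as in~\eqref{Canonical relative orientation of boundary} --- the first $n=\dim Y$ vectors covering a basis of $T_{f(x)}Y$, the remaining $m-1-n$ lying in $\ker df$ --- together with an outward-pointing $\nu_{out}\in T_xX$.

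Next, I would exploit a local collar description of $\pa X$ in $X$, which pulls back along $\pi_X$ to a collar of $\pa_{(\Id\times f)}(A\times_\W X)$ in $A\times_\W X$. This canonically lifts $\nu_{out}$ to an outward-pointing vector $\tilde\nu_{out}\in T_{(a,x)}(A\times_\W X)$, and extending the lifts of the $x_i$ by a basis of $T_aA$ adapted to $\pi^A$ yields a basis of $T_{(a,x)}\pa_{(\Id\times f)}(A\times_\W X)$ adapted to $\Id\times f$. In this basis, $\mO_c^{\iota_{(\Id\times f)}}$ reads off directly from~\eqref{Canonical relative orientation of boundary}, while $\pull{(\iota_f^*\pi_X/\pi_X)}\mO_c^{\iota_f}$ is computed from Definition~\ref{pullback fiber product orientation convention} applied to the top pullback square. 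Both ultimately amount to inserting the same $A$-directions into the expression for $\mO_c^{\iota_f}$, reducing the comparison to matching the two dual-basis representations.

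The principal obstacle is sign bookkeeping: the $(-1)^{NP}$ factor in Definition~\ref{pullback fiber product orientation convention} --- with $N=X$ and $P=\pa_f X$ in the present setting --- and the specific placement of $\nu_{out}$ among the horizontal and vertical directions in~\eqref{Canonical relative orientation of boundary} must be shown to conspire so that the two expressions match. To manage this, I would first apply Lemma~\ref{pullback of vertical local diffeomorphism lemma} to the collar projection in order to replace auxiliary local orientations of $\pi_X$ by canonical ones, and then use Lemma~\ref{vertical pullback vs horizontal pullback} to reorder contributions systematically. Once the two expressions are brought into a common normal form, the equality is immediate; I expect the sign-chasing to be the bulk of the work.
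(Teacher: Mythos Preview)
Your plan is sound and lands on the same computation as the paper: a pointwise check using the explicit formula~\eqref{Canonical relative orientation of boundary} for $\mO^{\iota}_c$ and the splitting-based Definition~\ref{pullback fiber product orientation convention}. The paper executes this more directly, however. Rather than introducing a collar and then reaching for Lemmas~\ref{pullback of vertical local diffeomorphism lemma} and~\ref{vertical pullback vs horizontal pullback} to manage signs, the paper simply identifies $\pa_{(\Id\times f)}(A\times_\W X)\simeq A\times_\W\pa_f X$, writes down the explicit splitting
\[
C=\begin{pmatrix}1&0&0\\0&0&1\\0&1&d\iota_f\end{pmatrix}:TA\oplus TX\oplus T\pa_fX\longrightarrow TA\oplus T\pa_fX\oplus TX
\]
for the short exact sequence underlying the top pullback square, and reads off $\operatorname{sgn}(C)=(-1)^{X\cdot\pa_fX}$ --- which is exactly the $(-1)^{NP}$ of Definition~\ref{pullback fiber product orientation convention}. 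The outward normal for $A\times_\W X$ is just $(0,\nu_{out})$ under the product decomposition, so no collar is needed.

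Two remarks on your detours. First, the $(-1)^{NP}$ here is $(-1)^{\dim X\cdot(\dim X-1)}=+1$, so it contributes no sign; the bookkeeping is lighter than you anticipate. Second, your proposed use of Lemma~\ref{pullback of vertical local diffeomorphism lemma} for ``the collar projection'' is unclear --- that lemma requires the vertical map to be a local diffeomorphism, which a collar projection is not. You don't need it: the paper's matrix computation handles the sign in one step.
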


\subsection{Proofs of results in orientations calculus}
\label{orientations calculus proofs}
\begin{proof}[Proof of Lemma \ref{pullback by diffeomorphism of composition raw lemma}]
The commutativity of diagram \eqref{pullback by diffeomorphism} amounts to showing that if $\mO^g$ and $\mO^a$ are local relative orientations of $g$ and $a$, respectively, then
\begin{equation}
\mO^a\circ\mO^g\circ\mO^r_c=\mO^a\circ\mO^f_c\circ \pull{(r/f)}(\mO^g).   
\end{equation}
It is enough to show that
\begin{equation}
\label{local orientation equality pullback diffeomorphism proof}
\mO^g\circ\mO^r_c=\mO^f_c\circ \pull{(r/f)}(\mO^g), 
\end{equation}which proof follows.

Note that by assumption, $df_m$ is an isomorphism. There is an isomorphism
\begin{align*}
    \eta:\ker(F)&\to T_pP,\\
    (x,y)&\overset{\eta}\mapsto y, 
\end{align*}
whose inverse is $\left(df_m^{-1}\left(dg_p(y)\right),y\right)\mapsfrom y$. Note that
\[
\eta\circ\psi=dr_{(m,p)},
\]
which suggests a splitting as follows.
\[
T_mM\oplus T_pP\xrightarrow{\begin{pmatrix}0&dr^{-1}\\df&-dg\end{pmatrix}}T_{(m,p)}(M\times_NP)\oplus T_{f(m)}N
\] 
Let $\mO^N,\mO^g$ be local orientations of $N,g$. Providing $M$ and $M\times_NP$ with the local orientations
\[
\pull{f}(\mO^N)=\mO^N\circ \mO^f_c\quad,\qquad\pull{r}(\mO^N\circ\mO^g)=\mO^N\circ\mO^g\circ \mO_c^r,
\]
results in $sgn\begin{pmatrix}
0&dr^{-1}\\df&-dg
\end{pmatrix}=(-1)^{NP}.$ Therefore, 
\[
\left(\mO^N\circ\mO^f_c\right)\times \mO^g=\mO^N\circ\mO^g\circ\mO^r_c.
\]
Therefore, $\pull{(r/f)}(\mO^g)$ is given by the equation
\[
\mO^N\circ\mO^f_c\circ\pull{(r/f)}(\mO^g)=\mO^N\circ\mO^g\circ\mO^r_c.
\]
In particular, 
\[
\mO^f_c\circ\pull{(r/f)}(\mO^g)=\mO^g\circ\mO^r_c,
\]
which was to be shown.
\end{proof}
\begin{proof}[Proof of Lemma \ref{pullback of vertical local diffeomorphism lemma}]
Note that by assumption, $dg_p$ is an isomorphism. There is an isomorphism
\begin{align*}
\eta:\ker(F)&\to T_mM,\\
(x,y)&\mapsto x,
\end{align*}
whose inverse is $\left(x,dg_p^{-1}(df_m(x))\right)\mapsfrom x$. Note that
\[
\eta\circ \psi=dq_{(m,p)},
\]
which suggests a splitting as follows.
\[
T_mM\oplus T_pP\xrightarrow{\begin{pmatrix}dq^{-1}&0\\df&-dg\end{pmatrix}}T_{(m,p)}(M\times_NP)\oplus T_{f(m)}N
\] 
Let $\mO^N,\mO^M$ be local orientations of $N,M$ at $m,f(m)$, respectively. 
Equipping $P,M\times_NP$ with the local orientations 
\[
\pull{g}(\mO^N)=\mO^N\circ \mO^g_c\quad,\qquad \pull{q}(\mO^M)=\mO^M\circ \mO_c^q,
\]
results in $sgn\begin{pmatrix}
dq^{-1}&0\\df&-dg
\end{pmatrix}=(-1)^{N}=(-1)^{NP}$ since $\dim N=\dim P.$
Therefore, 
\[
\mO^M\times \mO^g_c=\mO^M\circ\mO^q_c.
\]
This finishes the proof of the lemma.
\end{proof}
\begin{proof}[Proof of Lemma \ref{vertical pullback vs horizontal pullback}]
Let $\mO^N$ be a local orientation of $N$ and let $\mO^M=\mO^N\circ \mO^f,\mO^P=\mO^N\circ \mO^g$ be local orientations of $M,P$, respectively. Assume 
\[
TM\oplus TP\xrightarrow{\begin{pmatrix}
\a&\b\\df&-dg
\end{pmatrix}}T\left(M\times_NP\right)\oplus TN
\]
is a splitting used in the definition of the pullback by $r$ over $f$. Then
\[
TP\oplus TM\xrightarrow{\begin{pmatrix}
\b&\a\\dg&-df
\end{pmatrix}}TM\times_NP\oplus TN
\]
is a splitting used in the definition of the pullback by $q$ over $g$,
and with respect to the orientation $\mO^M\times\mO^g$ of $M\times_NP$,
\[sgn\begin{pmatrix}
\b&\a\\dg&-df
\end{pmatrix}=(-1)^{N+MP}
sgn\begin{pmatrix}
\a&\b\\df&-dg
\end{pmatrix}=(-1)^{N+NP+MP}.
\]
On the other hand, with respect to the orientation $\mO^P\times \mO^f$ of $M\times_NP$, by definition,
\[
sgn\begin{pmatrix}
\b&\a\\dg&-df
\end{pmatrix}=(-1)^{MN}.
\]
Therefore, 
\[
\mO^N\circ \mO^g\circ\pull{(q/g)}\mO^f=\mO^P\times\mO^f=(-1)^{s}\mO^M\times \mO^g=(-1)^{s}\mO^N\circ \mO^f\circ \pull{(r/f)}\mO^g,
\]
where 
\[
s=N+NP+MP+NM\equiv_2(M-N)(P-N)=fg.
\]
\end{proof}
\begin{proof}[Proof of Lemma \ref{horizontal pullback composition}]
Assume there are the following splittings.
\begin{align*}
A:\begin{pmatrix}x&y\\da&-db\end{pmatrix}:TN\oplus TT\xrightarrow{}&TP\oplus TK,\qquad &x\circ dg+y\circ dt=\Id_{TP}\\
B:=\begin{pmatrix}z&w\\df&-dg\end{pmatrix}:TM\oplus TP\xrightarrow{}&T(M\times_NP)\oplus TN,\qquad & z\circ dq+w\circ dr=\Id_{T(M\times_NP)}
\end{align*}
Then, there is the following splitting,
\[
C:=\begin{pmatrix}\a&\b\\da\circ df&-db\end{pmatrix}:TM\oplus TT\xrightarrow{}T(M\times_NP)\oplus TK
\]
where
\[
\a=z+w\circ x\circ df,\qquad \b=w\circ y.
\]
Indeed, 
\[
\a\circ dq+\b\circ dt\circ dr=z\circ dq+w\circ (x\circ dg+y\circ dt)\circ dr=z\circ dq+w\circ dr=\Id_{T(M\times_NP)}.\]

We want to relate the determinant of $C$ to those of $A$ and $B$. However, there is no natural composition possible to make, and therefore we turn to a space on which all $A,B$ and $C$ act. Consider the following maps
\[
TN\oplus TP\oplus TM\oplus TT\to TN\oplus TP\oplus T(M\times_NP)\oplus TK
\]given by the matrices
\begin{align*}
X:=&\begin{pmatrix}
-1&0&0\\0&-1&0\\0&0&C
\end{pmatrix}=\begin{pmatrix}
-1&0&0&0\\0&-1&0&0\\0&0&z+w\circ x\circ df&w\circ y\\0&0&da\circ df&-db
\end{pmatrix},\\ 
Y:=&\begin{pmatrix}
-1&-dg&df&0\\x&0&0&y\\0&w&z&0\\da&0&0&-db
\end{pmatrix}.
\end{align*}
We show that the $Y$ follows from the $X$ by elementary rows/columns actions.
\begin{align*}
\begin{pmatrix}
-1&0&0&0\\0&-1&0&0\\0&0&z+w\circ x\circ df&w\circ y\\0&0&da\circ df&-db
\end{pmatrix}\xrightarrow{\begin{smallmatrix}
C3-=C1\circ df\\
C4-=C2\circ y
\end{smallmatrix}}
\begin{pmatrix}
-1&0&df&0\\0&-1&0&y\\0&0&z+w\circ x\circ df&w\circ y\\0&0&da\circ df&-db
\end{pmatrix}\\
\xrightarrow{\begin{smallmatrix}
R4-=da\circ R1\\
R3-=w\circ R2
\end{smallmatrix}}
\begin{pmatrix}
-1&0&df&0\\0&-1&0&y\\0&w&z+w\circ x\circ df&0\\da&0&0&-db
\end{pmatrix}\xrightarrow{\begin{smallmatrix}
C3-=C2\circ x\circ df
\end{smallmatrix}}
\begin{pmatrix}
-1&0&df&0\\0&-1&x\circ df&y\\0&w&z&0\\da&0&0&-db
\end{pmatrix}\\\xrightarrow{\begin{smallmatrix}
R2-=x\circ R1
\end{smallmatrix}}
\begin{pmatrix}
-1&0&df&0\\x&-1&0&y\\0&w&z&0\\da&0&0&-db
\end{pmatrix}\xrightarrow{\begin{smallmatrix}
C2+=C1\circ dg+C4\circ dt
\end{smallmatrix}}
\begin{pmatrix}
-1&-dg&df&0\\x&0&0&y\\0&w&z&0\\da&0&0&-db
\end{pmatrix}
\end{align*}
Let $\mO^K,\mO^N,\mO^M,\mO^b$ be local orientations of $K,N,M,b$, respectively. Let
\begin{align*}
\mO^T&:=\mO^K\circ \mO^b\quad,\\
\mO^P&:=\mO^N\circ \pull{(t/a)}\mO^b\quad,
\\\mO^{M\times_NP}&:=\mO^M\circ \pull{(r/f)}\pull{(t/a)}\mO^b.
\end{align*}
We investigate $sgn(X)$ with respect to these orientations.
By definition of $X$, it follows that $sgn(X)=(-1)^{N+P}\cdot sgn(C).$ Since $X$ is carried to $Y$ by elementary transformations, we know that $sgn(X)=sgn(Y)$. Denote by $D=\begin{pmatrix}
0&0\\1&0
\end{pmatrix}$ and let
\[
Z:=\begin{pmatrix}
A&0\\
D&B
\end{pmatrix}:TN\oplus TT\oplus TM\oplus TP\to TP\oplus TK\oplus T(M\times_NP)\oplus TN
\]
Then the following equation holds.
\[
Y=\begin{pmatrix}
&&&1\\1&&&\\&&1&\\&1&&
\end{pmatrix}\cdot Z\cdot \begin{pmatrix}
1&&&\\&&&1\\&&1&\\&1&&
\end{pmatrix}.
\]
Recalling that $sgn(A)=(-1)^{KT}$ and $sgn(B)=(-1)^{PN},$ we obtain that $sgn(Y)=(-1)^s$, with
\begin{align*}
s\equiv_2&\uline{TM+TP}+MP+\cancel{NP}+NK+\uwave{N(M\times_NP)+K(M\times_NP)}+KT+\cancel{PN}\\
\equiv_2&\uline{T(M\times_NP)-TN}+NK+KT+MP
+\uwave{(N+K)(M\times_NP)}\\
\equiv_2&(T+N+K)(M+N+P)+TK+N(T+K)+MP\\
\equiv_2&P(\cancel{M}+N+P)+TK+N(T+K)+\cancel{MP}\\
\equiv_2&(P+N)(T+K)+TK\equiv_2(T+K)(T+K)+TK=T+K+TK.
\end{align*}
Therefore, 
\[
sgn(C)=(-1)^{N+P}sgn(Y)=(-1)^{N+P+T+K+TK}=(-1)^{TK}.
\]
Therefore, 
\[
\mO^M\times\mO^b=\mO^M\circ\pull{(r/f)}\pull{(t/a)}\mO^b,
\]
i.e.
\[
\pull{(r/f)}\mO^b=\pull{(r/f)}\pull{(t/a)}\mO^b.
\]
This concludes the proof.
\end{proof}
\begin{proof}[Proof of Lemma \ref{vertical pullback composition}]
Assume there are the following maps.
\[
\begin{tikzcd}
TM\times_NP\ar[from=d,"z"]&TP\ar[l,"w"]\ar[d,"dg"]\\
TM\ar[from=d,"x"]&TN\ar[l,"y"]\ar[d,"da"]\\
TT\ar[r,"db"]&TK   
\end{tikzcd}
\]
Assume that the following maps are the required splittings.
\begin{align*}
TT\oplus TN\xrightarrow{\begin{pmatrix}x&y\\db&-da\end{pmatrix}}&TM\oplus TK,\qquad &x\circ dt+y\circ df=\Id_{TM}\\
TM\oplus TP\xrightarrow{\begin{pmatrix}z&w\\df&-dg\end{pmatrix}}&T(M\times_NP)\oplus TN,\qquad & z\circ dq+w\circ dr=\Id_{T(M\times_NP)}
\end{align*}
Then there is the following splitting,
\[
TT\oplus TP\xrightarrow{\begin{pmatrix}\a&\b\\db&-da\circ dg\end{pmatrix}}T(M\times_NP)\oplus TK
\]
where
\[
\a=z\circ x,\qquad \b=z\circ y\circ dg+w.
\]
Indeed, 
\[
\a\circ dt\circ dq+\b\circ dr=z\circ \left(x\circ dt\circ dq+y\circ df\circ dq
\right)+w\circ dr=z\circ dq+w\circ dr=\Id_{T(M\times_NP)}.\]
Consider the following maps between the spaces
\[
TM\oplus TN\oplus TT\oplus TP\rightarrow TM\oplus TN\oplus T(M\times_NP)\oplus TK,
\]given by the matrices
\[
X:=\begin{pmatrix}
-1&0&0&0\\0&1&0&0\\0&0&\a&\b\\0&0&db&-da\circ dg
\end{pmatrix},\qquad 
Y:=\begin{pmatrix}
0&y&x&0\\df&1&0&-dg\\z&0&0&w\\0&-da&db&0
\end{pmatrix}
\]
We show that the $Y$ follows from the $X$ by elementary rows/columns actions.
\begin{align*}
\begin{pmatrix}
-1&0&0&0\\0&1&0&0\\0&0&\a&\b\\0&0&db&-da\circ dg
\end{pmatrix}\xrightarrow{C1+=C3\circ dt\circ dq\circ z+C4\circ dr\circ z}
\begin{pmatrix}
-1&0&0&0\\0&1&0&0\\z&0&\a&\b\\0&0&db&-da\circ dg
\end{pmatrix}\\
\xrightarrow{\begin{smallmatrix}C3-=C1\circ x\\C4-=C1\circ y\circ dg-C2\circ dg\end{smallmatrix}}
\begin{pmatrix}
-1&0&x&y\circ dg\\0&1&0&-dg\\z&0&0&w\\0&0&db&-da\circ dg
\end{pmatrix}\xrightarrow{\begin{smallmatrix}
R4-=da\circ R2\\
R1+=y\circ R2
\end{smallmatrix}}
\begin{pmatrix}
-1&y&x&0\\0&1&0&-dg\\z&0&0&w\\0&-da&db&0
\end{pmatrix}\\
\xrightarrow{\begin{smallmatrix}
C1+=C3\circ dt+C2\circ df
\end{smallmatrix}}
\begin{pmatrix}
0&y&x&0\\df&1&0&-dg\\z&0&0&w\\0&-da&db&0
\end{pmatrix}.
\end{align*}
% Therefore, they admit the same determinant. i.e. there is an equality of maps
% \[
% \zcort{P}\otimes \zcort{T}\otimes \zcort{N}\otimes \zcort{M}\to \zcort{K}\otimes \zcort{M\times_NP}\otimes \zcort{N}\otimes \zcort{M}
% \]
% given by
% \[
% \La^{top}X=\La^{top}Y.
% \]
Denote by 
\[
A:=\begin{pmatrix}
x&y\\db&-da
\end{pmatrix},\quad B:=\begin{pmatrix}
z&w\\df&-dg
\end{pmatrix},\quad C:=\begin{pmatrix}
\a&\b\\db&-da\circ dg
\end{pmatrix},
\]
Let $\mO^K,\mO^T,\mO^a,\mO^g$ be local relative orientations of $K,T,a,g$, respectively. Let 
\begin{align*}
\mO^M&:=\mO^T\circ\pull{(f/b)}(\mO^a)\quad,\\\mO^{M\times_NP}&:=\mO^M\circ \pull{(r/f)}(\mO^g)=\mO^T \circ\pull{(f/b)}(\mO^a)\circ\pull{(r/f)}(\mO^g)
\end{align*}
We investigate the sign of the map $X$ with respect to these orientations.
By definition of $X$, it follows that $sgn(X)=(-1)^M\cdot sgn(C).$
Since $X$ is carried to $Y$ by elementary transformations, we know that $sgn(X)=sgn(Y)$.
Denote by $D=\begin{pmatrix}
0&1\\0&0
\end{pmatrix}$ and set
\[Z:=\begin{pmatrix}
B&0\\D&A
\end{pmatrix}
:TM\oplus TP\oplus TT\oplus TN\to T(M\times_NP)\oplus TN\oplus TM\oplus TK.
\]
Then the following equation holds.
\[
Y=\begin{pmatrix}
&&1&\\&1&&\\1&&&\\&&&1
\end{pmatrix}\cdot Z\cdot\begin{pmatrix}
1&&&\\&&&1\\&&1&\\&1&&
\end{pmatrix}
\]Recalling that $sgn(A)=(-1)^{KN}$ and $sgn(B)=(-1)^{NP}$, we obtain that
$sgn(Y)=(-1)^s,$ for
\begin{align*}
s&=NT+\cancel{NP}+TP+MN+M(M\times_{\bcancel{N}}P)+N(\bcancel{M}\times_NP)+KN+\cancel{NP}\\
&=NT+TP+MN+M+MP+N+NP+KN\\
&=(N+P)(T+M)+M+NP+N+KN\\
&=(\cancel{N}+P)(K+N)+M+NP+\cancel{N+KN}\\
&=PK+M.
\end{align*}
We obtain that $sgn(C)=(-1)^{PK}$, and thus
\[
\mO^{T}\times(\mO^{a}\circ \mO^{g})=\mO^T\circ \pull{(f/b)}(\mO^a)\circ\pull{(r/f)}(\mO^g).
\]
Therefore,
\[
\pull{(r/b)}(\mO^a\circ\mO^g)=\pull{(f/b)}(\mO^a)\circ\pull{(r/f)}(\mO^g),
\]
which was to be shown.
\end{proof}
\begin{proof}[Proof of Lemma \ref{boundary factorization orientation}]
Since $f$ is a submersion, so is $\iota^*f$. Let $y_1,...,y_{m-1}$ be a basis for $T\pa_{g\circ f} M$ at a point, such that $x_i=d(\iota^*f)(y_i),i=1,...,p-1$ form a basis of $T\pa_gP$. Define a section
\begin{align*}
s:T\pa_g P&\to T\pa_{g\circ f}M,\\
\sum \a_ix_i&\mapsto \sum \a_iy_i.
\end{align*}
Note that $\left(d\iota_{g\circ f}(y_i)\right)_{i=1}^{m-1}$ together with $\nu_{out}$ form a basis of $TM$, and define a map
\begin{align*}
t:TM&\to T\pa M\\
\sum \b_j d\iota_{g\circ f}(y_j)+\g\nu_{out}&\mapsto \sum \b_jy_j.
\end{align*}
Note that if $(x,z)\in \ker(F)$, then $z\in Im(d\iota_{g\circ f})$, and thus $z=d\iota_{g\circ f}(t(z))$. further,
\[
d\iota_g\circ d(\iota^*f)\left(s(x)-t(z)
\right)=d\iota_g(x)-df(z)=0,
\]
and therefore $s(x)-t(z)\in \ker d(\iota^*f).$
Consider the map
\[
C:=\begin{pmatrix}
0&t\\d\iota_g&-df
\end{pmatrix}:T\pa P\oplus TM\xrightarrow{}T\pa M\oplus TP.
\]It is injective. Indeed, if 
\[
\begin{pmatrix}
0&t\\d\iota_g&-df
\end{pmatrix}\begin{pmatrix}
x\\z
\end{pmatrix}=0
\]
then $(x,z)\in \ker(F)$, and thus 
\[
\ker(d\iota^*f)\ni s(x)-t(z)=s(x),\]
which implies
\[
0=d(\iota^*f)s(x)=x.
\]Therefore, $z\in \ker(df)$ and thus $z\in Im(d\iota_{g\circ f})$. So
\[
z=d\iota_{g\circ f}(t(z))=d\iota_{g\circ f}(0)=0.
\]Since the dimensions of the source and target of $C$ are the same, it is an isomorphism, and in fact, a splitting of
\[
\begin{tikzcd}
0\ar[r]&T\pa M\ar[r,"{\begin{pmatrix}
d(\iota^*f)\\d\iota_{g\circ f}
\end{pmatrix}}"]&T\pa P\oplus TM\ar[r,"{\left(
d\iota_g,\,\,-df
\right)}"]&TP\ar[r]&0
\end{tikzcd}
\]

Let $\mO^P,\mO^f$ be local orientations of $P,f$, respectively. Set
\begin{align*}
\mO^M&=\mO^P\circ\mO^f,\\
\mO^{\pa P}&=\mO^P\circ\mO^{\iota_g}_c,\\
\mO^{\pa M}&=\mO^M\circ \mO^{\iota_{g\circ f}}_c.
\end{align*}
With these orientations, the maps
\begin{align*}
A:\R\oplus T\pa M&\to TM,\qquad (\a,z)\mapsto \a\nu_{out}+d\iota_{g\circ f}(z) \\
B:\R\oplus T\pa P&\to TP,\qquad (\b,x)\mapsto \b\nu_{out}+d\iota_g(x)
\end{align*}
have $sgn(A)=sgn(B)=(-1)^{N}$. Note that $A$ and $B$ are isomorphisms, and
\[
B^{-1}(y)=y-d\iota_{g\circ f}(t(y)),t(y).
\]
Therefore, the map
\[
D:=\begin{pmatrix}
1&0\\0&C
\end{pmatrix}:\R\oplus T\pa P\oplus TM\to \R\oplus T\pa M\oplus TP,
\]
which is equivalent to
\[
\begin{pmatrix}
1&0&y-d\iota_{g\circ f}(t(y))\\
0&0&t(y)\\
\nu_{out}&d\iota_g&-df
\end{pmatrix}
\]
has $sgn(C)=sgn(D)=1=(-1)^{P\cdot \pa P}$. Therefore, 
\[
\mO^{\pa P}\times \mO^f=\mO^M\circ\mO^{\iota_g\circ f},
\]
which breaks down to the equation
\[
\mO^{P}\circ \mO^{\iota_g}_c\circ\pull{(\iota_{g\circ f}/\iota_g)}\mO^f=\mO^P\circ\mO^f\circ\mO^{\iota_g\circ f}.
\]
The lemma follows.
\end{proof}
\begin{proof}[Proof of Lemma \ref{orientation of boundary of product}]
For ease, we assume $\W$ is a point. The general case is similar.
We identify $\pa_{(\Id\times f)}(A\times_\W X)$ with $A\times_\W \pa_fX$, such that $\iota^*_f\pi_X$ reduces to the projection $\pi_{\pa_fX}$ and $\iota_{(\Id\times f)}$ reduces to the inclusion $\Id\times \iota_f$.
Consider the map
\[
C:=\begin{pmatrix}
1&0&0\\
0&0&1\\0&1&d\iota_f
\end{pmatrix}:TA\oplus TX\oplus T\pa_fX\to TA\oplus T\pa_fX\oplus TX
\] 
which splits the following sequence.
\[
0\rightarrow\oplus T\pa_fX\xrightarrow{\begin{pmatrix}1&0\\0&d\iota_f\\0&1\end{pmatrix}}TA\oplus TX\oplus T\pa_fX\xrightarrow{\begin{pmatrix}
0&1&-d\iota_f
\end{pmatrix}}TX\rightarrow 0
\]

With whichever orientations chosen for $A,X,\pa_fX$, we have $sgn(C)=(-1)^{X\cdot \pa_fX}$.
But $C$ can be interpreted as a map
\[
T(A\times X)\oplus T\pa_fX\to T\left(\pa_{(\Id\times f)}(A\times X)\right)\oplus TX.
\]
Let $\mO^X,\mO^A$ be local orientations of $X,A$, respectively. Set
\begin{align*}
\mO^{\pa X}&:=\mO^X\circ\mO^{\iota_f}_c,\\\mO^{A\times X}&:=\mO^{A}\times \mO^X,\\
\mO^{\pa(A\times X)}&:=(\mO^A\times\mO^X)\circ \mO^{\iota_{(\Id\times f)}}_c.
\end{align*}
With these orientations, the identifying map
\[
T\pa_{(\Id\times f)}(A\times X)\to TA\oplus T\pa_fX
\]
is orientation preserving. Therefore, 
\[
\mO^A\times \mO^{\iota_f}_c=(\mO^{A}\times\mO^X)\circ\mO^{\iota_{(\Id\times f)}}_c,
\]
which implies by definition that
\[
\mO^{\iota_{(\Id\times f)}}_c=\pull{\left(\iota^*_f\pi_X/\pi_X\right)}(\mO^{\iota_f}_c).
\]
\end{proof}

\section{Orientors}
\label{orientors complete section}
\subsection{Orientors}
\label{orientors section}
\subsubsection{Definition and basic examples}
In this paper, we will concentrate mostly on bundle maps of the following form.
\begin{definition}\label{orientor}
Let $g:M\to N$ be a map and let $Q,K$ be $\Z/2$ bundles over $M,N$, respectively. A \textbf{\orientor{g} of $Q$ to $K$} is a graded bundle map \[
G:Q\to \zcort{g}\otimes_{\Z/2} g^*K.
\] Its \textbf{degree} is the usual degree as a bundle map, where $\zcort{g}\otimes g^*K$ is, as usual, the graded tensor product and $\zcort{g}$ is concentrated in degree $-\text{reldim } g$.
A \eorientor{g} of $K$ is a \orientor{g} of $g^*K$ to $K$.
\end{definition}
\begin{terminology}
if $g=\pi^M:M\to *$ is the constant map, then we say \orientor{M} for \orientor{g}.
\end{terminology}
\begin{definition}[Orientation as an orientor]
\label{orientation as orientor}
Let $f:M\to N$ be a relatively orientable map of orbifolds with corners. The section $\mO^f:M\to \zcort{f}$ can be extended uniquely to a $\Z/2$ equivariant map 
\[
\phi^{\mO^f}:\underline{\Z/2}\to \zcort{f}
\]
which satisfies
\[\phi^{\mO^f}(1)=\mO^f.\]
The map $\phi^{\mO^f}$ can be considered as an \eorientor{f} of $\underline{\Z/2}$. If $f$ is a local diffeomorphism, we denote by
\[
\phi_f:=\phi^{\mO^f_c}.
\]
If $N=*$ and $M$ is oriented with orientation $\mO^M$, then we abbreviate
\[
\phi_M:=\phi^{\mO^M}.
\]
\end{definition}
\begin{example}\label{symmetry isomorphism as orientor example}
Let $A,B$ be $\Z/2$ vector bundles over an orbifold $M$. The symmetry operator $\tau_{A,B}:A\otimes B\to B\otimes A$ of Definition \ref{tensor product} may be considered as an \orientor{\Id_M}. More generally, any bundle map of bundles over an orbifold $M$ may be considered as an \orientor{\Id_M}.
\end{example}
\begin{definition}\label{tensored orientor extension}
Let $M,N,g,Q,K,G$ be as in Definition \ref{orientor} and let $T$ be a $\Z/2$ bundle over $N$. Then the \textbf{right $T$ extension of $G$} is the \orientor{g} of $Q\otimes g^*T$ to $K\otimes T$ given by
\[
Q\otimes g^*T\overset{G\otimes\Id}{\xrightarrow{\hspace*{1cm}}}\zcort{g}\otimes g^*(K\otimes T).
\]
It is denoted by $G^T.$
Similarly, the \textbf{left $T $ extension of $G$} is the \orientor{g} of $g^*T\otimes Q$ to $T\otimes K$ given by
\[
g^*T\otimes Q\xrightarrow{\Id \otimes G}g^*T\otimes \zcort{g}\otimes g^*K\xrightarrow{\tau\otimes\Id}\zcort{g}\otimes g^*(T\otimes K).
\]
It is denoted by ${}^TG.$
\end{definition}
\begin{remark}[Repeated extension]
In the previous setting, if $T_1,T_2$ are $\Z/2$ bundles over $N$, then
\[
{\left(G^{T_1}\right)}^{T_2}=G^{T_1 \otimes T_2},\qquad {}^{T_2}{\left({}^{T_1}G\right)}={}^{T_2\otimes T_1}G.
\]
\end{remark}
\subsubsection{Composition}
\begin{definition}\label{orientor composition}
Let $M\overset{f}\to P\overset{g}\to N$ be maps and let $Q,K,R$ be {$\Z/2$~bundles} over $M,P,N$, respectively. Let $F:Q\to \zcort{f}\otimes f^*K$ be a \orientor{f} of $Q$ to $K$ and $G:K\to \zcort{g}\otimes g^*R$ be a \orientor{g} of $K$ to $R$. \textbf{The composition $G\bu F$} is the \orientor{g\circ f} of $Q$ to $R$ given as follows.
\[
Q\overset{F}\to\zcort{f}\otimes f^*K\overset{\Id \otimes f^*G}\to \zcort{f}\otimes f^*\zcort{g}\otimes f^*g^*P\overset{comp.}=\zcort{g\circ f}\otimes {(g\circ f)}^*R
\]
\end{definition}
\begin{lemma}\label{composition of orientors is associative}
The composition of orientors is associative.
\end{lemma}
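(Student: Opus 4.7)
The plan is to expand both $(H\bu G)\bu F$ and $H\bu (G\bu F)$ according to Definition~\ref{orientor composition} and show that the resulting bundle maps $Q\to \zcort{h\circ g\circ f}\otimes (h\circ g\circ f)^*S$ coincide, where $F,G,H$ are orientors of $f:M\to P$, $g:P\to N$, $h:N\to L$, of $Q$ to $K$, $K$ to $R$, and $R$ to $S$, respectively. After unraveling definitions, both sides are compositions of the same elementary constituents: $F$, then $f^*G$, then $f^*g^*H$, interleaved with instances of the composition isomorphism $\zcort{f}\otimes f^*\zcort{g}\simeq \zcort{g\circ f}$ from Definition~\ref{composition isomorphism}. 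The only difference is the order in which adjacent $\cort{}$-factors are compressed via the composition isomorphism.

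Concretely, I would first expand $H\bu (G\bu F)$ as
\[
Q\xrightarrow{F}\zcort{f}\otimes f^*K\xrightarrow{\Id\otimes f^*G}\zcort{f}\otimes f^*\zcort{g}\otimes f^*g^*R\xrightarrow{c_1\otimes\Id}\zcort{g\circ f}\otimes (gf)^*R
\]
\[
\xrightarrow{\Id\otimes (gf)^*H}\zcort{g\circ f}\otimes (gf)^*\zcort{h}\otimes (hgf)^*S\xrightarrow{c_2\otimes\Id}\zcort{h\circ g\circ f}\otimes (hgf)^*S,
\]
where $c_1,c_2$ are composition isomorphisms. Analogously, I would expand $(H\bu G)\bu F$ by first pulling back the definition of $H\bu G$ along $f$, using functoriality of $f^*$ (in particular the fact that $f^*$ commutes with tensor products, bundle maps, and with the composition isomorphism, which is built from composition of $\Z/2$-bundle homomorphisms). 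This yields
\[
Q\xrightarrow{F}\zcort{f}\otimes f^*K\xrightarrow{\Id\otimes f^*G}\zcort{f}\otimes f^*\zcort{g}\otimes f^*g^*R\xrightarrow{\Id\otimes\Id\otimes f^*g^*H}\zcort{f}\otimes f^*\zcort{g}\otimes f^*g^*\zcort{h}\otimes (hgf)^*S
\]
\[
\xrightarrow{\Id\otimes f^*c_3\otimes\Id}\zcort{f}\otimes f^*\zcort{h\circ g}\otimes (hgf)^*S\xrightarrow{c_4\otimes\Id}\zcort{h\circ g\circ f}\otimes (hgf)^*S.
\]

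The two expressions agree on the prefix through $\Id\otimes\Id\otimes f^*g^*H$ (after using Proposition~\ref{Koszul signs} to commute $\Id\otimes (gf)^*H$ past the composition isomorphism $c_1\otimes\Id$, which is valid because $c_1$ has the same parity as $\zcort{f}\otimes f^*\zcort{g}$, and $c_1$ does not swap any odd factors). Hence the whole claim reduces to the associativity statement
\[
c_2\circ (c_1\otimes\Id)=c_4\circ (\Id\otimes f^*c_3),
\]
as morphisms $\zcort{f}\otimes f^*\zcort{g}\otimes f^*g^*\zcort{h}\to \zcort{h\circ g\circ f}$. Unwinding Definition~\ref{composition isomorphism}, both sides send a decomposable tensor $\mO^f\otimes f^*\mO^g\otimes f^*g^*\mO^h$ to the threefold composition of $\Z/2$-bundle homomorphisms $f^*g^*\mO^h\circ f^*\mO^g\circ \mO^f:\tilde M\to (hgf)^*\tilde L$. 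The equality of the two sides is therefore just the associativity of composition of homomorphisms.

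The only bookkeeping item of substance is the naturality check that $f^*c_3=$ the composition isomorphism for the pulled-back $\cort{}$-bundles, which is immediate from the definition of $f^*$ on bundle maps and the chain rule; no Koszul signs intervene because $c_1,c_2,c_3,c_4$ act only between $\cort{}$-factors without transposing any odd-degree elements past one another. The main (minor) obstacle is simply writing out the diagram cleanly enough to make these associativity and naturality steps transparent.
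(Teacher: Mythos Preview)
Your proposal is correct and follows essentially the same approach as the paper: both reduce associativity of $\bu$ to the associativity of the composition isomorphism of Definition~\ref{composition isomorphism}, which in turn is just associativity of composition of $\Z/2$-bundle homomorphisms. The paper packages this into a single commutative diagram, while you have spelled out the intermediate steps (the commutation of $c_1\otimes\Id$ with $\Id\otimes(gf)^*H$, the naturality of $f^*$ on the composition isomorphism, and the final identity $c_2\circ(c_1\otimes\Id)=c_4\circ(\Id\otimes f^*c_3)$); these are exactly the ingredients implicit in the paper's diagram.
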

\begin{proof}[Proof of Lemma \ref{composition of orientors is associative}]
In addition to the data in Definition \ref{orientor composition}, let $h:N\to L$ be a map, let $T$ be a $\Z/2$ bundle over $L$ and let $H$ be a \orientor{h} of $R$ to $T$.
Then the equality\[H\bu (G\bu F)=(H\bu G)\bu F\] can be seen from the following diagram.
\begin{equation*}
\begin{tikzcd}
Q\ar[rr,"G\bu F"]\ar[dd,swap,"F"]&&\zcort{g\circ f}\otimes (g\circ f)^*R\ar[dd,"\Id \otimes (g\circ f)^*H"]\\\\
\zcort{f}\otimes f^*K\ar[uurr,"\Id \otimes f^*G"]\ar[rr,swap,"\Id \otimes f^*(H\bu G)"]&&\zcort{h\circ g\circ f}\otimes (h\circ g\circ f)^*T.
\end{tikzcd}
\end{equation*}
\end{proof}
\subsubsection{Pullback along a relatively oriented map}

\begin{lemma}\label{composition of relative orientation orientors lemma}
Let $M\xrightarrow{f}P\xrightarrow{g}N$ be maps, and let $\mO^f,\mO^g$ be relative orientations of $f,g,$ respectively. Then
\[
\phi^{\mO^g}\bu\phi^{\mO^f}=(-1)^{fg}\phi^{\mO^g\circ\mO^f}.
\]
\end{lemma}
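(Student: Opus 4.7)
The plan is to unpack the definition of $\phi^{\mO^g}\bu \phi^{\mO^f}$ as an \orientor{g\circ f} of $\underline{\Z/2}$ to $\underline{\Z/2}$, evaluate both sides on the generator $1$ of $\underline{\Z/2}$, and verify that they produce the same section of $\zcort{g\circ f}$ up to the claimed Koszul sign. Since $\phi^{\mO^f}$ and $\phi^{\mO^g}$ are determined by the underlying sections $\mO^f$ and $\mO^g$, it suffices to check this equality fiberwise.

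First I would unwind Definition~\ref{orientor composition} to present $\phi^{\mO^g}\bu \phi^{\mO^f}$ as the composite
\[
\underline{\Z/2}\xrightarrow{\phi^{\mO^f}}\zcort{f}\otimes f^*\underline{\Z/2}\xrightarrow{\Id\otimes f^*\phi^{\mO^g}}\zcort{f}\otimes f^*\zcort{g}\otimes (g\circ f)^*\underline{\Z/2}\xrightarrow{\text{comp.}}\zcort{g\circ f}\otimes (g\circ f)^*\underline{\Z/2},
\]
and track $1\in \underline{\Z/2}$ through this chain. The first arrow maps $1\mapsto \mO^f\otimes 1$. The second arrow, by the Koszul rule of Definition~\ref{tensor product}(e) and Proposition~\ref{Koszul signs}, produces the sign $(-1)^{|\mO^f|\cdot|f^*\phi^{\mO^g}|}$. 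Since $\mO^f$ sits in degree $-\text{rdim}\,f$ and $\phi^{\mO^g}$ has degree $-\text{rdim}\,g$, this sign equals $(-1)^{(-f)(-g)}=(-1)^{fg}$ under our convention of writing $f$ for $\text{rdim}\,f$.

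The third arrow is the composition isomorphism of Definition~\ref{composition isomorphism}, which by definition sends $\mO^f\otimes f^*\mO^g$ to $f^*\mO^g\circ\mO^f$, abbreviated $\mO^g\circ\mO^f$ per the notation convention immediately following that definition. Putting the three steps together gives
\[
\bigl(\phi^{\mO^g}\bu \phi^{\mO^f}\bigr)(1)=(-1)^{fg}\,\mO^g\circ\mO^f.
\]
On the other hand, $\phi^{\mO^g\circ\mO^f}(1)=\mO^g\circ\mO^f$ by Definition~\ref{orientation as orientor}. Since both orientors are $\Z/2$-equivariant maps from $\underline{\Z/2}$ and are determined by their value on $1$, this comparison establishes the identity.

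I do not anticipate any real obstacle here beyond bookkeeping the two degree contributions. The only point that deserves care is that $\zcort{f}$ is concentrated in degree $-\text{rdim}\,f$ (not $0$), so $\phi^{\mO^f}$ itself has nonzero degree, and this is precisely what feeds the Koszul sign when it is commuted past $f^*\phi^{\mO^g}$ in the tensor product. Once that convention is fixed, the computation is mechanical.
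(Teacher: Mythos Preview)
Your proposal is correct and follows essentially the same approach as the paper's proof: evaluate the composite orientor on the generator $1$, track the Koszul sign coming from $\Id\otimes f^*\phi^{\mO^g}$ applied to $\mO^f\otimes 1$, and identify the result with $\mO^g\circ\mO^f$ via the composition isomorphism. The paper compresses this into a single displayed line, but the content is identical to what you wrote.
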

\begin{remark}\label{composition of local diffeomorphisms orientors lemma}
In particular, if $f,g$ are local diffeomorphisms then Remark~\ref{canonical orientation of local diffeomorphisms composition remark} implies 
\[
\phi_g\bu\phi_f=\phi_{g\circ f}.
\]
\end{remark}
\begin{proof}[Proof of Lemma \ref{composition of relative orientation orientors lemma}]
We calculate
\[
\phi^{\mO^g}\bu\phi^{\mO^f}(1)\overset{\text{Def. \ref{orientor composition}}}=\left(1_{\zcort{f}}\otimes \phi^{\mO^g}\right)\left(\mO^f\otimes 1\right)\overset{\text{Koszul~\ref{Koszul signs}}}=(-1)^{fg}\mO^g\circ \mO^f.\]
\end{proof}
\begin{definition}\label{pullback of orientor}
Let $M\overset{f}\rightarrow P\overset{g}\rightarrow N$, and suppose that $f$ is relatively oriented with relative orientation $\mO^f$. Let $K,R$ be $\Z/2$-bundles over $P,N$, respectively. Let $G$ be a \orientor{g} of $K$ to $R$. The \textbf{pullback of $G$ by $(f,\mO^f)$} is the \orientor{g\circ f} of $f^*K$ to $R$, given by
\[
\expinv{(f,\mO^f)}G=(-1)^{fG}G\bu \left(\phi^{\mO^f}\right)^{K},
\]
where $\phi^{\mO^f}$ is the orientor from Definition \ref{orientation as orientor}.
If $f$ is a local diffeomorphism, we write
\[\expinv{f}G=\expinv{(f,\mO^f_c)}G=G\bu \left(\phi_f\right)^K.
\]
\end{definition}
\begin{lemma}
\label{pullback by composition relatively oriented functoriality lemma}
Let $M\xrightarrow{f}P\xrightarrow{g}N\xrightarrow{h}L$ be maps, let $\mO^f,\mO^g$ be relative orientations of $f,g,$ respectively, and let $H$ be an \orientor{h}. Then
\[
\expinv{(f,\mO^f)}\expinv{(g,\mO^g)}H=\expinv{(g\circ f,\mO^g\circ \mO^f)}H.
\]
\end{lemma}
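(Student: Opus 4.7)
The plan is to unfold both sides using Definition \ref{pullback of orientor}, reassociate via Lemma \ref{composition of orientors is associative}, and reduce to a right-extended version of Lemma \ref{composition of relative orientation orientors lemma}. Writing $R$ for the source of $H$, the right-hand side unfolds by definition to
\[
\expinv{(g\circ f,\,\mO^g\circ\mO^f)}H = (-1)^{(f+g)H}\, H\bu (\phi^{\mO^g\circ\mO^f})^{R},
\]
since $\text{rdim}(g\circ f)=f+g$. For the left-hand side, first apply the definition to get
\[
\expinv{(g,\mO^g)}H = (-1)^{gH}\, H\bu (\phi^{\mO^g})^{R},
\]
observe that this orientor has degree congruent to $H+g$ modulo $2$ (since right extension preserves the degree of an orientor, and composition of orientors adds degrees), and then apply the definition once more to obtain
\[
\expinv{(f,\mO^f)}\expinv{(g,\mO^g)}H = (-1)^{f(H+g)+gH}\, \bigl(H\bu (\phi^{\mO^g})^{R}\bigr)\bu (\phi^{\mO^f})^{g^*R}.
\]
By the associativity of $\bu$, this rewrites as $(-1)^{f(H+g)+gH}\, H\bu \bigl((\phi^{\mO^g})^{R}\bu (\phi^{\mO^f})^{g^*R}\bigr)$.

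The crux is then the identity
\[
(\phi^{\mO^g})^{R}\bu (\phi^{\mO^f})^{g^*R} = (-1)^{fg}\,(\phi^{\mO^g\circ\mO^f})^{R},
\]
which is the right-$R$-extended version of Lemma \ref{composition of relative orientation orientors lemma}. I would prove it by mimicking the proof of that lemma: evaluating the two-fold composition on a local section $r$ of $f^*g^*R$ yields $r \mapsto \mO^f\otimes r \mapsto (-1)^{fg}\,\mO^f\otimes \mO^g\otimes r$ via the Koszul rule of Proposition \ref{Koszul signs}, and the composition isomorphism $\cort{f}\otimes f^*\cort{g}\cong \cort{g\circ f}$ of Definition \ref{composition isomorphism} collapses $\mO^f\otimes \mO^g$ to $\mO^g\circ\mO^f$. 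The $R$-factor is carried through by $\Id_R$ at every stage, contributing no extra sign.

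Substituting this into the three-term composition, the accumulated sign becomes
\[
(-1)^{f(H+g)+gH+fg} = (-1)^{(f+g)H},
\]
matching the right-hand side. The only real obstacle is the sign bookkeeping; there is no new geometric content beyond Lemma \ref{composition of relative orientation orientors lemma}, the associativity of composition, and the Koszul rule, so once the right-extended identity above is established, the lemma follows immediately.
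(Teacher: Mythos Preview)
Your proof is correct and follows the same route as the paper's one-line argument, which simply cites Definition~\ref{pullback of orientor} and Lemma~\ref{composition of relative orientation orientors lemma}; you have just unpacked the sign bookkeeping in detail. Note that your right-$R$-extended identity $(\phi^{\mO^g})^{R}\bu (\phi^{\mO^f})^{g^*R} = (-1)^{fg}(\phi^{\mO^g\circ\mO^f})^{R}$ is also an instance of Lemma~\ref{orientors tensor distributivity} applied to Lemma~\ref{composition of relative orientation orientors lemma}, so the direct Koszul computation you sketch is not strictly necessary.
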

\begin{proof}
This follows immediately from Definition~\ref{pullback of orientor} and Lemma~\ref{composition of relative orientation orientors lemma}.
\end{proof}
\subsubsection{Orientors restoration}
\begin{lemma}\label{orientor invertion lemma}
Let $M\xrightarrow{f}P\xrightarrow{g}N$, and let $Q,K,R$ be $\Z/2$-bundles over $M,P,N$, respectively. Let $H$ be an \orientor{g\circ f} of $Q$ to $R$, and $G$ be an \orientor{g} of $K$ to $R$. Assume $G$ is invertible as a bundle map
\[
G:K\to \zcort{g}\otimes g^*R.
\]
Then there exists a unique \orientor{f} $F$ of $Q$ to $K$ satisfying
\[
H=G\bu F.
\]
\end{lemma}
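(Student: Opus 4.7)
The key observation is that the composition $G\bu F$ unwinds, by Definition~\ref{orientor composition}, to
\[
G\bu F=\bigl(\Id_{\zcort{f}}\otimes f^*G\bigr)\circ F,
\]
after identifying $\zcort{f}\otimes f^*\zcort{g}$ with $\zcort{g\circ f}$ via the composition isomorphism of Definition~\ref{composition isomorphism}. So the equation $H=G\bu F$ we need to solve for $F$ is simply the equation
\[
\bigl(\Id_{\zcort{f}}\otimes f^*G\bigr)\circ F=H
\]
of bundle maps $Q\to \zcort{g\circ f}\otimes (g\circ f)^*R$.

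The plan is therefore to produce $F$ by inverting the left factor. Since $G:K\to \zcort{g}\otimes g^*R$ is invertible as a bundle map by hypothesis, its pullback $f^*G:f^*K\to f^*\zcort{g}\otimes f^*g^*R$ is invertible, and hence so is $\Id_{\zcort{f}}\otimes f^*G$. Define
\[
F:=\bigl(\Id_{\zcort{f}}\otimes f^*G^{-1}\bigr)\circ H,
\]
which is a bundle map $Q\to \zcort{f}\otimes f^*K$, i.e.\ an \orientor{f} of $Q$ to $K$ (of degree $|H|-|G|$). By construction $G\bu F=H$.

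For uniqueness, suppose $F'$ is another \orientor{f} with $G\bu F'=H$. Then $(\Id_{\zcort{f}}\otimes f^*G)\circ F'=(\Id_{\zcort{f}}\otimes f^*G)\circ F$, and applying $\Id_{\zcort{f}}\otimes f^*G^{-1}$ on the left gives $F'=F$. There is essentially no obstacle: the whole content is that unwinding the definition of $\bu$ reduces the problem to inverting a bundle map, which is permitted by the hypothesis on $G$.
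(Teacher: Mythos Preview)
Your proof is correct and follows essentially the same approach as the paper's: both unwind $G\bu F$ via Definition~\ref{orientor composition} to $(\Id_{\zcort{f}}\otimes f^*G)\circ F$ and then use invertibility of $G$ to solve for $F$. Your version is slightly more explicit in writing out $F=(\Id_{\zcort{f}}\otimes f^*G^{-1})\circ H$ and noting the degree, but the content is identical.
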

\begin{proof}[Proof of Lemma \ref{orientor invertion lemma}]
Assume such $F$ exists. Then
\[
\left(\Id_{\zcort{f}}\otimes f^*G\right)\circ F=H.
\]
Since $G$ is invertible, this equation defines $F$ uniquely.
\end{proof}
\begin{lemma}\label{orientor on pullback is pulled back}
Let $M\xrightarrow{f}P\xrightarrow{g}N$, and assume that $f$ is relatively oriented, surjective submersion with connected fibers and with relative orientation $\mO^f$. Let $K,R$ be $\Z/2$-bundles over $P,N$, respectively. Set $h=g\circ f$ and let $H$ be an \orientor{h} of $f^*K$ to $R$. Then there exists a unique \orientor{g} of $K$ to $R$, denoted by $G$ such that 
\[
H=\expinv{(f,\mO^f)}G.
\]
\end{lemma}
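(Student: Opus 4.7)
The plan is to reduce the lemma to a descent statement for $\Z/2$-bundle morphisms along $f$. First, I would unwind the equation $H = \expinv{(f,\mO^f)} G$ using Definition~\ref{pullback of orientor}: it says
\[
H = (-1)^{fG}\,\bigl(\Id_{\zcort{f}} \otimes f^*G\bigr) \circ (\phi^{\mO^f})^K
\]
as graded $\Z/2$-bundle maps $f^*K \to \zcort{f} \otimes f^*\zcort{g} \otimes f^*g^*R$, where the target has been identified with $\zcort{g\circ f}\otimes (g\circ f)^* R$ via the composition isomorphism of Definition~\ref{composition isomorphism}. Because $\mO^f$ is a nowhere-vanishing global section of $\zcort{f}$, the bundle map $(\phi^{\mO^f})^K$ is an isomorphism. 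Consequently, the displayed equation uniquely determines a $\Z/2$-bundle map
\[
\widehat G : f^*K \longrightarrow f^*\bigl(\zcort{g}\otimes g^*R\bigr)
\]
over $M$: explicitly, $\widehat G$ is obtained by composing $(-1)^{fG} H$ with the inverse of $(\phi^{\mO^f})^K$ on the left and then collapsing the resulting $\zcort{f}$-factor via $(\phi^{\mO^f})^{-1}$. The existence part of the lemma therefore reduces to producing a unique morphism $G:K\to \zcort{g}\otimes g^*R$ over $P$ with $f^*G=\widehat G$.

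The main step is precisely this descent, where the hypotheses on $f$ enter essentially. I would establish the following general fact: if $A,B$ are $\Z/2$-bundles over $P$ and $f:M\to P$ is a surjective submersion with connected fibers, then every $\Z/2$-bundle morphism $\widehat\psi: f^*A \to f^*B$ is of the form $f^*\psi$ for a unique morphism $\psi:A\to B$. Indeed, over any point $p\in P$ the restriction of $\widehat\psi$ to the connected fiber $f^{-1}(p)$ is a continuous $\Z/2$-equivariant map between the constant bundles with fiber $A_p$ and $B_p$; but the set of $\Z/2$-equivariant maps between two-element $\Z/2$-sets is discrete, so $\widehat\psi$ is constant along $f^{-1}(p)$ and its common value defines $\psi(p)$. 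Surjectivity of $f$ ensures $\psi$ is defined on all of $P$, and the submersion property lets us realize $\psi$ locally by composing $\widehat\psi$ with any smooth local section of $f$, guaranteeing smoothness of $\psi$. This descent argument is the direct analogue of Lemma~\ref{unique extension of bundle maps}, replacing the ``essential subset'' hypothesis by ``connected fibers of a surjective submersion''.

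Applying this descent to $\widehat G$ produces the desired $g$-orientor $G$, and by construction $(-1)^{fG}(\Id_{\zcort{f}}\otimes f^*G)\circ(\phi^{\mO^f})^K$ recovers $H$, i.e.\ $H=\expinv{(f,\mO^f)}G$. Uniqueness of $G$ is immediate: any candidate $G'$ satisfying the equation produces $f^*G'=\widehat G$ by the invertibility of $(\phi^{\mO^f})^K$, and $G'$ is then determined by the uniqueness of the descent. The principal obstacle is verifying the descent statement in the orbifold-with-corners setting, where one must interpret ``$\Z/2$-bundle'', ``fiber'', and ``connected'' in the orbifold sense; this is a mild generalization of the argument for manifolds, since $\Z/2$-bundles are étale covers and the relevant rigidity—locally constant behavior of morphisms—remains valid on orbifold charts.
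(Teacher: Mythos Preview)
Your argument is correct and complete. The route differs from the paper's: you trivialize $\zcort{f}$ via $\mO^f$ to extract a map $\widehat G:f^*K\to f^*(\zcort{g}\otimes g^*R)$ and then descend it pointwise using connectedness of fibers, whereas the paper works sheaf-theoretically, passing to the adjoint map $\tilde H:K\to f_*(\zcort{h}\otimes h^*R)$, invoking the projection formula to write the target as $f_*\zcort{f}\otimes\zcort{g}\otimes g^*R$, and using that $f_*\phi^{\mO^f}:\underline{\Z/2}\to f_*\zcort{f}$ is an isomorphism (again by connected fibers) to split off $G$. Both arguments encode the same phenomenon---connected fibers force fiberwise constancy of $\Z/2$-bundle maps---and both use the trivialization $\mO^f$ in the same essential way; yours is more elementary and self-contained, while the paper's packages the descent into the adjunction formalism and avoids explicit pointwise reasoning. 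Two small notes: your phrase ``on the left'' should be ``on the right'' (you precompose $H$ with the inverse of $(\phi^{\mO^f})^K$), and the sign $(-1)^{fG}$ is well-defined before $G$ is constructed because $|G|=|H|+f$ is forced by degree considerations (for homogeneous $H$; in general decompose into homogeneous pieces).
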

The connectedness of the fibers of $f$ in the previous lemma is necessary, as is shown in the following examples.
\begin{example}
In the situation of Lemma~\ref{orientor on pullback is pulled back}, let $M=P=N=S^1$ modeled as the group of unit elements in $\C^*$. Let $f(z)=z^2$ and $g(z)=z$. Let $R=\underline{\Z/2}$ and let $K$ be a nontrivial double-cover of $P$. Since $f$ is a local diffeomorphism, it is relatively oriented. Then $f^*K$ is trivial, $\zcort{f\circ g}$ is trivial and $f^*g^*R$ is trivial. Therefore, there exists an \orientor{g\circ f} of $f^*K$ to $R$. However, there does not exist a \orientor{g} of $K$ to $R$ since $K$ is homeomorphic to $S^1$ while $R$ is homeomorphic to two copies of $S^1$.
\end{example}
\begin{example}\label{connected fiber necessity example points}
In the situation of Lemma~\ref{orientor on pullback is pulled back}, let $M=\{0,1\}, P=N=\{0\}$. Let $R=K=\Z/2$. The map $pt:M\to P$ is a local diffeomorphism. Let 
\[
F:\underline{\Z/2}\to \zcort{pt}\otimes \underline{\Z/2}
\]
be the map that over a point $x\in M$ sends
\[
k\mapsto k+x.
\]
Then under any identification $\zcort{pt}\simeq\Z/2$, the set map $F$ has two fixed points, while
$\expinv{(pt)}G$ has either four fixed points or no fixed points, for any equivariant map $G:\Z/2\to \Z/2$. Therefore, $F$ cannot be written as $\expinv{(pt)}G$ for any \eorientor{\Id_{pt}} $G$ of $\Z/2$.
\end{example}
\begin{proof}[Proof of Lemma \ref{orientor on pullback is pulled back}]
Under the adjunction of the functors $f^*$ and $f_*$ of sheaves, the map $H$ corresponds to a map
\[
\tilde H:K\to f_*\left(\zcort{h}\otimes h^*R\right).
\]
By the composition isomorphism of Definition~\ref{composition isomorphism} and the projection formula for sheaves, 
\[
f_*\left(\zcort{h}\otimes h^*R\right)=f_*\zcort{f}\otimes \zcort{g}\otimes g^*R.
\]
However, since the fibers of $f$ are connected, $f_*\underline{\Z/2}=\underline{\Z/2}$ and thus the map $f_*\phi^{\mO^f}:\underline{\Z/2}\to f_*\zcort{f}$ is an isomorphism.
Therefore, there exists a unique \orientor{g} $G$ of $K$ to $R$ satisfying
\[
\tilde H=f_*\phi^{\mO^f}\otimes G.
\]
By the abovementioned adjunction, it satisfies
\[
H=\expinv{\left(f,\mO^f\right)}G.
\]
\end{proof}

\subsubsection{Pullback along a fiber-product}
\begin{definition}\label{pullback of orientor by pullback-diagram}
Let
\[
\begin{tikzcd}
M\times_NP\ar[r,"r"]\ar[d,"q"]&P\ar[d,"g"]\\M\ar[r,"f"]&N
\end{tikzcd}
\] be a pullback square. Let $K,R$ be $\Z/2$-bundles over $P,N$ respectively, and let $G$ be a \orientor{g} from $K$ to $R$. \textbf{The pullback of $G$ by $r$ over $f$} is the \orientor{q} of $r^*K$ to $f^*R$ given by the following composition.
\[
r^*K\xrightarrow{r^*G}r^*\zcort{g}\otimes r^*g^*R\xrightarrow{\pull{(r/f)} \otimes\Id}\zcort{q}\otimes q^*f^*R.
\]
It is denoted by $\expinv{(r/f)}G$.
\end{definition}
\begin{example}\label{trivial expinv is pullback}
Let $f:M\to N$ be a map, $Q,K$ be $\Z/2$-bundles over $N$ and let $G:Q\to K$ be a \orientor{\Id_N} of $Q$ to $K$. Consider the following pullback diagram.
\[
\begin{tikzcd}
M\ar[d,swap,"\Id_M"]\ar[r,"f"]&N\ar[d,"\Id_N"]\\M\ar[r,"f"]&N
\end{tikzcd}
\]
Then under the canonical isomorphism $\zcort{\Id_X}\simeq \underline{\Z/2}$ it holds that
\[
\expinv{(f/f)}G=f^*G.
\]
This follows from Lemma \ref{pullback of vertical local diffeomorphism lemma}.
\end{example}
\begin{lemma}\label{pullback of composition of orientors by relatively oriented}
Consider the following diagram, in which the square is a pullback square.
\[
\begin{tikzcd}
M\times_NP\ar[r,"r"]\ar[d,"q"]&P\ar[d,"g"]\\M\ar[r,"f"]&N\ar[r,"h"]&L
\end{tikzcd}
\]
Let $K,R,T$ be $\Z/2$-bundles over $P,N,L$, respectively. Assume $G:K\to \zcort{g}\otimes g^*R$ and $H:R\to \zcort{h}\otimes h^*T$ are orientors. Let $\mO^f$ be a relative orientation of $f$. Set $\mO^r:=\pull{(q/g)}\mO^f$. Then $\mO^r$ is a relative orientation of $r$ and it follows that the following diagram is commutative.
\[
\begin{tikzcd}
r^*K\ar[r,"\expinv{\left(r/f\right)}(G)"]
\ar[dr,swap,"\expinv{\left(r,\mO^{r}\right)}(H\bu G)"]&\zcort{q}\otimes q^*f^*R\ar[d,"\Id \otimes q^*\expinv{\left(f,\mO^{f}\right)}(H)"]\\
&\zcort{q}\otimes q^*\zcort{h\circ f}\otimes q^*f^*a^*T
\end{tikzcd}
\]
That is,
\begin{equation}
    \label{pullback of orientor by lifted diffeomorphisms}
\expinv{\left(r,\mO^{r}\right)}(H\bu G)=\expinv{\left(f,\mO^{f}\right)}(H)\bu \expinv{(r/f)}(G).
\end{equation}
\end{lemma}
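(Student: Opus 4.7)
The plan is to unfold both sides using the definitions, reduce the statement by associativity to an identity not involving $H$, and then verify that identity fiberwise with a careful sign calculation.

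First, observe that $\mO^r$ is a relative orientation of $r$: the map $\pull{(q/g)}: q^*\zcort{f} \simeq \zcort{r}$ from Definition \ref{pullback fiber product orientation convention} carries the pulled-back section $q^*\mO^f$ to a section $\mO^r$ of $\zcort{r}$. Next, unfolding both sides of the claimed equation via Definition \ref{pullback of orientor} and using associativity of $\bu$ (Lemma \ref{composition of orientors is associative}), we rewrite
\[
\text{LHS} = (-1)^{r(|H|+|G|)}\, H \bu G \bu (\phi^{\mO^r})^K, \qquad \text{RHS} = (-1)^{f|H|}\, H \bu (\phi^{\mO^f})^R \bu \expinv{(r/f)} G.
\]
Since $r$ and $f$ have the same relative dimension in a pullback square, $r \equiv f \pmod 2$, so $r|H| \equiv f|H|$. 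By functoriality of $H \bu (-)$, it suffices to prove the reduced identity
\[
(-1)^{r|G|}\, G \bu (\phi^{\mO^r})^K \;=\; (\phi^{\mO^f})^R \bu \expinv{(r/f)} G,
\]
as \orientor{g\circ r}s (equivalently, \orientor{f\circ q}s) of $r^*K$ to $R$.

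To verify this, I would work fiberwise over a point $\tilde p \in M\times_N P$ with $r(\tilde p)=p$, $q(\tilde p)=m$, and $g(p)=f(m)=n$. For $k \in K|_p$, write $G|_p(k) = \alpha \otimes \rho \in \zcort{g}|_p \otimes R|_n$. Unfolding the LHS using Definitions \ref{tensored orientor extension} and \ref{orientor composition} together with the Koszul signs of Proposition \ref{Koszul signs}, and then applying the composition isomorphism $\zcort{r}\otimes r^*\zcort{g}\simeq\zcort{g\circ r}$ of Definition \ref{composition isomorphism} (which sends $\mO^r\otimes\alpha \mapsto \alpha\circ\mO^r$), one obtains $k \mapsto (\alpha \circ \mO^r|_{\tilde p}) \otimes \rho$; the Koszul sign $(-1)^{r|G|}$ produced by the composition exactly cancels the prefactor. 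Analogously, Definition \ref{pullback of orientor by pullback-diagram} gives $\expinv{(r/f)}G(k) = \pull{(r/f)}(\alpha)\otimes\rho$, and post-composing with $(\phi^{\mO^f})^R$, with signs from Proposition \ref{Koszul signs} and the composition isomorphism $\zcort{q}\otimes q^*\zcort{f}\simeq\zcort{f\circ q}$, yields $k \mapsto (-1)^{fq}(\mO^f|_m\circ \pull{(r/f)}(\alpha))\otimes\rho$.

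Finally, Lemma \ref{vertical pullback vs horizontal pullback}, applied with $\alpha$ as a local relative orientation of $g$ at $p$, gives
\[
\mO^f\circ\pull{(r/f)}(\alpha) \;=\; (-1)^{fg}\,\alpha\circ \pull{(q/g)}(\mO^f) \;=\; (-1)^{fg}\,\alpha\circ\mO^r,
\]
using the definition of $\mO^r$. Since $q \equiv g \pmod 2$ in a pullback, the residual sign on the RHS is $(-1)^{fq+fg} = (-1)^{2fg} = 1$, and the two expressions match fiberwise. The main obstacle is the sign bookkeeping: the Koszul contributions arising from tensoring graded identities with maps of degree $\pm\,\mathrm{rdim}$, together with the $(-1)^{fg}$ from Lemma \ref{vertical pullback vs horizontal pullback}, must cancel one another, and the cancellation hinges on the parity coincidences $r \equiv f$ and $q \equiv g$ that hold precisely because the square is a pullback.
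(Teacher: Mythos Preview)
Your proof is correct and follows essentially the same route as the paper: reduce by Definition~\ref{pullback of orientor} and associativity to the identity $G\bu(\phi^{\mO^r})^K=(-1)^{fG}(\phi^{\mO^f})^R\bu\expinv{(r/f)}G$, evaluate both sides on a local section $k$ with $G(k)=\alpha\otimes\rho$, and conclude via Lemma~\ref{vertical pullback vs horizontal pullback} together with the parity coincidence $r\equiv f\pmod 2$. Your sign bookkeeping matches the paper's, though you spell out the cancellation $fq+fg\equiv 0$ more explicitly than the paper, which just cites ``$\dim f=\dim r$.''
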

\begin{proof}
Recalling Definition \ref{pullback of orientor}, this lemma amounts to
\[
(H\bu G)\bu \left(\phi^{\mO^r}\right)^{K}=(-1)^{fG}\left(H\bu \left(\phi^{\mO^f}\right)^{R}\right)\bu \expinv{(r/f)}(G).
\]Using the Associativity Lemma \ref{composition of orientors is associative}, it is enough to prove that
\[
G\bu \left(\phi^{\mO^r}\right)^{K}=(-1)^{fG}\left(\phi^{\mO^f}\right)^{R}\bu \expinv{(r/f)}(G).
\] To see that, let $k,t,\mO^g$ be sections of $K, R, \zcort{g}$, respectively, such that $G(k)=\mO^g\otimes g^*t$. Then
\begin{align*}
    G\bu \left(\phi^{\mO^r}\right)^K(r^*k)=\left(\Id \otimes r^*G\right)\left(\mO^r\otimes r^*k\right)=(-1)^{rG}\mO^r\otimes r^*\mO^g\otimes r^*g^*t,
\end{align*}
and
\begin{align*}
    \left(\phi^{\mO^f}\right)^R\bu \expinv{\left(r/f\right)}(G)( r^*k)&=\left(\Id \otimes q^*\phi^{\mO^f}\right)\left(\pull{(r/f)}\mO^{g}\otimes r^*g^*t\right)\\&=(-1)^{fg}\left(\pull{(r/f)}\mO^{g}\otimes q^*\mO^f\otimes r^*g^*t\right).
\end{align*}
However, by Lemma \ref{vertical pullback vs horizontal pullback} it follows that
\[
\mO^r\otimes r^*\mO^g=(-1)^{fg}\pull{(r/f)}\mO^{g}\otimes q^*\mO^f,
\]
and the lemma follows, since $\dim f=\dim r$.
\end{proof}
\begin{remark}\label{pullback of composition of orientors by local diffeomorphism}
In the setting of Lemma \ref{pullback of composition of orientors by relatively oriented}, if $f$ is a local diffeomorphism, then $r$ is a local diffeomorphism, and
\[
\expinv{r}(H\bu G)=\expinv{f}(H)\bu \expinv{\left(r/f\right)}G.
\]
\end{remark}
\begin{example}
\label{expinv by diffeomorphism over Id}
Let $M\xrightarrow{f}P\xrightarrow{g}N$ be maps, $K,R$ be $\Z/2$-bundles over $P,N$, respectively, and let $G$ be a \orientor{g} of $K$ to $R$. Assume $f$ is a diffeomorphism. Consider the following pullback diagram. 
\[
\begin{tikzcd}
M\ar[r,"f"]\ar[d,"g\circ f"]&P\ar[d,"g"]\\N\ar[r,"\Id"]&N
\end{tikzcd}
\]
By applying Remark \ref{pullback of composition of orientors by local diffeomorphism} to the preceding diagram extended with $h=\Id_N$ and $H=\Id_R$, we get
\[
\expinv{(f/\Id)}G=\expinv{f}G.
\]
\end{example}
\begin{lemma}\label{commutativity vertical pullback vs horizontal pullback orientors}
Consider the following fiber-product diagram.
\[
\begin{tikzcd}
M\times_NP\ar[r,"r"]\ar[d,"q"]&P\ar[d,"g"]\\M\ar[r,"f"]&N.
\end{tikzcd}
\]
Let $Q,K,R,T$ be $\Z/2$-bundles over $N$.
Let $F$ be \orientor{f} of $f^*Q$ to $R$ and let $G$ be a \orientor{g} of $g^*K$ to $T$.
Then we have the following equality of \orientor{(f\circ q)}s of $(f\circ q)^*\left(K\otimes Q\right)$ to $T\otimes R$.
\[
{}^TF\bu \expinv{(r/f)}G^{f^*Q}=(-1)^{FG}G^R\,\,\bu\,\, {}^{g^*K}\expinv{(q/g)}F
\]
\end{lemma}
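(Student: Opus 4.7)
The plan is to evaluate both sides of the asserted equation as \orientor{f\circ q}s on a generic local section $r^*g^*k\otimes q^*f^*v$ of $(f\circ q)^*(K\otimes Q)$, where $k$ and $v$ are homogeneous local sections of $K$ and $Q$. Writing $G(g^*k)=G_1\otimes g^*t$ with $G_1\in\zcort{g}$ and $F(f^*v)=F_1\otimes f^*\rho$ with $F_1\in\zcort{f}$, each side produces an expression of the form $(\text{sign})\cdot\mathcal{O}\otimes(f\circ q)^*(t\otimes\rho)$ for some $\mathcal{O}\in\zcort{f\circ q}$, and the claim reduces to comparing orientation factors and scalar signs.

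For the LHS, I unfold $\expinv{(r/f)}G$ via Definition~\ref{pullback of orientor by pullback-diagram}, apply its right $f^*Q$-extension (Definition~\ref{tensored orientor extension}), and compose with ${}^TF$ via Definition~\ref{orientor composition}. Using the Koszul signs from Definition~\ref{tensor product}(5) and the composition isomorphism $\zcort{q}\otimes q^*\zcort{f}=\zcort{f\circ q}$, this yields
\[
{}^TF\bu\expinv{(r/f)}G^{f^*Q}(r^*g^*k\otimes q^*f^*v)=(-1)^{Fg+(F+f)t}\bigl(q^*F_1\circ\pull{(r/f)}(r^*G_1)\bigr)\otimes(f\circ q)^*(t\otimes\rho),
\]
where $Fg$ arises from commuting $q^*({}^TF)$ past $\pull{(r/f)}(r^*G_1)\in\zcort{q}$ (of degree $-g$, since $\text{rdim}\,q=\text{rdim}\,g$), and $(F+f)t$ is the internal Koszul sign of ${}^TF$. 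A parallel unfolding of the RHS via $\expinv{(q/g)}F$, its left $g^*K$-extension (using $\text{rdim}\,r=\text{rdim}\,f$), and composition with $G^R$ yields
\[
G^R\bu{}^{g^*K}\expinv{(q/g)}F(r^*g^*k\otimes q^*f^*v)=(-1)^{(F+f)k+Gf}\bigl(r^*G_1\circ\pull{(q/g)}(q^*F_1)\bigr)\otimes(f\circ q)^*(t\otimes\rho).
\]

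The bridge between the two expressions is Lemma~\ref{vertical pullback vs horizontal pullback}, which gives
\[
q^*F_1\circ\pull{(r/f)}(r^*G_1)=(-1)^{fg}\,r^*G_1\circ\pull{(q/g)}(q^*F_1)
\]
in $\zcort{f\circ q}=\zcort{g\circ r}$. After substituting this into the LHS, the asserted equality reduces to the modular identity
\[
Fg+(F+f)t+fg\equiv FG+(F+f)k+Gf\pmod 2,
\]
which follows by substituting the degree relation $t\equiv G+k+g\pmod 2$ (derived from $|G(g^*k)|=-g+|t|$) and simplifying, making use of $f^2\equiv f$, $g^2\equiv g$ modulo~$2$; the difference collapses to $fG-Gf\equiv 0$.

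The main obstacle is careful sign bookkeeping through Koszul signs from Definition~\ref{tensor product}(5), the symmetry $\tau$ in the left extension, and the composition isomorphism. The critical subtlety is that in the fiber-product square $\text{rdim}\,q=\text{rdim}\,g$ and $\text{rdim}\,r=\text{rdim}\,f$---conflating these assignments produces incorrect intermediate signs. Once the relative dimensions are correctly tracked, the final sign identity follows routinely.
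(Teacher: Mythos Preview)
Your proof is correct and follows essentially the same approach as the paper's own proof: evaluate both sides on a generic local section, track the Koszul signs through the extension and composition definitions, and reduce the comparison to Lemma~\ref{vertical pullback vs horizontal pullback} plus the degree identity $t\equiv G+k+g\pmod 2$. The paper's computation is identical up to notation (it writes $(F-f)t$ where you write $(F+f)t$, which agree modulo~$2$, and it uses $\mO^f,\mO^g$ for your $F_1,G_1$).
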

\begin{proof}[Proof of Lemma \ref{commutativity vertical pullback vs horizontal pullback orientors}]
Let $\mO^f,\mO^g$ be local orientations of $f,g$, respectively. Let $q,k,r,t$ be local sections of $Q,K,R,T$, respectively. Assume \[F(f^*q)=\mO^f\otimes f^*r, G(g^*k)=\mO^g\otimes g^*t.\] Then
\begin{align*}
{}^TF\bu\expinv{(r/f)}G^{f^*Q}\left((f\circ q)^*(k\otimes q)\right)=&
\left(\Id_{\zcort{q}}\otimes  q^*\left({}^TF\right)\right)\left(\pull{(r/f)}\mO^g\otimes q^*f^*(t\otimes q)\right)\\
=&(-1)^{Fg+(F-f)t}\mO^f\circ \pull{(r/f)}\mO^g\otimes q^*f^*(t\otimes r).
\end{align*}
There is no sign from passing $\mO^f$ by $\mO^g$ by the definition of the composition isomorphism~\ref{composition isomorphism}.
On the other hand,
\begin{align*}
G^R\bu{}^{g^*K}\expinv{(q/g)}F\left((g\circ r)^*(k\otimes q)\right)=&(-1)^{(F-f)k}
\left(1_{\zcort{r}}\otimes  r^*\left(G^R\right)\right)\left(\pull{(q/g)}\mO^f\otimes r^*g^*(k\otimes r)\right)\\
=&(-1)^{Gf+(F-f)k}\mO^g\circ \pull{(q/r)}\mO^f\otimes r^*g^*(t\otimes r).
\end{align*}
By Lemma~\ref{vertical pullback vs horizontal pullback} we see that
\[
{}^TF\bu \expinv{(r/f)}G^{f^*Q}=(-1)^{s}G^R\,\,\bu\,\, {}^{g^*K}\expinv{(q/g)}F,
\]
with 
\begin{align*}
s=Fg+(F-f)t+Gf+(F-f)k+fg.
\end{align*}
However, 
$G+k\equiv_2 g+t,$
and thus, 
\begin{align*}
s\equiv_2Fg+(F-f)(G-g)+Gf+fg\equiv_2FG.
\end{align*}
\end{proof}
\begin{lemma}\label{horizontal expinv composition}
Assume that the squares in the following diagram are pullbacks.
\[
\begin{tikzcd}
M\times_NP\ar[r,"r"]\ar[d,"q"]&P\ar[r,"t"]\ar[d,"g"]&T\ar[d,"b"]\\
M\ar[r,"f"]&N\ar[r,"a"]&K
\end{tikzcd}
\]
Then the rectangle is also a pullback diagram, and
\[
\expinv{\left((t\circ r)/(a\circ f)\right)}=\expinv{(r/f)}\circ r^*\expinv{(t/a)}.
\]
\end{lemma}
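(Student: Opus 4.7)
The plan is to reduce the statement directly to Lemma~\ref{horizontal pullback composition} by unwinding Definition~\ref{pullback of orientor by pullback-diagram} on both sides.

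First, I would dispense with the ``moreover'' clause by observing that two adjacent pullback squares compose to a pullback rectangle; this is the standard pasting lemma and does not involve orientations. Next, fix an arbitrary \orientor{b} $B : Y \to \zcort{b} \otimes b^*Z$, where $Y, Z$ are $\Z/2$-bundles over $T, K$ respectively, and apply both sides of the claimed identity to $B$. By Definition~\ref{pullback of orientor by pullback-diagram}, the right-hand side $\expinv{(r/f)}\bigl(r^*\expinv{(t/a)}B\bigr)$ is the composition
\[
r^*t^*Y \xrightarrow{r^*t^*B} r^*t^*\zcort{b} \otimes r^*t^*b^*Z \xrightarrow{\,r^*\pull{(t/a)}\otimes\Id\,} r^*\zcort{g} \otimes r^*g^*a^*Z \xrightarrow{\,\pull{(r/f)}\otimes\Id\,} \zcort{q} \otimes q^*f^*a^*Z,
\]
while the left-hand side $\expinv{((t\circ r)/(a\circ f))}B$ is the composition
\[
(t\circ r)^*Y \xrightarrow{(t\circ r)^*B} (t\circ r)^*\zcort{b} \otimes (t\circ r)^*b^*Z \xrightarrow{\,\pull{((t\circ r)/(a\circ f))}\otimes\Id\,} \zcort{q} \otimes q^*(a\circ f)^*Z.
\]
Since $(t\circ r)^* = r^*t^*$ and $(a\circ f)^*=f^*a^*$ as functors on sheaves, the two composites agree in the $B$-factor and in the $Z$-factor, and the claim reduces to the equality of bundle maps
\[
\pull{((t\circ r)/(a\circ f))} = \pull{(r/f)} \circ r^*\pull{(t/a)} : (t\circ r)^*\zcort{b} \longrightarrow \zcort{q}.
\]
This is exactly Lemma~\ref{horizontal pullback composition}, so the proof concludes by quoting it.

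There is no real obstacle here: the statement is a functoriality/naturality property that, once both sides are written out, reduces entirely to the orientation identity already established in Lemma~\ref{horizontal pullback composition}. The only subtlety worth mentioning is the absence of Koszul signs in commuting $\pull{(r/f)}$ past the identity on $r^*g^*a^*Z$, which is immediate since the identity is of degree zero; no other sign computation is required.
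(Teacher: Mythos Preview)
Your proof is correct and follows exactly the approach of the paper: the paper's own proof is the single line ``This is a consequence of Lemma~\ref{horizontal pullback composition},'' and your argument spells out precisely why, by unwinding Definition~\ref{pullback of orientor by pullback-diagram} on both sides and reducing to the identity $\pull{((t\circ r)/(a\circ f))}=\pull{(r/f)}\circ r^*\pull{(t/a)}$.
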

\begin{proof}[Proof of Lemma \ref{horizontal expinv composition}]
This is a consequence of Lemma \ref{horizontal pullback composition}.
\end{proof}
\begin{lemma}\label{vertical pullback expinv composition}
Assume that the squares in the following diagrams are pullbacks.
\[
\begin{tikzcd}
M\times_NP\ar[r,"r"]\ar[d,"q"]&P\ar[d,"g"]\\
M\ar[r,"f"]\ar[d,"t"]&N\ar[d,"a"]\\
T\ar[r,"b"]&K   
\end{tikzcd}
\]
Then the rectangle is also a pullback diagram. Let $Q,R,S$ be $\Z/2$-bundles over $P,N,K,$ respectively. Assume $G$ is a \orientor{g} of $Q$ to $R$, and $A$ is a \orientor{a} of $R$ to $S$. Then
\[
\expinv{(r/b)}(A\bu G)=\left(\expinv{(f/b)}A\right)\bu \left(\expinv{(r/f)}G\right).
\]
\end{lemma}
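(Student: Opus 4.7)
The strategy is to prove this vertical analogue of Lemma~\ref{horizontal expinv composition} by invoking Lemma~\ref{vertical pullback composition} after unwinding the relevant definitions. Both sides are \orientor{(t\circ q)}s from $r^*Q$ to $(t\circ q)^*b^*S$, and I will exhibit them as explicit composites of bundle maps and match factor by factor. By Definition~\ref{orientor composition}, the orientor $A\bu G$ is the composite $(\text{comp.})\circ(\Id\otimes g^*A)\circ G$, and applying $\expinv{(r/b)}$ according to Definition~\ref{pullback of orientor by pullback-diagram} postcomposes $r^*(A\bu G)$ with $\pull{(r/b)}\otimes\Id$ on the $r^*\zcort{a\circ g}$ factor. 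On the right side, $\expinv{(f/b)}A\bu \expinv{(r/f)}G$ expands to $(\text{comp.})\circ(\Id\otimes q^*\expinv{(f/b)}A)\circ \expinv{(r/f)}G$, and each of the two inner pullbacks is then expanded using Definition~\ref{pullback of orientor by pullback-diagram}.

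The key identification is then supplied by Lemma~\ref{vertical pullback composition}, which asserts that under the composition isomorphism $r^*\zcort{a\circ g}\simeq r^*\zcort{g}\otimes r^*g^*\zcort{a}$ (Definition~\ref{composition isomorphism}), the map $\pull{(r/b)}$ agrees with $\pull{(r/f)}\otimes\pull{(f/b)}$, where the $\zcort{a}$ factor is transported along the equality $g\circ r=f\circ q$ to give $r^*g^*\zcort{a}=q^*f^*\zcort{a}$. Combining this with the naturality of the composition isomorphism and the fact that $r^*(\Id\otimes g^*A)$ factors as $\Id_{r^*\zcort{g}}\otimes q^*f^*A$, both composites reduce to one and the same map $r^*Q\to\zcort{t\circ q}\otimes(t\circ q)^*b^*S$, concluding the proof.

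The main obstacle I anticipate is the careful bookkeeping of Koszul signs that arise when reordering graded tensor factors, in particular when commuting the degree $|A|$ morphism past sections of $\zcort{a}$ of degree $-\dim a$. Since $\pull{(r/f)}$, $\pull{(f/b)}$, and the composition isomorphism are all degree zero, and Lemma~\ref{vertical pullback composition} is asserted as a commutative diagram without any sign correction, the Koszul contributions arising on the two sides are forced to be identical. The argument will therefore mirror the one-line reduction of Lemma~\ref{horizontal expinv composition} to Lemma~\ref{horizontal pullback composition}, with the additional step of tracking the orientor $A$ through the relevant tensor commutations.
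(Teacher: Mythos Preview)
Your proposal is correct and follows the same approach as the paper, which simply states that the lemma is a consequence of Lemma~\ref{vertical pullback composition}. You have spelled out in detail the unwinding of Definitions~\ref{orientor composition} and~\ref{pullback of orientor by pullback-diagram} that makes this reduction work, correctly observing that no Koszul signs arise because the maps $\pull{(r/f)}$, $\pull{(f/b)}$, and the composition isomorphism all have degree zero.
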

\begin{proof}[Proof of Lemma \ref{vertical pullback expinv composition}]
This is a consequence of Lemma \ref{vertical pullback composition}.
\end{proof}
\subsubsection{Pullback along a relatively oriented commutative diagram}
\begin{definition}\label{pullback along a relatively oriented commutative diagram definition}
Let 
\[
\begin{tikzcd}
O\ar[r,"r"]\ar[d,"q"]&P\ar[d,"g"]\\M\ar[r,"f"]&N
\end{tikzcd}
\]
be a commutative diagram such that $r,f$ are relatively oriented, with relative orientations $\mO^r,\mO^f$. Let $K,R$ be $\Z/2$-bundles over $P,N,$ respectively, and let $G$ be a \orientor{g} from $K$ to $R$. \textbf{The pullback of $G$ by $r$ over $f$} is the unique \orientor{q} of $r^*K$ to $f^*R$, denoted $\expinv{(r/f)}G$, given by the following equation.
\[
\expinv{(r,\mO^r)}G=\left(\phi^{\mO^f}\right)^{R}\bu \expinv{\left(r/f\right)}G.
\]Its existence and uniqueness are guaranteed by Lemma~\ref{orientor invertion lemma}.
\end{definition}
\begin{remark}\label{enhanced pullback alternative equivalent definition remark}
Recall the isomorphism $\pull{(r/f)}:r^*\zcort{g}\to \zcort{q}$ from Definition~\ref{enhanced pullback fiber product orientation convention}.
It is easy to check by hand that the composite map
\[
r^*K\xrightarrow{r^*G}r^*\zcort{g}\otimes r^*g^*R\xrightarrow{\pull{(r/f)}\otimes1}\zcort{q}\otimes q^*f^*R
\]
satisfies the defining property of $\expinv{(r/f)}G$.
\end{remark}
\begin{remark}
In addition to the above remark, if the diagram is a fiber-product and $\mO^r=\pull{\left(q/g\right)}\mO^f$, then both definitions of $\expinv{\left(r/f\right)}G$ coincide. This follows by the uniqueness in the above definition and Lemma~\ref{pullback of composition of orientors by relatively oriented} applied to the following diagram,
\[
\begin{tikzcd}
O\ar[r,"r"]\ar[d,"q"]&P\ar[d,"g"]\\M\ar[r,"f"]&N\ar[r,"\Id_N"]&N
\end{tikzcd}
\]
noting that $\expinv{(f,\mO^f)}\Id_{R}=\left(\phi^{\mO^f}\right)^R.$
\end{remark}
\begin{lemma}\label{pullback of composition of orientors by relatively oriented enhanced}
Consider the following commutative diagram.
\[
\begin{tikzcd}
O\ar[r,"r"]\ar[d,"q"]&P\ar[d,"g"]\\M\ar[r,"f"]&N\ar[r,"h"]&L
\end{tikzcd}
\]
Let $\mO^f,\mO^r$ be relative orientations of $f,r$.
Let $K,R,T$ be $\Z/2$-bundles over $P,N,L$, respectively. Assume $G:K\to \zcort{g}\otimes g^*R$ and $H:R\to \zcort{h}\otimes h^*T$ are orientors. Let $\mO^f$ be a relative orientation of $f$. Set $\mO^r:=\pull{(q/g)}\mO^f$. Then
\begin{equation}
\label{pullback of orientor by lifted diffeomorphisms enhanced}
\expinv{\left(r,\mO^{r}\right)}(H\bu G)=(-1)^{(r-f)H}\expinv{\left(f,\mO^{f}\right)}(H)\bu \expinv{(r/f)}(G).
\end{equation}
\end{lemma}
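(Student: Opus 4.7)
The plan is to extend the pullback-case argument of Lemma~\ref{pullback of composition of orientors by relatively oriented} to the present commutative-diagram setting. The single ingredient that changes is that here $\mO^r$ is prescribed independently of $\mO^f$ rather than being forced by the fiber-product convention, and the relative dimensions of $r$ and $f$ may differ, which is precisely why the prefactor $(-1)^{(r-f)H}$ must appear.

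Concretely, I would first apply Definition~\ref{pullback of orientor} to the orientor $H\bu G$ to obtain
\[
\expinv{(r,\mO^r)}(H\bu G)=(-1)^{r(H+G)}\,(H\bu G)\bu(\phi^{\mO^r})^{K}.
\]
Next I would rewrite the right-most pair of factors: applying Definition~\ref{pullback of orientor} together with the defining relation of the enhanced pullback (Definition~\ref{pullback along a relatively oriented commutative diagram definition}) to $G$ alone yields the key substitution
\[
G\bu(\phi^{\mO^r})^{K}=(-1)^{rG}\,(\phi^{\mO^f})^{R}\bu\expinv{(r/f)}G.
\]
Inserting this into the previous display and invoking associativity of composition (Lemma~\ref{composition of orientors is associative}) regroups the expression as $H\bu(\phi^{\mO^f})^{R}\bu\expinv{(r/f)}G$ carrying the accumulated sign $(-1)^{r(H+G)+rG}=(-1)^{rH}$.

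Finally, one more application of Definition~\ref{pullback of orientor} identifies $H\bu(\phi^{\mO^f})^{R}$ with $(-1)^{fH}\expinv{(f,\mO^f)}H$, producing the right-hand side with total sign $(-1)^{rH+fH}=(-1)^{(r-f)H}$, as claimed. The only potential obstacle is sign bookkeeping, but every step is a direct substitution; no new geometric or categorical input beyond the listed definitions and the associativity lemma is required.
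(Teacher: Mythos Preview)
Your argument is correct: the three substitutions using Definition~\ref{pullback of orientor}, Definition~\ref{pullback along a relatively oriented commutative diagram definition}, and Lemma~\ref{composition of orientors is associative} combine exactly as you describe, and the sign bookkeeping checks out. The paper does not actually supply a proof for this lemma (it is stated without proof, as an immediate extension of Lemma~\ref{pullback of composition of orientors by relatively oriented} once the enhanced pullback is defined), so your write-up fills in what the paper leaves implicit.
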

\begin{lemma}\label{horizontal expinv composition enhanced}
Assume that the squares in the following diagram are commutative, and that the horizontal arrows are relatively oriented, with relative orientations $\mO^r,\mO^f, \mO^t,\mO^a$.
\[
\begin{tikzcd}
O\ar[r,"r"]\ar[d,"q"]&P\ar[r,"t"]\ar[d,"g"]&T\ar[d,"b"]\\
M\ar[r,"f"]&N\ar[r,"a"]&K
\end{tikzcd}
\]
Then
\[
\expinv{\left((t\circ r)/(a\circ f)\right)}=(-1)^{a(r-f)}\expinv{(r/f)}\circ r^*\expinv{(t/a)}.
\]
\end{lemma}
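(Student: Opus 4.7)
The plan is to invoke the uniqueness clause of Lemma~\ref{orientor invertion lemma} after expanding both sides of the claimed identity into defining equations of the form provided by Definition~\ref{pullback along a relatively oriented commutative diagram definition}.

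Fix an \orientor{b} $H$ from a bundle $\mathcal{A}$ over $T$ to a bundle $\mathcal{B}$ over $K$, and set $\mO^{t\circ r}:=\mO^t\circ \mO^r$ and $\mO^{a\circ f}:=\mO^a\circ \mO^f$. Applying Definition~\ref{pullback along a relatively oriented commutative diagram definition} to the outer rectangle gives the defining equation
\[
\expinv{(t\circ r,\,\mO^{t\circ r})}H=\left(\phi^{\mO^{a\circ f}}\right)^{\mathcal{B}}\bu\expinv{((t\circ r)/(a\circ f))}H.
\]
I would rewrite the left-hand side using Lemma~\ref{pullback by composition relatively oriented functoriality lemma} as $\expinv{(r,\mO^r)}\expinv{(t,\mO^t)}H$, and then apply Definition~\ref{pullback along a relatively oriented commutative diagram definition} to the right square to obtain $\expinv{(t,\mO^t)}H=\left(\phi^{\mO^a}\right)^{\mathcal{B}}\bu\expinv{(t/a)}H$.

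The main step is the application of Lemma~\ref{pullback of composition of orientors by relatively oriented enhanced} to the left square, with $a$ playing the role of the auxiliary arrow out of $N$, so as to distribute $\expinv{(r,\mO^r)}$ across the $\bu$-composition. Since $\deg \left(\phi^{\mO^a}\right)^{\mathcal{B}}=\deg \phi^{\mO^a}=-\mathrm{rdim}\,a$, this yields
\[
\expinv{(r,\mO^r)}\expinv{(t,\mO^t)}H=(-1)^{a(r-f)}\expinv{(f,\mO^f)}\left(\phi^{\mO^a}\right)^{\mathcal{B}}\bu\expinv{(r/f)}\expinv{(t/a)}H.
\]
It remains to prove $\expinv{(f,\mO^f)}\left(\phi^{\mO^a}\right)^{\mathcal{B}}=\left(\phi^{\mO^{a\circ f}}\right)^{\mathcal{B}}$. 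Unfolding Definition~\ref{pullback of orientor}, the left side equals $(-1)^{af}\left(\phi^{\mO^a}\right)^{\mathcal{B}}\bu\left(\phi^{\mO^f}\right)^{a^*\mathcal{B}}$. A direct verification on sections shows that right extension commutes with $\bu$-composition, so this equals $(-1)^{af}\left(\phi^{\mO^a}\bu\phi^{\mO^f}\right)^{\mathcal{B}}$, and Lemma~\ref{composition of relative orientation orientors lemma} contributes a further $(-1)^{af}$, cancelling the first.

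Substituting back and comparing with the defining equation, the uniqueness in Lemma~\ref{orientor invertion lemma} applied to the bundle-map isomorphism $\left(\phi^{\mO^{a\circ f}}\right)^{\mathcal{B}}$ forces the claimed identity. As a sanity check, when both squares are fiber products with $\mO^r=\pull{(q/g)}\mO^f$ and $\mO^t=\pull{(g/b)}\mO^a$, the exponent $r-f$ vanishes and the formula recovers Lemma~\ref{horizontal expinv composition}. The main obstacle will be the sign bookkeeping, particularly confirming the compatibility of the right extension $(-)^{\mathcal{B}}$ with $\bu$-composition; the remaining steps are direct applications of the previously established lemmas.
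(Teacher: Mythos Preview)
Your proof is correct and follows essentially the same route as the paper's: both arguments use the uniqueness in Definition~\ref{pullback along a relatively oriented commutative diagram definition} together with Lemma~\ref{composition of relative orientation orientors lemma}. Your write-up simply unpacks the intermediate steps the paper leaves implicit, in particular the appeal to Lemma~\ref{pullback of composition of orientors by relatively oriented enhanced} to distribute $\expinv{(r,\mO^r)}$, and the compatibility of right extension with $\bu$-composition (which is Lemma~\ref{orientors tensor distributivity} in the paper rather than something needing a fresh verification).
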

\begin{proof}
This is a consequence of the uniqueness in Definition~\ref{pullback along a relatively oriented commutative diagram definition}, applying Lemma~\ref{composition of relative orientation orientors lemma}.
\end{proof}

\subsubsection{Base extension}
\begin{definition}\label{pulled orientation orientor}
Let $\W$ be a manifold with corners. Let $A,X,Y$ be orbifolds with corners over $\W$ and let $f:X\to Y$ be a map over $\W$. Let $Q,K$ be $\Z/2$-bundles over $X,Y$, respectively, and let $F$ be a \orientor{f} of $Q$ to $K$. Consider the following Cartesian diagrams.
\[
\begin{tikzcd}
A\times_\W X\ar[d,swap,"\Id\times f"]\ar[r,"\pi^{A\times X}_X"]&X\ar[d,"f"]\\A\times_\W Y\ar[r,swap,"\pi^{A\times Y}_Y"]&Y
\end{tikzcd} \qquad
\begin{tikzcd}
X\times_\W A\ar[d,swap," f\times\Id"]\ar[r,"\pi^{X\times A}_X"]&X\ar[d,"f"]\\Y\times_\W A\ar[r,swap,"\pi^{Y\times A}_Y"]&Y
\end{tikzcd}
\]
We denote the pullbacks
\[
F^A:=\expinv{(\pi^{A\times X}_X/\pi^{A\times Y}_Y)}F,\qquad {}^AF:=\expinv{(\pi^{X\times A}_X/\pi^{Y\times A}_Y)}F.
\]
\end{definition}
\begin{lemma}\label{base extension is multiplicative expinv}
Let $\W$ be a parameter space, let $A,B,X,Y$ be orbifolds with corners over $\W$, and let $f:X\to Y$ be a map over $\W$.
If $Q,K$ are $\Z/2$-bundles over $X,Y$, respectively, $F$ is an \orientor{f} of $Q$ to $K$, then
\begin{align*}
F^{A\times_\W B}&=\left(F^{B}\right)^A,\\
{}^{A\times_\W B}F&={}^B\left({}^AF\right).
\end{align*}
\end{lemma}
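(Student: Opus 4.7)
The plan is to reduce both identities to an application of Lemma~\ref{horizontal expinv composition}, which expresses the pullback of an orientor along a horizontal composition of pullback squares as the composition of pullbacks along each square separately.

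I would first consider the diagram
\[
\begin{tikzcd}
A\times_\W B\times_\W X \ar[r] \ar[d,swap,"\Id\times\Id\times f"] & B \times_\W X \ar[r,"\pi^{B\times X}_X"] \ar[d,"\Id\times f"] & X \ar[d,"f"] \\
A \times_\W B \times_\W Y \ar[r] & B \times_\W Y \ar[r,swap,"\pi^{B\times Y}_Y"] & Y
\end{tikzcd}
\]
using the canonical identifications $A\times_\W B\times_\W X \simeq A\times_\W (B\times_\W X)\simeq (A\times_\W B)\times_\W X$, and similarly for $Y$. Each of the two inner squares is a pullback square (since a fiber product of a projection with the identity map along a common base is again a projection), hence so is the outer rectangle.

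Applying Lemma~\ref{horizontal expinv composition} to the outer rectangle gives
\[
\expinv{\bigl((\pi^{B\times X}_X\circ\pi)/(\pi^{B\times Y}_Y\circ\pi)\bigr)}F = \expinv{(\pi/\pi)}\circ \pi^*\expinv{(\pi^{B\times X}_X/\pi^{B\times Y}_Y)}F,
\]
where the unlabeled $\pi$ denotes the appropriate projection from the triple product. By definition of base extension in Definition~\ref{pulled orientation orientor}, the inner right pullback is exactly $F^B$, and pulling that back along the left square (a $\cdot^A$ extension for $F^B$, viewed as an orientor of $\Id_B\times f$) is exactly $(F^B)^A$. The outer pullback, in turn, is $F^{A\times_\W B}$. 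This gives the first identity. The second identity ${}^{A\times_\W B}F = {}^B({}^AF)$ follows in exactly the same way from the mirror diagram with all product factors on the right:
\[
\begin{tikzcd}
X\times_\W A\times_\W B \ar[r] \ar[d,swap,"f\times\Id\times\Id"] & X\times_\W A \ar[r,"\pi^{X\times A}_X"] \ar[d,"f\times\Id"] & X \ar[d,"f"] \\
Y\times_\W A \times_\W B \ar[r] & Y\times_\W A \ar[r,swap,"\pi^{Y\times A}_Y"] & Y
\end{tikzcd}
\]

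I do not expect any real obstacle here, since all the work has been done in Lemma~\ref{horizontal expinv composition}; the only point to check is that the canonical identifications of iterated fiber products are compatible with the projections appearing in Definition~\ref{pulled orientation orientor}, which is immediate.
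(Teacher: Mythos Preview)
Your proposal is correct and matches the paper's own proof almost verbatim: the paper also reduces both identities to Lemma~\ref{horizontal expinv composition} applied to exactly the two diagrams you drew. There is nothing to add.
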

\begin{proof}[Proof of Lemma \ref{base extension is multiplicative expinv}]
This is an immediate consequence of Lemma \ref{horizontal expinv composition} applied to the following diagrams of fiber-products.
\[
\begin{tikzcd}
A\times_\W B\times_\W X\ar[r,"\pi_{B\times X}"]\ar[d,swap,"\Id\times\Id\times f"]&B\times_\W X\ar[r,"\pi_X"]\ar[d,"\Id\times f"]&X\ar[d,"f"]\\
A\times_\W B\times_\W Y\ar[r,swap,"\pi_{B\times Y}"]&B\times_\W Y\ar[r,swap,"\pi_Y"]&Y
\end{tikzcd}\qquad \begin{tikzcd}
X\times_\W A\times_\W B\ar[r,"\pi_{ X\times A}"]\ar[d,swap,"f\times\Id\times\Id"]& X\times_\W A\ar[r,"\pi_X"]\ar[d,"f\times\Id"]&X\ar[d,"f"]\\
Y\times_\W A\times_\W B\ar[r,swap,"\pi_{Y\times A}"]&Y\times_\W A\ar[r,swap,"\pi_Y"]&Y
\end{tikzcd}
\]
\end{proof}
\begin{lemma}\label{base extension is distributive expinv}
Let $A,X,Y,Z$ be orbifolds with corners over a parameter space $\W$, and $X\overset{f}\to Y\overset{g}\to Z$ be maps over $\W$. 
If $Q,K,R$ are $\Z/2$-bundles over $X,Y,Z$, respectively, $F$ is an \orientor{f} of $Q$ to $K$ and $G$ is a \orientor{g} of $K$ to $R$, then
\begin{align*}
\left(G\bu F\right)^A&=G^A\bu F^A,\qquad
{}^A\left(G\bu F\right)={}^AG\bu \,\,{}^AF.    
\end{align*}
\end{lemma}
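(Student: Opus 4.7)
The plan is to reduce both claimed equalities to Lemma \ref{vertical pullback expinv composition}, which asserts that, for a vertically stacked pair of pullback squares, the pullback of a composition of orientors factors as the composition of the pullbacks. The base extensions $(-)^A$ and ${}^A(-)$ are by Definition \ref{pulled orientation orientor} precisely such pullbacks for the specific pullback diagrams arising from the product-with-$A$ construction over $\W$.

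For the right-extension identity, I would consider the diagram
\[
\begin{tikzcd}
A\times_\W X\ar[r,"\pi^{A\times X}_X"]\ar[d,swap,"\Id\times f"]&X\ar[d,"f"]\\
A\times_\W Y\ar[r,"\pi^{A\times Y}_Y"]\ar[d,swap,"\Id\times g"]&Y\ar[d,"g"]\\
A\times_\W Z\ar[r,swap,"\pi^{A\times Z}_Z"]&Z
\end{tikzcd}
\]
in which both squares are pullbacks (the top by definition of $(-)^A$ for $f$, the bottom for $g$). Lemma \ref{vertical pullback expinv composition} applied to this stacked diagram with the orientors $F$ and $G$ gives
\[
\expinv{\left(\pi^{A\times X}_X/\pi^{A\times Z}_Z\right)}(G\bu F)=\expinv{\left(\pi^{A\times Y}_Y/\pi^{A\times Z}_Z\right)}G\,\bu\,\expinv{\left(\pi^{A\times X}_X/\pi^{A\times Y}_Y\right)}F.
\]
Unpacking Definition \ref{pulled orientation orientor}, the right-hand side is exactly $G^A\bu F^A$, and the left-hand side is $(G\bu F)^A$, since $\pi^{A\times X}_X=\pi^{A\times Y}_Y\circ(\Id\times f)$ and composition in the base is compatible.

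For the left-extension identity, I would apply the same argument to the mirrored stacked pullback diagram
\[
\begin{tikzcd}
X\times_\W A\ar[r,"\pi^{X\times A}_X"]\ar[d,swap,"f\times\Id"]&X\ar[d,"f"]\\
Y\times_\W A\ar[r,"\pi^{Y\times A}_Y"]\ar[d,swap,"g\times\Id"]&Y\ar[d,"g"]\\
Z\times_\W A\ar[r,swap,"\pi^{Z\times A}_Z"]&Z
\end{tikzcd}
\]
which again consists of two pullback squares, so Lemma \ref{vertical pullback expinv composition} yields ${}^A(G\bu F)={}^AG\bu {}^AF$.

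The only point that requires care is verifying that the stacked diagrams above are genuinely pullback diagrams, so that Lemma \ref{vertical pullback expinv composition} applies without the essential-fiber-product caveat; this is standard, since fiber products over $\W$ are associative and the vertical maps $\Id\times f,\Id\times g$ are the pullbacks of $f,g$ along $\pi^A:A\to\W$. Once this is observed there are no signs or further calculations to track, and the lemma follows. I expect no real obstacle: the whole content is that base extension \emph{is} a particular instance of pullback by a fiber-product square, and Lemma \ref{vertical pullback expinv composition} is exactly the distributivity of such a pullback over composition.
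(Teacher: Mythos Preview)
Your proposal is correct and follows exactly the same approach as the paper: the paper's proof simply states that the lemma is an immediate consequence of Lemma~\ref{vertical pullback expinv composition} applied to the two stacked pullback diagrams you wrote down.
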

\begin{proof}[Proof of Lemma \ref{base extension is distributive expinv}]
This is an immediate consequence of Lemma \ref{vertical pullback expinv composition} applied to the following diagrams of fiber-products.
\[
\begin{tikzcd}
A\times_\W X\ar[d,swap,"\Id\times f"]\ar[r,"\pi_X"]&X\ar[d,"f"]\\A\times_\W Y\ar[d,swap,"\Id\times g"]\ar[r,"\pi_Y"]&Y\ar[d,"g"]\\A\times_\W Z\ar[r,swap,"\pi_Z"]&Z
\end{tikzcd}\qquad
\begin{tikzcd}
X\times_\W A\ar[d,swap,"f\times\Id"]\ar[r,"\pi_X"]&X\ar[d,"f"]\\Y\times_\W A\ar[d,swap,"g\times\Id"]\ar[r,"\pi_Y"]&Y\ar[d,"g"]\\Z\times_\W A\ar[r,swap,"\pi_Z"]&Z
\end{tikzcd}
\]
\end{proof}
\subsubsection{Base extension and relatively oriented pullback}
\begin{lemma}\label{pulled orientation orientor is pullback orientor of orientation}
In the setting of Definition \ref{pulled orientation orientor}, if $\mO^f$ is a relative orientation of $f$, then $1\times\mO^f$ and $\mO^f\times 1$ are relative orientations for $\Id\times f$ and $f\times\Id$, respectively, and
\[
\phi^{1\times \mO^f}=\left(\phi^{\mO^f}\right)^A,\qquad \phi^{\mO^f\times 1}={}^A\left(\phi^{\mO^f}\right).
\]
\end{lemma}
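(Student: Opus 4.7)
The plan is to verify the two claims by unwinding the definitions. First, I want to check that $1\times \mO^f$ and $\mO^f\times 1$ really are relative orientations. By Definition~\ref{pulled to product oreintation definition}, $1\times\mO^f = \pull{(\pi^{A\times X}_X/\pi^{A\times Y}_Y)}\mO^f$, which is a global section of $\zcort{\Id\times f}$ because $\mO^f$ is a global section of $\zcort{f}$ and $\pull{(\cdot/\cdot)}$ is defined on the total space of the relative orientation bundle via pulling back (Definition~\ref{pullback fiber product orientation convention}). The same holds for $\mO^f\times 1$ with the right-hand square.

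Next, I would argue that both $\phi^{1\times\mO^f}$ and $(\phi^{\mO^f})^A$ are $\Z/2$-equivariant maps $\underline{\Z/2}\to \zcort{\Id\times f}$, using the canonical identification of $(\pi^{A\times X}_X)^*\underline{\Z/2}$ with $\underline{\Z/2}$ (and likewise on the target). As such they are determined by their value on the canonical section $1$, so it suffices to check agreement there. On the one hand, by Definition~\ref{orientation as orientor},
\[
\phi^{1\times\mO^f}(1)=1\times\mO^f = \pull{(\pi^{A\times X}_X/\pi^{A\times Y}_Y)}\mO^f.
\]
On the other hand, by Definitions~\ref{pulled orientation orientor} and~\ref{pullback of orientor by pullback-diagram}, applied to the \orientor{f} $\phi^{\mO^f}$,
\[
(\phi^{\mO^f})^A(1)=\bigl(\pull{(\pi^{A\times X}_X/\pi^{A\times Y}_Y)}\otimes \Id\bigr)\bigl((\pi^{A\times X}_X)^*\phi^{\mO^f}(1)\bigr)=\pull{(\pi^{A\times X}_X/\pi^{A\times Y}_Y)}\mO^f.
\]
These agree, establishing the first equation.

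The second equation $\phi^{\mO^f\times 1}={}^A(\phi^{\mO^f})$ follows by exactly the same argument applied to the right-hand pullback square in Definition~\ref{pulled orientation orientor}, modulo a symmetry factor $\tau$ appearing in the definition of left-extension (Definition~\ref{tensored orientor extension}); but that $\tau$ acts trivially here because both source and target of $\phi^{\mO^f}$ are $\underline{\Z/2}$ concentrated in degree $0$, so no Koszul sign is generated. The main (and only) subtlety is to keep track of the canonical identifications $(\pi_X)^*\underline{\Z/2}\simeq\underline{\Z/2}$ so that it is sensible to compare the two orientors as maps with the same source and target; once that is fixed, everything reduces to the tautological equality $\phi^{\mO}(1)=\mO$ together with the definition of $\pull{(\cdot/\cdot)}$.
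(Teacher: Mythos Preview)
Your proof is correct and follows essentially the same approach as the paper: both sides are $\Z/2$-equivariant maps out of $\underline{\Z/2}$, so one simply checks they agree on the section $1$, where each sends $1\mapsto \pull{(\pi_X/\pi_Y)}\mO^f$. One minor clarification: the notation ${}^A(\phi^{\mO^f})$ in this lemma refers to \emph{base extension} (Definition~\ref{pulled orientation orientor}), not \emph{bundle extension} (Definition~\ref{tensored orientor extension}); so there is no symmetry operator $\tau$ in the relevant definition at all, and your remark about $\tau$ acting trivially, while harmless, is beside the point.
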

\begin{remark}\label{pulled orientation orientor is pullback orientor of orientation local diffeomorphism}
In the setting of Definition \ref{pulled orientation orientor}, if $f$ is a local diffeomorphism, then $\Id\times f$ and $f\times\Id$ are local diffeomorphisms and
\[
\phi_{\Id\times f}=\left(\phi_f\right)^A,\qquad \phi_{f\times\Id}={}^A\left(\phi_f\right).
\]
This follows at once from Lemmas \ref{pulled orientation orientor is pullback orientor of orientation}, \ref{pullback to product of vertical local diffeomorphism  remark}.
\end{remark}
\begin{proof}[Proof of Lemma \ref{pulled orientation orientor is pullback orientor of orientation}]
For the first equation, by definition, both sides are given by
\[
\underline{\Z/2}\ni\underline 1\mapsto 1\times\mO^f = \pull{\left(\pi^{A\times X}_X/\pi^{A\times Y}_Y\right)}\mO^f\in \zcort{1\times \mO^f}.
\]
For the second equation, by definition, both sides are given by
\[
\underline{\Z/2}\ni\underline 1\mapsto \mO^f\times 1 = \pull{\left(\pi^{X\times A}_X/\pi^{Y\times A}_Y\right)}\mO^f\in \zcort{\mO^f\times 1}.
\]
\end{proof}
\begin{lemma}\label{base extension of pullback is extended pullback of base extension lemma}
With the setting of Lemma \ref{base extension is distributive expinv}, assume $\mO^f$ is a relative orientation of $f$. The $1\times \mO^f$ and $\mO^f\times 1$ are relative orientations of $\Id\times f$ and $f\times\Id,$ respectively. Let $G$ be a \orientor{g}. Then
\[
\left(\expinv{\left(f,\mO^f\right)}G\right)^A=\expinv{\left(\Id\times f,1\times\mO^f\right)}\left(G^A\right), \qquad {}^A\left(\expinv{\left(f,\mO^f\right)}G\right)=\expinv{\left( f\times\Id,\mO^f\times 1\right)}\left({}^AG\right),
\]
\end{lemma}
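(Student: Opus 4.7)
The plan is to unpack Definition \ref{pullback of orientor} on both sides and use distributivity of base extension over composition (Lemma \ref{base extension is distributive expinv}) to reduce each identity to a bookkeeping statement about the orientor $\phi^{\mO^f}$ and its right extension by the target bundle $K$. Both equations proceed identically, so I will focus on the first; the second is obtained by the analogous argument with $(-)^A$ replaced by ${}^A(-)$ and the second half of Lemma \ref{pulled orientation orientor is pullback orientor of orientation} used to identify ${}^A(\phi^{\mO^f})$ with $\phi^{\mO^f\times 1}$.

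First, I would write
\[
\expinv{(f,\mO^f)}G = (-1)^{fG}\, G \bu (\phi^{\mO^f})^K
\]
by Definition \ref{pullback of orientor}, apply $(-)^A$, and use Lemma \ref{base extension is distributive expinv} to obtain
\[
\left(\expinv{(f,\mO^f)}G\right)^A = (-1)^{fG}\, G^A \bu \left((\phi^{\mO^f})^K\right)^A.
\]
On the other hand, Definition \ref{pullback of orientor} applied directly to $\Id\times f$ with relative orientation $1\times \mO^f$ gives
\[
\expinv{(\Id\times f,\,1\times \mO^f)}(G^A) = (-1)^{(\Id\times f)(G^A)}\, G^A \bu (\phi^{1\times \mO^f})^{\pi_Y^*K},
\]
where $\pi_Y^*K$ denotes the pullback of $K$ to $A\times_\W Y$. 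Since $\mathrm{rdim}(\Id\times f)=\mathrm{rdim}(f)$ and $\deg G^A=\deg G$, the global signs agree, and the first identity reduces to the auxiliary equality
\[
\left((\phi^{\mO^f})^K\right)^A = (\phi^{1\times \mO^f})^{\pi_Y^*K}.
\]

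I expect this auxiliary equality to be the main obstacle, though it is essentially bookkeeping. Both sides are \orientor{\Id\times f}s from $\pi_X^*f^*K = (\Id\times f)^*\pi_Y^*K$ to $\pi_Y^*K$, so I would establish equality by evaluating on a generator $\pi_X^*f^*k$. By Lemma \ref{pulled orientation orientor is pullback orientor of orientation}, the right-hand side sends it to $(1\times \mO^f)\otimes \pi_Y^*k$; meanwhile, applying Definition \ref{pullback of orientor by pullback-diagram} to $(\phi^{\mO^f})^K(f^*k)=\mO^f\otimes k$ yields $\pull{(\pi_X/\pi_Y)}(\mO^f)\otimes \pi_Y^*k$, which equals $(1\times \mO^f)\otimes \pi_Y^*k$ by Definition \ref{pulled to product oreintation definition}. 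Conceptually, the claim expresses the commutativity of base extension and right extension by a bundle on the target, since neither operation modifies the relative-orientation component; assembling the pieces yields the lemma.
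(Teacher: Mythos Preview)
Your proof is correct and follows essentially the same route as the paper: unpack Definition~\ref{pullback of orientor}, apply Lemma~\ref{base extension is distributive expinv}, match signs via $\mathrm{rdim}(\Id\times f)=\mathrm{rdim}(f)$ and $\deg G^A=\deg G$, and reduce to the auxiliary identity $\left((\phi^{\mO^f})^K\right)^A=(\phi^{1\times\mO^f})^{\pi_Y^*K}$. The only cosmetic difference is that the paper verifies the auxiliary identity by first observing $\left((\phi^{\mO^f})^K\right)^A=\left((\phi^{\mO^f})^A\right)^{\pi_Y^*K}$ and then invoking Lemma~\ref{pulled orientation orientor is pullback orientor of orientation}, whereas you evaluate both sides directly on a generator; the content is the same.
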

\begin{proof}[Proof of Lemma \ref{base extension of pullback is extended pullback of base extension lemma}]
We show the first equality. The second equality is left to the reader. First, observe,
\begin{equation}
\label{base extension bundle extension commutativity}
\left(\left(\phi^{\mO^f}\right)^{K}\right)^A=\left(\left(\phi^{\mO^f}\right)^A\right)^{\pi_Y^*K}\overset{\text{Lem. \ref{pulled orientation orientor is pullback orientor of orientation}}}=\left(\phi^{1\times \mO^f}\right)^{\pi_Y^*K}.
\end{equation}
Therefore, the lemma is obtained by the following calculation.
\begin{align*}
(\expinv{\left(f,\mO^f\right)}G)^A\overset{\text{Def. \ref{pullback of orientor}}}=&(-1)^{fG}\left(G\bu \left(\phi^{\mO^f}\right)^K\right)^A\\
\overset{\text{Rem. \ref{base extension is distributive expinv}}}=&(-1)^{fG}G^A\bu \left(\left(\phi^{\mO^f}\right)^{K}\right)^A
\\
\overset{\text{eq. \eqref{base extension bundle extension commutativity}}}=&(-1)^{fG}G^A\bu \left(\phi^{1\times \mO^f}\right)^{\pi_Y^*K}
\\
\overset{\text{Def. \ref{pullback of orientor}}}=&\expinv{\left(\Id\times f,1\times\mO^f\right)}(G^A).
\end{align*}
\end{proof}
\begin{lemma}\label{pullback of pullback by diffeomorphism of orientor}
With the setting of Lemma \ref{base extension is distributive expinv}, assume $f$ is a local diffeomorphism. Then, $\Id\times f$ and $f\times\Id$ are local diffeomorphisms. Let $G$ be a \orientor{g}. Then,
\[
\left(\expinv{f}G\right)^A=\expinv{(\Id\times f)}\left(G^A\right), \qquad {}^A\left(\expinv{f}G\right)=\expinv{(f\times\Id)}\left({}^AG\right).
\]
\end{lemma}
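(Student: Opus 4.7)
The plan is to derive this as a direct specialization of Lemma~\ref{base extension of pullback is extended pullback of base extension lemma}, passing from general relative orientations to the canonical relative orientations of local diffeomorphisms. First, I would note that $\Id\times f$ and $f\times\Id$ are local diffeomorphisms: indeed, since $f$ is a local diffeomorphism and the relevant squares are Cartesian, the base-change maps are again local diffeomorphisms, a standard consequence of the pullback property (and compatible with the canonical orientations via Lemma~\ref{pullback of vertical local diffeomorphism lemma}). In particular, $\expinv{(\Id\times f)}$ and $\expinv{(f\times\Id)}$ make sense as the special cases of $\expinv{(\cdot,\mO^\cdot_c)}$ applied to the canonical relative orientations.

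Next, I would invoke Remark~\ref{pullback to product of vertical local diffeomorphism remark}, which identifies the base-extended canonical orientations with the canonical orientations of the base-extended maps:
\[
1\times \mO^f_c = \mO^{\Id\times f}_c, \qquad \mO^f_c\times 1 = \mO^{f\times\Id}_c.
\]
Then I would apply Lemma~\ref{base extension of pullback is extended pullback of base extension lemma} to the choice $\mO^f:=\mO^f_c$. For the first claimed equality this yields
\[
\left(\expinv{(f,\mO^f_c)}G\right)^A = \expinv{(\Id\times f,\,1\times \mO^f_c)}\left(G^A\right),
\]
and rewriting both sides with the shorthand $\expinv{f} = \expinv{(f,\mO^f_c)}$ (Definition~\ref{pullback of orientor}) together with the identification above gives the first assertion. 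The second assertion follows symmetrically, using the second Cartesian diagram in Definition~\ref{pulled orientation orientor} and the identification $\mO^f_c\times 1 = \mO^{f\times\Id}_c$.

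I do not expect any real obstacle here, since the lemma is essentially bookkeeping: every ingredient has already been isolated in the preceding lemmas and remarks. The only point worth double-checking is the compatibility of notation, namely that the shorthand $\expinv{(\cdot)}$ for a local diffeomorphism is consistent with the shorthand $\expinv{(\cdot,\mO^\cdot_c)}$ under base extension, which is exactly what Remark~\ref{pullback to product of vertical local diffeomorphism remark} guarantees.
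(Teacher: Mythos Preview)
Your proposal is correct and follows essentially the same approach as the paper: the paper's proof states that the lemma is an immediate consequence of Lemma~\ref{base extension of pullback is extended pullback of base extension lemma} and Lemma~\ref{pullback of vertical local diffeomorphism lemma}, and your use of Remark~\ref{pullback to product of vertical local diffeomorphism remark} is exactly the packaged consequence of the latter lemma needed to specialize the former to canonical orientations.
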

\begin{proof}[Proof of Lemma \ref{pullback of pullback by diffeomorphism of orientor}]
This is an immediate consequence of Lemma~\ref{base extension of pullback is extended pullback of base extension lemma} and Lemma~\ref{pullback of vertical local diffeomorphism lemma}.
\end{proof}
The proof of the following lemma is similar to that of Lemma~\ref{base extension of pullback is extended pullback of base extension lemma} and thus is omitted.
\begin{lemma}\label{base extension of pullback is pullback by base extension orientors lemma}
Consider the following diagram in which all faces are fiber products.
\[\begin{tikzcd}
	{A\times_\W M\times_NP} && {A\times_\W P} \\
	& {M\times_N P} && P \\
	{A\times_\W M} & {} & {A\times_\W N} \\
	& M && N
	\arrow["g", from=2-4, to=4-4]
	\arrow["f"', from=4-2, to=4-4]
	\arrow[from=3-1, to=4-2]
	\arrow[from=1-1, to=3-1]
	\arrow["q"'{pos=0.2}, from=2-2, to=4-2]
	\arrow[from=3-3, to=4-4]
	\arrow[from=3-1, to=3-3]
	\arrow[from=1-1, to=1-3]
	\arrow["r"{pos=0.2}, from=2-2, to=2-4]
	\arrow[from=1-1, to=2-2]
	\arrow[from=1-3, to=2-4]
	\arrow[from=1-3, to=3-3]
\end{tikzcd}\]
Let $G$ be an \orientor{g}. Then
\[
\left(\expinv{(r/f)}G\right)^A=\expinv{\left(\Id_A\times r/\Id_A\times f\right)}\left(G^A\right).
\]
\end{lemma}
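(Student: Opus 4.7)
The plan is to realize both sides of the equation as the pullback of the \orientor{g} $G$ along two different pastings of two pullback squares inside the cube, and then to conclude via the horizontal composition identity of Lemma \ref{horizontal expinv composition}. The cube in the statement affords two natural routes from the upper-right-front vertex $P$ (over which $G$ lives) down to the lower-left-back vertex $A\times_\W M$: the first route pastes the right face (horizontal arrows $\pi_P,\pi_N$) on top of the back face (horizontal arrows $\Id_A\times r,\Id_A\times f$); the second pastes the front face (horizontal arrows $r,f$) on top of the left face (horizontal arrows $\pi_{M\times_N P},\pi_M$).

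First I would observe that by commutativity of the top and bottom faces of the cube, the composite horizontal arrows along the two routes agree:
\[
\pi_P\circ(\Id_A\times r)=r\circ\pi_{M\times_N P},\qquad \pi_N\circ(\Id_A\times f)=f\circ\pi_M.
\]
Hence both routes compute $\expinv{(\cdot/\cdot)}G$ along one and the same pair of composite maps. Applying Lemma \ref{horizontal expinv composition} to the first pasting rewrites this common composite as $\expinv{(\Id_A\times r/\Id_A\times f)}(G^A)$, since $G^A=\expinv{(\pi_P/\pi_N)}G$ by Definition \ref{pulled orientation orientor}. Applying the same lemma to the second pasting rewrites the same composite as $\bigl(\expinv{(r/f)}G\bigr)^A$, using the identification of base extension with $\expinv{(\pi_{M\times_N P}/\pi_M)}$. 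Equating the two resulting expressions yields the claimed identity.

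The only step that genuinely requires verification is that the four side faces entering the two pastings are honest pullback squares and that the two pairs of composite horizontal arrows really coincide; both are immediate from the hypothesis that all faces of the cube are fiber products, together with the standard commutativities between base change and fiber products. There is no sign subtlety, since Lemma \ref{horizontal expinv composition} is a sign-free identity and each individual $\expinv{(\cdot/\cdot)}$ is degree-preserving. This structural parallelism is presumably the reason that the authors consider the argument ``similar to'' the proof of Lemma \ref{base extension of pullback is extended pullback of base extension lemma}, where analogous bookkeeping—replacing the horizontal composition lemma by Lemma \ref{base extension is distributive expinv} and Lemma \ref{pulled orientation orientor is pullback orientor of orientation}—was executed explicitly.
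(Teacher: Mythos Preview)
Your proof is correct. The paper omits the proof of this lemma, saying only that it is ``similar to that of Lemma~\ref{base extension of pullback is extended pullback of base extension lemma},'' so there is no explicit argument to compare against; your approach via two applications of Lemma~\ref{horizontal expinv composition} along the two pastings of side faces of the cube is a clean and natural way to carry this out, and your observation that both pastings compute $\expinv{((r\circ\pi_{M\times_N P})/(f\circ\pi_M))}G$ is exactly what makes the argument work.
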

\subsubsection{Quotient orientors}
For this section, fix an oriented Lie group $G$ with orientation $\mO^G$, and an orbifold with corners $M$ with a right free proper $G$ action $\Phi:M\times G\to M$. Denote by $q:M\to M/G$ the quotient map and by $\pi:M\times G\to M$ the projection. Denote by $\mO^{\Phi}_c,\mO_c^ {q},\mO_c^\pi$ the relative orientations of $\Phi,q,\pi$ from Definition~\ref{action quotient orientation definition} with respect to $\mO^G$. Let $G',\mO^{G'},M',\Phi',q',\pi'$, etc. be another collection as the above. Let $\phi:G\to G'$ be a homomorphism of groups.
\begin{definition}\label{Invariant orientor group action definition}
With the above setting, a map $h:M\to M'$ is called \textbf{equivariant} if 
\[h\circ \Phi=\Phi'\circ (h\times \phi).\]
That is, the following diagram is commutative.
\[
\begin{tikzcd}
M\times G\ar[r,"\Phi"]\ar[d,"h\times \phi"]&M\ar[d,"h"]\\M'\times G'\ar[r,"\Phi'"]&M'
\end{tikzcd}
\]
Consider also the following commutative diagram.
\[
\begin{tikzcd}
M\times G\ar[r,"\pi"]\ar[d,"h\times \phi"]&M\ar[d,"h"]\\M'\times G'\ar[r,"\pi'"]&M'
\end{tikzcd}
\]
For a $G$-equivariant map $h:M\to M'$ and bundles $K,R$ over $M/G,M'/G'$, respectively, an \orientor{h} $H$ of $ q^*K$ to $q'^*R$ is said to be \textbf{$G-$equivariant} if 
\[
\expinv{\left(\Phi/\Phi'\right)}H=\expinv{\left(\pi/\pi'\right)}H.
\]
\end{definition}
\begin{remark}\label{orientor from quotient pulled by quotient is equivariant remark}
An equivariant map $h:M\to M'$ descends to the quotient $\overline h:M/G\to M'/G'$. That is, there exists a unique map $\overline h:M/G\to M'/G'$ such that $q'\circ h=\overline h\circ  q.$ In diagrammatic form, the following diagram is commutative.
\[
\begin{tikzcd}
M\ar[r,"q"]\ar[d,"h"]&M/G\ar[d,"\overline h"]\\M'\ar[r,"q'"]&M'/G'
\end{tikzcd}
\]
Let $K,R$ be bundles over $M/G,M'/G',$ respectively. Let ${\overline H}$ be an \orientor{\overline h} of $q^*K$ to $q'^*R$. Then $\expinv{\left(q/q'\right)}{\overline H}$ is $G$-equivariant.
Indeed, we have
\begin{align*}
\expinv{\left(\Phi/\Phi'\right)}\expinv{\left(q/q'\right)}{\overline H}&\overset{\text{Lem. \ref{horizontal expinv composition enhanced}}}=(-1)^{G'(G-G')}\expinv{\left((q\circ\Phi)/(q'\circ\Phi')\right)}{\overline H}\\&\qquad\,\,=(-1)^{G'(G-G')}\expinv{\left((q\circ \pi_M)/(q'\circ\pi_{M'})\right)}{\overline H}\overset{\text{Lem. \ref{horizontal expinv composition enhanced}}}=\expinv{\left(\pi/\pi'\right)}\expinv{\left(q/q'\right)}{\overline H}.
\end{align*}
The fact that every $G$-equivariant orientor is of the form $\expinv{(q/q')}{\overline H}$ for some ${\overline H}$ is formalized in the following lemma.
\end{remark}
\begin{lemma}\label{orientor restoration group action lemma}
Let $h:M\to M'$ be a $G-$invariant map, let $K,R$ be $\Z/2$ bundles over $M/G,M'/G',$ respectively, and let $H$ be a $G-$equivariant \orientor{h} of $q^*K$ to $q'^*R$. Let $\overline h:M/G\to M'/G'$ be the map satisfying $q'\circ h=\overline h\circ  q.$ There exists a unique \orientor{\overline h} ${\overline H}$ of $K$ to $R$ satisfying
\[
H=\expinv{(q/q')}{\overline H}.
\]
\end{lemma}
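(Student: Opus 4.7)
The plan is to reformulate the equation $H = \expinv{(q/q')}\overline H$ as a descent problem for an auxiliary orientor along the quotient map $q$, and then to use the equivariance hypothesis to verify the descent condition. I would separate the argument into uniqueness and existence.

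For uniqueness, I invoke Remark \ref{enhanced pullback alternative equivalent definition remark} to write $\expinv{(q/q')}\overline H$ as the composition of $q^*\overline H$ with the bundle isomorphism $\pull{(q/q')}\otimes 1\colon q^*\zcort{\overline h}\otimes q^*\overline h^*R\to \zcort{h}\otimes h^*q'^*R$. Since $\pull{(q/q')}$ is an isomorphism and $q$ is a surjective submersion (hence $q^*$ is faithful on morphisms of bundles over $M/G$), any two candidates $\overline H_1,\overline H_2$ yielding the same $H$ must coincide.

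For existence, I first convert the problem into a descent problem along the relatively oriented map $q$. Setting $\tilde H := (\phi^{\mO^{q'}_c})^R \bu H$, one obtains an \orientor{\overline h\circ q = q'\circ h} of $q^*K$ to $R$ (rather than to $q'^*R$), and by Definition \ref{pullback along a relatively oriented commutative diagram definition} the target equation $H=\expinv{(q/q')}\overline H$ is equivalent to $\tilde H = \expinv{(q,\mO^q_c)}\overline H$. When $G$ is connected the quotient $q$ has connected fibers, and Lemma \ref{orientor on pullback is pulled back} produces $\overline H$ at once, from which $H=\expinv{(q/q')}\overline H$ follows by applying $(\phi^{\mO^{q'}_c})^R$ in reverse.

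In general, $q$ may have disconnected fibers and equivariance becomes essential. I would exploit the Cartesian square
\[
\begin{tikzcd}
M\times G \ar[r,"\Phi"] \ar[d,"\pi"] & M \ar[d,"q"] \\
M \ar[r,"q"] & M/G,
\end{tikzcd}
\]
which realizes $q$ as an effective descent morphism even in the disconnected case, and verify the descent condition $\expinv{(\Phi/q)}\tilde H = \expinv{(\pi/q)}\tilde H$ on $M\times G$. Using Lemmas \ref{pullback of composition of orientors by relatively oriented enhanced} and \ref{vertical pullback expinv composition}, the analogous Cartesian square for $(M',G',q',\Phi',\pi')$, and the orientation identity \eqref{projection orientation is pullback of quotient equation}, this condition unwinds precisely into the hypothesized equivariance $\expinv{(\Phi/\Phi')}H = \expinv{(\pi/\pi')}H$. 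The main obstacle is the orientation bookkeeping here: one must track the canonical orientations $\mO^q_c, \mO^{q'}_c, \mO^\Phi_c, \mO^\pi_c$ and the identifications of Section \ref{Quotient orientation convention} so that the equivariance condition on $H$ translates into the descent condition on $\tilde H$ without spurious signs. Once the descent condition holds, $\overline H$ is constructed via local sections of the principal $G$-bundle $q$ (well-definedness being exactly what equivariance provides), and $H=\expinv{(q/q')}\overline H$ is verified by reversing the $\phi$-trick.
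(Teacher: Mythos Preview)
Your approach is correct and takes a genuinely different route from the paper. The paper's proof works directly with the sheaf adjunction $q^*\dashv q_*$: since $H$ is $G$-equivariant, under adjunction it corresponds to a map $\tilde H\colon K\to \bigl(q_*(\zcort{h}\otimes h^*q'^*R)\bigr)^G$; the projection formula and composition isomorphism identify the target with $(q_*\zcort{h})^G\otimes \overline h^*R$, and transitivity of the $G$-action on fibers of $q$ gives an isomorphism $(q/q')_\star\colon \zcort{\overline h}\to (q_*\zcort{h})^G$, from which $\overline H$ is read off immediately. No explicit descent verification or orientation bookkeeping on $M\times G$ is needed.

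Your argument instead presents $q$ as a groupoid quotient $M\times G\rightrightarrows M$ and checks a descent condition there, translating it back into the stated equivariance of $H$. This is perfectly valid, and your handling of the connected case via Lemma~\ref{orientor on pullback is pulled back} is a nice shortcut. The cost is the orientation bookkeeping you flag as ``the main obstacle'': matching $\mO^q_c,\mO^{q'}_c,\mO^\Phi_c,\mO^\pi_c$ so that the descent condition on $\tilde H$ is \emph{exactly} the equivariance of $H$ requires care (and your notation $\expinv{(\Phi/q)}\tilde H$ is slightly off, since $\tilde H$ is an $\overline h\circ q$-orientor rather than a $q$-orientor; you mean the relatively-oriented pullback $\expinv{(\Phi,\mO^\Phi_c)}\tilde H$ versus $\expinv{(\pi,\mO^\pi_c)}\tilde H$). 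The paper's invariants-and-adjunction packaging absorbs all of this into the single isomorphism $(q_*\zcort{h})^G\cong \zcort{\overline h}$, which is why it is shorter.
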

\begin{proof}
The proof is similar to that of Lemma~\ref{orientor on pullback is pulled back}. For a sheaf $\mathcal F$ with a $G$-action, denote by $\mathcal F^G\subset \mathcal F$ its invariants. Since $h$ is equivariant, $G$ acts on $q_*\left(\zcort{h}\otimes h^*q'^*R\right)$. Since $H$ is equivariant, under the adjunction of $q^*$ and $q_*$ it corresponds to a map
\[
\tilde H:K\to \left(q_*\left(\zcort{h}\otimes h^*q'^*R\right)\right)^G=\left(q_*\left(\zcort{h}\otimes q^*\overline h^*R\right)\right)^G.
\]
By the composition isomorphism of Definition~\ref{composition isomorphism} and the projection formula for sheaves,
\[
\left(q_*\left(\zcort{h}\otimes q^*\overline h^*R\right)\right)^G=\left(q_*\zcort{h}\otimes \overline h^*R\right)^G=\left(q_*\zcort{h}\right)^G\otimes\overline h^*R.
\]
Consider the map $(q/q')_\star:\zcort{\overline h}\to \left(q_*\zcort{h}\right)^G$ corresponding to 
$\pull{\left(q/q'\right)}:q^*\zcort{\overline h}\to \zcort{h}$ under the adjunction of $q^*$ and $q_*$. Since $G$ acts transitively on the fibers of $q$, $(q/q')_\star$ is an isomorphism.
Denote by $\overline H$ the unique \orientor{\overline h} of $K$ to $R$ satisfying
\[
\tilde H=\left((q/q')_\star\otimes 1\right)\circ \overline H.
\]
By the naturality of the adjunction it follows that
\[
H=\left(\pull{(q/q')}\otimes 1\right)\circ q^*\overline H.
\]
Recalling Remark~\ref{enhanced pullback alternative equivalent definition remark}, this is exactly
\[
H=\expinv{(q/q')}\overline H.
\]
\end{proof}

\subsubsection{Boundary orientors}
\begin{definition}\label{boundary-operator for relative orientation}
The \textbf{boundary orientor} is the \eorientor{\iota_f} of $\underline{\Z/2}$
\[
\pa_f:\underline{\Z/2}\to\zcort{\iota_f}\otimes \underline{\Z/2}
\]
given by
\[
\pa_f(1)=(-1)^f\mO_c^{\iota_f}.
\]
\end{definition}
\begin{remark}\label{boundary orientor is contraction by minus out pointing vector}
The composition of ${\pa_f}^{\zcort{f}}$ and the composition isomorphism,
\[
{\iota_f}^*\zcort{f}\overset{{\pa_f}^{\zcort{f}}}\to\zcort{\iota_f}\otimes{\iota_f}^*\zcort{f} \overset{comp.}=\zcort{f\circ\iota_f},
\]
is given by
\[
\mO^f\mapsto(-1)^{f}\mO^f\circ\mO_c^{\iota_f}.
\]
By abuse of notation, we often denote this composition by ${\pa_f}^{\zcort{f}}.$ Explicitly, it is given by the contraction with $-\nu_{out}$ on the right. Indeed, 
if $x_1,...,x_{m-1}\in~T_p\pa_fM$ form a basis, then Remark \ref{composition with canonical orientation of boundary} gives
\[
\pa_f^{\zcort{f}}\mO^f(x_1\we ...\we x_{m-1})=-\mO^f({(d\iota_f)}_px_1\we ...\we {(d\iota_f)}_p x_{m-1}\we \nu_{out}).
\]
\end{remark}
\begin{definition}\label{restriction of orientor to the boundary}
Let $M\overset{f}\to P\overset{g}\to N$ be such that $g\circ f$ is a surjective submersion. Assume that $f$ factorizes through the boundary of $g$ in the sense of Definition~\ref{boundary factorization}. That is, we have the following pullback diagram.
\[
\begin{tikzcd}
\pa_{g\circ f}M\ar[r,"\iota_{g\circ f}"]\ar[d,swap,dashed, "{\iota_g}^* f"]&M\ar[d,"f"]\\
\pa_g P\ar[r,swap,"\iota_g"]&P
\end{tikzcd}
\]
Let $Q,K$ be $\Z/2$-bundles over $M,P$ respectively, and let $F$ be a \orientor{f} from $Q$ to $K$. The \textbf{restriction of $F$ to the boundary} is $\expinv{\left(\iota_{g\circ f}/\iota_g\right)}F$, that is, the \orientor{{\iota_g}^*f} of $\iota_{g\circ f}^*Q$ to $\iota_g^*K$
\[
\iota_{g\circ f}^*Q\overset{\iota_{g\circ f}^*F}{\xrightarrow{\hspace*{1cm}}} \iota_{g\circ f}^*\zcort{f}\otimes \iota_{g\circ f}^*f^*K\overset{\pull{\left(\iota_{g\circ f}/\iota_g\right)}\otimes\Id}{\xrightarrow{\hspace*{2cm}}}\zcort{{\iota_g}^*f}\otimes {({\iota_g}^*f)}^*\iota_g^*K.
\]
\end{definition}
\begin{definition}\label{Boundary of orientor}
Let $M,N,g,Q,K,G$ be as in Definition \ref{orientor}. Recall that the \eorientor{\iota_g}
${\pa_g}^Q:\iota_{g}^*Q\to \zcort{\iota_g}\otimes \iota_g^*Q$ is the $Q$-extension from Definition \ref{tensored orientor extension} of the boundary orientor from Definition \ref{boundary-operator for relative orientation}. The \textbf{boundary of $G$} is the \orientor{\zcort{g\circ\iota_g}} of $\iota_g^*Q$ to $K$,
\[
\pa G=(-1)^{|G|}G\bu {\pa_g}^Q.
\]
\end{definition}
\begin{remark}
Recall that the degree of the boundary operator is $|\pa_g|=1$. Thus, if the degree of $G$ is $|G|$, then the degree of $\pa G$ is $|G|+1$.
\end{remark}
\begin{example}[Boundary of orientation]
\label{boundary of orientation as orientor}
Let $(M,\mO^M)$ be an oriented orbifold with corners. Recall the \eorientor{M} of $\underline{\Z/2}$ denoted by $\phi^{\mO^M}$ from Definition \ref{orientation as orientor}.
Then 
\[
\pa\phi^{\mO^M}=\phi^{\pa_{M}^{\zcort{M}} \left(\mO^M\right)}=\pa_M^{\zcort{M}}\circ \phi^{\mO^M},
\]
as \eorientor{\pa M}s of $\underline{\Z/2}$. 
Indeed, acting on the section $\underline{1}$ of $\underline{\Z/2}$ we have
\begin{multline*}
\pa\phi^{\mO^M}(\underline{1})=(-1)^{M}\left(\Id_{\zcort{\iota}}\otimes\phi^{\mO^M}\right)\left(\pa_M(\underline{1})\right)\\=\left(\Id_{\zcort{\iota}}\otimes\phi^{\mO^M}\right)\left(\mO_c^{\iota_M}\otimes \underline{1}\right)\overset{Kozsul}=(-1)^{M}\mO_c^{\iota_M}\otimes \mO^M,
\end{multline*}
and
\[
\pa_M^{\zcort{M}}\circ\phi^{\mO^M}(\underline{1})=\pa_M^{\zcort{M}}(\mO^M)=(-1)^M\mO^{\iota_M}_c\otimes\mO^M.
\]
\end{example}
\begin{lemma}\label{boundary of composition orientor}
Let $M\overset{f}\to P\overset{g}\to N$ be such that $g\circ f$ is a surjective submersion and let $Q,K,R$ be {$\Z/2$~bundles} over $M,P,N$, respectively. Let $F:Q\to \zcort{f}\otimes f^*K$ be a \orientor{f} of $Q$ to $K$ and $G:K\to \zcort{g}\otimes g^*R$ be a \orientor{g} of $K$ to $R$. Assume that $f$ factorizes through the boundary of $g$ as ${\iota_g}^*f$, as in Definition \ref{boundary factorization}. 
Then we have an equality of \orientor{g\circ f\circ \iota_{g\circ f}} of $\iota^*_{g\circ f}Q$ to $R$,
\[
\iota_{g\circ f}^*Q\longrightarrow \zcort{ g\circ f\circ \iota_{g\circ f}}\otimes {\left(g\circ f\circ\iota_{g\circ f}\right)}^*R
\]
as follows.
\begin{equation}
\pa(G\bu F)=\pa G\bu \expinv{\left(\iota_{g\circ f}/\iota_g\right)}F
\label{boundary of composition orientor equation}   
\end{equation}
% \[
% \pa (G\circ F)=\left(\pull{\left(\iota_{g\circ f}/\iota_g\right)}\otimes {\left({\iota_g}^*f\right)}^*\pa G\right)\circ {\iota_{g\circ f}}^*F.
% \]
\end{lemma}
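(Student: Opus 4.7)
\medskip

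The plan is to unpack both sides of \eqref{boundary of composition orientor equation} using Definition~\ref{Boundary of orientor}, apply associativity of orientor composition (Lemma~\ref{composition of orientors is associative}), and reduce to a single commutation identity between $F$ and the boundary orientors. Specifically, writing out
\[
\pa(G\bu F)=(-1)^{|F|+|G|}(G\bu F)\bu\pa_{g\circ f}^Q
\qquad\text{and}\qquad
\pa G\bu\expinv{\left(\iota_{g\circ f}/\iota_g\right)}F=(-1)^{|G|}\,G\bu\pa_g^K\bu\expinv{\left(\iota_{g\circ f}/\iota_g\right)}F,
\]
associativity reassociates the left-hand side as $(-1)^{|F|+|G|}G\bu\bigl(F\bu\pa_{g\circ f}^Q\bigr)$. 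Cancelling the common factor $(-1)^{|G|}G$ (using the uniqueness in Lemma~\ref{orientor invertion lemma}, applied to $G$, or more simply precomposing both sides of the desired equality with $G$), it suffices to establish the identity of \orientor{f\circ\iota_{g\circ f}}s
\begin{equation}\label{key commutation plan}
(-1)^{|F|}F\bu\pa_{g\circ f}^Q=\pa_g^K\bu\expinv{\left(\iota_{g\circ f}/\iota_g\right)}F.
\end{equation}

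To verify \eqref{key commutation plan} I would evaluate both sides on a local section $\iota_{g\circ f}^{*}q$, writing $F(q)=\mO^f\otimes f^*k$ for local sections $\mO^f$ of $\cort{f}$ and $k$ of $K$. The left-hand side produces $\pa_{g\circ f}^Q(\iota_{g\circ f}^{*}q)=(-1)^{f+g}\mO_c^{\iota_{g\circ f}}\otimes\iota_{g\circ f}^{*}q$; applying $F$ and pushing its degree $|F|$ past the one-dimensional factor $\mO_c^{\iota_{g\circ f}}$ by the Koszul rule (Lemma~\ref{Koszul signs}) yields, after the composition isomorphism, the expression
\[
(-1)^{f+g+|F|}\bigl(\mO^f\circ\mO_c^{\iota_{g\circ f}}\bigr)\otimes(f\circ\iota_{g\circ f})^{*}k.
\]
The right-hand side first gives $\expinv{\left(\iota_{g\circ f}/\iota_g\right)}F(\iota_{g\circ f}^{*}q)=\pull{(\iota_{g\circ f}/\iota_g)}(\mO^f)\otimes(\iota_g^{*}f)^{*}\iota_g^{*}k$ by Definition~\ref{pullback of orientor by pullback-diagram}; composing with $\pa_g^K$, whose degree is $1$ and whose action contributes the sign $(-1)^g$, and applying Koszul again to pass it through the factor of degree $-f$, one obtains
\[
(-1)^{f+g}\bigl(\mO_c^{\iota_g}\circ\pull{(\iota_{g\circ f}/\iota_g)}(\mO^f)\bigr)\otimes(g\circ f\circ\iota_{g\circ f})^{*}k.
\]

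Matching the two expressions therefore reduces to the single algebraic identity $\mO_c^{\iota_g}\circ\pull{(\iota_{g\circ f}/\iota_g)}(\mO^f)=\mO^f\circ\mO_c^{\iota_{g\circ f}}$, which is precisely the conclusion of Lemma~\ref{boundary factorization orientation}. The main obstacle is bookkeeping: one has to track the contributions from the definition of $\pa_h$ (the sign $(-1)^{h}$ in Definition~\ref{boundary-operator for relative orientation}), from the degrees of both $F$ and $\pa_g^K$ passing over degree-one relative-orientation factors, and from the composition isomorphism of Definition~\ref{composition isomorphism}. Once these are collected as above, they cancel uniformly on both sides, leaving Lemma~\ref{boundary factorization orientation} as the sole geometric input. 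Because both sides of \eqref{boundary of composition orientor equation} are bundle maps and the verification is local, Lemma~\ref{unique extension of bundle maps} if needed allows us to restrict attention to an essential open subset where $F$ admits the above factored local form.
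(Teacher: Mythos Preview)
Your proof is correct and follows essentially the same route as the paper: both reduce to the commutation identity $F\bu\pa_{g\circ f}^Q=(-1)^{|F|}\pa_g^K\bu\expinv{(\iota_{g\circ f}/\iota_g)}F$ (the paper's equation~\eqref{commutativity of orientor and boundary as orientors}) and verify it via Lemma~\ref{boundary factorization orientation}. One minor bookkeeping slip: your displayed left-hand expression $(-1)^{f+g+|F|}\bigl(\mO^f\circ\mO_c^{\iota_{g\circ f}}\bigr)\otimes\cdots$ is the value of $F\bu\pa_{g\circ f}^Q(q)$, not yet of $(-1)^{|F|}F\bu\pa_{g\circ f}^Q(q)$; once the prefactor $(-1)^{|F|}$ from \eqref{key commutation plan} is included, both sides carry the sign $(-1)^{f+g}$ and the reduction to Lemma~\ref{boundary factorization orientation} goes through exactly as you claim.
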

The proof of Lemma \ref{boundary of composition orientor} requires the following remark.
\begin{remark}
\label{boundary factorization orientation diagram}
Let $M\overset{f}\to P\overset{g}\to N$ be such that $g\circ f$ is a surjective submersion. Assume that $f$ factorizes through the boundary of $g$ in the sense of Definition~\ref{boundary factorization}. That is, we have the following pullback diagram.
\[
\begin{tikzcd}
\pa_{g\circ f}M\ar[r,"\iota_{g\circ f}"]\ar[d,swap,dashed, "{\iota_g}^* f"]&M\ar[d,"f"]\\
\pa_g P\ar[r,swap,"\iota_g"]&P
\end{tikzcd}
\]
With the notion of the boundary orientor, Lemma \ref{boundary factorization orientation} can be rephrased as the commutativity of the following diagram.
% \begin{equation}
% \begin{tikzcd}
% \iota_{g\circ f}^*\zcort{f}\otimes \overbrace{{({\iota_g}^*f)}^*\iota_g^*}^{=\iota_{g\circ f}^*f^*}\zcort{g}\ar[d,swap,"\pull{\left(\iota_{g\circ f}/\iota_g\right)}\otimes {({\iota_g}^*f)}^*{\pa_g^{\zcort{g}}}"]\ar[r,equal,"comp."]&\iota_{g\circ f}^*\zcort{g\circ f}\ar[d,"{\pa_{g\circ f}^{\zcort{g\circ f}}}"]\\
% \zcort{{\iota_g}^*f}\otimes {({\iota_g}^*f)}^*\zcort{\iota_g}\otimes {({\iota_g}^*f)}^*\iota_g^*\zcort{g}\ar[d,equal,"comp."]&
% \zcort{\iota_{g\circ f}}\otimes \iota_{g\circ f}^*\zcort{g\circ f}\ar[d,equal,"comp."]\\
% \zcort{{\iota_g}^*f}\otimes {({\iota_g}^*f)}^*\zcort{g\circ\iota_g}\ar[r,equal,swap,"comp."]&\zcort{g\circ f\circ \iota_{g\circ f}}
% \end{tikzcd}
% \end{equation}
\begin{equation}
\begin{tikzcd}
&\iota_{g\circ f}^*\zcort{f}\otimes \underline{\Z/2}\ar[dl,swap,"\pull{\left(\iota_{g\circ f}/\iota_g\right)}\otimes {({\iota_g}^*f)}^*{\pa_g}"]\ar[dr,"\pa_{g\circ f}^{\zcort{ f}}"]\\
\zcort{{\iota_g}^*f}\otimes {({\iota_g}^*f)}^*\zcort{\iota_g}\ar[rd,equal,swap,"comp."]&&
\zcort{\iota_{g\circ f}}\otimes \iota_{g\circ f}^*\zcort{f}\ar[ld,equal,"comp."]
\\
&\zcort{\iota_g\circ \iota_g^*f}=\zcort{f\circ \iota_{g\circ f}}
\end{tikzcd}
\end{equation}
That is,
\begin{equation}
\label{boundary factorization orientation for Kf}
\pa_{g\circ f}^{\zcort{f}}=\pull{\left(\iota_{g\circ f}/\iota_g\right)}\otimes {({\iota_g}^*f)}^*\pa_g.
\end{equation}
\end{remark}
\begin{proof}
With the notations of Lemma \ref{boundary factorization orientation}, let $\mO^g$ be a local relative orientation of $g$.
Recall the notation
\begin{align*}
\mO^{{\iota_g}^*\!f}=\pull{\left(\iota_{g\circ f}/\iota_g\right)}\left(\mO^f\right).
\end{align*}
By the Koszul sign convention and since $\deg \pa_g=1$,
\begin{align*}
\left(\pull{\left(\iota_{g\circ f}/\iota_g\right)}\otimes {({\iota_g}^*f)}^*{\pa_g}\right)\left(\mO^f\otimes\Id\right)=\,\,\quad&(-1)^{f}\pull{\left(\iota_{g\circ f}/\iota_g\right)}\left(\mO^f\right)\otimes {({\iota_g}^*f)}^*{\pa_g}\left(1\right)
\\
=\,\,\quad&(-1)^{f+g}\mO^{{\iota_g}^*\!f}\otimes \left(\mO_c^{\iota_g}\right)\\
\overset{comp.}=\quad&(-1)^{f+g}\mO_c^{\iota_g}\circ\mO^{{\iota_g}^*\!f}\\
\overset{\text{Lemma \ref{boundary factorization orientation}}}=&(-1)^{f+g}\mO^f\circ\mO_c^{\iota_{g\circ f}}={\pa_{g\circ f}^{\zcort{ f}}}\left(\mO^f\right).
\end{align*}
\end{proof}
\begin{proof}[Proof of Lemma \ref{boundary of composition orientor}]
First, we show that
\begin{equation}
    \label{commutativity of orientor and boundary operator proof}
    \left((\Id \otimes \iota^*_{g\circ f}F)\circ {\pa_{g\circ f}}^Q\right)=(-1)^{|F|}\left({\pa_{g\circ f}}^{\zcort{f}\otimes f^*K}\circ \iota^*_{g\circ f}F\right).
\end{equation}
Indeed, let $q$ be a local section of $\iota^*_{g\circ f}Q$. It follows from the Koszul signs of Proposition \ref{Koszul signs}
% and Remark \ref{boundary factorization orientation as tensored resolution} 
that
\begin{align*}
\left((\Id \otimes \iota^*_{g\circ f}F)\circ {\pa_{g\circ f}}^Q\right)(q)&=
(-1)^{f}(\Id \otimes \iota^*_{g\circ f}F)\left(\mO^{\iota_{g\circ f}}_c\otimes q\right)=(-1)^{f+|F|}\left(\mO^{\iota_{g\circ f}}_c\otimes F(q)\right),\\
\left({\pa_{g\circ f}}^{\zcort{f}\otimes f^*K}\circ \iota^*_{g\circ f}F\right)(q)&={\pa_{g\circ f}}^{\zcort{f}\otimes f^*K}\left(F(q)\right)=(-1)^f\mO_c^{\iota_{g\circ f}}\otimes F(q).
\end{align*}
Second, we show that there is an equality of maps
\[
\iota_{g\circ f}^*Q\longrightarrow \zcort{f\circ \iota_{g\circ f}}\otimes (f\circ\iota_{g\circ f})^*K=\zcort{\iota_g\circ ({\iota_g}^*f)}\otimes {\left(\iota_g\circ ({\iota_g}^*f)\right)}^*K
\]
as follows.
\begin{equation}
\label{commutativity of orientor and boundary as orientors}
F\bu{\pa_{g\circ f}}^Q=(-1)^{|F|}{\pa_g}^K\bu\expinv{\left(\iota_{g\circ f}/\iota_g\right)}F
\end{equation}
Indeed,
\begin{align*}
F\bu {\pa_{g\circ f}}^{Q}=\quad&(\Id \otimes \iota_{g\circ f}^*F)\circ {\pa_{g\circ f}}^{Q}\\
\overset{\text{eq. \eqref{commutativity of orientor and boundary operator proof}}}=&(-1)^{|F|}{\pa_{g\circ f}}^{\zcort{f}\otimes f^*K}\circ\iota_{g\circ f}^*F\\
\overset{\text{Rmk. \ref{boundary factorization orientation diagram}}
}=&(-1)^{|F|}\left[\pull{(\iota_{g\circ f}/\iota_g)}\otimes {({\iota_g}^*f)}^*{\pa_g}^K\right]\circ \iota_{g\circ f}^*F
\\\overset{\text{Koszul \ref{Koszul signs}}}=&(-1)^{|F|}\left
[\left(\Id \otimes {({\iota_g}^*f)}^*{\pa_g}^{K}\right)\circ \left(\pull{\left(\iota_{g\circ f}/\iota_g\right)}\otimes\Id\right)\right]\circ \iota_{g\circ f}^*F\\
\overset{\text{Def. \ref{restriction of orientor to the boundary}}}=&(-1)^{|F|}\left(\Id \otimes {({\iota_g}^*f)}^*{\pa_g}^{K}\right)\circ \expinv{\left(\iota_{g\circ f}/\iota_g\right)}F\\
\overset{\text{Def. \ref{orientor composition}}}=&(-1)^{|F|}{\pa_g}^{K}\bu \expinv{\left(\iota_{g\circ f}/\iota_g\right)}F.
\end{align*}
To prove \eqref{boundary of composition orientor equation}, we compile the definitions and use the above equation, as follows.
\begin{align*}
    \pa(G\bu F)\overset{\text{Def. \ref{Boundary of orientor}}}=&(-1)^{|G|+|F|} (G\bu F)\bu {\pa_{g\circ f}}^{Q}\\
    \overset{\text{Lem. \ref{composition of orientors is associative}}}=&(-1)^{|G|+|F|}G\bu \left(F\bu {\pa_{g\circ f}}^{Q}\right)\\
    \overset{{\text{eq. \eqref{commutativity of orientor and boundary as orientors}}}}=&(-1)^{|G|}G\bu({\pa_g}^K\bu\expinv{\left(\iota_{g\circ f}/\iota_g\right)}F)\\
    \overset{\text{Lem. \ref{composition of orientors is associative}}}=&
    (-1)^{|G|}\left(G\bu{\pa_g}^K\right)\bu\expinv{\left(\iota_{g\circ f}/\iota_g\right)}F\\
    \overset{\text{Def. \ref{Boundary of orientor}}}=&
    \pa G\bu\expinv{\left(\iota_{g\circ f}/\iota_g\right)}F.
\end{align*}
\end{proof}
There is a canonical identification of $\pa_{(\Id\times f)}(A\times X)\simeq A\times \pa_f X$.
That is, $\pi_X$ factorizes through the boundary of $f$ as in Definition \ref{boundary factorization}.
\begin{lemma}\label{boundary of pulled orientation orientor}
Consider the following diagram of Cartesian squares.
\[
\begin{tikzcd}
\pa_{(\Id\times f)}(A\times X)\ar[r,"\iota_f^*\pi_X"]\ar[d,swap,"\iota_{(\Id\times f)}"]&\pa_f X\ar[d,"\iota_f"]\\
A\times X\ar[d,swap,"\Id\times f"]\ar[r,"\pi_X"]&X\ar[d,"f"]\\A\times Y\ar[r,swap,"\pi_Y"]&Y
\end{tikzcd}
\]
Let $Q,K$ be $\Z/2$-bundles over $X,Y,$ respectively, and let $F$ be a \orientor{f} of $Q$ to $K$.
Under the identification $\pa_{(\Id\times f)}(A\times X)\simeq A\times \pa_f X$, it holds that
\[
(\pa F)^A=\pa(F^A).
\]
\end{lemma}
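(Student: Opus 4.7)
The plan is to unfold both sides of the claimed identity via Definition \ref{Boundary of orientor} of the boundary orientor and Definition \ref{pulled orientation orientor} of base extension, and then reduce to a comparison of boundary orientors. First, I would write $\pa F = (-1)^{|F|} F \bu \pa_f^Q$ and apply the distributivity of base extension over orientor composition (Lemma \ref{base extension is distributive expinv}) to obtain
\[
(\pa F)^A = (-1)^{|F|}\, F^A \bu \left(\pa_f^Q\right)^A.
\]
Since $|F^A| = |F|$, the identity $(\pa F)^A = \pa(F^A)$ will then follow from the reduced equation
\[
\left(\pa_f^Q\right)^A = \pa_{\Id\times f}^{\pi_X^*Q},
\]
after unfolding $\pa(F^A)$ via Definition \ref{Boundary of orientor} applied to $F^A$.

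To establish the reduced equation, I would evaluate both sides on a local section $q$ of $\iota_{\Id\times f}^*\pi_X^*Q$. The left-hand side, unfolded via Definition \ref{pullback of orientor by pullback-diagram}, sends $q$ first to $(-1)^f\mO_c^{\iota_f}\otimes q$ via the pullback of $\pa_f^Q$, and then applies the vertical pullback $\pull{(\iota_f^*\pi_X/\pi_X)}$ to the orientation factor; by Lemma \ref{orientation of boundary of product}, this yields $(-1)^f\mO_c^{\iota_{\Id\times f}}\otimes q$. The right-hand side sends $q$ to $(-1)^{\Id\times f}\mO_c^{\iota_{\Id\times f}}\otimes q$, and since $\Id\times f$ has the same relative dimension as $f$, the two expressions agree.

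The argument is essentially a bookkeeping exercise once the correct structural lemmas are in hand. The only substantive input is Lemma \ref{orientation of boundary of product}, which already encodes the compatibility between base change and the canonical boundary orientation. The main pitfall is careful sign management, since the $(-1)^f$ factor in the definition of $\pa_f$ and the Koszul signs in the various tensor and base extensions must be tracked, but no deeper obstacle arises.
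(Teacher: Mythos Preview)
Your proof is correct and follows essentially the same approach as the paper: both reduce the claim to the identity $\left(\pa_f\right)^A=\pa_{\Id\times f}$ via distributivity of base extension over orientor composition, and both establish this identity using the compatibility of the canonical boundary orientation with base extension encoded in Lemma~\ref{orientation of boundary of product}. The paper phrases the composition step via Lemma~\ref{vertical pullback expinv composition} and appeals to Remark~\ref{boundary orientor is contraction by minus out pointing vector} for the reduced equation, but the content is the same as your more explicit unfolding.
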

\begin{proof}[Proof of Lemma \ref{boundary of pulled orientation orientor}]
Notice that 
\[
\expinv{\left(\iota_f^*\pi_X/\pi_Y\right)}\pa_f=\pa_{(\Id\times f)},
\]
as they are both given by contraction by $-\nu_{out}$ on the right, as explained in Remark \ref{boundary orientor is contraction by minus out pointing vector}. Therefore, 
recalling Definition \ref{pulled orientation orientor} and Definition \ref{Boundary of orientor}, the lemma follows by applying Lemma \ref{vertical pullback expinv composition} to the above diagram.
\end{proof}
\subsubsection{Miscellanea}
\begin{lemma}\label{orientors tensor distributivity}
With the setting of Definition \ref{orientor composition}, let $T$ be a $\Z/2$-bundle over $N$. Then
\begin{align*}
\left(G\bu F\right)^T&=G^{T}\bu F^{g^*T},\\
{}^T\left(G\bu F\right)&={}^TG\bu {}^{g^*T}F.
\end{align*}
\end{lemma}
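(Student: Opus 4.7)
The plan is to verify both identities by unfolding the definitions of orientor composition (Definition 3.6) and of extension (Definition 3.5), and then matching terms up to Koszul signs (Proposition 3.3).

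For the first identity, observe that $G^T = G \otimes \Id_T$ and $F^{g^*T} = F \otimes \Id_{f^*g^*T}$ by Definition 3.5, and similarly $(G \bu F)^T = (G \bu F) \otimes \Id_T$. Since $\Id_T$ has degree zero, tensoring with it commutes with composition by Proposition 3.3 without introducing any sign, so I expect
\begin{align*}
(G \bu F)^T &= \bigl[(\Id_{\zcort{f}} \otimes f^*G) \circ F\bigr] \otimes \Id_T \\
&= \bigl[(\Id_{\zcort{f}} \otimes f^*G) \otimes \Id_T\bigr] \circ \bigl[F \otimes \Id_T\bigr] \\
&= (\Id_{\zcort{f}} \otimes f^*G^T) \circ F^{g^*T} \\
&= G^T \bu F^{g^*T},
\end{align*}
where in the penultimate step I invoke naturality of pullback with tensor to identify $f^*G \otimes \Id_{f^*T} = f^*(G \otimes \Id_T) = f^*G^T$ together with associativity of tensor.

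For the second identity, I will expand both sides and use Lemma 3.2 to resolve the symmetry isomorphism $\tau$. Unwinding the definitions gives
\begin{align*}
{}^T(G \bu F) &= (\tau_{(g\circ f)^*T,\, \zcort{g\circ f}} \otimes \Id) \circ (\Id \otimes \Id_{\zcort{f}} \otimes f^*G) \circ (\Id \otimes F),\\
{}^T G \bu {}^{g^*T} F &= (\Id \otimes \tau_{f^*g^*T,\, f^*\zcort{g}} \otimes \Id) \circ (\Id \otimes \Id \otimes f^*G) \circ (\tau_{f^*g^*T,\, \zcort{f}} \otimes \Id) \circ (\Id \otimes F),
\end{align*}
where in the second display I used naturality of $\tau$ under pullback, $f^*\tau_{g^*T,\, \zcort{g}} = \tau_{f^*g^*T,\, f^*\zcort{g}}$. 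Now Lemma 3.2 applied with $A = f^*g^*T$, $B = \zcort{f}$, $C = f^*\zcort{g}$, together with the composition isomorphism $\zcort{g\circ f} = \zcort{f} \otimes f^*\zcort{g}$ treated as equality, yields
\begin{align*}
\tau_{(g\circ f)^*T,\, \zcort{g\circ f}} = (\Id_{\zcort{f}} \otimes \tau_{f^*g^*T,\, f^*\zcort{g}}) \circ (\tau_{f^*g^*T,\, \zcort{f}} \otimes \Id_{f^*\zcort{g}}).
\end{align*}
Substituting this factorization into the expression for ${}^T(G \bu F)$, the verification reduces to commuting $(\tau_{f^*g^*T,\, \zcort{f}} \otimes \Id \otimes \Id)$ past $(\Id \otimes \Id \otimes f^*G)$. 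By Proposition 3.3, no Koszul sign arises from this commutation: the map $\tau$ has degree zero and the factor it would commute past is $f^*G$ acting on a position unaffected by the swap, so both tensor-products reduce to the same expression $\tau_{f^*g^*T,\,\zcort{f}} \otimes f^*G$.

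The main obstacle is the Koszul-sign bookkeeping in the left-extension case; the right-extension case is essentially immediate because no symmetry enters. Once Lemma 3.2 is invoked to decompose $\tau_{(g\circ f)^*T,\, \zcort{g\circ f}}$ compatibly with the composition isomorphism, the remaining verification is a mechanical bracket rearrangement using associativity of composition of orientors (Lemma 3.7) and naturality of $f^*$.
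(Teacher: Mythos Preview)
Your proof is correct and follows essentially the same approach as the paper's own proof: both arguments handle the right-extension case by observing that tensoring with the degree-zero map $\Id_T$ distributes over composition without sign (via Proposition~\ref{Koszul signs}), and both handle the left-extension case by invoking Lemma~\ref{symmetry isomorphism distributivity lemma} to factor $\tau_{T,\zcort{g\circ f}}$ compatibly with the composition isomorphism, followed by the same Koszul-sign commutation you describe. The only cosmetic difference is that the paper writes out the intermediate four-term composite explicitly rather than naming the commutation step in words.
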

\begin{proof}[Proof of Lemma \ref{orientors tensor distributivity}]
For the first equation, we calculate
\begin{align*}
(G\bu F)^T=&\left(G\bu F\right)\otimes \Id_{f^*g^*T}=\left(\left(\Id_{\zcort{f}}\otimes f^*G\right)\circ F\right)\otimes \Id_{f^*g^*T}\\\overset{\text{Koszul \ref{Koszul signs}}}=& \left(\Id_{\zcort{f}}\otimes f^*G\otimes \Id_{f^*g^*T}\right)\circ \left(F\otimes \Id_{f^*g^*T}\right)=G^T\bu F^{g^*T}.
\end{align*}
Similarly, for the second equation, we calculate
\begin{align*}
{}^T(G\bu F)=&\left(\tau_{T,\zcort{g\circ f}}\otimes \Id_{f^*g^*R}\right)\circ \left(\Id_{f^*g^*T}\otimes\left(G\bu F\right)\right)\\
\overset{\text{Lem. \ref{symmetry isomorphism distributivity lemma}}}=&\left(\left(\left(\Id_{\zcort{f}}\otimes \tau_{T,\zcort{g}}\right)\circ \left(\tau_{T,\zcort{f}}\otimes \Id_{f^*\zcort{g}}\right)\right)\otimes \Id_{f^*g^*R}\right)\circ\\&\circ \left(\Id_{f^*g^*T}\otimes\left(\left(\Id_{\zcort{f}}\otimes f^*G\right)\circ F\right)\right)\\\overset{\text{Koszul \ref{Koszul signs}}}=&
\left(\Id_{\zcort{f}}\otimes \tau_{T,\zcort{g}}\otimes \Id_{f^*g^*R}\right)\circ\left(\tau_{T,\zcort{f}}\otimes \Id_{f^*\zcort{g}}\otimes \Id_{f^*g^*R}\right)\circ\\
&\circ \left(\Id_{f^*g^*T}\otimes \Id_{\zcort{f}}\otimes f^*G\right)\circ \left(\Id_{f^*g^*T}\otimes F\right)\\
\overset{\text{Koszul \ref{Koszul signs}}}=&\left(\Id_{\zcort{f}}\otimes \tau_{T,\zcort{g}}\otimes \Id_{f^*g^*R}\right)\circ
\left(\Id_{\zcort{f}}\otimes \Id_{f^*g^*T}\otimes f^*G\right)\circ\\
&\circ\left(\tau_{T,\zcort{f}}\otimes \Id_{f^*K}\right)
\circ\left(\Id_{f^*g^*T}\otimes F\right)\\
&=\left(\Id_{\zcort{f}}\otimes f^*\left({}^TG\right)\right)\circ {}^{g^*T}F={}^TG\bu {}^{g^*T}F.
\end{align*}
\end{proof}
\begin{lemma}\label{orientors symmetry koszul}
Let $f:M\to N$ be a map of orbifolds with corners, and let $A,B$ be $\Z/2$-bundles over $M$ and $C$ be a $\Z/2$-bundle over $N$. If $F$ is an \orientor{f} of $B$ to $C$, then the following diagrams are commutative.
\[
\begin{tikzcd}
A\otimes B\ar[r,"{}^AF"]\ar[d,"\tau_{A,B}"]&\zcort{f}\otimes A\otimes f^*C\ar[d,"\Id \otimes \tau_{A,f^*C}"]\\
B\otimes A\ar[r,"F^A"]&\zcort{f}\otimes f^*C\otimes A
\end{tikzcd}\qquad\begin{tikzcd}
B\otimes A\ar[r,"F^A"]\ar[d,"\tau_{B,A}"]&\zcort{f}\otimes f^*C\otimes A\ar[d,"\Id \otimes \tau_{f^*C,A}"]\\
A\otimes B\ar[r,"{}^AF"]&\zcort{f}\otimes A\otimes f^*C
\end{tikzcd}
\]
That is,
\begin{align*}
\tau_{A,f^*C}\bu \,\,{}^AF&=F^A\bu \tau_{A,B},\\
\tau_{f^*C,A}\bu F^A&={}^AF\bu \tau_{B,A}.
\end{align*}
\end{lemma}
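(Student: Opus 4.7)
The plan is to verify each of the two diagrams as an equality of bundle maps over $M$ by unfolding the definitions and applying Proposition \ref{Koszul signs} together with Lemma \ref{symmetry isomorphism distributivity lemma}.

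For the first square, I would begin by expanding
\[
{}^AF = (\tau_{A, \zcort{f}} \otimes \Id_{f^*C}) \circ (\Id_A \otimes F), \qquad F^A = F \otimes \Id_A,
\]
from Definition \ref{tensored orientor extension}. The composition $(\Id_{\zcort{f}} \otimes \tau_{A, f^*C}) \circ {}^AF$ then collapses via Lemma \ref{symmetry isomorphism distributivity lemma}, which merges the two consecutive swaps into the single symmetry $\tau_{A, \zcort{f} \otimes f^*C}$, yielding $\tau_{A, \zcort{f} \otimes f^*C} \circ (\Id_A \otimes F)$. Independently, the first Koszul identity of Proposition \ref{Koszul signs} applied to the pair $(F, \Id_A)$ gives
\[
(F \otimes \Id_A) \circ \tau_{A, B} = \tau_{A, \zcort{f} \otimes f^*C} \circ (\Id_A \otimes F),
\]
with no surviving Koszul prefactor since $|\Id_A| = 0$. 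The two sides of the first square thus agree.

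The second square follows formally from the first by inverting the vertical arrows. Indeed, the symmetry isomorphisms satisfy $\tau_{B, A} \circ \tau_{A, B} = \Id_{A \otimes B}$ and $\tau_{f^*C, A} \circ \tau_{A, f^*C} = \Id_{A \otimes f^*C}$, so composing the first identity on the right with $\tau_{B, A}$ and on the left with $\Id_{\zcort{f}} \otimes \tau_{f^*C, A}$ immediately yields the second identity.

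No serious obstacle is anticipated. The argument is pure Koszul-sign bookkeeping; the only potential source of confusion is the nontrivial degree of $\zcort{f}$, which is $f \pmod{2}$ in the convention of this paper, but this degree appears symmetrically on both sides of each identity and so contributes no net sign.
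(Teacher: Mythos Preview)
Your proposal is correct and follows essentially the same route as the paper. The paper verifies the first identity by evaluating on local sections $a,b,c$ and tracking signs directly, whereas you package the same sign computation through Proposition~\ref{Koszul signs} and Lemma~\ref{symmetry isomorphism distributivity lemma}; both arguments then derive the second identity from the first by composing with the inverse symmetry isomorphisms.
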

\begin{proof}[Proof of Lemma \ref{orientors symmetry koszul}]
The second equation is obtained by the first one by composing with $\tau_{B,A}$ on the right and $\tau_{f^*C,A}$ on the left. Assume $a,b,c$ are local sections of $A,B,C$ and $\mO^f$ is a local relative orientation of $f$, such that $F(b)=\mO^f\otimes f^*c.$ Then
\[
\tau_{A,f^*C}\bu\,\,{}^AF(a\otimes b)=(-1)^{(F-f)a}(\Id \otimes \tau_{A,f^*C})(\mO^f\otimes a\otimes f^*c)=(-1)^{(F-f+c)a}\mO^f\otimes f^*c\otimes a,
\]
while
\[
F^A\bu \tau_{A,B}(b\otimes a)=(-1)^{ab}F^A(b\otimes a)=(-1)^{ab}\mO^f\otimes f^*c\otimes a.
\]
However, $F+b\equiv_2f+c$ by the equality of the degrees of $F(b)=\mO^f\otimes f^*c$. This proves the commutativity of the first diagram.
\end{proof}

\subsection{Extension to essential subspaces}
By Lemma \ref{unique extension of bundle maps}, All Lemmas of the above section that concern fiber-products or boundary factorization hold for essential fiber-products and essential boundary factorization.
\subsection{Extension to arbitrary commutative rings}

\begin{definition}
Let $\A$ be a commutative ring. 
Let $f:M\to N$ be a map. Consider the $\A$-representation of $\Z/2$ given by negation, $(-1)\cdot a=-a$ for $a\in\A$. Then the \textbf{$\A-$relative orientation bundle} $\cort{f}\to M$ is the local system associated to the negation representation,
\[
\cort{f}=\zcort{f}\times_{\Z/2} \A.
\]
\end{definition}
\begin{remark}
As $\zcort{f}$ is concentrated in degree $-\dim f$, so is $\cort{f}$.
\end{remark}
If $Q,K$ are local systems over $M,N$ and $g:M\to N$ is a map, then a \textbf{\orientor{g} of $Q$ to $K$} is a morphism of local systems over $M$
\[
G:Q\to \cort{g}\otimes_\A g^*K.
\]
All definitions, equations and lemmas about $\Z/2$-orientors extend naturally to orientors of local systems over $\A$.

% \section{Cauchy-Riemann $Pin$ boundary value problem}
% \input{Cauchy-Riemann}
\section{Moduli Spaces}
\label{moduli spaces section}
\subsection{Open stable maps}
\label{Moduli spaces}
Let $(X_0,\w_0)$ be a symplectic manifold of dimension $2n$ and let $L_0\subset X_0$ be a Lagrangian. Let $\mu_0:H_2(X_0,L_0;\Z)\to \Z$ be the Maslov index \cite{Maslov}. The symplectic form $\w_0$ induces a map $\w_0:H_2(X_0,L_0;\Z)\to \R$ given by integration, $\b\mapsto \int_\b \w_0$. Let $\Pi_0$ be a quotient of $H_2(X_0,L_0;\Z)$ by a subgroup that is contained in the kernel of $(\mu_0,\w_0):H_2(X_0,L_0;\Z)\to \Z\oplus \R$. Thus, $\mu_0,\w_0$ descend to $\Pi_0$. Let $J_0$ be an $\w_0-$tame almost target structure on $X_0$.
A $J_0$-holomorphic genus-0 open stable map to $(X_0,L_0)$ of degree $\b\in \Pi_0$ with one boundary component, $k+1$ boundary marked points, and $l$ interior marked points, is a quadruple $\mathfrak u:=(\S,u,\vec z,\vec w$) as follows. The domain $\S$ is a genus-0 nodal Riemann surface with boundary consisting of one connected component. 
The map of pairs
\[
u:(\S,\pa \S)\to (X_0,L_0)
\]
is continuous, and $J_0$-holomorphic on each irreducible component of $\S$, satisfying
\[
u_*([\S,\pa \S])=\b.
\]
The boundary marked points and the interior marked points
\[
\vec z=(z_0,...,z_k),\qquad \vec w=(w_1,...,w_l),
\]
where $z_j\in \pa \S,w_j\in \overset{\circ}\S$, are distinct from one another and from the nodal points. The labeling of the marked points $z_j$ respects the cyclic order given by the orientation of $\pa \S$ induced by the complex orientation of $\S$. Stability means that if $\S_i$ is an irreducible component of $\S$, then either $u|_{\S_i}$ is non-constant or it satisfies the following requirement: If $\S_i$ is a sphere, the number of marked points and nodal points on $\S_i$ is at least 3; if $\S_i$ is a disk, the number of marked and nodal boundary points plus twice the number of marked and nodal interior points is at least 3. An \textbf{isomorphism of open stable maps} \[\phi:(\S,u,\vec z,\vec w)\to(\S',u',\vec z',\vec w')\] is a homeomorphism $\phi:\S\to \S'$, biholomorphic on each irreducible component, such that 
\[
u=u'\circ \phi,\qquad\qquad z_j'=\phi(z_j),\quad j=0,...,k,\qquad\qquad w_j'=\phi(w_j),\quad j=1,...,l.
\]
We denote $\mathfrak u\sim \mathfrak u'$ if there exists an isomorphism of open stable maps $\phi:\mathfrak u\to \mathfrak u'$.
Denote by $\mM_{k+1,l}(X_0,L_0,J_0;\b)$ the moduli space of $J_0$-holomorphic genus-$0$ open stable maps to $(X_t,L_t)$ of degree $\b$ with one boundary component, $k+1$ marked boundary points and $l$ marked interior points.

\subsection{Families}\label{families target definition section}
Let $\W$ be a manifold with corners. An orbifold with corners $M$ over $\W$ is a submersion $\pi^M:M\to \W$. We denote by $T^vM=\ker \left(d\pi^M\right)$ the vertical tangent bundle along $\pi^M$. Let $\pi^N:N\to \W$ be another orbifold with corners over $\W$ and $f:M\to N$ be a map over $\W$. Let $\xi:\W'\to \W$ be a smooth map of manifolds with corners. As $\pi^M$ is a submersion, the fiber product $\xi^*M:=\W'_\xi\times_{\pi^M}M$ exists. We also get an induced map $\xi^*f:\xi^*M\to \xi^*N$. The situation is summed up in the following diagram.
\begin{equation}
\label{pullback along families basic diagram}    
\begin{tikzcd}
\xi^*M\ar[r,"\xi^M"]\ar[d,swap,"\xi^*f"]&M\ar[d,"f"]\\\xi^*N\ar[r,"\xi^N"]\ar[d,swap,"\xi^*\pi^{N}"]&N\ar[d,"\pi^N"]\\\W'\ar[r,"\xi"]&\W
\end{tikzcd}
\end{equation}
Moreover, for a fiber-product
\[
\begin{tikzcd}
M\times_\W P\ar[r]\ar[d]&P\ar[d,"\pi^P"]\\M\ar[r,"\pi^M"]&\W
\end{tikzcd}
\]
of orbifolds with corners over $\W$, we write $\pi^{M\times P}_M,\pi^{M\times P}_P$ for the corresponding projections.
For many purposes, one may assume $\W$ is a point.
\begin{definition}
Let $f:M\to N$ be a map of smooth manifolds. A \textbf{vector field along $f$} is a section $u$ of the bundle $f^*TN\to M$. A vector field $u$ along $f$ determines a linear map
\begin{align*}
    i_u:A^{k}(N)\to& A^{k-1}(M)\\
    i_u\rho\left(v_1,...,v_{k-1}\right)|_{x\in M}=&\rho_{f(x)}\left(u(x),df_x(v_1(x)),\ldots,df_x(v_{k-1}(x))\right).
\end{align*}
called interior multiplication.
\end{definition}
\begin{definition}\label{horizontal form definition}
Let $\pi^M:M\to \W$ be a manifold over $\W$. A differential form $\xi\in A^*(M)$ is called \textbf{horizontal} with respect to $\pi^M$ if its restriction to vertical vector fields vanishes.
\end{definition}
\begin{definition}\label{exact submersion definition}
Let $\pi^M:M\to \W$ be a manifold over $\W$ and let $\w\in A^2(M)$. The submersion $\pi^M:M\to \W$ is called \textbf{exact} with respect to $\w$ if $\w$ is horizontal with respect to $\pi^M$ and for every vector field $u$ on $\W$ there exists a function $f_u:M\to \R$ such that for all vector fields $\tilde u$ on $M$ such that $d\pi^M(\tilde u)=u$, the $1$-form 
\[
i_{\tilde u}\w-df_u
\]
is horizontal with respect to $\pi^M$.
\end{definition}
\begin{remark}\label{exact submersion independent of lift remark}
When checking whether a submersion is exact with respect to a horizontal $2$-form, given a vector field $u$ on $\W$, it suffices to construct a lift $\tilde u$ of $u$ to $M$, and a function $f_{u}:M\to \R$ such that $i_{\tilde u}\w-df_{u}$ is horizontal. It follows that for any lift $\tilde u'$, the form $i_{\tilde u'}\w - df_{u}$ is horizontal. Indeed,
%the function $f_u$ of the preceding definition is independent of the lift $\tilde u$ since for any lift $\tilde u'$ of $u$ to $M$, it holds that the $1$-form
\[
i_{\tilde u'}\w-i_{\tilde u}\w=i_{(\tilde u'-\tilde u)}\w
\]
is horizontal.
\end{remark}
\begin{lemma}\label{naturality of exact submersion condition lemma}
Consider the following commutative diagram,
\[
\begin{tikzcd}
M'\ar[r,"\xi^M"]\ar[d,"\pi'"]&M\ar[d,"\pi"]\\\W'\ar[r,"\xi"]&\W
\end{tikzcd}
\]
where $\pi,\pi'$ are submersions. Denote by $\xi^*(d\pi):\xi^*TM\to \xi^*T\W$ the map given by 
\[
\left(\xi^*(d\pi)\right)(v)|_x=d\pi_{\xi^M(x)}(v(x)).
\]
Assume $\pi$ is exact with respect to the form $\w\in A^2(M)$. Then for every vector field $u$ along $\xi$, there exists a function $f_u:M'\to \R$ such that for all vector fields $\tilde u$ along $\xi^M$ satisfying $(\xi^*d\pi)(\tilde u)=u\circ\pi'$, it holds that the $1$-form $i_{\tilde u}\w-df_u\in A^1(M')$ is horizontal.
\end{lemma}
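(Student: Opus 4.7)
The plan is to show existence of $f_u$ locally and then glue via a partition of unity on $\W'$, the key local case being when $u$ is a product of a function on $\W'$ with the $\xi$-pullback of a vector field on $\W$, where exactness of $\pi$ can be applied directly. Since horizontality is a local property on $M'$, and since any smooth section of $\xi^*T\W$ can be written, on a suitably small open set $U \subset \W'$, as a finite sum $u|_U = \sum_i \phi_i \cdot \xi^*v_i$ with $\phi_i \in C^\infty(U)$ and $v_i$ vector fields defined on a neighborhood of $\xi(U) \subset \W$ (by trivializing $T\W$ in local coordinates), it suffices to handle this finite-sum case on a single $U$ and then glue.

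In this local setting, I would apply Remark \ref{exact submersion independent of lift remark} to get, for each $v_i$, a function $f_i$ defined on a neighborhood of $\xi^M(\pi'^{-1}(U))$ in $M$ such that $i_{\tilde v_i}\w - df_i$ is horizontal with respect to $\pi$ for every lift $\tilde v_i$, and then define
\[
f_u^U := \sum_i (\phi_i \circ \pi') \cdot (\xi^M)^* f_i \in C^\infty\bigl(\pi'^{-1}(U)\bigr).
\]
To verify the defining property on $\pi'^{-1}(U)$, pick a lift $\tilde u$ and decompose
\[
\tilde u = \sum_i (\phi_i \circ \pi') \cdot \bigl(\tilde v_i \circ \xi^M\bigr) + \tilde u^{\mathrm v},
\]
where $\tilde v_i$ is some lift of $v_i$ to $M$; since $(\xi^*d\pi)(\tilde u) = u \circ \pi'$, the remainder $\tilde u^{\mathrm v}$ satisfies $(\xi^*d\pi)\tilde u^{\mathrm v} = 0$, so it takes values in $\ker d\pi$. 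Using $i_{\tilde v_i \circ \xi^M}\w = (\xi^M)^*(i_{\tilde v_i}\w)$ and the Leibniz rule, one computes
\[
i_{\tilde u}\w - df_u^U = \sum_i (\phi_i \circ \pi') (\xi^M)^*\bigl(i_{\tilde v_i}\w - df_i\bigr) - \sum_i (\xi^M)^*f_i \cdot d(\phi_i \circ \pi') + i_{\tilde u^{\mathrm v}}\w.
\]
The first summand is $\pi'$-horizontal because $\pi \circ \xi^M = \xi \circ \pi'$ forces $d\xi^M$ to send $\pi'$-vertical vectors to $\pi$-vertical vectors, and pulling back a $\pi$-horizontal form by a map with this property gives a $\pi'$-horizontal form; the second summand is $\pi'$-horizontal because $d(\phi_i \circ \pi')$ vanishes on $\pi'$-vertical vectors; and the last summand vanishes since $\w$ is $\pi$-horizontal and $\tilde u^{\mathrm v}$ is $\pi$-vertical.

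For the global construction, take a locally finite open cover $\{U_\alpha\}$ of $\W'$ with local solutions $f_u^\alpha$ on $\pi'^{-1}(U_\alpha)$, a subordinate partition of unity $\{\rho_\alpha\}$ on $\W'$, and set $f_u := \sum_\alpha (\rho_\alpha \circ \pi') \cdot f_u^\alpha$. Then
\[
i_{\tilde u}\w - df_u = \sum_\alpha (\rho_\alpha \circ \pi')\bigl(i_{\tilde u}\w - df_u^\alpha\bigr) - \sum_\alpha f_u^\alpha \cdot d(\rho_\alpha \circ \pi'),
\]
and both sums are $\pi'$-horizontal, the first by the local case and the second because $d(\rho_\alpha \circ \pi')$ vanishes on $\pi'$-vertical vectors. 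The only real subtlety, which I expect to be the main bookkeeping point rather than an obstacle, is confirming via the commutativity $d\pi \circ d\xi^M = d\xi \circ d\pi'$ that $\pi'$-vertical vectors on $M'$ are pushed by $d\xi^M$ to $\pi$-vertical vectors on $M$; everything else reduces to Leibniz and the definition of exactness.
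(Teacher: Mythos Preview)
Your proof is correct and follows essentially the same approach as the paper: localize so that $T\W$ is framed, write $u$ as a $C^\infty(\W')$-combination of $\xi$-pullbacks of the frame fields, pull back the functions $f_i$ coming from exactness of $\pi$, and check horizontality via the Leibniz rule; the paper does the partition of unity on $\W$ and works with one specific lift (invoking Remark~\ref{exact submersion independent of lift remark}), whereas you do it on $\W'$ and decompose an arbitrary lift, but the computation is the same.

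One small slip: the term $i_{\tilde u^{\mathrm v}}\w$ does not \emph{vanish}. Horizontality of $\w$ means $\w(v_1,v_2)=0$ only when \emph{both} arguments are $\pi$-vertical, so $i_{\tilde u^{\mathrm v}}\w$ is merely $\pi'$-horizontal (it vanishes on $\pi'$-vertical $w$ because $d\xi^M(w)$ is then $\pi$-vertical, by the same commutativity argument you already use). This is exactly the content of Remark~\ref{exact submersion independent of lift remark}, and it is all you need; your conclusion is unaffected.
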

\begin{proof}
\begin{comment}
First, assume we managed to prove the result for all vector fields with support in a set that belongs to an open cover $\mathcal U=\{U_\a\}_{\a\in I}$ of $\W'$. Since $\W$ is a manifold, we may assume $\mathcal U$ is locally finite. Let $\{\rho_\a\}_{\a\in I}$ be a partition of unity with respect to $\mathcal U$. Let $u$ be a vector field along $\xi^M$. And set $u_\a:=(\rho_a\circ \pi')\cdot u$. By assumption, for each $\a$ there exist an integer $k_\a\geq0$, functions $f_\a,g_\a^{1},...,g_\a^{k_\a}:M'\to \R$ and $1-$forms $\eta_\a^1,...,\eta_\a^{k_\a}\in A^1(U_\a)$ such that
\[
i_{u_\a}\w=df_\a+\sum_{i=1}^{k_\a}g_\a^i\cdot\left({\pi'}^*\eta_\a^i\right).
\]
Then
\begin{align*}
i_u\w=\sum_\a i_{u_\a}\w=\sum_\a df_\a+\sum_{i=1}^{k_\a}g_\a^i\cdot\left({\pi'}^*\eta_\a^i\right)
\end{align*}
\end{comment}
By a standard partition of unit argument, we may assume that $T\W$ admits a frame $(u_1,...,u_k)$. For $i=1,..., k$, let $f_i:M\to \R$ be the functions corresponding to $u_i$ according to Definition~\ref{exact submersion definition}. Let $u$ be a vector field along $\xi$. There exist unique functions $\a_i:\W\to \R$ for $i=1,...,k$ such that
\[
u(x)=\sum_{i=1}^k\a_i(x)\cdot u_i(\xi^M(x)).
\]
Define \[f_u:={\left(\sum_{i=1}^k(\a_i\circ \pi')\cdot (f_i\circ\xi^M)\right)}.\]
Similarly to Remark~\ref{exact submersion independent of lift remark}, it is enough to prove that $i_{\tilde u}\w-df_u$ is horizontal for a specific $\tilde u$. We proceed to construct $\tilde u$ for which we can compute $i_{\tilde u}\w$. For $i=1,...,k$, let $\tilde u_i$ be vector fields on $M$ such that $d\pi(\tilde u_i)=u_i.$
Then the vector field $\tilde u$ along $\xi^M$ defined by
\[
\tilde u(x):=\sum_{i=1}^k(\a_i\circ\pi')(x)\cdot \tilde u_i(\xi^M(x))
\] satisfies
\[
(\xi^*d\pi)(\tilde u)=u.
\]

We calculate, using the Leibniz's rule,
\begin{align*}
i_{\tilde u}\w-df_u=\sum_{i=1}^k(\a_i\circ\pi')\cdot\left( {\xi^M}^*\left(i_{u_i}\w-df_i\right)\right)-\sum_{i=1}^kd(\a_i\circ\pi')\cdot (f_i\circ \xi^M).
\end{align*}
Keeping in mind that pullbacks of horizontal forms are horizontal, all the forms on the right are horizontal.

% the forms $i_{u_i}\w-df_i$ are horizontal by assumption and the forms $d(\a_i\circ\pi')\cdot (f_i\circ \xi^M)$ are obviously horizontal.
\end{proof}
\begin{corollary}\label{pullback of exact manifold is exact naturality corollary}
Recall the notation from diagram~\eqref{pullback along families basic diagram}. Let $\w\in A^2(M)$ and assume $\pi^M$ is exact with respect to $\w$. It holds that $\xi^*\pi^{M}$ is exact with respect to $\left({\xi^M}\right)^*\w\in A^2(\xi^*M)$.
\end{corollary}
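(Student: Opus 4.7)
The plan is to deduce the corollary directly from Lemma~\ref{naturality of exact submersion condition lemma}, which already does the substantive analytic work. Set $\pi' := \xi^*\pi^{M}$ and $\w' := (\xi^M)^*\w$. Two conditions of Definition~\ref{exact submersion definition} must be verified: that $\w'$ is horizontal with respect to $\pi'$, and that for every vector field $u$ on $\W'$ there is a function $f_u:\xi^*M \to \R$ such that $i_{\tilde u}\w' - df_u$ is horizontal for every vector field $\tilde u$ on $\xi^*M$ with $d\pi'(\tilde u) = u$.

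First, horizontality of $\w'$ follows immediately from that of $\w$. Let $w$ be a vertical vector on $\xi^*M$ with respect to $\pi'$. Commutativity of diagram~\eqref{pullback along families basic diagram} gives $d\pi^M(d\xi^M(w)) = d\xi(d\pi'(w)) = 0$, so $d\xi^M(w)$ is vertical on $M$. For any other tangent vector $w'$ at the same point, $\w'(w,w') = \w(d\xi^M(w), d\xi^M(w')) = 0$ by horizontality of $\w$, which establishes horizontality of $\w'$.

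Next, I would handle the existence of $f_u$ by translating the problem into the setting of Lemma~\ref{naturality of exact submersion condition lemma}. Given $u$ on $\W'$, form the vector field $u^\sharp := d\xi \circ u$ along $\xi$, and apply the lemma to $u^\sharp$ to obtain a function $f_{u^\sharp}:\xi^*M \to \R$ such that $i_{\tilde v}\w - df_{u^\sharp}$ is horizontal for every vector field $\tilde v$ along $\xi^M$ satisfying $(\xi^*d\pi^M)(\tilde v) = u^\sharp \circ \pi'$. Define $f_u := f_{u^\sharp}$. For any lift $\tilde u$ of $u$ on $\xi^*M$, set $\tilde v := d\xi^M \circ \tilde u$, a vector field along $\xi^M$. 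A direct chase using $d\pi^M \circ d\xi^M = d\xi \circ d\pi'$ verifies the hypothesis $(\xi^*d\pi^M)(\tilde v) = u^\sharp \circ \pi'$. Unwinding the definition of interior multiplication along a map yields the pointwise identity $i_{\tilde u}\w' = i_{\tilde v}\w$, so that $i_{\tilde u}\w' - df_u = i_{\tilde v}\w - df_{u^\sharp}$ is horizontal on $\xi^*M$, as needed.

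There is no serious obstacle here: the partition-of-unity construction and the independence on the choice of lift have already been absorbed into Lemma~\ref{naturality of exact submersion condition lemma}. The only conceptual step is the observation that every lift $\tilde u$ of $u$ on $\xi^*M$ induces, via $d\xi^M$, a vector field $\tilde v$ along $\xi^M$ lifting $u^\sharp$, and that pullback of forms converts interior multiplication by $\tilde u$ into interior multiplication along $\xi^M$ by $\tilde v$.
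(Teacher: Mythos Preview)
Your proof is correct and follows exactly the route the paper intends: the corollary is stated without proof immediately after Lemma~\ref{naturality of exact submersion condition lemma}, and your argument spells out the direct deduction from that lemma. The verification that $d\xi^M\circ\tilde u$ satisfies the hypothesis of the lemma and that $i_{\tilde u}(\xi^M)^*\w = i_{d\xi^M\circ\tilde u}\w$ are precisely the elementary steps the paper leaves implicit.
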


\begin{definition}\label{symplectic fibration definition}
Let $\pi^X:X\to \W$ be a manifold with corners over $\W$, and let $\w$ be a closed $2$-form on $X$. $\pi^X$ is called a \textbf{symplectic fibration} if it is a locally trivial fibration such that, for all $t\in \W$, $(\pi^{-1}(t),\w|_{\pi^{-1}(t)})$ is a symplectic manifold and the vertical boundary with respect to $\pi^X$ is empty. Let $L\subset X$ be a subfibration, that is, the restriction $\pi^L:=\pi^X|_L$ is a locally trivial fibration. We say that $L$ is a \textbf{Lagrangian subfibration} if $\w|_L$ is horizontal with respect to $\pi^L$. That is, the fibers of $\pi^L$ are Lagrangian submanifolds in the fibers of $\pi^X$. A Lagrangian subfibration is called \textbf{exact} if $\pi^L:=\pi^X|_L$ is exact with respect to $\w|_L$.

For a vector bundle $V\to B$, define the characteristic classes $p^\pm(V)\in H^2(B;\Z/2)$ by
\[
p^+(V)=w_2(V),\qquad p^-(V)=w_2(V)+w_1(V)^2.
\]
According to \cite{Kirby-Pin-structures}, $p^\pm(V)$ is the obstruction to the existence of a $Pin^{\pm}$ structure on $V$. See \cite{Kirby-Pin-structures} for a detailed discussion of the definition of the groups $Pin^{\pm}$ and the notion of $Pin^\pm$ structures.
We say that the fibration $X\supset L\to \W$ is \textbf{relatively $Pin^{\pm}$} if $p^{\pm}(T^vL)\in \Ima\left(i^*:H^2(X)\to H^2(L)\right)$, and $Pin^{\pm}$ if $p^{\pm}(T^vL)=0$. A \textbf{relative $Pin^{\pm}$ structure $\fp$ on $L$} is a relative $Pin^{\pm}$ structure on $T^vL$.
\end{definition}
\begin{remark}
The condition that the vertical boundary with respect to $\pi^X$ is empty may be replaced with an appropriate convexity property.
\end{remark}

We fix a symplectic fibration $(X,\w,\W,\pi^X)$ with an exact Lagrangian subfibration $L$ whose fibers are connected. For $t\in \W$, we write $X_t,L_t$ for the fibers of $\pi^X,\pi^L$, respectively, and $\w_t$ for the restriction of $\w$ to $X_t$. Set
\[
\zort{L}:=\zcort{\pi^L}[1-n].
\]
\begin{definition}\label{vertical orientability definition}
We say that the fibration $L$ is \textbf{vertically orientable} if $(\pi^L_*\zort{L})\neq\emp$. This is equivalent to the fiber being orientable.
\end{definition}
\begin{definition}
Let $b\in \Z$. We define a sheaf on $\W$
\[
\mathcal{X}_L^{b}:=
\pi_*^L\left(\zort{L}^{\otimes b}\right).
\]
\end{definition}
\begin{definition}
$b\in \Z$ is called \textbf{an exponent for $L$} if $\mathcal X_L^{b}$ is nonempty. In this case, the canonical map $\pi_L^*\mathcal X_L^{b}\to \zort{L}^{\otimes b}$ is an isomorphism, since both are $\Z/2$ local systems.
\end{definition}
\begin{remark}\label{exponent for L cases remark}
$b\in\Z$ is an exponent for $L$ if and only if $b$ is even or $L$ is vertically oriented.
\end{remark}
\begin{definition}\label{vertical relative homology definition}
Let $\underline {H_2}(X,L;\Z)$ be the sheaf over $\W$ given by sheafification of the presheaf with sections over an open set $U\subset \W$ given by $H_2\left({\left(\pi^X\right)}^{-1}(U),{\left(\pi^L\right)}^{-1}(U);\Z\right)$.
\end{definition}
The sheaf $\underline {H_2}(X,L;\Z)$ is the local system with fiber $H_2(X_t,L_t;\Z)$ for $t\in \W$, with the Gauss Manin connection. Let $\underline\mu:\underline{H_2}(X,L;\Z)\to \underline\Z$ be the bundle map given by the fiberwise Maslov index. Moreover, let $\underline\w:\underline{H_2}(X,L;\Z)\to \underline \R$ be the bundle map given over $t\in \W$ by 
\[
\underline\w|_t(\b_t)=\int_{\b_t}i_t^*\w,
\]
where $i_t:X_t\to X$ is the inclusion.
\begin{lemma}\label{constancy of w on H2 lemma}
The bundle maps $\underline\mu,\underline\w$ are constant on local sections of $\underline{H_2}(X,L;\Z)$.
\end{lemma}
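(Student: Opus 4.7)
The plan is to show that the integer-valued function $t\mapsto \underline\mu(\b_t)$ and the real-valued function $t\mapsto \underline\w(\b_t)$ are locally constant along every smooth curve $\phi:[0,1]\to \W$, for any local section $\b$ of $\underline{H_2}(X,L;\Z)$. By Ehresmann-type local triviality of the pair fibration $(X,L)\to \W$, the pullback $\phi^*(X,L)$ is diffeomorphic to $[0,1]\times(X_0,L_0)$ over $[0,1]$. Under this trivialization the local section corresponds to a constant class $\tilde\b\in H_2(X_0,L_0;\Z)$; choosing a smooth representative $u_0$ (a 2-chain with $\pa u_0\subset L_0$) and recomposing with the trivialization yields a smooth family $\Phi:[0,1]\times D\to X$ with $\Phi(t,\cdot)$ representing $\b_{\phi(t)}$ in $(X_{\phi(t)},L_{\phi(t)})$ and $\Phi(t,\pa D)\subset L$.

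For $\underline\mu$, the Maslov index of $\b_{\phi(t)}$ is computed from the homotopy class of the loop of Lagrangian subspaces $(\Phi|_{\{t\}\times\pa D})^*T^vL$ inside the trivialized complex vector bundle $(\Phi|_{\{t\}\times D})^*(T^vX,J)$. This data depends continuously on $t$, so $t\mapsto \mu_{\phi(t)}(\b_{\phi(t)})$ is a continuous $\Z$-valued function and hence constant.

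For $\underline\w$, write $\underline\w(\b_{\phi(t)})=\int_{\{t\}\times D}\Phi^*\w$ and differentiate under the integral sign. Using $d\w=0$ and Cartan's magic formula,
\begin{equation*}
\frac{d}{dt}\int_{\{t\}\times D}\Phi^*\w=\int_{\{t\}\times D}\mathcal L_{\pa_t}\Phi^*\w=\int_{\{t\}\times D}d\bigl(i_{\pa_t}\Phi^*\w\bigr)=\int_{\{t\}\times\pa D}i_{\pa_t}\Phi^*\w,
\end{equation*}
where the last equality uses Stokes together with the fact that pullback along $\{t\}\times D\hookrightarrow[0,1]\times D$ commutes with $d$. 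On the boundary, $\Psi:=\Phi|_{[0,1]\times\pa D}$ factors through $L$, so $i_{\pa_t}\Phi^*\w$ restricts to $i_{\pa_t}\Psi^*(\w|_L)$. Apply Lemma~\ref{naturality of exact submersion condition lemma} to the commutative square
\begin{equation*}
\begin{tikzcd}
{[0,1]\times\pa D}\ar[r,"\Psi"]\ar[d,"\pi_1"] & L\ar[d,"\pi^L"] \\
{[0,1]}\ar[r,"\phi"] & \W
\end{tikzcd}
\end{equation*}
with vector field $u=\phi'$ along $\phi$ and lift $\tilde u=d\Psi(\pa_t)$. Exactness of $\pi^L$ with respect to $\w|_L$ produces a function $g:[0,1]\times\pa D\to \R$ such that $i_{\tilde u}(\w|_L)-dg$ is horizontal with respect to $\pi_1$. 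Since $\{t\}\times\pa D$ is a fiber of $\pi_1$, the horizontal part is annihilated by pullback to this fiber, so
\begin{equation*}
\int_{\{t\}\times\pa D}i_{\pa_t}\Psi^*(\w|_L)=\int_{\{t\}\times\pa D}dg=0
\end{equation*}
by Stokes on the closed 1-manifold $\pa D$. Hence $\frac{d}{dt}\underline\w(\b_{\phi(t)})=0$.

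The main technical point, which I expect to require care rather than real difficulty, is the passage from a topologically defined local section of $\underline{H_2}(X,L;\Z)$ to a smooth family of chain representatives compatible with the Gauss-Manin connection. If $u_0$ must be taken as a sum of 2-simplices rather than a single disk, the Stokes step should be applied simplex-by-simplex: internal boundaries cancel, while the outer 1-chain lies entirely in $L$ and contributes zero since it is itself a cycle, so that the final integral of $dg$ still vanishes.
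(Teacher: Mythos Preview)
Your proof is correct and follows essentially the same approach as the paper: both reduce to a path $[0,1]\to\W$, represent the section by a smooth family of relative cycles, and use Stokes together with Lemma~\ref{naturality of exact submersion condition lemma} (exactness of $\pi^L$) to kill the boundary contribution. The only cosmetic difference is that you differentiate $\int_{\{t\}\times D}\Phi^*\w$ and show the derivative vanishes pointwise, whereas the paper applies Stokes once over the whole cylinder $\Sigma\times[0,1]$ to show the endpoint values agree directly; the key input and the logical structure are identical.
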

\begin{proof}
For $\underline\mu$, the claim is clear since $\Z$ is discrete. For $\underline\w$,
it is enough to show that the map is constant on any path $\g:[0,1]\to \W$. Let $s\in [0,1]$ denote the parameter. Let $\b$ be a section of $\g^*\underline{H_2}(X,L;\Z),$ and let
\[
\s:(\S\times[0,1],\partial \S \times [0,1])\to (X,L)
\]
be such that the following diagram is commutative
\[
\begin{tikzcd}
\left(\S\times[0,1],\pa\S\times[0,1]\right)\ar[r,"\s"]\ar[d,"\pi_{[0,1]}"]&(X,L)\ar[d,"\pi^X"]\\
{[0,1]}\ar[r,"\g"]&\W
\end{tikzcd}
\]
and $\left[\s|_{\S\times\{s\}}\right]=\b|_{\g(s)}$. 
Let $\xi = \frac{\partial \sigma}{\partial s}.$
Recall that $L$ is an exact Lagrangian subfibration. By Lemma~\ref{naturality of exact submersion condition lemma} applied to the above diagram with $u=\pa_s$ and $\tilde u=\xi$, there exists a function $f:\pa \S\times[0,1]\to \R$ such that $i_{\xi}\w-df$ is horizontal with respect to $\pi_{[0,1]}$.
Since $[0,1]$ is one dimensional and $\pi_{[0,1]}^*ds$ is a horizontal $1$-form, it follows that
\begin{equation}\label{ds parallel to w minus df exauation}
\pi_{[0,1]}^*ds\we \left(i_\xi\w-df\right)=0.    
\end{equation}
Since $d\w=0,$
\begin{align*}
\int_{\b|_{\g(1)}}\w|_{\g(1)}-\int_{\b|_{\g(0)}}\w|_{\g(0)}\overset{\text{Stokes}}=&\int_{\pa \S\times [0,1]} \s^*\w= \int_{\pa \S\times [0,1]}\left(\pi_{[0,1]}\right)^*ds \wedge  i_{\xi}\w \\
\overset{\text{eq.~\eqref{ds parallel to w minus df exauation}}}=&\int_{\pa\S\times[0,1]}\left(\pi_{[0,1]}\right)^*ds\we df.
\end{align*}
However, by Stoke's theorem again,
\[
\int_{\pa\S\times[0,1]}\left(\pi_{[0,1]}\right)^*ds\we df=-\int_{\pa\S\times[0,1]}d(f\cdot \left(\pi_{[0,1]}\right)^*ds)=-\int_{\pa\S\times\{0,1\}}f\cdot \left(\pi_{[0,1]}\right)^*ds=0.
\]
Therefore, 
\[
\int_{\b|_{\g(1)}}\w|_{\g(1)}=\int_{\b|_{\g(0)}}\w|_{\g(0)}.
\] This completes the proof.
\begin{comment}
In a manner similar to the proof of Lemma~\ref{naturality of exactness condition}, the exactness of $L$ implies the existence of an integer $k\geq0$, functions $f,g^1,...,g^k:L \to \R$ and $1$-forms $\eta^1,...,\eta^k\in A^1(\W)$ such that 
\begin{equation}\label{exactness for family of disks equation}
i_{\xi}\w=\s^*df+\sum_i(g^i\circ\s)\cdot\left(\s^*{\pi^X}^*\eta\right)
=\s^*df+\sum_i(g^i\circ\s)\cdot\left(\pi_{[0,1]}^*\g^*\eta^i\right).
\end{equation}  Then, since $d\w=0,$
\begin{align*}
\int_{\b|_{\g(1)}}\w|_{\g(1)}-\int_{\b|_{\g(0)}}\w|_{\g(0)}\overset{\text{Stokes}}=&\int_{\pa \S\times [0,1]} \s^*\w= \int_{\pa \S\times [0,1]}ds \wedge  i_{\xi}\w \\
\overset{\text{eq.~\eqref{exactness for family of disks equation}}}=&\int_{\pa\S\times[0,1]}ds\we\left(\s^*df+\sum_i\left(g^i\circ\s\right)\cdot\pi_{[0,1]}^*\g^*\eta^i\right).
\end{align*}
However, 
\[
\int_{\pa\S\times[0,1]}ds\we\s^*df=\int_{\pa\S\times[0,1]}ds\we\s^*df=-\int_{\pa\S\times[0,1]}d(f\circ\s\we ds)=\int_{\pa\S\times\{0,1\}}f\circ\s\we ds=0,
\]
and also
\[
\int_{\pa\S\times[0,1]}ds\we\left(g^i\circ\s\right)\cdot\pi_{[0,1]}^*\g^*\eta^i=0
\]
since $ds\we \pi_{[0,1]}^*\a=0$ for all forms $\a\in A^1([0,1])$. This completes the proof.
\end{comment}
\end{proof}
\begin{definition}
A \textbf{target} is an octuple $\target:=(\W,X,\w,L,\pi^X,\fp,\underline\Upsilon,J)$ as follows. \begin{enumerate}
    \item 
$\W$ is manifold with corners.
\item $\pi^X:X\to \W$ is a symplectic fibration with respect to $\w$.
\item $L\subset X$ is an exact Lagrangian subfibration with a relative $Pin^\pm$ structure $\fp$.
\item $\underline\Upsilon \subset \ker(\underline\mu\oplus\underline\w)$ is a sub-bundle such that the quotient $\underline{H_2}(X,L;\Z)/\underline\Upsilon$ is a globally constant sheaf.
\item $J=\{J_t\}_{t\in\W}$ is a $\w$-tame almost complex structure on $T^vX$.
\end{enumerate}
The dimension of $\target$ is defined to be $\dim\target:=\dim \pi^X.$ 
\end{definition}
\begin{definition}
Let $\target:=(\W,X,\w,L,\pi^X,\fp,\underline\Upsilon,J)$ be a target. The \textbf{group of degrees of $\target$} which we denote by $\Pi:=\Pi(\target)$ is the fiber of $\underline{H_2}(X,L;\Z)/\underline\Upsilon.$
Lemma~\ref{constancy of w on H2 lemma} implies that the bundle-maps $\underline\mu,\underline\w$ descend to maps $\mu:\Pi\to \Z$ and $\w:\Pi\to \R$.
A degree $\b\in \Pi$ is called \textbf{admissible} if $\mu(\b)+1$ is an exponent for $L$. Denote by $\Pi^{ad}\subset\Pi$ the admissible degrees.
\end{definition}
\begin{example}
Consider $\R P^1$ as lines in the $yz$ plane and $S^2$ as the unit vectors in the $xyz$ space. For $t\in \R P^1$ and a vector $\vec v\in S^2$, we denote $\vec v\perp t$ if $\vec v$ is perpendicular to $t$. Set $\W=\R P^1$ and $X= \R P^1\times S^2$. Denote by $\pi:X\to \W$ and $p:X\to S^2$ the projections. Let $\w=p^*\w_0$ and $J=p^*J_0$ where $\w_0,J_0$ are the standard symplectic form and complex structure on $S^2$, respectively. Let
\[
L=\left\{(t,\vec v)\in X\mid \vec v\perp t\right\}.
\]
Namely, $L$ is a circle rotating on its diameter. Note that $\w|_L=0$. In particular,
$L\subset X$ is an exact Lagrangian subfibration. It is both relatively $Pin^+$ and relatively $Pin^-$. This may be seen as follows. $L$ is the Klein bottle and $T^vL\simeq\left(\pi^L\right)^*\mathcal O_{\R P^1}(-1)$. By the naturality of the characteristic classes $p^\pm$, it follows that $L$ is both $Pin^+$ and $Pin^-$ as a fibration. Let $\fp$ be any $Pin^\pm$ structure on $L$. 
The fibration $L$ is vertically orientable, yet the map $\pi^L$ is not relatively orientable. Moreover, we have 
\[
H_2(X_t,L_t;\Z)=\Z\oplus \Z
\]
and parallel transporting $(x,y)\in H_2(X_t,L_t;\Z)$ along the loop $\R P^1$ we get $(x,y)\mapsto (y,x)$. Let $\underline\Upsilon=\ker(\underline\mu\oplus \underline\w)$ which is the M\"obius $\Z$ bundle over $\R P^1$. 
Then
\[
\target_0:=\left(\W,X,\w,L,\pi^X,\fp,\underline\Upsilon,J\right)
\]
is a target. It holds that $\Pi(\target_0)=\Z$. Alternatively, we can take $\underline\Upsilon=2\cdot\ker(\underline\mu\oplus\underline\w)$ and then $\Pi=\Z\oplus \Z/2.$
\end{example}
\begin{definition}
Let $\target:=(\W,X,\w,L,\pi^X,\fp,\underline\Upsilon,J)$ be a target. Let $\W'$ be a manifold with corners and $\xi:\W'\to \W$ be a smooth map. By Corollary~\ref{pullback of exact manifold is exact naturality corollary}, $\xi^*L\subset\xi^*X$ is exact with respect to ${\xi^X}^*\w$. Thus, the octuple \[\xi^*\target=
\left(\W',\xi^*X, \xi^*\w,\xi^*L,\xi^*\pi^{X},\xi^*\fp, \xi^*\underline\Upsilon,\xi^*J\right)
\]
is a target.
Since pullback of sheaves is an exact functor, the canonical map
\[
\underline{H_2}(\xi^*X,\xi^*L;\Z)/\xi^*\underline\Upsilon\to \xi^*\left(\underline{H_2}(X,L;\Z)/\underline\Upsilon\right)
\] is an isomorphism, so $\xi^*\target$ is indeed a target. In particular, the canonical map \[\xi^*:\Pi(\target)\to \Pi(\xi^*\target)\] is an isomorphism.
\end{definition}
\begin{definition}\label{moduli spaces of families definition}
Fix a target $\target=(\W,X,\w,L,\pi^X,\fp,\underline\Upsilon, J)$.
For $k\geq-1,l\geq0$ and $\b\in \Pi$, denote by
\begin{equation*}
\mM_{k+1,l}(\b):=\mM_{k+1,l}(\target;\b):=\left\{(t,\fu)\mid t\in \W, \fu\in \mM_{k+1,l}(X_t,L_t,J_t;\b_t)\right\}.    
\end{equation*}
Denote by $\pi^{\mM}:\mM_{k+1,l}(\b)\to\W$ the map $(t,\mathfrak u)\mapsto t$.
Denote by
\[
\begin{split}
    evb_j^{(k,l,\b)}:\mdl{3}\to L,\qquad &j=0,...,k,\\
    evi_j^{(k,l,\b)}:\mdl{3}\to X,\qquad &j=1,...,l,
\end{split}
\]
the evaluation maps given by 
\begin{align*}
    evb_j^{(k,l,\b)}(t,(\S,u,\vec z,\vec w))&=(t,u(z_j)),\\evi_j^{(k,l,\b)}(t,(\S,u,\vec z,\vec w))&=(t,u(w_j)).
\end{align*}
We may omit part or all superscripts ${(k,l,\b)}$ when they are clear from the context.
\end{definition}

To streamline the exposition, we assume that $\mdl{3}$ are smooth orbifolds with corners and $evb_0^\b$ are submersions for $\b\in\Pi$. These assumptions hold in a range of important examples~\cite[Example 1.5]{Sara1}.

In general, the moduli spaces $\mdl{3}$ are only metrizable spaces. They can be highly singular and have varying dimension. Nonetheless, the theory of the virtual fundamental class being developed by several authors~\cite{Fu09a,FO19,FO20,HW10,HWZ21} allows one to perturb the $J$-holomorphic map equation to obtain moduli spaces that are weighted branched orbifolds with corners and evaluation maps that are smooth. Thus, we may consider \orientor{evb_0}s. Furthermore, by averaging over continuous families of perturbations, one can make $evb_0^\b$ behave like a submersion. So, fiber-products along $evb_0^\b$ exist. See~\cite{Fu09a,FO19,FO20}. When the unperturbed moduli spaces are smooth of expected dimension and $evb_0^\beta$ is a submersion, one can choose the perturbations to be trivial. Furthermore, as explained in~\cite{Fu09a,FO19}, one can make the perturbations compatible with forgetful maps of boundary marked points.
The compatibility of perturbations with forgetful maps of interior marked points has not yet been fully worked out in the Kuranishi structure formalism.

\begin{remark}
If $\xi:\W'\to \W$ is a map, $\target:=(\W,X,\w,L,\pi^X,\fp,\underline\Upsilon,J)$ is a target and $\b\in \Pi(\target)$, then 
\[
\mM_{k+1,l}(\xi^*\target;\xi^*(\b))=\xi^*\mM_{k+1,l}(\target;\b).
\]Moreover, for $i\leq k$ and $j\leq l$,
\[
evb_i^{\left(\xi^*\target\right)}=\xi^*\left(evb_i^{\target}\right),\qquad evi_j^{\left(\xi^*\target\right)}=\xi^*\left(evi_j^{\target}\right).
\]
\end{remark}
For a finite list of indices $I$, let $\mM_{k+1,I}(\b)$ denote the moduli space diffeomorphic to $\mM_{k+1,|I|}(\b)$ with interior marked points labeled by $I$. It carries evaluation maps $evb_i^\b$ for indices $i=0,...,k$ and $evi_j^\b$ for indices $j\in I$.

The orbifold structure of $\mM_{k+1,I}(\b)$ arises from the automorphisms of open stable maps. Vertical corners of codimension $r$ along the map $\pi^{\mM}$ consist of open stable maps $(\S,u,\vec z,\vec w)$ where $\S$ has $r$ boundary nodes. For $r=0,1,2,...$ denote by 
\[\mM_{k+1,I}(\b)^{(r)}\subset\mM_{k+1,I}(\b)\] the dense open subset consisting of stable maps with no more than $r$ boundary nodes and no interior nodes. Each of these subspaces is an essential subset of $\mM_{k+1,I}(\b)$.
A precise description of the vertical corners in the case $r=1$ is given in terms of gluing maps, as follows. 

Let $k\geq-1,l\geq 0,\b\in\Pi$. Fix partitions $k_1+k_2=k+1,\b_1+\b_2=\b$ and $I\dot\cup J=[l]$, where $k_1>0$ if $k+1> 0$. When $k+1>0$, let $0< i\leq k_1$. When $k=-1$ let $i=0$. Let
\[
B^{(1)}_{i,k_1,k_2,I,J}(\b_1,\b_2)\subset \pa \mM_{k+1,l}^{(1)}(\b)
\]
denote the locus of two component stable maps, described as follows. One component has degree $\beta_1$ and the other component has degree $\beta_2.$ The first component carries the boundary marked points labeled $0,\ldots,i-1,i+k_2,\ldots,k,$ and the interior marked points labeled by $I.$ The second component carries the boundary marked points labeled~$i,\ldots,i+~k_2-1$ and the interior marked points labeled by $J.$ The two components are joined at the $i$th boundary marked point on the first component and the $0$th boundary marked point on the second. Let 
\[
B_{i,k_1,k_2,I,J}(\b_1,\b_2):=\overline{B^{(1)}_{i,k_1,k_2,I,J}(\b_1,\b_2)}\subset \pa\mdl{3}
\] denote the closure. Denote by 
\[
\iota^{\b_1,\b_2}_{i,k_1,k_2,I,J}:B_{i,k_1,k_2,I,J}(\b_1,\b_2)\to \mdl{3}
\] the inclusion of the boundary.

There is a canonical gluing map 
\[
\vartheta_{i,k_1,k_2,\b_1,\b_2,I,J}:\mdl{1}_{evb_i^{\b_1}}\times_{evb_0^{\b_2}}\mdl{2}\to B_{i,k_1,k_2,I,J}(\b_1,\b_2).
\]
This map is a diffeomorphism, unless $k = -1, I = \emptyset = J$ and $\beta_1 = \beta_2.$ In the exceptional case, $\vartheta$ is a $2$ to $1$ local diffeomorphism in the orbifold sense.
The dense open subset 
\[
\mM_{k_1+1,I}^{(0)}(\b_1)\times_L\mM_{k_2+1,J}^{(0)}(\b_2)
\]
is carried by $\vartheta_{i,k_1,k_2,\b_1,\b_2,I,J}$ onto $B^{(1)}_{i,k_1,k_2,I,J}(\b_1,\b_2)$.
We abbreviate
\begin{align*}
\vartheta^\b&=\vartheta_{i,k_1,k_2,\b_1,\b_2,I,J}
\end{align*} 
when it creates no ambiguity.
The images of all such $\vartheta^\b$ intersect only in codimension $2$, and cover the vertical boundary of $\mdl{3}$, unless $k=-1$ and $\b\in \text{Im}(\G\left(\underline {H_2}(X;\Z)\right)\to \Pi)$.
In the exceptional case, there might occur another phenomenon of bubbling, which will not be relevant to this paper.

% \begin{remark}
% In general, in the case $k=-1$ there might be another form of codimension-1 bubbling, in which the boundary $\pa D^2$ shrinks into a point. Denote the boundary component corresponding to this bubbling phenomenon by $C$. It can be described, alternatively, as interior bubbling from a ghost disk component. Note that the ghost disk is unstable. 
% In this case there is a gluing map
% \[
% \vartheta_\emp:L\times_X\mM_{l+1}(\b)\to C
% \]
% which is a surjective local diffeomorphism, where $\mM_{l+1}(\b)$ is the moduli space of closed stable curves. 

% However, this might happen only when $\pa \b=0$. Thus, in particular it implies that $\mu(\b)$ is even. an even Maslov index together with no boundary marked points implies that the moduli space $\mM_{0,l}(\b)$ is orientable. Therefore, $C$ is orientable. But $X,\mM_{l+1}(\b)$ are orientable while $L$ is nonorientable, which implies $L\times_X\mM_{l+1}(\b)$ is nonorientable. Therefore, $L\times_X\mM_{l+1}(\b)$ and $C$ must be empty. Problem - is $\vartheta_\emp$ really surjective? Otherwise, I cannot deduce $C=\emp$.
% \end{remark}

\section{Orientors and the gluing map}\label{Moduli orientation}
Fix a target $\target=\left(\W,X,\w,L,\pi^X,\fp,\underline\Upsilon, J\right)$. In the rest of this section we generalize the result of \cite[Proposition 8.3.3]{Fukaya}, which discusses the orientation of the gluing map $\vartheta^\b$ of Section~\ref{Moduli spaces}. In \cite[Proposition 8.3.3]{Fukaya} the moduli spaces are canonically oriented, and it is determined whether $\vartheta^\b$ preserves orientation. In our case, the moduli spaces may be non-orientable. There is only an \orientor{evb_0} of a bundle given in terms of the bundle $\zort{\pi^L}$, see Definition~\ref{Jakethesis} for the construction of the orientors. As a diffeomorphism, the map $\vartheta^\b$ induces a map \[\pull{\vartheta^\b}:{\vartheta^\b}^*\zcort{evb_0^{\b}\circ\iota^\b}\rightarrow\zcort{evb_0^{\b_1}\circ p_1}\]
where $p_1:\mM_{k_1+1,I}(\b_1)\times_L\mM_{k_2+1,J}(\b_2)\to \mM_{k_1+1,I}(\b_1)$ is the projection.
In this section we write a formula for $\pull{\vartheta^\b}$ in terms of the bundle $\zort{L}$. See Theorem~\ref{expinv of boundary of J by thetabeta theorem}.
\subsection{Conventions and notations of one moduli space}
\label{Moduli orientation - Conventions}
We identify $D^2$ with the unit disk in $\C$ and identify $S^1$ with $\pa D^2$. In particular, $D^2$ carries the induced orientation of $\C$ and $S^1$ carries the induced orientation of the boundary of $D^2$, which is counterclockwise. In particular, for points $z_0,z_1\in \pa D^2$ there is a notion of oriented segment $[z_0,z_1]$.

Following \cite[Section 8.3]{Fukaya} we fix an orientation for $\text{Aut}(D^2)\simeq PSL_2(\R)$ as follows. Recall that $\text{Aut}(D^2)$ acts from the left on $D^2$ and $S^1$ by M\"obius transformations 
\[
\begin{pmatrix}
a&b\\c&d
\end{pmatrix}\cdot z=\frac{az+b}{cz+d}.
\]
We let it act from the right on $D^2$ and $S^1$ by $(z,g)\mapsto g^{-1}\cdot z$.
\begin{definition}\label{orientation of PSL}
The \textbf{canonical orientation} $\mO_c^{\text{Aut}(D^2)}$ of $\text{Aut}(D^2)$ is given as follows. We pick three points~$z_0,z_1,z_2\in~S^1$ ordered counterclockwise. We embed $\text{Aut}(D^2)$ as an open subset of $S^1\times S^1\times S^1$ by \[{g
\mapsto (g^{-1}\cdot z_0,g^{-1}\cdot z_1,g^{-1}\cdot z_2)}\] and we equip $\text{Aut}(D^2)$ with the induced orientation.
\end{definition}

For a finite set $I$, set
\[
\mW_{k+1,I}=\mM_{k+1,I}(\b=0;X=\W,L=\W).
\]
Set
\[
\widehat{\mW_{k+1,I}}:=\W\times\left\{\begin{matrix}
(z_0,...,z_k)\in {(S^1)}^{k+1},\\
\{w_j\}_{j\in I}\in {(\text{int}D^2)}^I
\end{matrix}\quad\left|\quad\begin{matrix}
\forall j_1,j_2\leq k,\quad j_1\neq j_2\implies z_{j_1}\neq z_{j_2},\\
(z_0,...,z_k)\text{ is cyclically ordered},\\
\forall j_1,j_2\in I,\quad j_1\neq j_2\implies z_{j_1}\neq z_{j_2}
\end{matrix}\right.\right\}.
\]
The moduli space $\mW_{k+1,I}$ is nonempty only if $k+2|I|\geq 2$, since constant maps are unstable unless the condition on the number of marked points holds. If it is nonempty, then
\begin{equation}
\mW^{(0)}_{k+1,I}=\left.\widehat{\mW_{k+1,I}}\right/\text{Aut}(D^2).
\label{description of disk space}
\end{equation}

\begin{definition}
\label{interior forgetful map definition}
Let
\[
U_{k+1,I}:\mM_{k+1,I}(\b)\to
\mW_{k+1,I}
\]
denote the map that forgets the holomorphic map and contracts unstable components.
For $k\geq 0$, define
\[
Ev_{k+1,I}^\b:\mM_{k+1,I}(\b)\to L\times_\W\mW_{k+1,I},\quad Ev_{k+1,I}^\b=(evb_0^\b,U_{k+1,I}).
\]
Assume that $I\subset[l]$. Denote $\hat I=I\dot\cup \{l+1\}$. Let
\[
Fi^\b_{k+1,I}:\mM_{k+1,\hat I}(\b)\to \mM_{k+1,I}(\b)
\]
denote the map that forgets the $|I|+1$st interior point and contracts unstable components. Moreover, denote by 
\[
Fi_{k+1,I}:\mW_{k+1,\hat I}\to \mW_{k+1,I}
\]
the forgetful map when $X=L=\W$.

Similarly, let
\[
Fb^\b_{k+1,I}:\mM_{k+2,I}(\b)\to \mM_{k+1,I}(\b)
\]
denote the map that forgets the $k+1$st boundary point and contracts unstable components.
Denote by
\[
Fb_{k+1,I}:\mW_{k+2,I}\to \mW_{k+1,I}
\]
the forgetful map when $X=L=\W$.

Moreover, for $k\geq0$, let $f^\b:\mM_{k+1,l}(\b)\to \mM_{k+1,l}(\b)$ be given by
\[
f^\b(\S,u,(z_0,...,z_k),\vec w)=(\S,u,(z_1,...z_k,z_0),\vec w),
\]
the cyclic map. Denote by $f$ the map $f^\b$ when $X=L=\W$.
\end{definition}
It holds that \[
evi_j\circ f^\b =evi_j,\quad evb_k\circ  f^\b =evb_0,\quad evb_j\circ f^\b =evb_{j+1},\,\,j=0,...,k-1.
\]
Assume $k+2|I|\geq 2$. The following diagrams are essential pullbacks.
\begin{equation}
\begin{tikzcd}
\mM_{k+1,\hat I}(\b)\ar[r,"Fi^\b_{k+1,I}"]\ar[d,swap,"U_{k+1,\hat I}"]&\mM_{k+1,I}(\b)\ar[d,"U_{k+1,I}"]\\
\mW_{k+1,\hat I}\ar[r,swap,"Fi_{k+1,I}"]&\mW_{k+1,I}
\end{tikzcd}\qquad
\begin{tikzcd}
\mM_{k+2, I}(\b)\ar[r,"Fb_{k+1,I}^\b"]\ar[d,swap,"U_{k+2, I}"]&\mM_{k+1,I}(\b)\ar[d,"U_{k+1,I}"]\\
\mW_{k+2, I}\ar[r,swap,"Fb_{k+1,I}"]&\mW_{k+1,I}
\end{tikzcd}
\label{forgetting boundary point is functorial diagram}    
\end{equation}

Moreover, for $k\geq0$ the following diagram is a pullback diagram.
\begin{equation}\label{cyclic map pullback diagram}
\begin{tikzcd}
\mM_{k+1,l}(\b)\ar[r,"f^\b"]\ar[d,"U_{k+1,l}^\b"]&\mM_{k+1,l}(\b)\ar[d,"U_{k+1,l}^\b"]\\
\mW_{k+1,l}\ar[r,"f"]&\mW_{k+1,l}
\end{tikzcd}    
\end{equation}

For $k\geq 0$, we extend the boundary marked points from the set of indices $\{0,...,k\}$ to the set $\Z$ cyclicly. For example, $z_{k+1}=z_{0},$ and $z_{-1}=z_k$.
\begin{definition}\label{boundary transport}
Let $k\geq 0$. For $i,j\in\Z$, the \textbf{boundary transport} map is the degree $0$ isomorphism
\[
c_{ij}:(evb_j^\b)^*\zort{L}{\overset\sim\longrightarrow} (evb_i^\b)^*\zort{L},
\]
given as follows.
% By Lemma~\ref{unique extension of bundle maps}, it is enough to construct $c_{ij}$ on $\mM^{(0)}_{k+1,I}(\b)$. Let $\fu=(D^2,u,\vec z,\vec w)\in \mM_{k+1,I}(\b)$ be a point.
% Define $c_{ii}$ to be the identity. Assume $i<j$. We then define $c_{ji}=c_{ij}^{-1}$. The complex structure of $D^2$ provides
% $\pa D^2 = S^1$ with the boundary orientation. Consider the unique up to homotopy oriented arc $\g:[0,1]\to S^1$ from ${z_i}$ to $z_j$, passing through $z_{i+1},...,z_{j-1}$ in order. Then $u\circ\g$ defines a path in $L$ from $u(z_i)$ to $u(z_j)$. Trivializing $(u\circ \g)^*\zort{L}$, as $[0,1]$ is contractible, we get an isomorphism
% If~$\phi:\fu\to \fu'$ is an isomorphism of stable maps,  then since $\phi$ preserves orientation and marked points,
% \[
% c_{ij}^{\fu}=c^{\fu'}_{ij}.
% \]
% Moreover, 
% since $c^\fu_{ij}$ depends continuously on $\fu$ in the Gromov topology, we get a continuous map
% \[
% c_{ij}:{(evb_j^\b)}^*\zort{L}\to {(evb_0^\b)}^*\zort{L}
% \]
% on $\mM^{(0)}_{k+1,I}(\b)$.
Define $c_{ii}$ to be the identity.
Assume $i< j$.
We construct $c_{ij}$ at a point $(t,\fu)=(t,(\S,u,\vec z,\vec w))\in \mM_{k+1,I}(\b)$. We then define $c_{ji}=c_{ij}^{-1}$. The complex structure of $\S$ provides
$\pa \S = \sum_r S^1/\sim$ with the boundary orientation. Consider the unique up to homotopy oriented arc $\g:[0,1]\to \pa \S$ from ${z_i}$ to $z_j$, that passes through $z_{i+1},...,z_{j-1}$ in order, and switches components whenever it reaches a nodal point. Then $u\circ\g$ defines a path in $L_t$ from $u(z_i)$ to $u(z_j)$. Trivializing $(u\circ \g)^*\zort{L}|_t$, as $[0,1]$ is contractible, we get an isomorphism
\[
c^u_{ij}:\zort{L}|_{t,u(z_j)}\to \zort{L}|_{t,u(z_i)}.
\]
If~$\phi:\fu\to \fu'$ is an isomorphism of stable maps,  then since $\phi$ preserves orientation and marked points,
\[
c_{ij}^{\fu}=c^{\fu'}_{ij}.
\]
Moreover, 
since $c^\fu_{ij}$ depends continuously on $(t,\fu)$ in the Gromov topology, we get a continuous map
\[
c_{ij}:{(evb_j^\b)}^*\zort{L}\to {(evb_0^\b)}^*\zort{L}
\]
on $\mM_{k+1,I}(\b)$.
\end{definition}

\begin{remark}
We treat the maps $c_{ij}$ as \orientor{\Id_{\mM_{k+1,I}(\b)}}s of ${(evb_j^\b)}^*\zort{L}$ to ${(evb_0^\b)}^*\zort{L}$.
\end{remark}
The following remarks imply that $c_{ij}$ for $i,j\in\Z$ is determined by $c_{0l}$ for $l\in \{0,...,k\}$.
\begin{remark}[Full circle]
\label{full round}
The isomorphism $c_{0,k+1}$ gives the involution $(-1)^{\mu(\b)}$ of~${evb^\b_0}^*\zort{L}$.
\end{remark}
\begin{remark}[Group-like]
\label{group-like transport}
Let $i,j,k\in \Z$ be indices. Then
\[
c_{ik}=c_{ij}\circ c_{jk}.
\]
\end{remark}
The proof of the following lemma is left to the reader.
\begin{lemma}[Properties of boundary transport]
Let $k,l\geq 0$. Then the following equations hold.
\begin{align*}
% \label{rotation from 0 to 1 boundary forgetful map}
     \left(Fb_{k+1,I}^\b\right) ^*c_{0j}=&c_{0j},\qquad 0\leq j\leq k,\\
% \label{rotation from 0 to 1 interior forgetful map}
     \left(Fi_{k+1,I}^\b\right) ^*c_{0j}=&c_{0j},\qquad 0\leq j\leq k,\\
    %  \label{rotation from 0 to 1 cyclic diffeomorphism}
     \left(f^\b\right) ^*c_{0j}=&c_{1,j+1}.
\end{align*}
\end{lemma}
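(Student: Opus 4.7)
The plan is to verify each equation pointwise on the essential dense subset $\mM^{(0)}_{k+1,I}(\b)\subset \mM_{k+1,I}(\b)$ of stable maps with irreducible smooth domain, and then invoke Lemma~\ref{unique extension of bundle maps} to extend the equality to the full moduli space. Both sides of each equation are continuous $\Z/2$-bundle maps, since $c_{ij}$ depends continuously on the stable map in the Gromov topology, so matching on an essential subset forces matching everywhere.

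For the cyclic equation $(f^\b)^*c_{0j}=c_{1,j+1}$, at a stable map $\fu=(\S,u,(z_0,\ldots,z_k),\vec w)$ the image $f^\b(\fu)$ has boundary marked points $(z_1,\ldots,z_k,z_0)$, so under the cyclic extension of indices the $j$th marked point of $f^\b(\fu)$ coincides with the $(j+1)$st marked point of $\fu$. The oriented arc on $\pa \S$ from the new $z_0$ to the new $z_j$ is therefore literally identical to the oriented arc from $z_1$ to $z_{j+1}$ in $\fu$, so the trivializations of $(u\circ\gamma)^*\zort{L}$ defining $(f^\b)^*c_{0j}$ at $\fu$ and $c_{1,j+1}$ at $\fu$ coincide.

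For the forgetful equations, on $\mM^{(0)}_{k+1,I}(\b)$ the maps $Fb^\b_{k+1,I}$ and $Fi^\b_{k+1,I}$ simply remove one marked point without altering $\S$, $u$, or the remaining marked points. Hence the oriented arc from $z_0$ to $z_j$ passing through $z_1,\ldots,z_{j-1}$ in order is unchanged, and the equality of parallel transport isomorphisms is immediate.

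The main obstacle is verifying that the equations persist across the loci where the forgetful operation contracts unstable components. If a component $\S_i$ becomes unstable after removing a marked point and is collapsed, then by the stability criterion for open stable maps the restriction $u|_{\S_i}$ must be constant; thus $u\circ\gamma$ traverses a single point of $L_t$ while crossing $\S_i$, and parallel transport of $\zort{L}$ along $u\circ\gamma$ is unaffected by the collapse. Combined with the essential subset argument and Lemma~\ref{unique extension of bundle maps}, this concludes all three equations.
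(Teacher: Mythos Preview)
The paper leaves this lemma to the reader, so there is no proof to compare against; your argument is correct and is the natural one. The boundary transport $c_{ij}$ is defined via parallel transport along $u\circ\gamma$, and you correctly trace how the arc $\gamma$ and the path $u\circ\gamma$ behave under $f^\b$, $Fb^\b$, and $Fi^\b$ on the essential subset of smooth-domain maps, then invoke Lemma~\ref{unique extension of bundle maps}.

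Two minor remarks. First, for the forgetful equations you should be working on the essential subset of the \emph{source} of the forgetful map, namely $\mM^{(0)}_{k+2,I}(\b)$ for $Fb^\b_{k+1,I}$ and $\mM^{(0)}_{k+1,\hat I}(\b)$ for $Fi^\b_{k+1,I}$, rather than $\mM^{(0)}_{k+1,I}(\b)$; this is purely a notational slip. Second, your final paragraph about collapsed components is logically redundant once Lemma~\ref{unique extension of bundle maps} has been applied, since both sides of each equation are already globally defined continuous bundle maps and agreement on an essential subset suffices; but it is a harmless and reassuring direct check.
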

\subsection{Cauchy-Riemann   \texorpdfstring{$Pin$}{TEXT} boundary problems}
We recall here a number of definitions and results from~\cite{JakePhD}. See also~\cite[Chapter 8]{Fukaya} and~\cite{Jake-involutions-mirror-symmetry}.
 In the following, given a vector bundle over a manifold $V\to M$, we denote by $\G(V)$ an appropriate Banach space completion of the smooth sections of $V$.
\begin{definition}
A \textbf{Cauchy-Riemann $Pin$ boundary value problem} is a quintuple $\underline D=(\S,E,F,\fp,D)$ where
\begin{itemize}
    \item $\S$ is a Riemann surface.
    \item $E\to \S$ is a complex vector bundle.
    \item $F\to \pa \S$ is a totally real sub-bundle of $E|_{\pa \S}$ with an orientation over every component of $\pa \S$ where it is orientable.
    \item $\fp$ is a $Pin$ structure on $F$.
    \item $D:\G\left((\S,\pa \S),(E,F)\right)\to \G\left(\S,\W^{0,1}(E)\right)$ is a linear partial differential operator satisfying, for $\xi\in \G\left((\S,\pa \S),(E,F)\right)$ and $f\in C^\infty(\S,\R)$,
    \[
    D(f\xi)=fD\xi+(\overline\pa f)\xi.
    \]
    Such a $D$ is called a \textbf{real linear Cauchy-Riemann operator}.
\end{itemize}
\end{definition}
\begin{definition}
The \textbf{determinant line} of a Fredholm operator $D$ is the one-dimensional vector space
\[
\det(D)=\La^{\max}(\ker D)\otimes \La^{\max}(\coker D).
\]
\end{definition}
The determinant lines of a continuously varying family of Fredholm operators fit together to form a determinant line bundle~\cite[Appendix A]{mcduff-salamon}. It is well-known that real linear Cauchy-Riemann operators are Fredholm~\cite[Appendix C]{mcduff-salamon}. The following restates Proposition 2.8 and Lemma 2.9 of~\cite{JakePhD}. 
\begin{lemma}\label{Cauchy Riemann boundary value problem properties lemma}
The determinant of the real linear Cauchy-Riemann operator of a Cauchy-Riemann $Pin$ boundary value problem carries a canonical orientation, which has the following properties:
\begin{enumerate}
    \item The orientation varies continuously in families and thus defines an orientation of the associated determinant line bundle.
    \item Reversing the orientation of the boundary condition over one component of the boundary reverses the canonical orientation of the determinant line.
\end{enumerate}
\end{lemma}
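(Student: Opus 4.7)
The plan is to construct the canonical orientation in stages of increasing generality, following the strategy of~\cite{JakePhD}, and then verify the two listed properties. First I would exploit the fact that the space of real linear Cauchy-Riemann operators on a fixed $(E,F)$ is an affine space over the $0$th order operators and hence contractible: once an orientation of $\det D$ is fixed for some convenient $D$, continuity in families (which is property~(1)) uniquely propagates it to every other $D$ in the family. So the problem reduces to constructing the orientation for a preferred $D$ per homotopy class of data $(\S,E,F,\fp)$.

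Next I would degenerate $\S$ along a collection of embedded circles into a nodal surface whose components are either closed Riemann surfaces or disks with a single boundary component, and correspondingly degenerate $D$ to a direct sum of Cauchy-Riemann operators. The determinant line extends continuously over this nodal degeneration via the standard gluing isomorphism, and on each closed component the operator may be deformed through real linear Cauchy-Riemann operators to a complex linear one, whose kernel and cokernel are complex vector spaces and thus carry canonical complex orientations. Hence it suffices to orient $\det D$ on each disk component.

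The main step is the disk case. Given $(D^2,E,F,\fp)$, I would use the Pin structure to deform $F$, through totally real sub-bundles of $E|_{\pa D^2}$ carrying compatible Pin structures, to a direct sum of rank-one totally real line bundles over $S^1$. Each such rank-one problem has kernel and cokernel computable by Riemann-Roch; when $F$ is trivial the spaces are canonically complex, and when $F$ is M\"obius the Pin structure (which in rank one is equivalent to a choice of orientation of a particular associated bundle) selects a preferred orientation. Taking the tensor product of these orientations and propagating backward through the nodal degeneration defines $\mO_{\det D}$. One then verifies that the result is independent of the chosen rank-one splitting by checking the assertion on pairs of rank-one summands, which reduces to an $SL_2(\R)$ calculation on the space of totally real planes in $\C^2$, combined with the fact that the Pin group doubly covers $O(n)$ precisely so as to control the sign ambiguity in such exchanges.

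Property~(2) follows from the construction: reversing the orientation of $F$ on one boundary component is represented by the nontrivial class in $H^1(S^1;\Z/2)$ and, in any rank-one decomposition, flips the orientation of exactly one summand, which reverses $\mO_{\det D}$. The hard part will be the coherence of the rank-one decomposition on the disk, i.e.\ ensuring that the Pin structure really does single out a preferred sign and that independent decompositions yield the same orientation. This is the source of all the delicate sign terms that appear later in the paper (for instance the $\d\mu(\b_1)\mu(\b_2)$ correction in Theorem~\ref{boundary of q theorem introduction}), so it is the step that deserves the most care.
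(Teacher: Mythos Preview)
The paper does not prove this lemma at all: the sentence immediately preceding it reads ``The following restates Proposition 2.8 and Lemma 2.9 of~\cite{JakePhD},'' and the lemma is stated without proof as a citation. Your proposal is a reasonable outline of the argument in that reference (and of the parallel treatment in~\cite[Chapter 8]{Fukaya}): contractibility of the space of Cauchy-Riemann operators, nodal degeneration to reduce to disks and closed components, complex orientations on closed components, and a Pin-structure-guided rank-one decomposition on the disk. So your approach matches the cited source rather than diverging from the paper, which simply imports the result.
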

\subsection{Orientors on moduli spaces}
\begin{definition}\label{canonical orientations for cyclic and forgetful maps definition}
The map $f^\b$ is a diffeomorphism, and thus has a canonical orientation $\mO^f_c$. Moreover, the maps $Fb^\b_{k+1,I}$ and $Fi^\b_{k+1,I}$ have canonical orientations, denoted $\mO^{Fb}, \mO^{Fi}$, originating in the complex orientation of the domains of the $J$-holomorphic maps and the induced orientation on the boundary.
\end{definition}
\begin{remark}\label{canonical orientations for cyclic and forgetful maps remark}
Consider the following essential pullback diagram.
\begin{equation}
    \label{commutativity of boundary and interior forgetful maps diagram}
    \begin{tikzcd}
        \mM_{k+2,\hat I}(\b)\ar[r,"Fb^\b_{k+1,\hat I}"]\ar[d,swap,"Fi^\b_{k+2,I}"]&\mM_{k+1,\hat I}(\b)\ar[d,"Fi^\b_{k+1,I}"]\\
        \mM_{k+2, I}(\b)\ar[r,swap,"Fb^\b_{k+1,I}"]&\mM_{k+1, I}(\b)
    \end{tikzcd}
\end{equation}
Then 
\begin{equation}
\label{commutativity of boundary and interior forgetful maps boundary equation}
    \mO^{Fi_{k+1}}\circ \mO^{Fb_{\hat I}}=\mO^{Fb_{I}}\circ \mO^{Fi_{k+2}}.
\end{equation}
Moreover, 
\[
\mO^{Fi^\b}:=\pull{\left(U_{k+1,\hat I}/U_{k+1,I}\right)}\mO^{Fi},\qquad
\mO^{Fb^\b}:=\pull{\left(U_{k+2,I}/U_{k+1,I}\right)}\mO^{Fb},
\]
where the pullbacks are taken according to diagrams~\eqref{forgetting boundary point is functorial diagram}.
\end{remark}

\begin{notation}
As this causes no confusion, we write $\expinv{\left(Fb^\b_{k+1,I}\right)}$ for $\expinv{\left(Fb^\b_{k+1,I},\mO^{Fb}\right)}$, and similarly for $Fi$.
\end{notation}
\begin{lemma}\label{disk space orientors with properties existence lemma}
There exists a collection of \eorientor{\pi^{\mW_{k+1,I}}}s $\phi_{k+1,I}$ of $\underline{\Z/2}$ labeled by $k\geq-1,$ $l\geq 0,I\subset [l]$ such that $k+2|I|\geq 2$, with the following properties:
\begin{enumerate}
    \item\label{disk space orientors:base case} In the zero dimensional cases $(k,|I|)\in \{(0,1),(2,0)\}$, in which the projection maps $\pi^{\mW_{k+1,I}}:\mW_{k+1,I}\to \W$ are diffeomorphisms, the orientors are the canonical orientors from Definition~\ref{orientation as orientor} \[\phi_{k+1,I}=\phi_{\pi^{\mW_{k+1,I}}}.\]
    \item\label{disk space orientors:boundary permutation} The orientors are well behaved under cyclic permutations of the boundary points, that is \[\expinv{f}\phi_{k+1,I}=(-1)^k\phi_{k+1,I}.\]
    \item\label{disk space orientors:interior permutation} The orientors are well behaved under arbitrary relabeling of interior points, that is, if $I,J\subset [l]$ and $a:I\to J$ is a bijection, the induced map $\tilde a:\mW_{k+1,I}\to \mW_{k+1,J}$ satisfies
    \[
    \expinv{\tilde a}\phi_{k+1,J}=\phi_{k+1,I}.
    \]
    \item\label{disk space orientors:forget boundary} The orientors are well behaved under forgetting boundary points, that is \[\expinv{\left(Fb_{k+1,I}\right)}\phi_{k+1,I}=\phi_{k+2,I}.\]
    \item \label{disk space orientors:forget interior} 
    The orientors are well behaved under forgetting interior points, that is \[\expinv{\left(Fi_{k+1,I}\right)}\phi_{k+1,I}=\phi_{k+1,\hat I}.\]
\end{enumerate}
Properties~\ref{disk space orientors:base case},~\ref{disk space orientors:forget boundary} and~\ref{disk space orientors:forget interior} determine the orientors $\phi_{k+1,I}$ uniquely.
\end{lemma}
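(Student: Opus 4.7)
The plan is to construct the orientors $\phi_{k+1,I}$ by induction on $(k,|I|)$ starting from the two base cases, verify consistency of the definition, and then establish the two symmetry properties. For existence and uniqueness, every admissible pair $(k,|I|)$ with $k \geq 0$ is reachable from a base case by a finite sequence of forgetful operations: starting from $\phi_{3,\emptyset}$ (resp.\ $\phi_{1,\{1\}}$), pulling back along $Fb_{\bullet}$ increases the boundary count by one and pulling back along $Fi_{\bullet}$ adds an interior point. These pullbacks, forced by properties~\ref{disk space orientors:forget boundary} and~\ref{disk space orientors:forget interior}, together with property~\ref{disk space orientors:base case} on the base cases, determine $\phi_{k+1,I}$ uniquely in this range. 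For the remaining case $k = -1$ with $|I| \geq 2$, the orientor $\phi_{0,I}$ is determined from the already constructed $\phi_{1,I}$ by applying Lemma~\ref{orientor on pullback is pulled back} to the relatively oriented surjective submersion $Fb_{0,I}:\mW_{1,I} \to \mW_{0,I}$, whose fibers are $S^1$ and hence connected.

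Well-definedness requires consistency when distinct sequences of pullbacks reach the same $(k,|I|)$. This reduces to showing that pullback along $Fi_{\bullet}$ commutes with pullback along $Fb_{\bullet}$. Using the essential pullback diagram~\eqref{commutativity of boundary and interior forgetful maps diagram}, the equality of composed orientations~\eqref{commutativity of boundary and interior forgetful maps boundary equation}, and Lemma~\ref{pullback by composition relatively oriented functoriality lemma} (pullback along a composition of relatively oriented maps equals the composition of pullbacks), both orders produce the same orientor. Iterated $Fb$'s among themselves (or iterated $Fi$'s) commute trivially since they concern disjoint marked points. The argument extends to paths that pass through the $k = -1$ stage by combining this commutativity with the uniqueness clause in Lemma~\ref{orientor on pullback is pulled back}.

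Property~\ref{disk space orientors:interior permutation} is then established by a parallel induction on $|I|$. It is trivial on the base cases, where either $|I| = 0$ or $\tilde a$ permutes a single interior point. In the inductive step, $\tilde a$ commutes strictly with $Fb_{\bullet}$ and with $Fi_{\bullet}$ up to an induced relabeling, and the relative orientation is preserved since each interior marked point contributes a $2$-dimensional factor, so any permutation of interior labels has trivial sign.

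The principal obstacle is property~\ref{disk space orientors:boundary permutation}, the cyclic symmetry $\expinv{f}\phi_{k+1,I} = (-1)^k\phi_{k+1,I}$. Direct propagation through the induction fails because $f$ does not commute with $Fb_{k+1,I}$: tracing the actions shows that $Fb \circ f$ and $f \circ Fb$ forget different boundary points, so the inductive pullback relation does not transfer cleanly. To bridge this, I will identify the orientor $\phi_{k+1,I}$ produced by the induction with the relative orientation of $\mW_{k+1,I}^{(0)}$ over $\W$ coming from the quotient presentation~\eqref{description of disk space}, where $\widehat{\mW_{k+1,I}}$ carries the product orientation $\bigwedge_{j=0}^k dz_j \wedge \bigwedge_{i \in I} dw_i$ and the quotient by $\text{Aut}(D^2)$ is taken with respect to the canonical orientation of Definition~\ref{orientation of PSL}. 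Under this identification the cyclic map $f$ lifts to the cyclic permutation of the one-dimensional factors $dz_j$, which contributes precisely $(-1)^k$ from its $k$ transpositions, and this sign descends to the quotient because $f$ commutes with the $\text{Aut}(D^2)$-action. The matching between the inductive construction and the quotient orientation is proved by induction using the fact that the forgetful maps $Fb$ and $Fi$ lift to projections on the $\widehat{\mW}$'s whose canonical relative orientations (given by $dz_{k+1}$ and $dw_{l+1}$) are compatible with the quotient, so that the two definitions of $\phi_{k+1,I}$ agree on the base cases and are propagated by the same pullbacks.
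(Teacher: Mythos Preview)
Your proposal has a genuine gap in the well-definedness argument. You correctly observe that consistency is needed when distinct sequences of pullbacks reach the same $(k,|I|)$, but your claim that this ``reduces to showing that pullback along $Fi_\bullet$ commutes with pullback along $Fb_\bullet$'' is incomplete. Commutativity of $Fb$ with $Fi$ only shows that two chains of pullbacks starting from the \emph{same} base case agree regardless of the order of moves. It does not show that chains starting from the two \emph{different} base cases $(k,|I|)=(0,1)$ and $(k,|I|)=(2,0)$ agree at their first common descendant $(k,|I|)=(2,1)$. Concretely, you must verify
\[
\expinv{\left(Fb_{2,\{1\}}\right)}\expinv{\left(Fb_{1,\{1\}}\right)}\phi_{1,\{1\}}=\expinv{\left(Fi_{3,\emptyset}\right)}\phi_{3,\emptyset}
\]
as a separate, direct calculation on $\mW_{3,\{1\}}$. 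The paper performs precisely this check; without it the inductive definition is not well posed on the region $k\geq 2,\;|I|\geq 1$ reachable from both base cases.

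Apart from this gap, your approach is essentially the paper's. Your treatment of $k=-1$ via Lemma~\ref{orientor on pullback is pulled back} is more explicit than the paper's one-line remark. For properties~\ref{disk space orientors:boundary permutation} and~\ref{disk space orientors:interior permutation} the paper argues directly on the quotient presentation $\mW_{k+1,I}^{(0)}=\widehat{\mW_{k+1,I}}/\mathrm{Aut}(D^2)$: since an endo-orientor of $\underline{\Z/2}$ over a connected relatively orientable space is determined by a single sign, it suffices to compute how the lifts of $f$ and $\tilde a$ to $\widehat{\mW_{k+1,I}}$ act on the product orientation, yielding $(-1)^k$ and $+1$ respectively. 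Your identification of the inductively built $\phi_{k+1,I}$ with this quotient orientation amounts to the same argument; your separate inductive verification of property~\ref{disk space orientors:interior permutation} becomes unnecessary once you have it.
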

\begin{proof}
We follow the proof of~\cite[Lemma 2.24]{jake-rahul-intersection-kdv}. A direct calculation shows that
\[
\expinv{\left(Fb_{2,1}\right)}\expinv{\left(Fb_{1,1}\right)}\phi_{1,1}=\expinv{\left(Fi_{3,0}\right)}\phi_{3,0}.
\]
See~\cite[Figure 2]{jake-rahul-intersection-kdv}.
Equation~\eqref{commutativity of boundary and interior forgetful maps boundary equation} and Lemma~\ref{pullback by composition relatively oriented functoriality lemma} imply that for any $k,I$ and any \orientor{\pi^{\mW_{k+1,I}}} $\phi$ we have
\[
\expinv{\left(Fi_{k+2,I}^\b\right)}\expinv{\left(Fb_{k+1,I}^\b\right)}\phi=\expinv{\left(Fb_{k+1,\hat I}^\b\right)}\expinv{\left(Fi_{k+1,I}^\b\right)}\phi.
\]
Therefore, we can define $\phi_{k+1,I}$ recursively, according to properties~\ref{disk space orientors:base case},~\ref{disk space orientors:forget boundary} and~\ref{disk space orientors:forget interior}.

For property~\ref{disk space orientors:boundary permutation} to hold, it suffices to check that $f$ changes orientation by $(-1)^k.$ This follows by Lemma~\ref{orientor restoration group action lemma} since $\mW^{(0)}_{k+1,I}=\widehat{\mW_{k+1,I}}/\text{Aut}(D^2)$ and the diffeomorphism of $\widehat{\mW_{k+1,I}}$ which cyclicly permutes the $S^1$ components changes orientation by $(-1)^k$.

Similarly, for property~\ref{disk space orientors:interior permutation} to hold, it suffices to check that $\tilde a$ does not change orientation. This is parallel to the proof of property~\ref{disk space orientors:boundary permutation}, only that the diffeomorphism does not change orientation because $\dim (D^2)=2$.
\end{proof}

\begin{lemma}
\label{Jake Thesis lemma}
Let $k+2|I|\geq 0$. For $k\geq 0$, the relative $Pin^\pm$ structure $\fp$ determines a canonical \orientor{U_{k+1,I}^\b} of $(evb^\b_{z_0})^*\left(\zort{L}^{\otimes \left(\mu(\b)+1\right)}\right)$ to $\underline{\Z/2}$ of degree $1-n$
\[
\tilde I^\b_{k+1,I}:(evb^\b_{z_0})^*\left(\zort{L}^{\otimes \left(\mu(\b)+1\right)}\right)\to \zcort{U_{k+1,l}^\b}.
\]

For $k=-1,\b\in \Pi^{ad}$, the relative $Pin^\pm$ structure $\fp$ determines a canonical \orientor{U_{0,I}^\b} of $\left(\pi^\mM\right)^*\mathcal X_L^{\mu(\b)+1}$ to $\Z/2$ of degree $1-n$
\[
\tilde I^\b_{0,I}:\left(\pi^\mM\right)^*\mathcal X_L^{\mu(\b)+1}\to \zcort{U_{0,l}^\b}.
\]
\end{lemma}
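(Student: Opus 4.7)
The construction is a translation, into the language of orientors, of Solomon's construction in~\cite{JakePhD}; the job is to package the analytic input (Lemma~\ref{Cauchy Riemann boundary value problem properties lemma}) into the format of Definition~\ref{orientor}. I will first build the orientor on the essential open locus $\mM^{(0)}_{k+1,I}(\b)\subset\mM_{k+1,I}(\b)$ of stable maps whose domain $\Sigma$ has no boundary nodes, and then extend uniquely to the closure via Lemma~\ref{unique extension of bundle maps}. Throughout, I work fiberwise over $\W$, using that for a point $[\fu]=(\S,u,\vec z,\vec w)$ in the interior of the moduli space the linearized Cauchy--Riemann operator
\[
D_u:\Gamma\bigl((\S,\pa\S),(u^*TX,u^*TL)\bigr)\longrightarrow\Gamma(\S,\W^{0,1}(u^*TX))
\]
is a real linear Cauchy--Riemann operator. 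Equipping $u^*TL$ with the pullback of the relative $Pin^\pm$ structure $\fp$, the quadruple $\underline D_u=(\S,u^*TX,u^*TL,u^*\fp,D_u)$ is a Cauchy--Riemann $Pin$ boundary value problem.

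Next, I relate $\det D_u$ to $\zcort{U^\b_{k+1,I}}$. The tangent space $T_{[\fu]}\mM_{k+1,I}(\b)$ sits in the standard exact sequence involving $\det D_u$, the quotient of $T\mathrm{Aut}(D^2)$ by the stabilizer of the marked points, the real tangent lines $T_{z_i}\pa\Sigma$ for $i=0,\ldots,k$, and the complex tangent planes $T_{w_j}\Sigma$ for $j\in I$; a parallel sequence with $D_u$ replaced by the trivial-target $\bar\pa$ describes $U^*T\mW_{k+1,I}$. Taking the difference and using that the $\mathrm{Aut}(D^2)$, boundary marked-point, and interior marked-point contributions are canonically oriented (Definition~\ref{orientation of PSL} for $\mathrm{Aut}(D^2)$, the boundary orientation of $S^1$ for the $T_{z_i}\pa\Sigma$, and the complex structure for the $T_{w_j}\Sigma$), I obtain a canonical $\Z/2$-equivariant identification
\[
\zcort{U^\b_{k+1,I}}\big|_{[\fu]}\;\simeq\;\det D_u
\]
over $\mM^{(0)}_{k+1,I}(\b)$, depending continuously on $[\fu]$ in the Gromov topology. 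By Lemma~\ref{Cauchy Riemann boundary value problem properties lemma}(1), the resulting orientation line bundle varies continuously across the moduli space.

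The final step is to account for the choice of a boundary orientation of $u^*TL$. Since $\pa\Sigma$ is connected, choosing an orientation of $u^*TL\big|_{u(z_0)}$ determines one along all of $\pa\Sigma$ by parallel transport; going once around the boundary one picks up the sign $(-1)^{\mu(\b)}$, so by Lemma~\ref{Cauchy Riemann boundary value problem properties lemma}(2) flipping the orientation at $u(z_0)$ changes the canonical orientation of $\det D_u$ by $(-1)^{\mu(\b)+1}$. Consequently the construction is $\Z/2$-equivariant with respect to the $(\mu(\b)+1)$-th tensor power of the sign representation on $\zort_L\big|_{evb_0(\fu)}$, and therefore defines the desired map
\[
\tilde I^\b_{k+1,I}\colon (evb_0^\b)^*\zort_L^{\otimes(\mu(\b)+1)}\longrightarrow\zcort{U^\b_{k+1,I}}
\]
on $\mM^{(0)}_{k+1,I}(\b)$. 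An index computation shows the degree of $\tilde I^\b_{k+1,I}$ is $1-n$. I then invoke Lemma~\ref{unique extension of bundle maps} to extend this map to $\mM_{k+1,I}(\b)$.

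For the $k=-1$ case, there is no distinguished boundary marked point, so the source of the orientor must be choice-independent. If $\mu(\b)$ is odd then $\zort_L^{\otimes(\mu(\b)+1)}=\underline{\Z/2}=\mathcal X_L^{\mu(\b)+1}$ and the previous construction descends without modification. If $\mu(\b)$ is even then admissibility $\b\in\Pi^{ad}$ forces $L$ to be vertically orientable (Remark~\ref{exponent for L cases remark}), so $\mathcal X_L^{\mu(\b)+1}$ is a $\Z/2$ local system on $\W$ whose pullback along $\pi^L$ is canonically $\zort_L^{\otimes(\mu(\b)+1)}$; a section of $(\pi^\mM)^*\mathcal X_L^{\mu(\b)+1}$ supplies the required boundary orientation of $u^*TL$ uniformly. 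Repeating the exact-sequence analysis (now with $k+1=0$ boundary marked points, so the three $\mathrm{Aut}(D^2)$ directions are cut out by three interior marked points or, for low $|I|$, by the stabilizer-free stratum) produces $\tilde I^\b_{0,I}$. The main technical obstacle, as in~\cite{JakePhD}, is the sign bookkeeping needed to align the canonical orientations of $\mathrm{Aut}(D^2)$, the marked-point lines, and $\det D_u$ inside $\zcort{U^\b_{k+1,I}}$; once this is done the orientor property is tautological.
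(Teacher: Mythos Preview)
Your proposal is correct and follows essentially the same approach as the paper: restrict to the essential locus $\mM^{(0)}_{k+1,I}(\b)$, identify $\zcort{U^\b_{k+1,I}}$ with $\det D_\fu$, invoke \cite{JakePhD} together with Lemma~\ref{Cauchy Riemann boundary value problem properties lemma}, handle $k=-1$ via admissibility, and extend by Lemma~\ref{unique extension of bundle maps}. The only packaging difference is that the paper obtains the identification $\zcort{U^\b_{k+1,I}}\simeq\det D_\fu$ via a nine-lemma on the diagram relating the pre-quotient spaces $\widehat{\mM_{k+1,I}}(\b)$ and $\widehat{\mW_{k+1,I}}$ (yielding $\ker D_\fu\simeq\ker dU_{k+1,I}$ directly, since both $D_\fu$ and $dU_{k+1,I}$ are surjective), and simply cites \cite{JakePhD} for the isomorphism $\det D_\fu\simeq\zort_L^{\otimes(\mu(\b)+1)}$ rather than sketching it from Lemma~\ref{Cauchy Riemann boundary value problem properties lemma}(2).
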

\begin{proof}
By Lemma~\ref{unique extension of bundle maps}, it is enough to construct $\tilde I^\b_{k+1,I}$ on $\mM^{(0)}_{k+1,I}(\b)$. Recall Definition~\ref{moduli spaces of families definition}. To an arbitrary point $(t,\fu)=(t,D^2,u,\vec z,\vec w)\in \mM^{(0)}_{k+1,I}(\b)$, associate the Cauchy-Riemann $Pin$ boundary value problem $\underline D_\fu$ over the disk, with \[E_\fu=u^* TX, \,\,F_\fu=u^*TL,\] with the $Pin$ structure $\fp_\fu$ on $F_\fu$ pulled-back from the corresponding structure on $L$, and $D_\fu$ the linearization of the $J_t$-holomorphic map operator.

We show that there exists a canonical isomorphism \[\ker(D_\fu)\simeq\ker dU_{k+1,I}|_{(t,\fu)},\]
which will imply, since $D_\fu$ and $dU_{k+1,I}|_{(t,\fu)}$ are surjective, a canonical isomorphism
\begin{equation}
    \label{det Du}
\zcort{U_{k+1,I}}|_{(t,\fu)}\simeq \det (\underline D_\fu).
\end{equation}
Denote by 
\[
\widehat{\mM_{k+1,I}}(\b):=\left\{(u,t)\mid\begin{smallmatrix}t\in\W,\\u:(D^2,\pa D^2)\to (X,L) \,\,J_t-\text{holomorphic,}\\
u_*\left([D^2,\pa D^2]\right)=\b\end{smallmatrix}\right\}\times_\W \widehat{\mW_{k+1,I}} 
\]
Then 
\[
\mM^{(0)}_{k+1,I}(\b)=\widehat{\mM_{k+1,I}}(\b)/\text{Aut}(D^2).
\]
To see the existence of the canonical isomorphism~\eqref{det Du}, we examine the following commutative diagram, of which the columns and bottom rows are exact.
\begin{equation}
    \begin{tikzcd}
        &&0\ar[d]&0\ar[d]&\\
        &0\ar[r]\ar[d]&\ker D_\fu\ar[d]\ar[r]&\ker dU_{k+1,I}|_{(t,\fu)}\ar[d]\ar[r]&0\\
        0\ar[r]&\mathfrak g\ar[d]\ar[r]&T_\fu\widehat{ \mM_{k+1,I}}(\b)\ar[d]\ar[r]&T_\fu\mM_{k+1,I}(\b)\ar[d]\ar[r]&0\\
        0\ar[r]&\mathfrak g\ar[d]\ar[r]&T_\fu\widehat{\mW_{k+1,I}}\ar[d]\ar[r]&T_\fu\mW_{k+1,I}\ar[d]\ar[r]&0\\&0&0&0&
    \end{tikzcd}
\end{equation}
By the nine lemma, the top row is exact as well, and thus the canonical isomorphism~\eqref{det Du} exists.

Consider the case $k\geq 0$. In \cite[Section 3]{JakePhD} a canonical isomorphism \[\det(D_\fu)\simeq\left(\zort{L}|_{u(z_0)}\right)^{\otimes (\mu(\b)+1)}\]
is constructed, which depends continuously on $\mathfrak u$.
Globally, under the identification~\eqref{det Du}, this isomorphism implies an isomorphism of bundles
\[
\tilde I_{k+1,l}^\b:{(evb_0^\b)}^*\zort{L}^{\otimes (\mu(\b)+1)}\to \zcort{U_{k+1,I}}.
\]
This isomorphism is continuous due to Lemma~\ref{Cauchy Riemann boundary value problem properties lemma}.
Note that $\text{dim}U_{k+1,I}^\b=n+\mu(\b)$, and $\deg \zort{L}=-1$, which implies $\deg \tilde I_{k+1,l}^\b=1-n$.

Consider the case $k=-1$. $F_\fu$ is orientable exactly if $\mu(\b)$ is even. Thus, recalling Remark~\ref{exponent for L cases remark}, the admissibility of $\b$ implies that either $F_\fu$ is non-orientable, if $\mu(\b)\equiv_21$, or $L$ is vertically oriented and thus $F_\fu$ is equipped with the pullback orientation. By Lemma~\ref{Cauchy Riemann boundary value problem properties lemma}, $\det(D_\fu)$ is canonically oriented. In case $L$ is vertically orientable, reversing the orientation of $L$ results in reversing the orientation of $F_{\fu}$. By Lemma~\ref{Cauchy Riemann boundary value problem properties lemma}, the latter reverses the canonical orientation of $\det(D_\fu)$ Thus, we get an isomorphism
\[
\mathcal X_L^{\mu(\b)+1}|_t\to \zcort{U_{0,I}^\b}|_{(t,\mathfrak u)}.
\]
Moreover, the canonical orientation is continuous in $(t,\fu)$. Globally, this implies an isomorphism of bundles
\[
\tilde I_{0,I}^\b:\left(\pi^\mM\right)^*\mathcal X_L^{\mu(\b)+1}\to \zcort{U_{0,I}^\b}.
\]
Its degree is $1-n.$
\end{proof}
\begin{remark}\label{tilde I forgetting marked point equation compatibility remark}
Let $k\geq 0, I\subset [l]$ be such that $k+2|I|\geq 2$. Let $\hat I=I\dot\cup \{l+1\}$.
Consider the essential pullback diagrams~\eqref{forgetting boundary point is functorial diagram}.
The proof of Lemma~\ref{Jake Thesis lemma} implies the following equations.
\begin{align*}
\expinv{\left(Fi_{k+1,I}^\b/Fi_{k+1,I}\right)}\tilde I_{k+1,I}^\b&=\tilde I_{k+1,\hat I}^\b,\\
\expinv{\left(Fb_{k+1,I}^\b/Fb_{k+1,I}\right)}\tilde I_{k+1,I}^\b&=\tilde I_{k+2,I}^\b
\end{align*}
Moreover, let $k=-1, I\subset[l]$ be such that $|I|\geq 2$. Let $\hat I=I\dot\cup\{l+1\}$. Consider the essential pullback diagram~\eqref{forgetting boundary point is functorial diagram}.
We have
\begin{equation}
\expinv{\left(Fi_{0,I}^\b/ Fi_{0,I}\right)}\tilde I^\b_{0,I} = \tilde I^\b_{0,\hat I}.
\label{tilde J forgetting boundary point equation compatibility k=-1}    
\end{equation}
\end{remark}

\begin{remark}
\label{rotation from 0 to 1 remark}
Recall Definition~\ref{boundary transport}. The map $\tilde I_{k+1,I}^\b$ is independent on the point $z_0$. This is incorporated in the following equation of \orientor{U_{k+1,I}^\b} of ${evb_1^\b}^*\left(\zort{L}^{\otimes (\mu(\b)+1)}\right)$ to $\underline{\Z/2}$,
\[
\expinv{\left(f^\b/f\right)}\tilde I_{k+1,I}^\b=\tilde I_{k+1,I}^\b\bu c_{01}^{\otimes (\mu(\b)+1)},
\]
where the pullback is taken with respect to the pullback diagram~\eqref{cyclic map pullback diagram}.
\end{remark}
\begin{definition}
\label{1-n translation L orientor definition}
Let
\[
e:=e_L:\zort{L}\to \zcort{L}
\]
be the \orientor{ \pi^L} of $\zort{L}$ to $\underline{\Z/2}$, of degree $1-n$, given by level-translation.
\end{definition}

Recall the map $Ev_{k+1,I}^\b$ from Definition~\ref{interior forgetful map definition}.
\begin{definition}\label{Jakethesis}
\leavevmode
%There exist canonical orientors over $\mM_{k+1,l}(\b)$ as follows.
For $k\geq0$, let \begin{equation}
\tilde{J}_{k+1,I}^\b:({evb}^\b_{z_0})^*\Big(\zort{L}^{\otimes {\mu(\b)}}\Big)\overset{\sim}\longrightarrow \zcort{Ev^\b_{k+1,I}}
\label{definition tilde J}    
\end{equation} be the unique \orientor{Ev_{k+1,I}^\b} of $({evb}^\b_{z_0})^*\Big(\zort{L}^{\otimes {\mu(\b)}}\Big)$ to $\underline{\Z/2}$,
satisfying
\[
\tilde I_{k+1,I}^\b={}^{\mW_{k+1,I}}e\bu {}^{\zort{}} \tilde J_{k+1,I}^\b.
\]
Existence and uniqueness is guaranteed by Lemma~\ref{orientor invertion lemma}.

Consider $k=-1,\b\in \Pi^{ad}$. Let \begin{equation}
\tilde{J}_{0,I}^\b:\left(\pi^\mM\right)^*\mathcal X_L^{\mu(\b)+1}\overset{\sim}\longrightarrow \zcort{U^\b_{0,I}}
\label{definition tilde J k=-1}    
\end{equation}be the \orientor{U_{0,I}^\b} of $\left(\pi^\mM\right)^*\mathcal X_L^{\mu(\b)+1}$ to $\underline{\Z/2}$ given by
\begin{equation*}
\tilde{J}_{0,I}^\b:=\tilde{I}_{0,I}^\b.
\end{equation*}
\end{definition}

% Therefore, the isomorphism $\tilde J^\b$ is equivalent to an isomorphism
% \[
% \zcort{\tilde{\mM^{reg}}(\b)}\simeq \Big(({\tilde{evb}}^\b_{z_0})^*\zort{L}\Big)^{\otimes {\mu(\b)+1}}.
% \]
% Assume first that $\mu(\b)$ is odd. 
% It can be deduced from \cite[Proposition 2.8]{JakePhD} that for a map $u:(D^2,\pa D^2)\to (X,L)$, the $Pin^{\pm}$-structure $u^*\fp^{\pm}$ on the vector bundle
% \[F_u:=\begin{smallmatrix}
% u|_{\pa D^2}^*TL\\ \downarrow\\ \pa D^2
% \end{smallmatrix}\]
% provides an orientation of $\tilde\mM^{reg}(\b)$ at $u$. Moreover, $\zort{L}^{\otimes \mu(\b)+1}$ is canonically trivial since $\mu(\b)+1$ is even.

% Assume now $\mu(\b)$ is even. It follows that~$F_u$ is an orientable bundle over~$S^1$. It can be deduced from \cite[Proposition 2.8]{JakePhD} that for a map~$u:(D^2,\pa D^2)\to (X,L)$, the $Pin^{\pm}$-structure~$u^*\fp^{\pm}$ on~$F_u$
% together with an orientation for~$F_u$ provides an orientation of $\tilde\mM^{reg}(\b)$ at $u$. Since $F_u$ is orientable, an orientation at the point $z_0$ determines an orientation of $F_u$. Therefore, $\zort{\tilde\mM^{reg}}(\b)\simeq {(\tilde{evb}^\b_{z_0})}^*\zort{L}$. On the other hand, $\zort{L}^{\otimes \mu(\b)+1}\simeq \zort{L}$ canonically since $\mu(\b)$ is even. 
% This concludes the construction of the isomorphism $\tilde J^\b$.
\begin{remark}\label{tilde J for bottom cases b0 remark}
For $(k,l)\in \{(2,0),(0,1)\}$, the map $Ev_{k+1,l}^{\b_0}$ is a diffeomorphism.
It follows from the construction of $\tilde I_{k,l}^\b$ that,
\[
\tilde J_{k+1,l}^{\b_0} = \phi_{Ev_{k+1,l}^{\b_0}}.
\]
\end{remark}
\begin{lemma}\label{tilde J forgetting marked point compatibility lemma}
Let $k\geq 0, I\subset [l]$ be such that $k+2|I|\geq 2$. Let $\hat I=I\dot\cup \{l+1\}$. Consider the following diagrams.
\begin{equation}
\label{forgetting boundary point is functorial Ev diagram}
\begin{tikzcd}
\mM_{k+2, I}(\b)\ar[rr,"Fb_{k+1,I}^\b"]\ar[d,swap,"Ev_{k+2, I}"]&&\mM_{k+1,I}(\b)\ar[d,"Ev_{k+1,I}"]\\
L\times_\W\mW_{k+2,I}\ar[rr,swap,"\Id\times Fb_{k+1,I}"]&&L\times_\W\mW_{k+1,I}
\end{tikzcd}\qquad
\begin{tikzcd}
\mM_{k+1,\hat I}(\b)\ar[rr,"Fi_{k+1,I}^\b"]\ar[d,swap,"Ev_{k+1,\hat I}"]&&\mM_{k+1,I}(\b)\ar[d,"Ev_{k+1,I}"]\\
L\times_\W\mW_{k+1,\hat I}\ar[rr,swap,"\Id\times Fi_{k+1,I}"]&&L\times_\W\mW_{k+1,I}
\end{tikzcd}
\end{equation}The fact that diagrams~\eqref{forgetting boundary point is functorial diagram} are essential pullbacks implies that diagrams~\eqref{forgetting boundary point is functorial Ev diagram} are essential pullbacks as well. We have,
\begin{align*}
\expinv{\left(Fi_{k+1,I}^\b/\left(\Id\times Fi_{k+1,I}\right)\right)}\tilde J^\b_{k+1,I} &= \tilde J^\b_{k+1,\hat I},\\
\expinv{\left(Fb_{k+1,I}^\b/\left(\Id\times Fb_{k+1,I}\right)\right)}\tilde J^\b_{k+1,I} &= \tilde J^\b_{k+2,I}.
\end{align*}
Moreover, setting $\tilde{Ev}_{k+1,I}=\left(evb_1^\b,U_{k+1,I}^\b\right)$, the following diagram is a pullback diagram,
\[
\begin{tikzcd}
\mM_{k+1,I}(\b)\ar[r,"f^\b"]\ar[d,"\tilde{Ev}_{k+1,I}"]&\mM_{k+1,I}(\b)\ar[d," Ev_{k+1,I}"]\\L\times_\W \mW_{k+1,I}\ar[r,"\Id_L\times f"]&L\times_\W \mW_{k+1,I}
\end{tikzcd}
\]
and \[
\expinv{\left(f^\b/(\Id\times f)\right)}\tilde J^\b_{k+1,I}=\tilde J^\b_{k+1,I}\bu c_{01}^{\otimes(\mu(\b)+1)}.
\]
\end{lemma}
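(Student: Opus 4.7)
The plan is to prove all three equations by pulling back the defining equation
\[
\tilde I^\b_{k+1,I} = {}^{\mW_{k+1,I}}e \bu {}^{\zort{L}}\tilde J^\b_{k+1,I}
\]
from Definition~\ref{Jakethesis} through the relevant pullback diagrams, distributing the pullback across the composition via Lemma~\ref{vertical pullback expinv composition}, comparing with the already-established behavior of $\tilde I$, and invoking the uniqueness in Lemma~\ref{orientor invertion lemma}. I will write out the argument for the first equation; the other two are essentially identical.

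First, I would verify that the two diagrams in~\eqref{forgetting boundary point is functorial Ev diagram} are essential pullbacks. This is immediate from~\eqref{forgetting boundary point is functorial diagram}, the identity $evb_0 \circ Fi^\b = evb_0$ (resp.\ $evb_0 \circ Fb^\b = evb_0$), and the fact that $Ev = (evb_0, U)$. I would then stack the interior-forgetful pullback with the base-change pullback $\Id\times Fi / Fi$:
\[
\begin{tikzcd}
\mM_{k+1,\hat I}(\b) \ar[r, "Fi^\b"] \ar[d, "Ev_{\hat I}"] & \mM_{k+1,I}(\b) \ar[d, "Ev_I"] \\
L\times_\W \mW_{k+1,\hat I} \ar[r, "\Id\times Fi"] \ar[d, "\pi_\mW"] & L\times_\W \mW_{k+1,I} \ar[d, "\pi_\mW"] \\
\mW_{k+1,\hat I} \ar[r, "Fi"] & \mW_{k+1,I}
\end{tikzcd}
\]
Applying Lemma~\ref{vertical pullback expinv composition} to this stacked diagram with orientors ${}^{\zort{L}}\tilde J^\b_{k+1,I}$ (an \orientor{Ev_I}) and ${}^{\mW_{k+1,I}}e$ (an \orientor{\pi_\mW}) yields
\[
\expinv{(Fi^\b/Fi)}\tilde I^\b_{k+1,I} = \expinv{((\Id\times Fi)/Fi)}\bigl({}^{\mW_{k+1,I}}e\bigr) \bu \expinv{(Fi^\b/(\Id\times Fi))}\bigl({}^{\zort{L}}\tilde J^\b_{k+1,I}\bigr).
\]

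Next I would identify each factor. The LHS equals $\tilde I^\b_{k+1,\hat I}$ by Remark~\ref{tilde I forgetting marked point equation compatibility remark}. For the first factor on the right, the identity
$\expinv{((\Id\times Fi)/Fi)}\bigl({}^{\mW_{k+1,I}}e\bigr) = {}^{\mW_{k+1,\hat I}}e$
follows from the naturality of base extension: unpacking Definition~\ref{pulled orientation orientor}, one writes ${}^{\mW_{k+1,I}}e = \expinv{(\pi_L/\pi^{\mW_{k+1,I}})}e$, and then Lemma~\ref{horizontal expinv composition} applied to the composed pullback $L\times_\W\mW_{k+1,\hat I}\to L\times_\W\mW_{k+1,I}\to L$ over $\mW_{k+1,\hat I}\to \mW_{k+1,I}\to \W$ gives the claim, since $\pi_L\circ(\Id\times Fi)=\pi_L$ and $\pi^{\mW_{k+1,I}}\circ Fi=\pi^{\mW_{k+1,\hat I}}$. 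For the second factor, pullback along a fiber square commutes with the left ${}^{\zort{L}}$-extension; this is a direct check from the definitions, since the left extension is tensoring on the left with $Ev^*\pi_L^*\zort{L}$ composed with a symmetry isomorphism, and pullback preserves both. Combining these identifications yields
\[
\tilde I^\b_{k+1,\hat I} = {}^{\mW_{k+1,\hat I}}e \bu {}^{\zort{L}}\bigl(\expinv{(Fi^\b/(\Id\times Fi))}\tilde J^\b_{k+1,I}\bigr).
\]
Since also $\tilde I^\b_{k+1,\hat I} = {}^{\mW_{k+1,\hat I}}e \bu {}^{\zort{L}}\tilde J^\b_{k+1,\hat I}$ by definition, and both ${}^{\mW_{k+1,\hat I}}e\bu(\cdot)$ and ${}^{\zort{L}}(\cdot)$ are invertible bundle maps, Lemma~\ref{orientor invertion lemma} yields the first equation.

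Equation~2 proceeds identically using the boundary-forgetful square in~\eqref{forgetting boundary point is functorial Ev diagram} in place of the interior-forgetful one, and Remark~\ref{tilde I forgetting marked point equation compatibility remark} in the obvious way. For equation~3, I would first verify that the stated square is an essential pullback, which follows from~\eqref{cyclic map pullback diagram} combined with $evb_1\circ f^\b = evb_0$, then repeat the stacking-and-distributing argument, now using Remark~\ref{rotation from 0 to 1 remark} to identify the LHS as $\tilde I^\b_{k+1,I}\bu c_{01}^{\otimes(\mu(\b)+1)}$; uniqueness then gives the claimed formula (where $c_{01}^{\otimes(\mu(\b)+1)}$ is to be interpreted on the ${}^{\zort{L}}$-extended source, matching the conventions of Definition~\ref{Jakethesis}). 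The main obstacle throughout is the bookkeeping for the base-extension naturality identity $\expinv{((\Id\times Fi)/Fi)}({}^{\mW_{k+1,I}}e)={}^{\mW_{k+1,\hat I}}e$ and its analogues for $Fb$ and $f$, together with tracking the Koszul signs introduced when the pullback is distributed through the left ${}^{\zort{L}}$-extension; once these compatibilities are in place, everything reduces to the uniqueness supplied by Lemma~\ref{orientor invertion lemma}.
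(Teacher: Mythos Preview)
Your proposal is correct and follows essentially the same approach as the paper's proof: stack the pullback square for $Ev$ over the one for $\pi_\mW$, apply Lemma~\ref{vertical pullback expinv composition} to distribute the pullback of $\tilde I = {}^{\mW}e\bu{}^{\zort{L}}\tilde J$, identify $\expinv{((\Id\times F)/F)}({}^{\mW}e)={}^{\mW'}e$, invoke Remark~\ref{tilde I forgetting marked point equation compatibility remark} (resp.\ Remark~\ref{rotation from 0 to 1 remark}), and conclude by the uniqueness in Lemma~\ref{orientor invertion lemma}. The only cosmetic differences are that the paper writes out the $Fb$ case while you write out $Fi$, and you are more explicit about the ${}^{\zort{L}}$-extension and about why the base-extension identity for $e$ holds (the paper simply says ``It is immediate'').
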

\begin{proof}
We prove for the case $Fb$. The proof for $Fi$ is the same. Consider the following diagram of pullback squares.
\[
\begin{tikzcd}
\mM_{k+2,I}(\b)\ar[r,"Fb^\b"]\ar[d,"Ev_{k+2,I}"]&\mM_{k+1,I}(\b)\ar[d,"Ev_{k+1,I}"]\\
L\times_\W\mW_{k+2,I}\ar[r,"\Id\times Fb"]\ar[d,"\pi_{\mW}"]&L\times_\W\mW_{k+1,I}\ar[d,"\pi_{\mW}"]\\
\mW_{k+2,I}\ar[r,"Fb"]&\mW_{k+1,I}
\end{tikzcd}
\]
We have
\[
\expinv{\left(Fb^\b/Fb\right)}\tilde I_{k+1,I}^\b\overset{\text{Lem.~\ref{vertical pullback expinv composition}}}=\expinv{\left(\Id\times Fb/Fb\right)}{}^{\mW_{k+1}}e\bu\expinv{\left(Fb^\b/\Id\times Fb\right)}\tilde J_{k+1,I}^\b.
\]
It is immediate that
\[\expinv{\left(\Id\times Fb/Fb\right)}{}^{\mW_{k+1}}e={}^{\mW_{k+2}}e.\]
Thus,
\[
{}^{\mW_{k+2}}e\bu\expinv{\left(Fb^\b/\Id\times Fb\right)}\tilde J_{k+1,I}^\b= \expinv{\left(Fb^\b/Fb\right)}\tilde I_{k+1,I}^\b\overset{\text{Rmk.~\ref{tilde I forgetting marked point equation compatibility remark}}}=\tilde I_{k+2,I}^\b={}^{\mW_{k+2}}e\bu \tilde J_{k+2,I}^\b.
\]
By Lemma~\ref{orientor invertion lemma} we get the result for $Fb$.
The case of $f$ is a similar argument based on Lemmas~\ref{vertical pullback expinv composition} and~\ref{orientor invertion lemma}.
\end{proof}
% \begin{remark}[Explicit description of $c_{ij}$]
% \label{boundary transport on boundary}
% Recall that $c_{ij}$ was constructed directly only on $\mM^{(0)}_{k+1,I}(\b)$, and was extended abstractly to the entire moduli space. However, there is a nice description of the extension. At a point $\fu=(\S,u,\vec z,\vec w)\in \mM_{k+1,I}(\b)$, the definition of $c_{ij}$ copies Definition~\ref{boundary transport}, except that $\g:[0,1]\to S^1$ is replaced with $\g^\S:[0,1]\to \pa \S$, the positively oriented arc that begins at $z_i$ and goes to $z_j$ and switches boundary components at each nodal point.
% \end{remark}
Use $\mW$ to denote $\mW_{k+1,I}$, and consider the following direct product diagram.
\[
\begin{tikzcd}
L\times_\W \mW\ar[r," \pi^{L\times \mW}_\mW"]\ar[d,swap," \pi^{L\times \mW}_L"]&\mW\ar[d," \pi^\mW"]\\L\ar[r,swap," \pi^L"]&\W
\end{tikzcd}
\]
Recall Definition~\ref{pulled orientation orientor}, and consider the orientor
\[\phi_{k+1,I}^L:=\expinv{\left( \pi^{L\times \mW}_\mW/ \pi^L\right)}\phi_{k+1,I}.\]
$\tilde J^\b_{k+1,I}$ and $\phi^L_{k+1,I}$ are orientors that can be composed.
Note that
\[
evb_0^\b= \pi^{L\times \mW}_L\circ Ev_0^\b.
\]
\begin{definition}
\label{fundamental orientor definition}
For $k\geq 0$, denote by $J^\b_{k+1,I}$ the \orientor{evb_0^\b} of degree $2-k-2|I|$ 
\[
J^\b_{k+1,I}:{(evb_0^\b)}^*\zort{L}^{\otimes \b}\to \zcort{evb_0^\b},
\] 
given as follows.
For $k+2|I|\geq 2$, it is given by the equation
\[
J^\b_{k+1,I}:=\phi^L_{k+1,I}\bu\tilde J^\b_{k+1,I}.
\]
For $k+2|I|<2$, it is defined by the equation
\[
\expinv{\left(Fi_{k+1,I}^\b\right)} J^\b_{k+1,I} = J^\b_{k+1,\hat I}.
\]
Using Lemma~\ref{orientor on pullback is pulled back} and the fact that the fibers of $Fi_{k+1,I}^\b$ are connected, such $J^\b_{k+1,I}$ exists and is unique.

Denote by $ \pi^{\mM_{k+1,I}(\b)}:\mM_{k+1,I}(\b)\to \W$ the projection.
For $k=-1,\b\in \Pi^{ad}$, denote by $J^\b_{0,I}$ the \orientor{\pi^{\mM_{0,I}(\b)}} of degree $4-~n-~2|I|$
\[
J^\b_{0,I}:\left(\pi^{\mM_{0,I}(\b)}\right)^*\mathcal X_L^{\mu(\b)+1}\to \zcort{ \pi^{\mM_{0,I}(\b)}},
\]
given as follows. For $|I|\geq 2$, it is given by the equation
\[
J^\b_{0,I}:=\phi_{0,I}\bu \tilde J_{0,I}^{\b}.
\]
For $|I|<2$, it is defined by the equation
\[
\expinv{\left(Fi_{0,I}^\b\right)} J^\b_{0,I} = J^\b_{0,\hat I}.
\]
\end{definition}
\begin{remark}\label{J for bottom cases b0 remark}
It follows from Lemma~\ref{disk space orientors with properties existence lemma} and Remark~\ref{tilde J for bottom cases b0 remark} that for $(k,l)\in \{(2,0),(0,1)\}$ we have
\[
J_{k+1,l}^{\b_0} = \left(\phi_{\mW_{k+1,l}}\right)^L\bu \phi_{Ev_0}\overset{\text{Lem.~\ref{pulled orientation orientor is pullback orientor of orientation local diffeomorphism}}}=\phi_{\Id_L\times\mW_{k+1,l}}\bu \phi_{Ev_0}\overset{\text{Rmk. \ref{composition of local diffeomorphisms orientors lemma}}}=\phi_{evb_0}.
\]
\end{remark}
\begin{lemma}\label{properties of J lemma}
The above definitions are compatible, in the sense that
\begin{align*}
\expinv{\left(Fi_{k+1,I}^\b\right)} J^\b_{k+1,I} &= J^\b_{k+1,\hat I}    \\\expinv{\left(Fb_{k+1,I}^\b\right)}J_{k+1,I}^\b&=J^\b_{k+2,I}.
\end{align*}
For all $k,I$.
Moreover, for $k\geq 0,$
\[
\expinv{f^\b}J^\b_{k+1,I}=(-1)^kJ^\b_{k=1,I}\bu c_{0i}^{\otimes (\mu(\b)+1)}.
\]
\end{lemma}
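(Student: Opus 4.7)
The strategy is to reduce each of the three identities to the analogous statements already established for $\tilde J^\b_{k+1,I}$ in Lemma~\ref{tilde J forgetting marked point compatibility lemma} and for the disk-space orientors $\phi_{k+1,I}$ in Lemma~\ref{disk space orientors with properties existence lemma}, coupled through the factorization $J^\b_{k+1,I} = \phi^L_{k+1,I} \bu \tilde J^\b_{k+1,I}$ of Definition~\ref{fundamental orientor definition}. This factorization is valid in the range $k+2|I|\geq 2$; the low-dimensional cases are then handled by the inductive clause of the definition.

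The first step is to verify the $Fi$ and $Fb$ identities in the range $k+2|I|\geq 2$. Using the essential pullback diagrams~\eqref{forgetting boundary point is functorial Ev diagram} and Lemma~\ref{pullback of composition of orientors by relatively oriented enhanced}, I would split
\[
\expinv{(Fi^\b_{k+1,I})}\bigl(\phi^L_{k+1,I}\bu \tilde J^\b_{k+1,I}\bigr) = \expinv{(\Id\times Fi_{k+1,I})}(\phi^L_{k+1,I})\,\bu\,\expinv{(Fi^\b_{k+1,I}/\Id\times Fi_{k+1,I})}(\tilde J^\b_{k+1,I}).
\]
The second factor becomes $\tilde J^\b_{k+1,\hat I}$ by Lemma~\ref{tilde J forgetting marked point compatibility lemma}. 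For the first factor I would rewrite $\phi^L_{k+1,I} = (\phi_{k+1,I})^L$ and apply Lemma~\ref{base extension of pullback is extended pullback of base extension lemma} to obtain $(\expinv{Fi_{k+1,I}}\phi_{k+1,I})^L$, which equals $\phi^L_{k+1,\hat I}$ by property~\ref{disk space orientors:forget interior} of Lemma~\ref{disk space orientors with properties existence lemma}. Recomposing yields $J^\b_{k+1,\hat I}$. The $Fb$ case runs identically, substituting property~\ref{disk space orientors:forget boundary} of Lemma~\ref{disk space orientors with properties existence lemma} and the $Fb$ clause of Lemma~\ref{tilde J forgetting marked point compatibility lemma}. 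The case $k=-1,\b\in\Pi^{ad}$ is entirely analogous using the formula $J^\b_{0,I}=\phi_{0,I}\bu\tilde J^\b_{0,I}$.

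Next, I would extend to the low-dimensional range $k+2|I|<2$. The $Fi$ identity holds immediately by the recursive clause of Definition~\ref{fundamental orientor definition}. For the $Fb$ identity I would exploit the commuting square of forgetful maps~\eqref{commutativity of boundary and interior forgetful maps diagram}: equation~\eqref{commutativity of boundary and interior forgetful maps boundary equation} together with Lemma~\ref{pullback by composition relatively oriented functoriality lemma} implies
\[
\expinv{(Fi^\b_{k+2,I})}\circ\expinv{(Fb^\b_{k+1,I})} = \expinv{(Fb^\b_{k+1,\hat I})}\circ\expinv{(Fi^\b_{k+1,I})}.
\]
Applying both sides to $J^\b_{k+1,I}$ and using the already-verified $Fi$ identity reduces the low-dimensional $Fb$ claim to the same claim at index $(k+1,\hat I)$, where after finitely many insertions $k+2|\hat I|\geq 2$, so the high-dimensional argument applies. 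Finally, the cyclic identity is proved by the same factorization-and-pullback technique applied to $\expinv{f^\b}$ via the pullback square~\eqref{cyclic map pullback diagram}. The cyclic clause of Lemma~\ref{tilde J forgetting marked point compatibility lemma} produces the factor $c_{01}^{\otimes(\mu(\b)+1)}$ on the $\tilde J$-side, while property~\ref{disk space orientors:boundary permutation} of Lemma~\ref{disk space orientors with properties existence lemma} combined with Lemma~\ref{base extension of pullback is extended pullback of base extension lemma} contributes the sign $(-1)^k$ from the $\phi^L_{k+1,I}$-side; the low-dimensional range is again reached by commuting with $\expinv{Fi^\b}$.

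The main obstacle is the careful bookkeeping of relative orientations when invoking Lemma~\ref{pullback of composition of orientors by relatively oriented enhanced}: one must verify that $\mO^{Fi^\b}$ agrees with the pullback of $1\times\mO^{Fi}$ under the horizontal map $Ev^\b_{k+1,\hat I}$ of the pullback square, and likewise for $\mO^{Fb^\b}$ and $\mO^{f^\b}$. This is essentially the content of Remark~\ref{canonical orientations for cyclic and forgetful maps remark} (combined with Remark~\ref{pulled to product of vertical local diffeomorphism  remark} to identify $1\times\mO^{Fi}$ with $\mO^{\Id\times Fi}_c$ when relevant). Once these orientation identifications are made explicit, what remains is a direct assembly of the established orientor-calculus identities.
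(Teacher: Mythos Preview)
Your proposal is correct and follows essentially the same approach as the paper: factor $J^\b_{k+1,I}=\phi^L_{k+1,I}\bu\tilde J^\b_{k+1,I}$, split the pullback via Lemma~\ref{pullback of composition of orientors by relatively oriented}, and invoke Lemma~\ref{tilde J forgetting marked point compatibility lemma} together with Lemma~\ref{disk space orientors with properties existence lemma} and Lemma~\ref{base extension of pullback is extended pullback of base extension lemma}. You are in fact more thorough than the paper, which treats only the $Fb$ case explicitly in the stable range and leaves both the low-dimensional extension and the cyclic identity to the reader.
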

\begin{proof}
We prove for $Fb$. The case $Fi$ is the same, including the case $k=-1$. We have,
\[
\expinv{\left({Fb}^\b,\mO^{{Fb}^\b}\right)} \left(\phi_{k+1,I}^L\bu\tilde J^\b_{k+1,I}\right)=\expinv{\left(\Id\times {Fb},1\times \mO^{Fb}\right)}\left(\phi_{k+1,I}^L\right)\bu \expinv{\left({Fb}^\b/\Id\times {Fb}\right)}\tilde J^\b_{k+1,I}.
\]
However,
\[
\expinv{\left(\Id\times {Fb},1\times \mO^{Fb}\right)}\left(\phi_{k+1,I}^L\right) \overset{\text{Lem.~\ref{base extension of pullback is extended pullback of base extension lemma}}}= \left(\expinv{\left({Fb}\right)}\phi_{k+1,I}\right)^L\overset{\text{Lem.~\ref{disk space orientors with properties existence lemma}}}=\left(\phi_{k+2, I}\right)^L.
\]
Combining the above results with Lemma~\ref{tilde J forgetting marked point compatibility lemma}, we obtain the claim for $Fb$. The proof for $f$ is left to the reader. One might want to use Lemma~\ref{pullback of composition of orientors by relatively oriented} and recall Lemma~\ref{disk space orientors with properties existence lemma}.
\end{proof}

\subsection{Gluing map signs}\label{Gluing map signs section}
Denote by \[\d:=\begin{cases}
0,& \fp\text{ is a } Pin^+ \text{ structure},\\
1,& \fp\text{ is a } Pin^- \text{ structure}.
\end{cases}\] 
For $k\geq 0$, consider the following fiber-product diagram.
\[
\begin{tikzcd}
\mdl{1}\times_L\mdl{2}\ar[r,"p_2"]\ar[d,"p_1"]&\mdl{2}\ar[d,"evb_0^{\b_2}"]\\\mdl{1}\ar[r,"evb_i^{\b_1}"]&L
\end{tikzcd}
\]
Recall the boundary of orientors from Definition~\ref{Boundary of orientor}.
The main results of this section are the following two theorems.
\begin{theorem}
\label{expinv of boundary of J by thetabeta theorem}
Let $k\geq 0$ and let $\b_1+\b_2=\b,$ $k_1+k_2=k+1,I\dot\cup J=[l]$. 
The following equation of \orientor{\left(evb_0^\b\circ\iota^\b\circ\vartheta^\b\right)}s of
${\left(evb_0^\b\circ \iota^\b\circ\vartheta^\b \right)}^*\left(\zort{L}^{\otimes \b_2}\otimes \zort{L}^{\otimes \b_1}\right)$ to $\underline{\Z/2}$ holds,
\[
\expinv{\left(\vartheta^\b\right)}\pa J^\b_{k+1,l}\bu p_1^*\left(evb_0^{\b_1}\right)^*m_2=(-1)^sJ^{\b_1}_{k_1+1,I}\bu \left(\expinv{\left(p_2/evb_i^{\b_1}\right)}J^{\b_2}_{k_2+1,J}\right)^{\left(evb_0^{\b_1}\right)^*\zort{L}^{\otimes \b_1}}\!\!\!\bu(p_1^*c_{i0})^{\zort{L}^{\otimes \b_1}}
\]
with
\[
s=i+ik_2+k+\d\mu(\b_1)\mu(\b_2).
\]
\end{theorem}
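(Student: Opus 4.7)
The plan is to unfold the definition $J^\b_{k+1,l} = \phi^L_{k+1,l}\bu\tilde J^\b_{k+1,l}$ of Definition~\ref{fundamental orientor definition} and analyze the boundary of each factor separately through the gluing map. First I would observe that the gluing map $\vartheta^\b$ essentially factorizes through $\iota^\b$ in the sense of Definition~\ref{essential boundary factorization}, and use Lemma~\ref{boundary of composition orientor} on the composition $J^\b_{k+1,l}=\phi^L_{k+1,l}\bu\tilde J^\b_{k+1,l}$, followed by Lemma~\ref{vertical pullback expinv composition} to commute $\expinv{(\vartheta^\b)}$ past compositions. This reduces the theorem to two independent pieces: one statement about the restriction of $\phi^L_{k+1,l}$ (purely combinatorial/topological on the disk moduli space $\mW_{k+1,l}$ base-extended over $L$) and one statement about the restriction of $\tilde J^\b_{k+1,l}$ (analytic, about gluing of determinant lines of Cauchy-Riemann $Pin$ boundary value problems).

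For the combinatorial piece, I would use the defining properties of $\phi_{k+1,l}$ from Lemma~\ref{disk space orientors with properties existence lemma}. The gluing map on the universal disks $\mW_{k_1+1,I}\times\mW_{k_2+1,J}\to \partial\mW_{k+1,l}$ contributes a sign coming from: placing the node at position $i$ in the cyclic order (sign depending on $i$ and $k_2$), the outward normal sign from $\partial$, and the interaction of the two $\text{Aut}(D^2)$-quotients. A careful but essentially bookkeeping calculation using properties~\ref{disk space orientors:boundary permutation}--\ref{disk space orientors:forget boundary} of $\phi_{k+1,l}$, together with Lemma~\ref{base extension of pullback is extended pullback of base extension lemma} to base-change to $L$, should produce the sign $(-1)^{i+ik_2+k}$ and match the product structure on the fiber-product side.

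For the analytic piece, I would invoke the main result of \cite[Proposition 8.3.3]{Fukaya} (as rephrased via our Cauchy-Riemann $Pin$ formalism from Lemma~\ref{Cauchy Riemann boundary value problem properties lemma}) which compares the canonical orientation of the determinant line of the glued problem $\underline D_{\fu_1\#\fu_2}$ with the tensor product of $\det(\underline D_{\fu_1})\otimes\det(\underline D_{\fu_2})$. This is precisely where the $Pin^\pm$ sign $(-1)^{\d\mu(\b_1)\mu(\b_2)}$ enters, since the $Pin^+$ vs.\ $Pin^-$ distinction controls the sign by which reversing the two boundary conditions at the glued node compares with the orientation of the joint determinant. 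The boundary transport isomorphism $c_{i0}$ naturally appears here because the identification $\tilde J^{\b_1}\otimes \tilde J^{\b_2}$ involves $\zort{L}$ at the glued point, which must be transported back to $evb_0^{\b_1}$ to match the fiber of ${evb_0^\b}^*\zort{L}^{\otimes\b}$ on the left.

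The main obstacle will be the sign bookkeeping, not any conceptual step. In particular, one must carefully track: (i) the Koszul signs in Definition~\ref{Boundary of orientor} for each factor, which depend on the degrees of $\phi^L$ and $\tilde J^\b$ separately; (ii) the signs in Lemma~\ref{commutativity vertical pullback vs horizontal pullback orientors} when moving $\expinv{(p_2/evb_i^{\b_1})}$ past the base extension over $\zort{L}^{\otimes\b_1}$; (iii) the interaction of $c_{i0}$ with the composition through $m_2$. I would organize this as a commutative-diagram chase, reducing to a sequence of equalities in each of which at most one sign-producing operation is applied, so that the accumulated exponent can be read off term-by-term and shown to equal $i+ik_2+k+\d\mu(\b_1)\mu(\b_2)\mod 2$.
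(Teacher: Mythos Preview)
Your strategy is essentially the paper's: decompose $J^\b_{k+1,l}=\phi^L_{k+1,l}\bu\tilde J^\b_{k+1,l}$, apply Lemma~\ref{boundary of composition orientor} to get $\pa J = \pa\phi^L\bu\expinv{(\iota^\b/(\Id\times\iota))}\tilde J$, then handle the $\phi^L$-piece (combinatorial, sign $(-1)^{i+ik_2+k}$) and the $\tilde J$-piece (analytic gluing of determinant lines, sign $(-1)^{\d\mu(\b_1)\mu(\b_2)}$) separately, and finally reassemble. The paper carries this out via Lemmas~\ref{moduli of disks orientation},~\ref{axiomb1b2},~\ref{boundary is fibered} and~\ref{phi squared times J squared is phi times J squared lemma}.

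There is however one genuine gap in your plan. The factorization $J^\b_{k+1,l}=\phi^L_{k+1,l}\bu\tilde J^\b_{k+1,l}$ is only the \emph{definition} when $k+2l\geq 2$, and more seriously, the essential factorization of $Ev^\b$ through the boundary (Lemma~\ref{essential factorization of Ev lemma}) --- which you need in order to apply Lemma~\ref{boundary of composition orientor} --- requires the stability conditions $k_1+2|I|\geq 2$ and $k_2+2|J|\geq 2$. Without these, $\mW_1$ or $\mW_2$ may be empty and the map $U$ contracts components on $B(\b)$, destroying the fiber-product structure of diagram~\eqref{theta beta over theta}. The paper deals with this by a preliminary reduction: using compatibility with $Fi$ (Lemma~\ref{properties of J lemma}) and equation~\eqref{pullback of orientation of forgetful map is orientation of forgetful map equation}, it shows that the identity for $(k,l)$ follows from the identity for $(k,l+1)$ with the extra interior point placed on either component, so one may assume $|I|,|J|$ are as large as needed. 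You should add this reduction step at the outset.

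Two smaller remarks. First, for the combinatorial sign the paper does not derive it from the axiomatic properties of $\phi_{k+1,l}$ in Lemma~\ref{disk space orientors with properties existence lemma} as you suggest, but rather imports the orientation comparison from~\cite[Proposition~2.8]{Sara1}; your route should also work but is not what is written. Second, after splitting and computing each piece you still need to recombine $\left(\phi_1\bu(\phi_2)^{\mW_1}\right)^L$ with ${}^{\mW_2}\tilde J^1$ and $\expinv{(p_2/\cdot)}\tilde J^2$ back into $J^1$ and $\expinv{(p_2/evb_i^{\b_1})}J^2$; this is the content of Lemma~\ref{phi squared times J squared is phi times J squared lemma} and is not quite automatic --- it requires commuting $\phi_2$ past $\tilde J^1$ via Lemma~\ref{commutativity vertical pullback vs horizontal pullback orientors}.
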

The proof is given in Section~\ref{Gluing proof section}.

For $b\in \Z$ an exponent for $L$, denote by ${i}_{b}:\pi_L^*\mathcal X^{b}\to \zort{L}^{b}$ the canonical map, which is an isomorphism since $b$ is an exponent for $L$ and the fiber of $\pi^L$ is connected.
\begin{theorem}
\label{expinv of boundary of J by thetabeta theorem k=-1}
Let $\b\in \Pi^{ad}$. Let $\b_1+\b_2=\b$, and $I\dot\cup J=[l]$. 
As \orientor{ \pi^{\mM_1\times_L\mM_2}}s of
$\underline{\mathcal X_L^{\mu(\b)+1}}$ to $\underline{\Z/2}$,
it holds that
\[
\expinv{\left(\vartheta^\b\right)}\pa J^\b_{0,l}
% p_1^*evb_0^{\b_1}^*
=-e\bu \left(J_{1,I}^{\b_1}\bu\left(\expinv{\left(p_2/evb_0^{\b_1}\right)}J_{1,J}^{\b_2}\right)^{\zort{L}^{\b_1}}\right)^{\zort{L}}\bu \left(evb_0^{\b_1}\circ p_1\right)^*{i}_{\mu(\b)+1}.
\]
\end{theorem}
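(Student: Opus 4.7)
The plan is to prove Theorem \ref{expinv of boundary of J by thetabeta theorem k=-1} by adapting the framework used for Theorem \ref{expinv of boundary of J by thetabeta theorem} to the three features that are specific to $k=-1$: the absence of an $evb_0$ evaluation on the total disk, the fact that $\tilde{J}^\b_{0,l}$ coincides with $\tilde{I}^\b_{0,l}$ (rather than being $e$-twisted, as happens for $k\geq0$), and the appearance of $\mathcal X_L^{\mu(\b)+1}$ via the canonical isomorphism $i_{\mu(\b)+1}$ on the right-hand side.

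First, I would reduce to the generic case $|I|,|J|\geq 2$ using the $Fi$ compatibility of the $J$ orientors recorded in (the analog of) Lemma \ref{properties of J lemma}. Since the forgetful maps $Fi^\b$ factorize through the relevant boundary strata of both $\mM_{0,l}(\b)$ and $\mM_{1,l}(\b)$ and commute with the gluing map $\vartheta^\b$, Lemmas \ref{boundary of composition orientor}, \ref{pullback of composition of orientors by relatively oriented}, and \ref{vertical pullback expinv composition} allow one to push $\expinv{(Fi^\b)}$ through $\pa$, $\vartheta^\b$, and the fiber-product structure, so that both sides of the claimed identity transform compatibly under the addition of an interior marked point.

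In the generic case, I would expand the left-hand side as $\pa(\phi_{0,l}\bu\tilde{I}^\b_{0,l})$ using Lemma \ref{boundary of composition orientor}, exploiting that $U^\b_{0,l}:\mM_{0,l}(\b)\to\mW_{0,l}$ essentially factors through the boundary stratum of $\mW_{0,l}$ that parametrizes disk bubbling. The resulting two factors split as follows. The combinatorial factor $\pa\phi_{0,l}$ restricts under $\vartheta^\b$ to a combination of $\phi_{1,I}$ and $\phi_{1,J}$ on the component disk moduli; this is a local calculation in the spirit of the proof of Lemma \ref{disk space orientors with properties existence lemma}, using the characterization of the $\phi_{k+1,I}$ by forgetful-map compatibility. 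The analytic factor $\tilde{I}^\b_{0,l}$ splits via the standard gluing formula for determinant lines of Cauchy-Riemann operators on nodal domains: the determinant of the linearization on the nodal disk is canonically isomorphic, modulo one copy of $T_pL$ at the nodal value $p$, to the tensor product of the two component determinants. This extra copy of $T_pL$ is precisely the source of the $\zort{L}$ that is contracted by $e$ on the right-hand side, while $i_{\mu(\b)+1}$ translates $\mathcal X_L^{\mu(\b)+1}$ into $\zort{L}^{\mu(\b)+1}$ so that the component factors $\tilde{J}^{\b_i}_{1,\cdot}$ hidden inside $J^{\b_i}_{1,\cdot}$ can be paired against it.

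Finally, I would assemble the pieces using the Koszul-sign machinery of Lemmas \ref{pullback of composition of orientors by relatively oriented}, \ref{vertical pullback expinv composition}, and \ref{orientors tensor distributivity}, together with boundary transport $c_{ij}$ to compare trivializations of $\zort{L}$ coming from the two sides of the node. The expected overall sign $-1$ is the $k=-1$, $i=0$, $k_1=k_2=0$ specialization of $s=i+ik_2+k+\d\mu(\b_1)\mu(\b_2)$ from Theorem \ref{expinv of boundary of J by thetabeta theorem}, namely $s=-1$; at these indices the $\d\mu(\b_1)\mu(\b_2)$ contribution happens to not survive as an extra summand because there are no boundary-marked-point permutations to account for and because the only $T_pL$ traversed by the boundary transport is already incorporated in the extra $\zort{L}$ factor. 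The main obstacle will be this last piece of sign bookkeeping: tracking the canonical orientation of the determinant line of the glued CR operator in two ways (once from the $k=-1$ Pin-boundary-value problem on the full nodal disk, once from the product of the two $k=0$ problems on the components) and verifying that the degree shifts introduced by inserting $e$ and $i_{\mu(\b)+1}$ produce no net Koszul sign beyond the $-1$ already predicted.
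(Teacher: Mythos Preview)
Your overall strategy matches the paper's proof closely: reduce to $|I|,|J|\geq 2$ via $Fi$-compatibility, decompose $\pa J^\b_{0,l}=\pa\phi_{0,l}\bu\expinv{(\iota^\b/\iota)}\tilde J_0$ using Lemma~\ref{boundary of composition orientor}, handle the combinatorial factor $\pa\phi_{0,l}$ by Lemma~\ref{moduli of disks orientation} (which the paper notes holds also at $k=-1$, giving $\expinv{\vartheta}\pa\phi=-\phi_1\bu\phi_2^{\mW_1}$), and handle the analytic factor by a determinant-line gluing statement (the paper's Lemma~\ref{axiomb1b2 k=-1}). The assembly step is then orientor calculus: commute $\phi_1\bu\phi_2^{\mW_1}$ past $e$ via Lemma~\ref{commutativity vertical pullback vs horizontal pullback orientors} and invoke Lemma~\ref{phi squared times J squared is phi times J squared lemma}.

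Two points in your sign analysis need correction, however. First, your explanation of why the $\d\mu(\b_1)\mu(\b_2)$ term vanishes is wrong. This term has nothing to do with boundary-marked-point permutations or with how many copies of $T_pL$ the boundary transport traverses; it arises in Lemma~\ref{axiomb1b2} from the relation $e_1e_1=(-1)^\d$ in the $Pin$ group, and it is present in the determinant-line gluing regardless of $k$. The actual reason it disappears is the hypothesis $\b\in\Pi^{ad}$: admissibility means $\mu(\b)+1$ is an exponent for $L$, so either $\mu(\b)$ is odd (forcing one of $\mu(\b_1),\mu(\b_2)$ even since they sum to $\mu(\b)$) or $L$ is vertically orientable (forcing all Maslov indices even, since parallel transport of $\zort{L}$ around $\pa\b$ gives $(-1)^{\mu(\b)}$). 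Either way $\mu(\b_1)\mu(\b_2)$ is even. This is exactly the content of the Remark following Lemma~\ref{axiomb1b2 k=-1} in the paper. Second, your plan to use boundary transport $c_{ij}$ to compare trivializations across the node is superfluous here: since $i=0$ and $k_1=k_2=0$, the node sits at $z_0$ on both component disks, so $c_{00}=\Id$ and no nontrivial transport enters (contrast with the $k\geq 0$ case, where $c_{i0}$ genuinely appears because the node is at $z_i$ on the first component).
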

The proof is given in Section~\ref{Gluing proof section k=-1}.

\subsection{Conventions and notations for boundary components}\label{conventions for boundary section}
The following notations will be used in Sections~\ref{Gluing proof section} and~\ref{Gluing proof section k=-1} below.
Let $k\geq-1,l\geq 0,\b\in\Pi$. Fix partitions $k_1+k_2=k+1,\b_1+\b_2=\b$ and $I\dot\cup J=[l]$, such that $k_1>0$ if $k\geq 0$. In all lemmas leading to the proofs of Theorems~\ref{expinv of boundary of J by thetabeta theorem} and~\ref{expinv of boundary of J by thetabeta theorem k=-1}, but not in the proofs themselves, we assume
\begin{equation}\label{Stability condition for mW1 and mW2 equations} 
k_1+2|I|\geq 2, \qquad k_2+2|J|\geq 2.
\end{equation}
When $k\geq 0$, let $0< i\leq k_1$. When $k=-1$ let $i=0$. Set
\begin{align*}
\begin{matrix}
\mW=\mW_{k+1,[l]},&\mW_1=\mW_{k_1+1,I},&\mW_2=\mW_{k_2+1,J}\\
\phi=\phi_{k+1,l},&\phi_1=\phi_{k_1+1,I},&\phi_2=\phi_{k_2+1,J}\\
\mM=\mdl{3},&\mM_1=\mdl{1},&\mM_2=\mdl{2},\\
\tilde J=\tilde J_{k+1,l}^{\b}&\tilde J^1=\tilde J_{k_1+1,I}^{\b_1}&\tilde J^2=\tilde J_{k_2+1,J}^{\b_2},\\
J=J_{k+1,l}^{\b},&J^1:=J_{k_1+1,I}^{\b_1}&J^2:=J_{k_2+1,J}^{\b_2},
\end{matrix}
\end{align*}
where $\mM,\mW$ possibly carry a superscript $(r)$, for $r=0,1,2,...$.
Moreover, set
\begin{align*}
    \begin{matrix}
    B=B_{i,k_1,k_2,I,J}(X=pt),&
    B(\b)=B_{i,k_1,k_2,I,J}(\b_1,\b_2),&\\
    B^{(1)}=B^{(1)}_{i,k_1,k_2,I,J},&B^{(1)}(\b)=B^{(1)}_{i,k_1,k_2,I,J}(\b_1,\b_2),&\\
    \vartheta=\vartheta_{i,k_1,k_2,I,J}^{X=L=pt},&
    \vartheta^\b=\vartheta_{i,k_1,k_2,\b_1,\b_2,I,J},&\\
    \iota=\iota_{i,k_1,k_2,I,J}^{X=L=pt},&\iota^\b=\iota^{\b_1,\b_2}_{i,k_1,k_2,I,J},&
    \end{matrix}
\end{align*}
Denote by
\begin{align*}
q_j:&\mW_1\times_\W\mW_2\to\mW_j,
\end{align*}
 the projections to the $j$th coordinates, where $j=1,2$.
\begin{remark}[Boundary marked points on $B(\b)$]
\label{boundary marked points on B}
The following equations hold.
\[
evb^\b_j\circ \iota^\b\circ \vartheta^\b=
\begin{cases}
evb^{\b_1}_j\circ p_1&,\quad 0\leq j\leq i-1,\\
evb^{\b_2}_{j-i+1}\circ p_2&,\quad i\leq j\leq i+k_2-1,\\
evb^{\b_1}_{j-k_2+1}\circ p_1&,\quad i+k_2\leq j\leq k.
\end{cases}
\]
\[
evb_i^{\b_1}\circ p_1=evb_0^{\b_2}\circ p_2
\]
\end{remark}

Abbreviate
\begin{align*}
\begin{matrix}
U=U_{k+1,[l]},& Ev=Ev_{k+1,[l]}^{\b},\\
U_1=U_{k_1+1,I},& Ev_1=Ev_{k_1+1,I}^{\b_1},\\
U_2=U_{k_2+1,J},& Ev_2=Ev_{k_2+1,J}^{\b_2}.
\end{matrix}
\end{align*}

Write $\iota:B\to \mW$ for the inclusion of the boundary.
The maps $U,Ev^\b$ preserve boundaries, up to codimension 2, in the sense of the following lemma.

Recall the Definition~\ref{essential boundary factorization} of essential boundary factorization.
\begin{lemma}\label{essential factorization of Ev lemma}
\label{stability on B}
For $k\geq -1$ the map $U$ essentially factorizes through the boundary of $ \pi^\mW$. In particular, for $k\geq 0$ the map $Ev^\b$ essentially factorizes through the boundary of $ \pi^{L\times \mW}_L$. Namely, there exist maps 
\begin{align*}
\iota^*U:B(\b)&\to B,\\
\iota^*Ev^\b:B(\b)&\to L\times_\W B,
\end{align*}
such that the following diagrams are fiber-products.
\[
\begin{tikzcd}
B^{(1)}(\b)\ar[d,dashed,"\iota^*U"]\ar[r,"\iota^\b"]&\mM^{(1)}\ar[d,"U"]\\
B^{(1)}\ar[r,"\iota"]&\mW^{(1)}
\end{tikzcd}
\qquad\text{and}\qquad
\begin{tikzcd}
B^{(1)}(\b)\ar[d,dashed,"\iota^*Ev^\b"]\ar[r,"\iota^\b"]&\mM^{(1)}\ar[d,"Ev^\b"]\\
L\times_\W B^{(1)}\ar[r,"\Id\times\iota"]&L\times_\W \mW^{(1)}
\end{tikzcd}
\]
Moreover, 
\begin{equation}
    \label{iota Ev}
    \iota^*Ev^\b=(evb_0^\b\circ\iota^\b,\iota^*U).
\end{equation}
\end{lemma}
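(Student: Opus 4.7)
The plan is to work on the essential open subsets $B^{(1)}(\b) \subset B(\b)$ and $B^{(1)} \subset B$, where the stable maps have exactly one boundary node (of the prescribed type) and no interior nodes. On $B^{(1)}(\b)$, a point is a two-component stable map arising from the gluing construction, so forgetting the $J$-holomorphic maps and contracting any resulting unstable components yields a two-component nodal domain in $\mW$, which by construction lies in $B^{(1)}$. This defines $\iota^*U$ on the essential subset. The extension to all of $B(\b)$ will then be obtained by continuity using Lemma~\ref{unique extension of bundle maps}, noting that the fiber product structure needs only to be verified essentially, by definition of essential fiber product.

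To verify the first square is an essential fiber product, I would show that a point of $\mM^{(1)} \times_{\mW^{(1)}} B^{(1)}$ is a pair $(\fu, x)$ with $U(\fu) = \iota(x)$, and this forces $\fu$ to have underlying nodal domain exactly $x$. Since $x \in B^{(1)}$ carries a boundary node of the specific combinatorial type defining $B$, the stable map $\fu$ must itself have a boundary node of the same type, placing $\fu$ in $B^{(1)}(\b)$. Conversely, any $\fu \in B^{(1)}(\b)$ projects consistently to $\iota^\b(\fu) \in \mM^{(1)}$ and to $\iota^*U(\fu) \in B^{(1)}$. This gives a canonical bijection identifying $B^{(1)}(\b)$ with the fiber product, and one checks it is a diffeomorphism in the orbifold sense.

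The factorization of $Ev^\b$ for $k \geq 0$ will then follow quickly. Using the universal property of the product $L \times_\W B$, the natural candidate for the lift is forced to be
\[
\iota^*Ev^\b = (evb_0^\b \circ \iota^\b, \iota^*U),
\]
which establishes formula~\eqref{iota Ev}. To see the right-hand square is an essential fiber product, I will combine the already-established fiber product square for $U$ with the tautological factorization $Ev^\b = (evb_0^\b, U)$ and the fact that $\Id_L \times \iota$ is obtained from $\iota$ by base change along $\pi^L$; a diagram chase shows the resulting square is a pullback.

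The stability assumption~\eqref{Stability condition for mW1 and mW2 equations} plays a role throughout: it ensures that after forgetting the holomorphic maps, no component of the two-part domain becomes unstable, so no contraction actually occurs on $B^{(1)}(\b)$ and the combinatorial structure of the node is preserved. The main obstacle I expect is keeping track of this stability bookkeeping and verifying that the orbifold automorphisms match up correctly under the identification, in particular ensuring that the map $\iota^*U$ is well-defined as a map of orbifolds with corners and not merely of coarse moduli spaces.
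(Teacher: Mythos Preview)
Your proposal is correct and follows the same approach as the paper: the key observation is that the stability assumption~\eqref{Stability condition for mW1 and mW2 equations} guarantees enough marked points on each component so that forgetting the holomorphic map never produces an unstable component, whence the nodal domain is preserved and $U$ carries $B^{(1)}(\b)$ to $B^{(1)}$. The paper's proof is in fact just this one sentence; your elaboration of the fiber-product verification and the derivation of $\iota^*Ev^\b$ from $\iota^*U$ is correct but more than the paper spells out.

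One small correction: your appeal to Lemma~\ref{unique extension of bundle maps} for extending $\iota^*U$ to all of $B(\b)$ is misplaced, since that lemma concerns maps of $\Z/2$-bundles, not maps of orbifolds. No extension argument is needed here: $U$ is already globally defined on $\mM$, and the same stability reasoning shows that its restriction to all of $B(\b)$ (not just $B^{(1)}(\b)$) lands in $B$, so $\iota^*U$ is simply this restriction.
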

\begin{remark}
The maps $\Id\times\iota$ and $Ev^\b$ are transversal.
\end{remark}
\begin{proof}
By assumption~\eqref{Stability condition for mW1 and mW2 equations}, there are enough marked points on each component of $\S$ at points of $B^{(1)}(\b)$ that forgetting the holomorphic map never results in unstable components. Thus, the domain of the holomorphic map does not change. 
\end{proof}
\subsection{Proof for \texorpdfstring{$k\geq 0$}{k>-1}}
\label{Gluing proof section}
In this section, we prove Theorem~\ref{expinv of boundary of J by thetabeta theorem}. We assume $k\geq 0$, unless stated otherwise.

Recall the boundary map from Definition~\ref{boundary-operator for relative orientation}.
The following lemma holds for $k=-1$ as well.
\begin{lemma}[Orientation of gluing map of disks]
\label{moduli of disks orientation}  
The following equation of \orientor{\pi^{\mW_1\times\mW_2}} holds.
\begin{equation}
\expinv{\vartheta}(\pa\phi)=(-1)^{i+ik_2+k}\,\,\phi_{1}\bu \left(\phi_{2}\right)^{\mW_1},
\label{moduli of disks orientation lemma orientors equation}
\end{equation}
where $\pa\phi$ is the boundary of $\phi$ from Definition~\ref{Boundary of orientor}
\end{lemma}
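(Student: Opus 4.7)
The plan is to reduce the general equation to a minimal base case using the compatibility of the orientors $\phi_{k+1,I}$ with forgetting marked points, and then verify the base case directly by computing the boundary orientation on the $\text{Aut}(D^2)$-cover $\widehat{\mW_{k+1,l}}$ of Section~\ref{Moduli orientation - Conventions}.

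First, I would establish the following compatibility of the gluing map with the forgetful maps. For $j \in [l] \setminus I$ (so that $j \in J$), there is an essentially commutative diagram
\[
\begin{tikzcd}
\mW_{k_1+1,I}\times_\W \mW_{k_2+1,J}\ar[r,"\vartheta"]\ar[d,"\Id\times Fi"]&\mW_{k+1,l}\ar[d,"Fi"]\\
\mW_{k_1+1,I}\times_\W \mW_{k_2+1,J\setminus\{j\}}\ar[r,"\vartheta"]&\mW_{k+1,l\setminus\{j\}}
\end{tikzcd}
\]
which is a pullback, and a similar diagram for forgetting boundary marked points. Applying Lemma~\ref{pullback of composition of orientors by relatively oriented enhanced} and Lemma~\ref{boundary of composition orientor} together with properties~\ref{disk space orientors:forget boundary} and~\ref{disk space orientors:forget interior} of Lemma~\ref{disk space orientors with properties existence lemma} (and carefully checking that forgetting preserves the canonical orientation of the boundary and of $\vartheta$ as a local diffeomorphism), the problem reduces to the minimal stable cases $(k_1+1,|I|), (k_2+1,|J|)\in\{(1,1),(3,0)\}$.

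Second, in each base case I would carry out the computation on the $\text{Aut}(D^2)$-cover. Using the description $\mW_{k+1,l}^{(0)}=\widehat{\mW_{k+1,l}}/\text{Aut}(D^2)$ from equation~\eqref{description of disk space}, the orientor $\phi_{k+1,l}$ descends from a canonical orientation of $\widehat{\mW_{k+1,l}}$ via Definition~\ref{action quotient orientation definition} and the canonical orientation $\mO^{\text{Aut}(D^2)}_c$ of Definition~\ref{orientation of PSL}. At the level of covers, the gluing map has the explicit form identifying the $i$th boundary marked point on the first disk (after rotating via cyclic reorder) with a fixed base point of the second disk, so the boundary component $B^{(1)}$ is parametrized by an $S^1$-worth of gluing angles. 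I would compute the outward normal at a boundary point, apply Remark~\ref{composition with canonical orientation of boundary}, and compare to the product orientation induced by $\phi_1\bu (\phi_2)^{\mW_1}$, which by Remark~\ref{pullback to product of vertical local diffeomorphism  remark} and Lemma~\ref{pulled orientation orientor is pullback orientor of orientation local diffeomorphism} descends from the product orientation on $\widehat{\mW_{k_1+1,I}}\times\widehat{\mW_{k_2+1,J}}$.

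The main obstacle will be bookkeeping of the signs. The factor $(-1)^{ik_2}$ should arise from re-ordering the boundary marked points of the glued disk so that the $k_2$ boundary points from the second disk sit between positions $i$ and $i+k_2$ among the $k+1$ total points; the factor $(-1)^i$ from the $i$-fold cyclic rotation (property~\ref{disk space orientors:boundary permutation} of Lemma~\ref{disk space orientors with properties existence lemma}) needed to place the gluing node at a standard location; and the factor $(-1)^k$ from the interaction of the boundary orientation convention in Definition~\ref{relative orientation of boundary} (which places the outward normal in position $n+1$, here $n=k$) with the Koszul signs produced by the left $\mW_1$-extension $(\phi_2)^{\mW_1}$. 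Additionally, one must track the signs coming from Remark~\ref{boundary orientation comparison} comparing our boundary convention with the Fukaya convention, as well as from Lemma~\ref{vertical pullback vs horizontal pullback} when translating between $\pull{(r/f)}$ and $\pull{(q/g)}$ in the pullback square. Assembling these contributions yields exactly $(-1)^{i+ik_2+k}$.
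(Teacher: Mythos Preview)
Your approach is valid but takes a genuinely different route from the paper. The paper first reduces to the case $\W$ is a point using the base-extension machinery (Lemmas~\ref{base extension of pullback is extended pullback of base extension lemma},~\ref{boundary of pulled orientation orientor},~\ref{base extension is distributive expinv}), and then imports the orientation formula
\[
\mO^{\mW}\circ\mO^{\iota}\circ\mO^\vartheta_c=(-1)^{i+ik_2+k_1k_2}\,\mO^{\mW_1}\times \mO^{\mW_2}
\]
as a black box from \cite[Proposition 2.8]{Sara1} (and \cite[Proposition 8.3.3]{Fukaya} for $i=1$). The remainder of the paper's argument is pure translation into the orientor language: the Koszul sign in $(\Id\otimes q_1^*\phi_1)\circ q_2^*\phi_2=(-1)^{k_1k_2}q_2^*\phi_2\otimes q_1^*\phi_1$ converts the exponent $k_1k_2$ into $k$. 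Your plan---reducing via forgetful maps to the minimal stable configurations and then computing directly on the $\text{Aut}(D^2)$-cover---would amount to re-proving the cited formula rather than importing it, which is more self-contained but substantially more laborious. In particular, since $Fb$ always forgets the last boundary point, each boundary reduction forces cyclic permutations and hence additional signs via property~\ref{disk space orientors:boundary permutation}; and the base-case computation still has to compare two distinct $\text{Aut}(D^2)$-quotient orientations against the outward normal. You also omit the preliminary reduction to $\W=pt$; this is not strictly necessary for your strategy (everything is fibered), but it would streamline the base-case computation. Your heuristic attribution of the individual sign factors is plausible but does not quite match the paper's decomposition, where $(-1)^k$ arises from the Koszul sign rather than from the boundary convention.
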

\begin{proof}
First, we reduce to the case $\W$ is a point. Write $\tilde\phi,\tilde \vartheta$ for $\phi,\vartheta$ in the case $\W$ is a point. Then
\[
\phi=\tilde\phi^\W,\qquad \vartheta=\Id_\W\times \tilde\vartheta.
\]
Assuming we proved the lemma for that case, we get
\begin{align*}
\expinv{\vartheta}\pa\phi=\expinv{\left(\Id_\W\times \tilde\vartheta\right)}\pa\tilde\phi^\W\overset{\begin{smallmatrix}\text{Lem.~\ref{base extension of pullback is extended pullback of base extension lemma}}\\\text{Lem.~\ref{boundary of pulled orientation orientor}}\end{smallmatrix}}=\left(\expinv{\tilde\vartheta}\pa\tilde\phi\right)^\W
\overset{\text{rdct.}}=(-1)^{i+ik_2+k}\left(\tilde\phi_1\bu(\tilde \phi_2)^{\tilde \mW_1}\right)^\W
\\\overset{\text{Lem~\ref{base extension is distributive expinv}}}=(-1)^{i+ik_2+k}\phi_1\bu\phi_2^{\tilde \mW_1}.
\end{align*}
Therefore, we continue with the assumption $\W$ is a point.
Let $\mO^{\mW_i}$, possibly with no subscript, denote the orientation of $\mW_i$ defining $\phi_{i}$. Our orientation convention of the absolute boundary agrees with that of \cite{Sara1}. See Remark~\ref{boundary orientation comparison}. Rewriting~\cite[Proposition 2.8]{Sara1} in the case $X,L,$ are both points in the notation of the present work, we obtain
\begin{equation}
\label{orientations calculation in boundary phi proof equation}
\mO^{\mW}\circ\mO^{\iota}\circ\mO^\vartheta_c=(-1)^{i+ik_2+k_1k_2}\mO^{\mW_1}\times \mO^{\mW_2}.
\end{equation}
See also~\cite[Proposition 8.3.3]{Fukaya} in the case $i=1$.

The bundle-map $(-1)^k\pa$ is given by composition with $\mO^{\iota}$ on the right, so equation~\eqref{orientations calculation in boundary phi proof equation} is equivalent to the commutativity of the following diagram up to the sign $(-1)^{i+ik_2+k_1k_2}$.
\begin{equation}
\begin{tikzcd}
\underline{\Z/2}\ar[rr,"\vartheta^*\iota^*\phi"]\ar[d,equal]&&\vartheta^*\iota^*\zcort{\mW}\ar[r,"(-1)^{k}\vartheta^*\pa"]&\vartheta^*\zcort{B}\ar[d,"\pull{\vartheta}"]
\\\underline{\Z/2}\otimes \underline{\Z/2}\ar[rr,swap,"q_2^*\phi_2\otimes q_1^*\phi_1"]&&q_2^*\zcort{\mW_2}\otimes q_1^*\zcort{\mW_1}\ar[r,swap,"\sim"]&\zcort{\mW_1\times\mW_2}
\end{tikzcd}
\label{moduli of disks orientation lemma diagram}
\end{equation}

Equation~\eqref{moduli of disks orientation lemma orientors equation} follows from diagram~\eqref{moduli of disks orientation lemma diagram}, using the Kozsul signs~\ref{Koszul signs} and Example~\ref{boundary of orientation as orientor} as follows.
\begin{multline*}
\phi_1\bu (\phi_2)^{\mW_1}=(\Id\otimes q_1^*\phi_1)\circ q_2^*\phi_2=(-1)^{k_1k_2}q_2^*\phi_2\otimes q_1^*\phi_1\\=(-1)^{i+ik_2+k}\pull{\vartheta}\circ \vartheta^*\pa \circ \vartheta^*\iota^*\phi=(-1)^{i+ik_2+k}\expinv{\vartheta}\left(\pa \phi\right).
\end{multline*}
\end{proof}
Denote by $c_{0l},c^1_{0l}$ and $c^2_{0l}$ the boundary transport maps on $\mM,\mM_1$ and $\mM_2$, respectively. Recall Remark~\ref{boundary marked points on B}.
\begin{lemma}[Rotation around $\b_2$]
\label{rotate around b2}
The boundary transport map $c_{0l}$ may be reconstructed on the boundary component $B(\b)$ as follows.
\[
{(\vartheta^\b)}^* c_{0j}=
\begin{cases}
p_1^*c_{0j}^{1},& 0\leq j\leq i-1,\\
p_1^*c^{1}_{0i}\circ p_2^*c^{2}_{0,j-i+1},& i\leq j\leq i+k_2-1,\\
(-1)^{w_1(\pa\b_2)}\cdot\,\, p_1^*c^{1}_{0,j-k_2+1},& i+k_2\leq j\leq k.
\end{cases}
\]
\end{lemma}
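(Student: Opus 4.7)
The plan is to verify the three cases on the essential open subset $\mM_1^{(0)}\times_L\mM_2^{(0)}\subset \mM_1\times_L\mM_2$ whose image under $\vartheta^\b$ is $B^{(1)}(\b)$, the codimension-one stratum of $\pa\mM$ consisting of nodal maps with exactly one boundary node; by Lemma~\ref{unique extension of bundle maps} this extends uniquely to the whole fiber product. Over this locus a point is a pair $(\fu_1,\fu_2)$ of smooth stable disk maps with domains $D^2_1,D^2_2$, meeting at $z^1_i\in\pa D^2_1\sim z^2_0\in \pa D^2_2$. The relabeling dictated by $\vartheta^\b$ places the original labels $0,\dots,i-1,i+k_2,\dots,k$ at positions $0,\dots,i-1,i+1,\dots,k_1$ on component $1$, and the labels $i,\dots,i+k_2-1$ at positions $1,\dots,k_2$ on component $2$; the node occupies position $i$ on $\pa D^2_1$ and position $0$ on $\pa D^2_2$. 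Under this identification, $\pa\S$ pulls back to the wedge $\pa D^2_1\vee \pa D^2_2$ glued at the node and inheriting counter-clockwise orientation on each circle.

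Next, apply Definition~\ref{boundary transport}: $c_{0j}$ is parallel transport of $\zort{L}$ along the unique (up to homotopy) oriented arc from $z_0$ to $z_j$ passing through $z_1,\dots,z_{j-1}$ in order and switching components at nodal points. Case~1, $0\leq j\leq i-1$: the arc stays on $\pa D^2_1$ and is precisely the arc that defines $c^1_{0j}$ on $\mM_1$, so the transport equals $p_1^*c^1_{0j}$. Case~2, $i\leq j\leq i+k_2-1$: the arc traverses $\pa D^2_1$ from position $0$ to the node, switches to $\pa D^2_2$, and continues from position $0$ to position $j-i+1$; invoking Remark~\ref{group-like transport} twice factors this transport as $p_1^*c^1_{0i}\circ p_2^*c^2_{0,j-i+1}$.

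Case~3, $i+k_2\leq j\leq k$: the arc leaves component $1$ at the node after passing $z_1,\dots,z_{i-1}$, winds once all the way around $\pa D^2_2$ (passing $z_i,\dots,z_{i+k_2-1}$), returns to the node, and re-enters component $1$ before reaching $z_j$ at position $j-k_2+1$. The two sub-arcs lying in $\pa D^2_1$ concatenate to the oriented arc on $\pa D^2_1$ from position $0$ to position $j-k_2+1$, contributing $p_1^*c^1_{0,j-k_2+1}$. The intermediate full loop around $\pa D^2_2$ acts on $(u_2|_{\pa D^2_2})^*\zort{L}$ by the monodromy of $\zort{L}=\zcort{\pi^L}[1-n]$ along the boundary cycle $\pa\b_2$, which by definition of $\zort{L}$ is the involution $(-1)^{w_1(\pa\b_2)}$. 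Collecting these contributions via Remark~\ref{group-like transport} and Remark~\ref{full round} (using that $\mu(\b_2)\equiv w_1(\pa\b_2)\pmod 2$ for Lagrangian disks, so the two expressions for the full-loop monodromy agree) yields the stated sign.

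The main obstacle is purely combinatorial: keeping the two labelings (original and per-component) aligned while describing the arc in each case, and in Case~3 confirming that the full-loop monodromy is correctly identified with $(-1)^{w_1(\pa\b_2)}$ rather than being absorbed into the re-entry transport on component $1$. Everything else follows routinely from Definition~\ref{boundary transport} together with Remarks~\ref{full round} and~\ref{group-like transport}; no orientation-sheaf calculation of the type used elsewhere in Section~\ref{Moduli orientation} is required.
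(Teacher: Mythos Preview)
Your proof is correct and follows exactly the approach the paper intends: the paper's own proof is the single line ``The lemma follows from Definition~\ref{boundary transport} and Remark~\ref{full round},'' and your argument is simply a careful unpacking of that sentence, case by case, together with the group-like property of Remark~\ref{group-like transport}. Nothing further is needed.
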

\begin{proof}
The lemma follows from Definition~\ref{boundary transport} and Remark~\ref{full round}.
\end{proof}
Set $E:=Ev_1\times U_2$. That is,
\[
E=(Ev_1\times\Id)\circ (\Id\times U_2).
\]
Consider the following commutative diagram, in which the square is a fiber-product.
\begin{equation}
\begin{tikzcd}
\mM_1\times_L\mM_2\ar[r,"p_2"]\ar[d,swap,"\Id\times U_2"]\ar[dd,bend right=80,swap,"E"]&\mM_2\ar[d,"Ev_2"]\\
\mM_1\times_\W \mW_2\ar[r,swap,"evb_i^{\b_1}\times\Id"]\ar[d,swap,"Ev_1\times\Id"]&L\times_\W\mW_2\\
L\times_\W \mW_1\times_\W\mW_2
\end{tikzcd}
\label{E as fibered composition}
\end{equation}
Keeping in mind Lemma~\ref{essential factorization of Ev lemma}, all squares in the following diagram are fiber products.
\begin{equation*}
\label{theta beta over theta essential}
  \begin{tikzcd}
\mM^{(0)}_1\times_L\mM^{(0)}_2\ar[d,swap,"E"]\ar[r,"\vartheta^\b"]&B^1(\b)\ar[d,"\iota^*Ev"]\ar[r,"\iota^\b"]&\mM^{(1)}\ar[d,"Ev"]
\\
L\times_\W\mW^{(0)}_1\times_\W \mW^{(0)}_2\ar[r,swap,"\Id\times \vartheta"]&L\times_\W B^{(1)}\ar[r,swap,"\Id\times\iota"]&L\times_\W \mW
\end{tikzcd}
\end{equation*}
In particular, all three squares in the following diagram are essential fiber products.
\begin{equation}
\label{theta beta over theta}
  \begin{tikzcd}
\mM_1\times_L\mM_2\ar[d,swap,"E"]\ar[r,"\vartheta^\b"]&B(\b)\ar[d,"\iota^*Ev"]\ar[r,"\iota^\b"]&\mM\ar[d,"Ev"]
\\
L\times_\W\mW_1\times_\W \mW_2\ar[r,swap,"\Id\times \vartheta"]&L\times_\W B\ar[r,swap,"\Id\times\iota"]&L\times_\W \mW
\end{tikzcd}
\end{equation}

The proof of the following lemma is similar to proofs that appear in \cite{Fukaya}. It relies on the fact that $e_1e_1=(-1)^\d$ in $\fp$. 
% \footnote{Jake - requires references}
\begin{lemma}\label{axiomb1b2}
The following diagram is commutative, up to the sign $(-1)^{\d\mu(\b_1)\mu(\b_2)}$,
\begin{equation}
\begin{tikzcd}
{\vartheta^\b}^*\iota^*{(evb_0^\b)}^*\zort{L}^{\otimes \b}\ar[from=d,"p_1^*{evb_0^{\b_1}}^*m_2"]\ar[rrrrr,"{\vartheta^\b}^*\iota^*\tilde J"]&&&&&{\vartheta^\b}^*\zcort{Ev}\ar[dd,swap,"\pull{\left(\left(\iota^\b\circ\vartheta^\b\right)/\left((\Id\times\iota)\circ (\Id\times\vartheta)\right)\right)}"]
\\
p_1^*{evb_0^{\b_1}}^*\left(\zort{L}^{\otimes\b_2}\otimes\zort{L}^{\otimes\b_1}\right)\ar[d,swap,"p_1^*c_{i0}\otimes \Id"]\\
\begin{matrix}
p_2^*{evb_0^{\b_2}}^*\zort{L}^{\otimes \b_2}\bigotimes\\ p_1^*{evb_0^{\b_1}}^*\zort{L}^{\otimes \b_1}
\end{matrix}
\ar[rrrr,swap,"\left(\expinv{\left(p_2/\left(evb_i^{\b_1}\times\Id\right)\right)}\tilde J^2\right)^{{evb_0^{\b_1}}^*\mathbb L_L^{\otimes \b_1}}"]&&&&\zcort{\Id\times U_2}\otimes p_1^*{evb_0^{\b_1}}^*\zort{L}^{\otimes \b_1}\ar[r,swap,"{}^{\mW_2}\tilde J^1"]
&\zcort{E}
\end{tikzcd}
\end{equation}
where $m_2$ is the canonical multiplication $m_2:\zort{L}^{\otimes \b_2}\otimes \zort{L}^{\otimes \b_1}\to \zort{L}^{\otimes \b}$.
That is, we have an equality of \orientor{E}s of ${\vartheta^\b}^*\iota^*{(evb_0^\b)}^*\zort{L}^{\otimes\b_1+\b_2}$ to $\underline{\Z/2}$, as follows.
\begin{align*}
(-1)^{\d\mu(\b_1)\mu(\b_2)}\cdot\,\,&\expinv{\left(\left(\iota^\b\circ\vartheta^\b\right)/\left((\Id\times\iota)\circ (\Id\times\vartheta)\right)\right)}\tilde J\,\,\bu\,\, p_1^*{evb_0^{\b_1}}^*m_2\\
=&{}^{\mW_2}\tilde J^1\bu \left(\expinv{\left(p_2/\left(evb_i^{\b_1}\times\Id\right)\right)}\tilde J^2\right)^{{evb_0^{\b_1}}^*\zort{L}^{\otimes \b_1}}\bu \left(p_1^*c_{i0}\right)^{\zort{L}^{\otimes \b_1}}
\end{align*}

The pullback of $\tilde J$ on the left hand side is based on diagram~\eqref{theta beta over theta} and the pullbacks of $\tilde J^2,\tilde J^1$ are based on diagram~\eqref{E as fibered composition}.
\end{lemma}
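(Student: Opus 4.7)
The plan is to first reduce to the case when $\W$ is a point, exactly as in the opening paragraph of the proof of Lemma~\ref{moduli of disks orientation}: both sides of the asserted equation are built from $\vartheta^\b, \tilde J, \tilde J^1, \tilde J^2, m_2$ and the boundary transport, and by Lemmas~\ref{base extension of pullback is extended pullback of base extension lemma}, \ref{base extension is distributive expinv} and \ref{pullback of composition of orientors by relatively oriented} all these operations intertwine with the base-extension $(-)^{A}$ of Definition~\ref{pulled orientation orientor}. Once $\W$ is a point, since both sides are \orientor{E}s over $\mM_1\times_L\mM_2$ and $\mM_1^{(0)}\times_L\mM_2^{(0)}$ is an essential open subset, Lemma~\ref{unique extension of bundle maps} reduces us to checking the identity over a generic point $(\fu_1,\fu_2)$ in this subset, where both components of the glued map are smooth disks.

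Fix such a point and set $\fu=\vartheta^\b(\fu_1,\fu_2)$. The standard gluing theory for linearized Cauchy--Riemann operators at a boundary node (see \cite[Section~8.1]{Fukaya} and \cite[Section~3]{JakePhD}) yields a canonical exact sequence
\[
0\to \ker D_\fu \to \ker D_{\fu_1}\oplus \ker D_{\fu_2}\xrightarrow{ev^{\mathrm{node}}_1-ev^{\mathrm{node}}_2} T_{evb_i^{\b_1}(\fu_1)}L\to \coker D_\fu\to \coker D_{\fu_1}\oplus\coker D_{\fu_2}\to 0,
\]
and hence a canonical isomorphism $\det(D_\fu)\otimes\det(T_{evb_i^{\b_1}(\fu_1)}L)\simeq \det(D_{\fu_1})\otimes \det(D_{\fu_2})$. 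Under the identification~\eqref{det Du} applied on each factor together with equation~\eqref{iota Ev}, this exact sequence encodes the desired gluing relation between $\tilde I$ on the glued side and $\tilde I^1\bu \tilde I^2$ on the two components, up to the advertised sign. The boundary transport $c_{i0}$ appears because the construction of \cite[Section~3]{JakePhD} normalizes $\tilde I^1$ by trivializing $u_1^*TL$ starting at $z_0^{\b_1}$, whereas the gluing is performed at the node $z_i^{\b_1}=z_0^{\b_2}$; translating the $\zort{L}^{\otimes \b_1}$-factor back to $z_0^{\b_1}$ is precisely the content of $c_{i0}$.

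The main obstacle is the sign $(-1)^{\d\mu(\b_1)\mu(\b_2)}$. This is the classical $Pin^\pm$ correction term that appears in \cite[Proposition~8.3.3]{Fukaya} in the orientable case, and the analysis carries over verbatim. Concretely, one trivializes $u_1^*TL$ along $\pa \Sigma_1$ using $\fp$, which winds $\mu(\b_1)$ times, and similarly for $u_2^*TL$ along $\pa \Sigma_2$, which winds $\mu(\b_2)$ times. Comparing the two $Pin^\pm$-trivializations at the common boundary point requires commuting $\mu(\b_1)$ copies of the generator $e_1$ past $\mu(\b_2)$ copies, and each crossing contributes $(-1)^\d$ by the defining relation $e_1^2=(-1)^\d$ in $Pin^\pm$, producing a total of $(-1)^{\d\mu(\b_1)\mu(\b_2)}$.

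Finally, once the analogous gluing identity has been established for $(\tilde I,\tilde I^1,\tilde I^2)$, the statement for $(\tilde J,\tilde J^1,\tilde J^2)$ follows formally. Indeed, by Definition~\ref{Jakethesis} one has $\tilde I^\b_{\bullet,\bullet}={}^{\mW}e\bu {}^{\zort{}}\tilde J^\b_{\bullet,\bullet}$ on each of the three moduli spaces, so applying the already proved identity for $\tilde I$, stripping off the $e$-factors using Lemma~\ref{orientor invertion lemma}, and invoking the distributivity results of Section~\ref{orientors section} (Lemmas~\ref{vertical pullback expinv composition}, \ref{base extension of pullback is extended pullback of base extension lemma}, \ref{orientors tensor distributivity}) yields the desired equation. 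A careful check shows that no additional signs are introduced at this step, since $e$ has even degree.
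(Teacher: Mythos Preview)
Your overall strategy matches what the paper does: the paper does not give a self-contained proof of this lemma, stating only that ``the proof \ldots\ is similar to proofs that appear in \cite{Fukaya}'' and ``relies on the fact that $e_1e_1=(-1)^\d$ in $\fp$.'' Your sketch expands this hint along the expected lines --- reduce to a point, work on the essential open set of smooth two-component maps, use the gluing exact sequence for the linearized operators to compare determinant lines, and track the $Pin^\pm$ discrepancy via $e_1^2=(-1)^\d$.

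Two corrections are in order. First, your justification in the last paragraph is wrong: $e$ does \emph{not} have even degree; by Definition~\ref{1-n translation L orientor definition} one has $|e|=1-n$. The reason no extra sign appears when passing from the $\tilde I$-identity to the $\tilde J$-identity is not parity of $|e|$ but rather that the same $e$, based at $evb_0^{\b_1}=evb_0^{\b}\circ\iota^\b\circ\vartheta^\b$, is stripped from both sides via Lemma~\ref{orientor invertion lemma}; the factor ${}^{\mW_2}e$ coming from $\tilde I^2$ at the node does \emph{not} survive separately but is absorbed by the fiber-product identification, which is why the $k\ge 0$ statement has no residual $e$ while the $k=-1$ analog (Lemma~\ref{axiomb1b2 k=-1}) does. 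You should make this bookkeeping explicit. Second, a minor slip: in the diagram $(p_1^*c_{i0})\otimes\Id$ acts on the $\zort{L}^{\otimes\b_2}$ factor, transporting it from $z_0^{\b_1}$ to the node $z_i^{\b_1}=z_0^{\b_2}$, not on the $\zort{L}^{\otimes\b_1}$ factor as you wrote.
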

\begin{lemma}
\label{boundary is fibered}
It holds that
\begin{equation}
\label{boundary is fibered equation}
\pa J^{\b}_{k+1,l}=\pa\phi^L_{k+1,l}\bu \expinv{\left(\iota^\b/(\Id\times\iota)\right)}\tilde J^\b_{k+1,l}.
\end{equation}
\end{lemma}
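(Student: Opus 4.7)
The plan is to derive the formula as a direct application of Lemma~\ref{boundary of composition orientor} to the factorization
\[
J^\b_{k+1,l} = \phi^L_{k+1,l} \bu \tilde J^\b_{k+1,l}
\]
provided by Definition~\ref{fundamental orientor definition}. Here $\tilde J^\b_{k+1,l}$ is an \orientor{Ev^\b_{k+1,l}} and $\phi^L_{k+1,l}$ is an \orientor{\pi^{L\times\mW}_L}, and their composition is along the map $\pi^{L\times\mW}_L \circ Ev^\b_{k+1,l} = evb_0^\b$. In Lemma~\ref{boundary of composition orientor} we therefore set $F = \tilde J^\b_{k+1,l}$, $G = \phi^L_{k+1,l}$, $f = Ev^\b_{k+1,l}$, $g = \pi^{L\times\mW}_L$.

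To invoke the lemma, I need $Ev^\b_{k+1,l}$ to factorize through the boundary of $\pi^{L\times\mW}_L$ in the sense of Definition~\ref{boundary factorization}. This is precisely what Lemma~\ref{essential factorization of Ev lemma} provides, with $\iota_g^* f = \iota^*Ev^\b$ and with the top and bottom arrows of the fiber-product square given by $\iota^\b$ and $\Id\times\iota$, respectively. The only caveat is that the factorization is essential rather than strict: the fiber-product square in Lemma~\ref{essential factorization of Ev lemma} is genuine only after restriction to the essential open subset $B^{(1)}(\b) \subset B(\b)$.

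To deal with this, first I would apply Lemma~\ref{boundary of composition orientor} on $B^{(1)}(\b)$, where the factorization is a genuine pullback, obtaining the desired identity as an equation of bundle maps over $B^{(1)}(\b)$. Then, since both sides of the equation are defined on all of $B(\b)$ and $B^{(1)}(\b) \subset B(\b)$ is essential, Lemma~\ref{unique extension of bundle maps} promotes the equality from $B^{(1)}(\b)$ to all of $B(\b)$. Plugging the factorization into the conclusion of Lemma~\ref{boundary of composition orientor} yields
\[
\pa\bigl(\phi^L_{k+1,l} \bu \tilde J^\b_{k+1,l}\bigr) = \pa \phi^L_{k+1,l} \bu \expinv{\bigl(\iota^\b/(\Id\times\iota)\bigr)} \tilde J^\b_{k+1,l},
\]
which together with Definition~\ref{fundamental orientor definition} is exactly the claimed formula.

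I do not expect any substantive obstacle: the whole argument is essentially a bookkeeping exercise that organizes the composition rule of Lemma~\ref{boundary of composition orientor} around the essential factorization of Lemma~\ref{essential factorization of Ev lemma}. The mildest point of care is confirming that the various pullback signs and degree shifts built into Lemma~\ref{boundary of composition orientor} (which are already absorbed into the $\pa$ notation of Definition~\ref{Boundary of orientor}) match those carried by $\pa J^\b_{k+1,l}$ and $\pa\phi^L_{k+1,l}$; this follows at once from the naturality of the boundary orientor and the definition of $\phi^L_{k+1,l}$ as the base extension $\expinv{(\pi^{L\times\mW}_\mW/\pi^L)}\phi_{k+1,l}$.
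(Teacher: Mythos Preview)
Your proposal is correct and follows essentially the same argument as the paper: invoke Definition~\ref{fundamental orientor definition} for the factorization $J^\b_{k+1,l}=\phi^L_{k+1,l}\bu\tilde J^\b_{k+1,l}$, use Lemma~\ref{essential factorization of Ev lemma} (= Lemma~\ref{stability on B}) to verify the essential boundary factorization of $Ev^\b$, and apply Lemma~\ref{boundary of composition orientor}. The paper compresses your discussion of the essential-versus-strict factorization into its standing convention (Remark after Lemma~\ref{unique extension of bundle maps}) that all fiber-product lemmas extend automatically to essential fiber products.
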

\begin{proof}
By Definition~\ref{fundamental orientor definition},
\[
J^{\b}_{k+1,l}=\phi^L_{k+1,I}\bu \tilde J^\b_{k+1,l}.
\]By Lemma~\ref{stability on B} we may apply Lemma~\ref{boundary of composition orientor} to get equation~\eqref{boundary is fibered equation}.
\end{proof}
\begin{lemma}
\label{phi squared times J squared is phi times J squared lemma}
The following equation holds.
\[
\left(\phi_{1}\bu \left(\phi_{2}\right)^{\mW_1}\right)^L\bu\,\,{}^{\mW_2}\tilde J^1\bu \left(\expinv{\left(p_2/(evb_i^{\b_1}\times\Id)\right)}\tilde J^2\right)^{\zort{L}^{\b_1}}=J^1\bu \left(\expinv{\left(p_2/evb_i^{\b_1}\right)}J^2\right)^{\zort{L}^{ \b_1}}
\]
\end{lemma}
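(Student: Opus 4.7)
The strategy is to unfold the definitions of the fundamental orientors $J^1,J^2$ on the right-hand side, then use orientor calculus to distribute the various pullbacks and bundle extensions, and finally invoke a commutation identity to rearrange the resulting expression into the form appearing on the left.

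First, I would substitute $J^1=\phi_1^L\bu\tilde J^1$ and $J^2=\phi_2^L\bu\tilde J^2$ from Definition~\ref{fundamental orientor definition}. Applying Lemma~\ref{vertical pullback expinv composition} to the vertically stacked pullback squares
\[
\begin{tikzcd}[column sep=large,row sep=small]
\mM_1\times_L\mM_2\ar[r,"p_2"]\ar[d,"\Id\times U_2"]&\mM_2\ar[d,"Ev_2"]\\
\mM_1\times_\W\mW_2\ar[r,"evb_i^{\b_1}\times\Id"]\ar[d,"\tilde p_1"]&L\times_\W\mW_2\ar[d,"\pi_L"]\\
\mM_1\ar[r,"evb_i^{\b_1}"]&L
\end{tikzcd}
\]
then yields the splitting
$\expinv{(p_2/evb_i^{\b_1})}(\phi_2^L\bu\tilde J^2)=\expinv{((evb_i^{\b_1}\times\Id)/evb_i^{\b_1})}\phi_2^L\bu\expinv{(p_2/(evb_i^{\b_1}\times\Id))}\tilde J^2$, and Lemma~\ref{orientors tensor distributivity} distributes the right-$\zort{L}^{\b_1}$-bundle extension over this composition.

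Next I would observe that the horizontal composition
\[
\begin{tikzcd}[column sep=large,row sep=small]
\mM_1\times_\W\mW_2\ar[r,"evb_i^{\b_1}\times\Id"]\ar[d,"\tilde p_1"]&L\times_\W\mW_2\ar[r,"\pi_{\mW_2}"]\ar[d,"\pi_L"]&\mW_2\ar[d,"\pi^{\mW_2}"]\\
\mM_1\ar[r,"evb_i^{\b_1}"]&L\ar[r,"\pi^L"]&\W
\end{tikzcd}
\]
has outer rectangle $(\pi^{\mM_1},\pi^{\mW_2})$, so by Lemma~\ref{horizontal expinv composition} the pullback $\expinv{((evb_i^{\b_1}\times\Id)/evb_i^{\b_1})}\phi_2^L$ equals the base extension $\phi_2^{\mM_1}$. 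An analogous horizontal stacking with $Ev_1$ replacing $evb_i^{\b_1}$ gives $\expinv{((Ev_1\times\Id)/Ev_1)}\phi_2^{L\times\mW_1}=\phi_2^{\mM_1}$ as well, using $\pi^L\circ evb_i^{\b_1}=\pi^{\mM_1}=\pi^L\circ evb_0^{\b_1}$. On the left-hand side, Lemmas~\ref{base extension is distributive expinv} and~\ref{base extension is multiplicative expinv} rewrite $(\phi_1\bu\phi_2^{\mW_1})^L$ as $\phi_1^L\bu\phi_2^{L\times\mW_1}$. After canceling the invertible factor $\phi_1^L$ and the shared final factor $(\expinv{(p_2/(evb_i^{\b_1}\times\Id))}\tilde J^2)^{\zort{L}^{\b_1}}$, the claim reduces to the commutation identity
\[
\tilde J^1\bu\bigl(\phi_2^{\mM_1}\bigr)^{\zort{L}^{\b_1}}=\phi_2^{L\times\mW_1}\bu{}^{\mW_2}\tilde J^1.
\]

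This final identity is obtained by applying Lemma~\ref{commutativity vertical pullback vs horizontal pullback orientors} to the fiber-product square
\[
\begin{tikzcd}[row sep=small]
\mM_1\times_\W\mW_2\ar[r,"Ev_1\times\Id"]\ar[d,"\tilde p_1"]&L\times_\W\mW_1\times_\W\mW_2\ar[d,"\pi_{L\times\mW_1}"]\\
\mM_1\ar[r,"Ev_1"]&L\times_\W\mW_1
\end{tikzcd}
\]
with $F=\tilde J^1$, $G=\phi_2^{L\times\mW_1}$ and $Q=\pi_L^*\zort{L}^{\b_1}$, after recognizing that $\expinv{(\tilde p_1/\pi_{L\times\mW_1})}\tilde J^1={}^{\mW_2}\tilde J^1$ by direct comparison of the defining diagrams (the two squares are one another rotated by $90^\circ$). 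The Koszul sign $(-1)^{|F|\cdot|G|}$ produced by the lemma is trivially $+1$ because $\tilde J^1$ has degree zero: this is forced by the defining equation $\tilde I^1={}^{\mW_1}e\bu{}^{\zort{L}}\tilde J^1$ of Definition~\ref{Jakethesis}, in which the whole degree $1-n$ is absorbed by $e$. The main bookkeeping hurdle is keeping straight the three different ways the bundle $\zort{L}^{\b_1}$ is being pulled back ($evb_0^{\b_1}$ on $\mM_1$, $\pi_L$ on $L\times\mW_1$, and $\pi_L^{L\times\mW_1}\circ(Ev_1\times\Id)$ on the total space), but the relations $\pi^L\circ evb_0^{\b_1}=\pi^{\mM_1}=\pi^L\circ evb_i^{\b_1}$ guarantee all three agree after pullback, and bundle extension commutes with pullback by a direct check from Definition~\ref{tensored orientor extension}.
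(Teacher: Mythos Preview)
Your argument is correct and follows essentially the same route as the paper's proof: the paper also rewrites $(\phi_1\bu\phi_2^{\mW_1})^L$ as $\phi_1^L\bu\phi_2^{L\times\mW_1}$ via Lemmas~\ref{base extension is distributive expinv} and~\ref{base extension is multiplicative expinv}, obtains the commutation identity $\phi_2^{L\times\mW_1}\bu{}^{\mW_2}\tilde J^1=\tilde J^1\bu(\phi_2^{\mM_1})^{\zort{L}^{\b_1}}$ from Lemma~\ref{commutativity vertical pullback vs horizontal pullback orientors} applied to the square with $Ev_1\times\Id$, and then combines $\phi_2^{\mM_1}$ with $\expinv{(p_2/(evb_i^{\b_1}\times\Id))}\tilde J^2$ into $\expinv{(p_2/evb_i^{\b_1})}J^2$ via Lemmas~\ref{horizontal expinv composition} and~\ref{vertical pullback expinv composition}. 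The only organizational difference is that the paper works from the left-hand side forward rather than reducing the right-hand side; your explicit remark that $\deg\tilde J^1=0$ kills the Koszul sign is a detail the paper leaves implicit.
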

\begin{proof}
First, note that
\begin{equation}
\label{disks extensions distributivity equation}   \left(\phi_{1}\bu \left(\phi_{2}\right)^{\mW_1}\right)^L\overset{\text{Lem.~\ref{base extension is distributive expinv}}}=\left(\phi_{1}\right)^L\bu \left(\left(\phi_{2}\right)^{\mW_1}\right)^L\overset{
\text{Lem.~\ref{base extension is multiplicative expinv}}}=\left(\phi_{1}\right)^L\bu \left(\phi_{2}\right)^{L\times\mW_1}.
\end{equation}
Consider the following diagram in which all three squares are Cartesian.
\[
\begin{tikzcd}
\mM_1\times_\W\mW_2\ar[r,"Ev_1\times\Id"]\ar[d," \pi_{\mM_1}"]&L\times_\W\mW_1\times_\W\mW_2\ar[d," \pi_{L\times \mW_1}"]\ar[r," \pi_{\mW_2}"]&\mW_2\ar[d]\\
\mM_1\ar[r,"Ev_1"]&L\times_\W\mW_1\ar[r]&*
\end{tikzcd}
\]
Lemma~\ref{horizontal expinv composition} implies
\[
\left(\phi_2\right)^{\mM_1}=\expinv{\left(Ev_1\times\Id/Ev_1\right)}\left(\left(\phi_2\right)^{L\times_\W\mW_1}\right).
\]
Applying Lemma~\ref{commutativity vertical pullback vs horizontal pullback orientors} to the left Cartesian square gives
\begin{equation}
\left(\phi_{2}\right)^{L\times_\W\mW_1}\bu\,\, {}^{\mW_2}\tilde J^1=\tilde J^1\bu \left(\left(\phi_2\right)^{\mM_1}\right)^{{evb_1^\b}^*\zort{L}^{\otimes \b_1}}.
\label{phi over tilde J1}
\end{equation}
Moreover, Lemmas~\ref{horizontal expinv composition} and~\ref{vertical pullback expinv composition} applied to the commutative diagram
\[
\begin{tikzcd}
\mM_1\times_L\mM_2\ar[r,"p_2"]\ar[d,swap,"\Id\times U_2"]&\mM_2\ar[d,"Ev_2"]\\
\mM_1\times_\W \mW_2\ar[r,swap,"evb_i^{\b_1}\times\Id"]\ar[d,swap," \pi_{\mM_1}"]&L\times_\W\mW_2\ar[d," \pi_L"]\ar[r,swap," \pi_{\mW_2}"]&\mW_2\ar[d]\\
\mM_1\ar[r,swap,"evb_i^{\b_1}"]&L\ar[r]&*
\end{tikzcd}
\]
give
\begin{equation}
\left(\phi_2\right)^{\mM_1}\bu \expinv{\left(p_2/\left(evb_i^{\b_1}\times\Id\right)\right)}\tilde J^2=\expinv{\left(p_2/evb_i^{\b_1}\right)}\left(\phi_2^L\bu \tilde J^2\right)=\expinv{\left(p_2/evb_i^{\b_1}\right)}J^2.
\label{phi over tilde J2}    
\end{equation}
Therefore, 
\begin{align*}
\left(\phi_{1}\bu \left(\phi_{2}\right)^{\mW_1}\right)^L\bu\,\,{}^{\mW_2}\tilde J^1\bu& \left(\expinv{\left(p_2/(evb_i^{\b_1}\times\Id)\right)}\tilde J^2\right)^{\zort{L}^{\b_1}}=\\
\overset{\text{eq.~\eqref{disks extensions distributivity equation}}}=&\left(\phi_{1}\right)^L\bu \left(\phi_{2}\right)^{L\times_\W\mW_1}\bu\,\,{}^{\mW_2}\tilde J^1\bu \left(\expinv{\left(p_2/(evb_i^{\b_1}\times\Id)\right)}\tilde J^2\right)^{\zort{L}^{\b_1}}\\
\overset{\text{eq.~\eqref{phi over tilde J1}+\eqref{phi over tilde J2}}}=&\left(\phi_1\right)^L\bu\tilde J^1\bu\left(\expinv{\left(p_2/evb_i^{\b_1}\right)}J^2\right)^{\zort{L}^{\b_1}}\\
\overset{\text{Def.~\ref{fundamental orientor definition}}}=&J^1\bu \left(\expinv{\left(p_2/evb_i^{\b_1}\right)}J^2\right)^{\zort{L}^{ \b_1}}
\end{align*}
\end{proof}
\begin{proof}[Proof of Theorem~\ref{expinv of boundary of J by thetabeta theorem}]
First, we show that it is enough to show the lemma for $|I|,|J|$ large enough. We show the reduction for $|I|$. The reduction for $|J|$ is similar. For this part only, for ease of notation, we denote spaces, maps, and orientors by their respective interior points index-set, for example, $\vartheta^\b_l, \mM_{I},\mM_{J},$ and $\vartheta^\b_{l+1}, \mM_{\hat I},\mM_{J}$. Moreover, denote $Fi^\b_{k+1,l}$ by $F_l$ and $Fi^{\b_1}_{k_1+1,I}$ by $F_I$.
Consider the following diagram of essential fiber products.
\[
\begin{tikzcd}
\mM_{\hat I}\times_L\mM_{J}\ar[r,"\vartheta_{l+1}"]\ar[d,"F_I\times\Id"]&B_{l+1}\ar[r,"\iota^\b_{l+1}"]\ar[d,"\iota^*F_l"]&\mM_{l+1}\ar[d,"F_l"]\\
\mM_{I}\times_L\mM_{J}\ar[r,"\vartheta_{l}"]&B_{l}\ar[r,"\iota_{l}^\b"]&\mM_{l}
\end{tikzcd}
\]
Since the fiber of $F_l$ is two-dimensional, it is easy to see that 
\begin{equation}
\pull{\left(\left(\vartheta^\b_{l+1}\circ \iota^\b_{l+1}\right)/\left(\vartheta^\b_{l}\circ \iota^\b_{l}\right)\right)}\left(\mO^{F_l}\right)=\mO^{F_I}\times1.
\label{pullback of orientation of forgetful map is orientation of forgetful map equation}    
\end{equation}
Observe,
\begin{align}
\label{pullback of boundary of J with extra point is remembering a point}
\expinv{\vartheta_{l+1}}\pa J_{l+1}\overset{\text{Lem.~\ref{properties of J lemma}}}=&\quad\expinv{\vartheta_{l+1}}\pa\left(\expinv{\left(F_l,\mO^{F_l}\right)}J_{l}\right)\\\notag
\overset{\text{Def.~\ref{pullback of orientor}}}=&\quad\expinv{\vartheta_{l+1}}\pa\left(J_{l}\bu \phi^{\mO^{F_l}}\right)\\\notag
\overset{\text{Lem.~\ref{boundary of composition orientor}}}=&\quad\expinv{\vartheta_{l+1}}\left(\pa J_{l}\bu \expinv{\left(\iota^\b_{l+1}/\iota^\b_l\right)}\left(\phi^{\mO^{F_l}}\right)\right)\\\notag
\overset{\text{Lem.~\ref{pullback of composition of orientors by relatively oriented}}}=&\quad\expinv{\vartheta_{l}}\pa J_{l}\bu \expinv{\left(\vartheta^\b_{l+1}/\vartheta^\b_l\right)}\expinv{\left(\iota^\b_{l+1}/\iota^\b_l\right)}\left(\phi^{\mO^{F_l}}\right)
\\\notag
\overset{\text{eq.~\eqref{pullback of orientation of forgetful map is orientation of forgetful map equation}}}=&\quad\expinv{\vartheta_{l}}\pa J_{l}\bu \phi^{\mO^{F_I}\times 1}\\\notag
\overset{\text{Def.~\ref{pullback of orientor}}}=&\quad
\expinv{\left(F_I\times\Id,\mO^{F_I}\times 1\right)}\expinv{\left(\vartheta^\b_l\right)}\pa J_{l}.
\end{align}

On the other hand, consider the following essential fiber product diagram.
\[
\begin{tikzcd}
\mM_{\hat I}\times_L\mM_{J}\ar[r,"F_I\times\Id"]\ar[d,"p_1"]&\mM_{I}\times_L\mM_{J}\ar[d,"p_1"]\ar[r,"p_2"]&\mM_J\ar[d,"evb^{\b_2}_0"]\\
\mM_{\hat I}\ar[r,"F_I"]&\mM_{I}\ar[r,"evb_i^{\b_1}"]&L
\end{tikzcd}
\]
Applying Lemma~\ref{pullback of composition of orientors by relatively oriented} to the above diagram, we obtain
\begin{align*}
\expinv{\left(F_I\times\Id,\mO^{F_I}\times1\right)}&\left(J_{I}\bu \left(\expinv{\left(p_2/evb_i^{\b_1}\right)}J_{J}\right)^{\zort{L}^{\b_1}}\bu c_{i0}^{\zort{L}^{\b_1}}\right)=\\  
\overset{\text{Lem.~\ref{pullback of composition of orientors by relatively oriented}}}=&\left(\expinv{\left(F_I,\mO^{F_I}\right)}J_{I}\right)\bu \left(\expinv{\left(F_I\times\Id/F_I\right)}\expinv{\left(p_2/evb_i^{\b_1}\right)}J_{J}\right)^{\zort{L}^{\b_1}}\bu c_{i0}^{\zort{L}^{\b_1}}\\
\overset{\text{Lem.~\ref{properties of J lemma}}}=&J_{\hat I}\bu \left(\expinv{\left(p_2/evb_i^{\b_1}\right)}J_{J}\right)^{\zort{L}^{\b_1}}\bu c_{i0}^{\zort{L}^{\b_1}}.
\end{align*}

Combining the preceding calculation with equation~\eqref{pullback of boundary of J with extra point is remembering a point}, the theorem for $l+1$ implies
\begin{multline*}
\expinv{\left(F_I\times\Id,\mO^{F_I}\times1\right)}\expinv{\vartheta_{l}}\pa J_{l}=\\=(-1)^s\expinv{\left(F_I\times\Id,\mO^{F_I}\times1\right)}\left(J_I\bu \left(\expinv{\left(p_2/evb_i^{\b_1}\right)}J_J\right)^{\zort{L}^{\b_1}}\bu c_{i0}^{\zort{L}^{\b_1}}\right).
\end{multline*}
Since $F_I$ is surjective, we obtain the desired reduction.

We proceed to prove the theorem in the case $k_1+2|I|\geq 2$ and $k_2+2|J|\geq 2$ with the standard abbreviations. Observe that
\begin{align}
\expinv{(\Id\times \vartheta)}\left(\pa\phi^L\right)&\overset{\text{Lem.~\ref{boundary of pulled orientation orientor}}}=\expinv{(\Id\times \vartheta)}\left(\left(\pa\phi\right)^L\right)\\&\overset{\text{Lem.~\ref{pullback of pullback by diffeomorphism of orientor}}}=\left(\expinv{\vartheta}\left(\pa\phi\right)\right)^L
\\&\overset{\text{Lem.~\ref{moduli of disks orientation}}}=(-1)^{s_1}\left(\phi_{1}\bu \left(\phi_{2}\right)^{\mW_1}\right)^L,
\label{moduli of disks orientation times L}
\end{align}
where $s_1=i+ik_2+k$.

The following calculation suffices. Due to lack of horizontal space, we write $\zort{L}^{\b_1}$ for
${evb_0^{\b_1}}^*\zort{L}^{\otimes \b_1}$ in the exponential notation, and $m_2$ for $p_1^*{evb_0^{\b_1}}^*m_2$.
\begin{align*}
\expinv{\left(\vartheta^\b\right)}\pa J\bu m_2\overset{\text{Lem.~\ref{boundary is fibered}}}=&\expinv{\left(\vartheta^\b\right)}\left(\pa\phi^L\bu \expinv{\left(\iota^\b/(\Id\times\iota)\right)}\tilde J\right)\bu m_2\\
\overset{\begin{smallmatrix}\text{diag.~\eqref{theta beta over theta}} \\\text{ Rmk.~\ref{pullback of composition of orientors by local diffeomorphism}}\end{smallmatrix}}=&
\expinv{(\Id\times \vartheta)}\left(\pa\phi^L\right)\bu\left(\expinv{\left(\vartheta^\b/(\Id\times\vartheta)\right)}\expinv{\left(\iota^\b/(\Id\times\iota)\right)}\tilde J\right)\bu m_2\\
\overset{\text{Lem.~\ref{horizontal expinv composition}}}=&
\expinv{(\Id\times \vartheta)}\left(\pa\phi^L\right)\bu\left(\expinv{\left(\left(\vartheta^\b\circ \iota^\b\right)/\left(\Id\times(\vartheta\circ\iota)\right)\right)}\tilde J\right)\bu m_2
\\
\overset{\begin{smallmatrix}\text{eq.~\eqref{moduli of disks orientation times L}}\\\text{Lem. ~\ref{axiomb1b2}}\end{smallmatrix}}=&(-1)^{s}\left(\phi_{1}\bu \left(\phi_{2}\right)^{\mW_1}\right)^L\bu\,\,{}^{\mW_2}\tilde J^1\bu\\&\bu \left(\expinv{\left(p_2/(evb_i^{\b_1}\times\Id)\right)}\tilde J^2\right)^{\zort{L}^{\b_1}}\bu \left(p_1^*c_{i0}\right)^{\zort{L}^{ \b_1}}\\
\overset{\text{Lem.~\ref{phi squared times J squared is phi times J squared lemma}}}=&(-1)^sJ^1\bu \left(\expinv{\left(p_2/evb_i^{\b_1}\right)}J^2\right)^{\zort{L}^{ \b_1}}\bu(p_1^*c_{i0})^{\zort{L}^{\b_1}}.
\end{align*}
\end{proof}
\subsection{Proof for \texorpdfstring{$k=-1$}{k=-1}}
\label{Gluing proof section k=-1}
In this section, we prove Theorem~\ref{expinv of boundary of J by thetabeta theorem k=-1}. We assume $k=-1$ and $\b\in\Pi^{ad}$. In particular, it follows that $k_1=i=k_2=0$.
As stated in the previous section, Lemma~\ref{moduli of disks orientation} holds in this case as well, and equation~\eqref{moduli of disks orientation lemma orientors equation} reads
\begin{equation}
\expinv{\vartheta}(\pa \phi)=-\phi_1\bu \left(\phi_2\right)^{\mW_1}.
\label{moduli of disks orientation lemma orientors equation k=-1}
\end{equation}
Let $U_0:=U_{0,l}^\b$. Set $E:=U_1\times U_2$. That is,
\[
E= \pi^{L\times \mW_1\times\mW_2}_{\mW_1\times\mW_2}\circ(Ev_1\times \Id_{\mW_2})\circ (\Id_{\mM_1}\times U_2).
\]
In particular, 
\begin{equation}
\label{orientation bundle of E k=-1 equation}\zcort{E}\simeq \zcort{Ev_1\times U_2}\otimes \left(Ev_1\times U_2\right)^*\zcort{ \pi^{L\times \mW_1\times \mW_2}_{\mW_1\times \mW_2}}\simeq\zcort{Ev_1\times U_2}\otimes p_1^*{evb_0^{\b_1}}^*\zcort{L}.
\end{equation} 
Consider the following diagram, in which the square is a fiber-product.
\begin{equation*}
\begin{tikzcd}
\mM_1\times_L\mM_2\ar[r,"p_2"]\ar[d,swap,"\Id_{\mM_1}\times U_2"]\ar[ddd,bend right=80,swap,"E"]&\mM_2\ar[d,"Ev_2"]\\
\mM_1\times_\W \mW_2\ar[r,swap,"evb_0^{\b_1}\times\Id"]\ar[d,swap,"Ev_1\times \Id_{\mW_2}"]&L\times_\W\mW_2\\
L\times_\W \mW_1\times_\W\mW_2\ar[d,swap," \pi^{L\times \mW_1\times\mW_2}_{\mW_1\times\mW_2}"]\\
\mW_1\times_\W\mW_2
\end{tikzcd}
\label{E as fibered composition k=-1}    
\end{equation*}
Keeping in mind Lemma~\ref{essential factorization of Ev lemma}, all squares in the following diagram are fiber-products.
\begin{equation*}
\label{theta beta over theta essential k=-1}
\begin{tikzcd}
\mM^{(0)}_1\times_L\mM^{(0)}_2\ar[d,swap,"E"]\ar[r,"\vartheta^\b"]&B^{(1)}(\b)\ar[d,"\iota^*U_0"]\ar[r,"\iota^\b"]&\mM^{(1)}\ar[d,"U_0"]
\\
\mW^{(0)}_1\times_\W \mW^{(0)}_2\ar[r,swap,"\vartheta"]& B^{(1)}\ar[r,swap,"\iota"]&\mW
\end{tikzcd}
\end{equation*}
In particular, all three squares in the following diagram are essential fiber products.
\begin{equation}
\label{theta beta over theta k=-1}
  \begin{tikzcd}
\mM_1\times_L\mM_2\ar[d,swap,"E"]\ar[r,"\vartheta^\b"]&B(\b)\ar[d,"\iota^*U_0"]\ar[r,"\iota^\b"]&\mM\ar[d,"U_0"]
\\
\mW_1\times_\W \mW_2\ar[r,swap,"\vartheta"]& B\ar[r,swap,"\iota"]& \mW
\end{tikzcd}
\end{equation}

\begin{lemma}
\label{axiomb1b2 k=-1}
The following diagram is commutative.
\begin{equation}\label{axiomb1b2 k=-1 diagram}
\begin{tikzcd}
\left(\pi^{\mM_1\times_L\mM_2}\right)^*\mathcal X_L^{\mu(\b)+1}\ar[rrrr,"{\vartheta^\b}^*{\iota^\b}^*\tilde J_0"]\ar[d,swap,"\left(evb_0^{\b_1}\circ p_1\right){i}_{\mu(\b)+1}"]&&&&{\vartheta^\b}^*{\iota^\b}^*\zcort{U_{0,l}^\b}\ar[ddd,swap,"\expinv{\left(\left(\vartheta^\b\circ\iota^\b\right)/\left(\vartheta\circ\iota\right)\right)}"]\\
p_2^*{evb_0^{\b_2}}^*\left(\zort{L}^{\otimes \b_2}\otimes \zort{L}^{\otimes \b_1}\otimes \zort{L}\right)\ar[d,"\left(\expinv{\left(p_2/\left(evb_0^{\b_1}\times\Id\right)\right)}\tilde J^2\right)^{\zort{L}^{\otimes \b_1+1}}"]
\\
\zcort{\Id\times U_2}\otimes p_1^*{evb_0^{\b_1}}^*\left(\zort{L}^{\otimes \b_1}\otimes \zort{L}\right)\ar[d,"\Id\otimes p_1^*\left({}^{\mW_2}\tilde J^1\right)\otimes \Id"]&&&&
\\
\zcort{Ev_1\times U_2}\otimes p_1^*{evb_0^{\b_1}}^*\zort{L}\ar[rr,swap,"\begin{matrix}
\Id\otimes \\p_1^*{evb_0^{\b_1}}^*\left({}^{\mW_1\times_\W\mW_2}(e)\right)
\end{matrix}"]&&\begin{matrix}
\zcort{Ev_1\times U_2}\otimes\\ p_1^*{evb_0^{\b_1}}^*\zcort{L}\end{matrix}\ar[rr,swap,"\text{eq.~\eqref{orientation bundle of E k=-1 equation}}"]&&\zcort{E}
\end{tikzcd}
\end{equation}
That is, we have an equality of \orientor{E}s of $\left(\pi^{\mM_1\times_L\mM_2}\right)^*\mathcal X_L^{\mu(\b)+1}$ to $\underline{\Z/2}$, as follows.
\begin{align*}
\expinv{\left(\left(\iota^\b\circ\vartheta^\b\right)/\left(\iota\circ\vartheta\right)\right)}\tilde J_0={}^{\mW_1\times_\W\mW_2}(e)\bu \left({}^{\mW_2}\tilde J^1\bu \left(\expinv{\left(p_2/evb_0^{\b_1}\times\Id\right)}\tilde J^2\right)^{\zort{L}^{\otimes \b_1}}\right)^{\zort{L}}\bu{i}_{\mu(\b)+1}.
\end{align*}
\end{lemma}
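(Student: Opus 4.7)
The plan is to establish the equation pointwise via the standard nodal-gluing exact sequence for Cauchy--Riemann operators, and then match the canonical orientations on both sides using the admissibility condition $\b\in\Pi^{ad}$.

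First, by Lemma~\ref{unique extension of bundle maps} both sides of the claimed identity depend continuously on the stable map, so it suffices to check commutativity on the essential subset $\mM_1^{(0)}\times_L\mM_2^{(0)}\subset \mM_1\times_L\mM_2$ (away from configurations with additional nodes). By Lemma~\ref{essential factorization of Ev lemma}, $\vartheta^\b$ restricts to a diffeomorphism from this subset onto $B^{(1)}(\b)$, up to the exceptional $2$-to-$1$ case, which does not affect the pointwise identification. All orientors involved arise from the canonical orientations of Cauchy--Riemann $Pin$ determinant lines established in Lemma~\ref{Cauchy Riemann boundary value problem properties lemma}, so it is enough to compare the induced maps on determinant lines at a single point.

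Next, fix $\fu=(t,\fu_1,\fu_2)$ with glued image $\fu^\vee\in B^{(1)}(\b)$, and let $q=u_1(z_0)=u_2(z_0)\in L_t$ denote the node image. The linearizations $D_{\fu_1},D_{\fu_2},D_{\fu^\vee}$ of $\overline{\pa}_{J_t}$ fit into the standard node-gluing exact sequence, whose determinant yields a canonical isomorphism
\[
\det D_{\fu^\vee}\otimes \det T_qL_t \;\simeq\; \det D_{\fu_1}\otimes \det D_{\fu_2}.
\]
Under the identifications of Lemma~\ref{Jake Thesis lemma}, the right-hand side becomes $\zort{L}|_q^{\otimes(\mu(\b_1)+1)}\otimes \zort{L}|_q^{\otimes(\mu(\b_2)+1)}$ (up to infinitesimal deformation terms accounting for $\zcort{Ev_i}$). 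The left-hand side, since $k=-1$ and thus no boundary marked point is present on $\fu^\vee$, is identified with $\mathcal X_L^{\mu(\b)+1}|_t\otimes \det T_qL_t$; admissibility $\b\in\Pi^{ad}$ is precisely what guarantees that $\mathcal X_L^{\mu(\b)+1}$ is nonzero and that $i_{\mu(\b)+1}$ is an isomorphism after pulling back via $evb_0^{\b_1}\circ p_1$.

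With these identifications, the commutativity of diagram~\eqref{axiomb1b2 k=-1 diagram} is a direct translation of the node-gluing isomorphism: the lower path applies $\tilde J^2$ and $\tilde J^1$ (which encode the determinant-line identifications for the two components) after pulling back along $p_2,p_1$, then incorporates the $(\det T_qL_t)^\vee$ contribution via the level-translation $e:\zort{L}\to\zcort{L}$ of degree $1-n$, while the upper path applies $\tilde J_0=\tilde I_0$ directly and uses $\pull{((\iota^\b\circ\vartheta^\b)/(\iota\circ\vartheta))}$ to restrict to the boundary. Both paths produce the same canonical orientation of $\zcort{E}$ by Lemma~\ref{Cauchy Riemann boundary value problem properties lemma}.

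The main obstacle is the sign bookkeeping. Unlike the $k\ge 0$ case (Lemma~\ref{axiomb1b2}), where reordering the boundary marked points that flank the node invokes the $Pin^\pm$ relation $e_1e_1=(-1)^\d$ and contributes $(-1)^{\d\mu(\b_1)\mu(\b_2)}$, here each component carries exactly one boundary marked point $z_0$ and no permutation is performed around the node, so this source of sign is absent. The remaining verification is a careful Koszul-sign and orientation-convention computation on the exact sequence above, tracking in particular the tensor order $\zort{L}^{\otimes\b_2}\otimes\zort{L}^{\otimes\b_1}\otimes\zort{L}$ imposed on the diagram, the degree shift contributed by $e$ for the $(\det T_qL_t)^\vee$ factor, and the passage from $\mathcal X_L^{\mu(\b)+1}$ to $\zort{L}^{\otimes(\mu(\b)+1)}$ via $i_{\mu(\b)+1}$. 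This parallels the specialization to one boundary marked point per component of the argument in~\cite[\S 8.3]{Fukaya}.
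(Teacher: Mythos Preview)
Your overall strategy---reduce to the essential subset, use the nodal-gluing exact sequence for the linearized Cauchy--Riemann operators, and compare the canonical determinant-line orientations of Lemma~\ref{Cauchy Riemann boundary value problem properties lemma}---is the right one and matches what the paper has in mind (the paper does not spell out a proof here; it treats Lemma~\ref{axiomb1b2 k=-1} as the $k=-1$ analogue of Lemma~\ref{axiomb1b2}, whose proof it declares parallel to~\cite[\S 8.3]{Fukaya}).

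The gap is in your sign analysis. You claim the $Pin^\pm$ sign $(-1)^{\d\mu(\b_1)\mu(\b_2)}$ is absent because ``each component carries exactly one boundary marked point $z_0$ and no permutation is performed around the node.'' This conflates two independent sources of sign. The permutation contribution is the $(-1)^{i+ik_2+k}$ appearing in Lemma~\ref{moduli of disks orientation}, and indeed with $i=k_1=k_2=0$ it contributes $(-1)^1$ there, not here. The sign $(-1)^{\d\mu(\b_1)\mu(\b_2)}$ comes instead from gluing the $Pin$ structures on the two boundary circles at the node via the relation $e_1e_1=(-1)^\d$; that mechanism is present regardless of how many boundary marked points there are, and it does \emph{not} disappear just because $i=0$ (note that in Lemma~\ref{axiomb1b2} the sign and the $c_{i0}$ term are separate).

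The correct reason the sign is absent is the one the paper records in the Remark immediately following the lemma: admissibility $\b\in\Pi^{ad}$ means either $\mu(\b)$ is odd, in which case $\mu(\b_1)+\mu(\b_2)$ is odd so their product is even, or $L$ is vertically orientable, in which case every Maslov index is even; in both cases $\mu(\b_1)\mu(\b_2)$ is even and $(-1)^{\d\mu(\b_1)\mu(\b_2)}=+1$. You should replace your paragraph on the sign with this argument.
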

\begin{remark}
The above lemma lacks the sign $S=(-1)^{\d\mu(\b_1)\mu(\b_2)}$ in the commutativity of the diagram in comparison with Lemma~\ref{axiomb1b2}. The reason for this is that since $\b\in \Pi^{ad}$, by assumption, either $\mu(\b)=_21$ or $L$ is vertically orientable. In both cases, one of $\mu(\b_1)$ and $\mu(\b_2)$ is even.
\end{remark}
\begin{lemma}
\label{boundary is fibered k=-1}
It holds that
\begin{equation}
\label{boundary is fibered equation k=-1}
\pa J^{\b}_{0,l}=\pa\phi_{0,l}\bu \expinv{\left(\iota^\b/\iota\right)}\tilde J_0.
\end{equation}
\end{lemma}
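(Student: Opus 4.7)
The proof will closely parallel that of Lemma~\ref{boundary is fibered}, with only minor simplifications arising from the fact that in the case $k=-1$ the map $U_{0,l}^{\b}$ targets $\mW_{0,l}$ directly rather than $L\times_{\W}\mW$. The plan is therefore:

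First, I would appeal to Definition~\ref{fundamental orientor definition}, which for $|I|=l\geq 2$ provides the factorization
\[
J^{\b}_{0,l}=\phi_{0,l}\bu \tilde J^{\b}_{0,l}
\]
as a composition of an \orientor{\pi^{\mW_{0,l}}} with an \orientor{U_{0,l}^{\b}}. The composition $\pi^{\mW_{0,l}}\circ U_{0,l}^{\b}$ is exactly $\pi^{\mM_{0,l}(\b)}$, so the boundary operator appearing on the left of the desired equation is computed relative to this composed map.

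Second, I would invoke Lemma~\ref{essential factorization of Ev lemma}, which guarantees that $U_{0,l}^{\b}$ essentially factorizes through the boundary of $\pi^{\mW_{0,l}}$ via $\iota^{*}U$ (the pullback diagram in the statement of that lemma, applied with the inclusion $\iota$ of $B\subset \mW_{0,l}$). This is precisely the hypothesis needed in order to apply Lemma~\ref{boundary of composition orientor} to the composition $\phi_{0,l}\bu \tilde J^{\b}_{0,l}$.

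Finally, Lemma~\ref{boundary of composition orientor} yields
\[
\pa J^{\b}_{0,l}=\pa\bigl(\phi_{0,l}\bu \tilde J^{\b}_{0,l}\bigr)=\pa\phi_{0,l}\bu \expinv{(\iota^{\b}/\iota)}\tilde J^{\b}_{0,l},
\]
which is the claimed identity. There is really no hard step here once the matching definitions are unwound; the only point requiring care is that the essential boundary factorization of $U_{0,l}^{\b}$ through $\pi^{\mW_{0,l}}$ is valid on the full codimension-one boundary stratum (this relies on the stability assumption $k_1+2|I|\geq 2$, $k_2+2|J|\geq 2$, ensuring no component is contracted when forgetting the holomorphic map), and that the essential fiber-product version of Lemma~\ref{boundary of composition orientor} suffices thanks to Lemma~\ref{unique extension of bundle maps}. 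The low-$l$ cases can be handled either by the same direct argument (when $\phi_{0,l}$ is available via Lemma~\ref{disk space orientors with properties existence lemma}) or reduced to higher $l$ via the compatibility $\expinv{(Fi_{0,l}^{\b})}J^{\b}_{0,l}=J^{\b}_{0,\hat l}$ of Definition~\ref{fundamental orientor definition}, exactly as in the reduction performed in the proof of Theorem~\ref{expinv of boundary of J by thetabeta theorem}.
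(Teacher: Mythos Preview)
Your proposal is correct and follows essentially the same argument as the paper: invoke Definition~\ref{fundamental orientor definition} to factor $J^{\b}_{0,l}=\phi_{0,l}\bu \tilde J_0$, use Lemma~\ref{essential factorization of Ev lemma} (the paper cites it as Lemma~\ref{stability on B}) to verify the boundary factorization hypothesis, and then apply Lemma~\ref{boundary of composition orientor}. The additional remarks you include about the stability condition and the low-$l$ reduction are not needed here, since Section~\ref{Gluing proof section k=-1} works under the standing assumption~\eqref{Stability condition for mW1 and mW2 equations} and the reduction is deferred to the proof of Theorem~\ref{expinv of boundary of J by thetabeta theorem k=-1}.
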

\begin{proof}
% By Lemma~\ref{unique extension of bundle maps}, it is enough to prove the lemma over $B^{(1)}(\b)$.
By Definition~\ref{fundamental orientor definition},
\[
J^{\b}_{0,l}=\phi_{0,I}\bu \tilde J_0.
\]By Lemma~\ref{stability on B} we may apply Lemma~\ref{boundary of composition orientor} to get equation~\eqref{boundary is fibered equation k=-1}.
\end{proof}
\begin{proof}[Proof of Theorem~\ref{expinv of boundary of J by thetabeta theorem k=-1}]
As in the proof of Theorem~\ref{expinv of boundary of J by thetabeta theorem}, the proof can be reduced to the case $|I|,|J|$ large enough. Therefore, we assume $|I|,|J|\geq 2$. Consider the following Cartesian square.
\[
\begin{tikzcd}
L\times_\W\mW_1\times_\W\mW_2\ar[r]\ar[d]&\mW_1\times_\W\mW_2\ar[d]\\L\ar[r]&\W
\end{tikzcd}
\]
Applying Lemma~\ref{commutativity vertical pullback vs horizontal pullback orientors}, keeping in mind that $\deg\phi_1=\deg\phi_2=_20$, we learn that
\begin{align}
\left(\phi_1\bu\phi_2^{\mW_1}\right)\bu\,{}^{\mW_1\times_\W\mW_2}(e)=e\bu \left(\left(\phi_1\bu\phi_2^{\mW_1}\right)^L\right)^{\zort{L}}.
\label{phi1 phi2 commutativity with e}    
\end{align}
Continuing as in the proof of Theorem~\ref{expinv of boundary of J by thetabeta theorem}, we calculate
\begin{align*}
\expinv{\left(\vartheta^\b\right)}\pa J_{0,l}^\b\overset{\text{Lem.~\ref{boundary is fibered k=-1}}}=&\expinv{\left(\vartheta^\b\right)}\left(\pa\phi\bu\expinv{\left(\iota^\b/\iota\right)}\tilde J_0\right)\\
\overset{\begin{smallmatrix}\text{diag.~\eqref{theta beta over theta k=-1}}\\\text{Rmk.~\ref{pullback of composition of orientors by local diffeomorphism}}\end{smallmatrix}}=&\expinv{\vartheta}\left(\pa \phi\right)\bu \left(\expinv{\left(\vartheta^\b/\vartheta\right)}\expinv{\left(\iota^\b/\iota\right)}\tilde J_0\right)\\
\overset{\text{Lem.~\ref{horizontal expinv composition}}}=&\expinv{\vartheta}\left(\pa \phi\right)\bu \left(\expinv{\left(\vartheta^\b\circ\iota^\b/\vartheta\circ\iota\right)}\tilde J_0\right)\\
\overset{\begin{smallmatrix}\text{eq.~\eqref{moduli of disks orientation lemma orientors equation k=-1}}\\\text{Lem.~\ref{axiomb1b2 k=-1}}\end{smallmatrix}}=&-\phi_1\bu\phi_2^{\mW_1}\bu\,{}^{\mW_1\times_\W\mW_2}(e)\bu \\&\bu\left({}^{\mW_2}\tilde J^1\bu \left(\expinv{\left(p_2/evb_0^{\b_1}\times\Id\right)}\tilde J^2\right)^{\zort{L}^{\otimes \b_1}}\right)^{\zort{L}}\bu{i}_{\mu(\b)+1}\\
\overset{\text{eq.~\eqref{phi1 phi2 commutativity with e}}}=&
-e\bu \left(\left(\phi_1\bu\phi_2^{\mW_1}\right)^L\bu{}^{\mW_2}\tilde J^1\bu \left(\expinv{\left(p_2/evb_0^{\b_1}\times\Id\right)}\tilde J^2\right)^{\zort{L}^{\otimes \b_1}}\right)^{\zort{L}}\bu{i}_{\mu(\b)+1}\\
\overset{\text{Lem.~\ref{phi squared times J squared is phi times J squared lemma}}}=&-e\bu \left(J^1\bu\left(\expinv{\left(p_2/evb_0^{\b_1}\right)}J^2\right)^{\zort{L}^{\b_1}}\right)^{\zort{L}}\bu{i}_{\mu(\b)+1}.
\end{align*}
\end{proof}
\subsection{Base change}\label{naturality of J families section}
Let $\xi:\W'\to \W$ be a smooth map of manifolds with corners. Recall diagram~\ref{pullback along families basic diagram}.
For a map of orbifolds $f:M\to N$ over $\W$ and an \orientor{f} $F$ we write
\[
\expinv{\xi}F:=\expinv{\left(\xi^M/\xi^N\right)}F.
\]
Let $\target=\left(\W,X,\w,L,\pi^X,\fp,\underline\Upsilon,J\right)$ be a target over $\W$ as in Section~\ref{families target definition section}.
Denote by $\pull\xi_{\zort{}}$ the \orientor{\Id_{\xi^*L}} given by \[\pull\xi_{\zort{}}:=\pull{\left(\xi^L/\xi\right)}:{\xi^L}^*\zort{L}\to \zort{\xi^*L}.\]

Let $k\geq -1,l\geq 0,I\subset[l]$ and $\b\in \Pi(\target)$. Then
\begin{align*}
   Fi^{(\xi^*\target;\xi^*\b)}_{k+1,I}=\xi^*Fi^{(\target;\b)}_{k+1,I},&\qquad \pull\xi(\mO^{Fi^\mathcal{D}})=\mO^{Fi^{\xi^*\target}},\\
 Fb^{(\xi^*\target;\xi^*\b)}_{k+1,I}=\xi^*Fb^{(\target;\b)}_{k+1,I},&\qquad \pull\xi(\mO^{Fb^\mathcal{D}})=\mO^{Fb^{\xi^*\target}},\\
 f^{(\xi^*\target;\xi^*\b)}=\xi^*f^{(\target;\b)}.
\end{align*}
The proofs of the following lemmas are immediate from the definitions.
\begin{lemma}\label{naturality of cij families lemma}
Let $i,j\in \Z$ and $\b\in \Pi(\target)$.
The following diagram is commutative.
\[
\begin{tikzcd}
\left(evb_j^{(\xi^*\target;\xi^*\b)}\right)^*{\xi^L}^*\zort{L}\ar[rr,"{\xi^{\mM(\target;\b)}}^*c_{ij}^{\target}"]\ar[d,swap,"\left(evb_j^{(\xi^*\target;\xi^*\b)}\right)^*\left(\pull\xi_{\zort{}}\right)"]&&\left(evb_i^{(\xi^*\target;\xi^*\b)}\right)^*{\xi^L}^*\zort{L}\ar[d,"\left(evb_i^{(\xi^*\target;\xi^*\b)}\right)^*\left(\pull\xi_{\zort{}}\right)"]\\
\left(evb_j^{(\xi^*\target;\xi^*\b)}\right)^*\zort{\xi^*L}\ar[rr,swap,"c_{ij}^{\left(\xi^*\target\right)}"]&&\left(evb_i^{(\xi^*\target;\xi^*\b)}\right)^*\zort{\xi^*L}
\end{tikzcd}
\]
That is, the following equation of \orientor{\Id_{\mM_{k+1,l}(\xi^*\target;\xi^*\b)}}s holds.
\[
\left(evb_i^{(\xi^*\target;\xi^*\b)}\right)^*\left(\pull\xi_{\zort{}}\right)\bu{\xi^{\mM(\target;\b)}}^*c_{ij}^{\target}=c_{ij}^{\left(\xi^*\target\right)}\bu \left(evb_j^{(\xi^*\target;\xi^*\b)}\right)^*\left(\pull\xi_{\zort{}}\right)
\]
\end{lemma}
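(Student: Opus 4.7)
The plan is to verify the commutativity of the diagram pointwise on an essential open subset, using the topological nature of $c_{ij}$ as parallel transport and the fact that $\pull\xi_{\zort{}}$ is a flat isomorphism of local systems.

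First, by Lemma~\ref{unique extension of bundle maps}, it suffices to verify the equation on the essential dense open subset $\mM_{k+1,l}^{(0)}(\xi^*\target;\xi^*\b)\subset \mM_{k+1,l}(\xi^*\target;\xi^*\b)$, consisting of stable maps whose domain is a smooth disk with no boundary or interior nodes. Pick a point $(t',\fu')\in \mM_{k+1,l}^{(0)}(\xi^*\target;\xi^*\b)$ with $\fu'=(D^2,u',\vec z,\vec w)$ and $u'(\pa D^2)\subset (\xi^*L)_{t'}$. Under the canonical identification $\mM_{k+1,l}(\xi^*\target;\xi^*\b)\simeq \xi^*\mM_{k+1,l}(\target;\b)$, the projection $\xi^{\mM(\target;\b)}$ sends $(t',\fu')$ to $(\xi(t'),\fu)$, where $\fu=(D^2,\xi^L\circ u',\vec z,\vec w)$ is the same stable map viewed in $X_{\xi(t')}$, because $\xi^L\circ u'$ has exactly the same boundary path on $\pa D^2$ as $u'$ up to the canonical fiberwise identification $(\xi^*L)_{t'}=L_{\xi(t')}$.

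Second, by Definition~\ref{boundary transport}, both $c_{ij}^\target$ and $c_{ij}^{\xi^*\target}$ are given by parallel transport of $\zort{L}$ and $\zort{\xi^*L}$ respectively along the same oriented boundary arc $\g:[0,1]\to \pa D^2$ from $z_i$ to $z_j$, composed with $u'$ or $\xi^L\circ u'$. It therefore suffices to show that for any path $\delta:[0,1]\to (\xi^*L)_{t'}$, the pullback map $\pull\xi_{\zort{}}$ intertwines the parallel transports:
\[
c_\delta^{\xi^*L}\circ \pull\xi_{\zort{}}|_{\delta(1)}=\pull\xi_{\zort{}}|_{\delta(0)}\circ c_{\xi^L\circ \delta}^L.
\]
Here $c^L$ and $c^{\xi^*L}$ denote the parallel transport along paths in the respective orientation local systems. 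This is a standard naturality property of pullback of local systems: over the contractible interval $[0,1]$, both $\delta^*\zort{\xi^*L}$ and $\delta^*{\xi^L}^*\zort{L}=(\xi^L\circ \delta)^*\zort{L}$ are canonically trivial, and the isomorphism $\pull\xi_{\zort{}}$, being defined pointwise from the canonical identification of fibers of $\pi^L$ and $\pi^{\xi^*L}$, is constant in these trivializations.

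Third, applying the above identity with $\delta=u'\circ\g$ and taking fibers over $(t',u'(z_i)),(t',u'(z_j))$ yields commutativity at the chosen point. The equation holds uniformly in $(t',\fu')$ since parallel transport varies continuously in the Gromov topology (as already used in Definition~\ref{boundary transport}), and the assembled morphism over $\mM_{k+1,l}^{(0)}(\xi^*\target;\xi^*\b)$ extends to the whole moduli by Lemma~\ref{unique extension of bundle maps}. The only substantive point is the pointwise naturality step; everything else is bookkeeping following directly from Definitions~\ref{boundary transport} and the construction of $\pull{(r/f)}$ in Definition~\ref{pullback fiber product orientation convention}.
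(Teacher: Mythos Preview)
Your proof is correct and follows the approach the paper indicates: the paper states only that the proof is ``immediate from the definitions,'' and what you have written is precisely an explicit unpacking of that claim, using the definition of $c_{ij}$ as parallel transport and the standard naturality of parallel transport under pullback of local systems. No gap.
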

\begin{lemma}\label{naturality of phi families}
Let $k\geq-1,l\geq0$. Then the following equation of orientors holds.\[
\expinv{\xi}\phi_{k+1,l}^{\target}=\phi_{k+1,l}^{\left(\xi^*\target\right)}.
\]
\end{lemma}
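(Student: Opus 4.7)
The plan is to invoke the uniqueness clause of Lemma~\ref{disk space orientors with properties existence lemma}: the collection $\{\phi^{(\xi^*\target)}_{k+1,l}\}$ is characterized uniquely by property~\ref{disk space orientors:base case} (the base case when $\pi^{\mW_{k+1,l}}$ is a diffeomorphism), property~\ref{disk space orientors:forget boundary} (compatibility with forgetting a boundary marked point) and property~\ref{disk space orientors:forget interior} (compatibility with forgetting an interior marked point). Accordingly, I will show that the collection $\{\expinv{\xi}\phi^{(\target)}_{k+1,l}\}$ of \eorientor{\pi^{\xi^*\mW_{k+1,l}}}s satisfies these same three properties, at which point the identification $\expinv{\xi}\phi^{(\target)}_{k+1,l}=\phi^{(\xi^*\target)}_{k+1,l}$ is forced.

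For the base case, when $(k,|I|)\in\{(0,1),(2,0)\}$ the map $\pi^{\mW_{k+1,l}}$ is a diffeomorphism, so by pullback $\pi^{\xi^*\mW_{k+1,l}}$ is also a diffeomorphism. Applying Lemma~\ref{pullback of vertical local diffeomorphism lemma} to the Cartesian square
\[
\begin{tikzcd}
\xi^*\mW_{k+1,l}\ar[r,"\xi^{\mW}"]\ar[d,swap,"\pi^{\xi^*\mW}"]&\mW_{k+1,l}\ar[d,"\pi^{\mW}"]\\
\W'\ar[r,"\xi"]&\W
\end{tikzcd}
\]
gives $\pull{(\xi^{\mW}/\xi)}(\mO^{\pi^{\mW}}_c)=\mO^{\pi^{\xi^*\mW}}_c$, hence $\expinv{\xi}\phi_{\pi^{\mW_{k+1,l}}}=\phi_{\pi^{\xi^*\mW_{k+1,l}}}$. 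This verifies property~\ref{disk space orientors:base case} for the pulled-back family.

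For the two forgetful-compatibility conditions, consider the stacked Cartesian diagram
\[
\begin{tikzcd}
\xi^*\mW_{k+2,I}\ar[r,"\xi^{\mW}"]\ar[d,swap,"\xi^*Fb_{k+1,I}"]&\mW_{k+2,I}\ar[d,"Fb_{k+1,I}"]\\
\xi^*\mW_{k+1,I}\ar[r,"\xi^{\mW}"]\ar[d,swap,"\pi^{\xi^*\mW}"]&\mW_{k+1,I}\ar[d,"\pi^{\mW}"]\\
\W'\ar[r,"\xi"]&\W
\end{tikzcd}
\]
and the analogous diagram with $Fi$ in place of $Fb$. Since $\xi^*Fb_{k+1,I}$ coincides with $Fb^{(\xi^*\target)}_{k+1,I}$ and the canonical relative orientation $\mO^{Fb}$ of Definition~\ref{canonical orientations for cyclic and forgetful maps definition} is induced from the complex orientation of the fiber of the universal curve, one checks from the construction that $\pull{(\xi^{\mW}/\xi^{\mW})}(\mO^{Fb})=\mO^{Fb^{(\xi^*\target)}}$, and similarly for $Fi$. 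Combining Lemma~\ref{pullback by composition relatively oriented functoriality lemma} applied to the left-hand column with Lemma~\ref{horizontal expinv composition enhanced} then yields
\[
\expinv{(Fb^{(\xi^*\target)}_{k+1,I})}\expinv{\xi}\phi^{(\target)}_{k+1,I}=\expinv{\xi}\expinv{(Fb_{k+1,I})}\phi^{(\target)}_{k+1,I}=\expinv{\xi}\phi^{(\target)}_{k+2,I},
\]
where the last equality is property~\ref{disk space orientors:forget boundary} for $\target$. Running the same argument with $Fi$ establishes property~\ref{disk space orientors:forget interior}.

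The only nontrivial verification is the compatibility of the canonical relative orientations $\mO^{\pi^{\mW}}_c$, $\mO^{Fb}$ and $\mO^{Fi}$ with the horizontal pullback $\pull{\xi}$, which is the main point to pin down carefully; since each of these orientations is pulled back from the universal moduli of stable disks (which is untouched by the base change $\xi$), this reduces to checking the conventions of Definition~\ref{enhanced pullback fiber product orientation convention}, after which the uniqueness assertion of Lemma~\ref{disk space orientors with properties existence lemma} closes the argument.
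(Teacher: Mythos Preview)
Your approach via the uniqueness clause of Lemma~\ref{disk space orientors with properties existence lemma} is correct in outline, but it is considerably more elaborate than what the paper intends. The paper declares this lemma ``immediate from the definitions'': the point is that $\mW^{\target}_{k+1,l}=\W\times\tilde{\mW}_{k+1,l}$ is a product (with $\tilde{\mW}_{k+1,l}$ the moduli space for $\W=pt$) and $\phi^{\target}_{k+1,l}=(\tilde\phi_{k+1,l})^{\W}$ is simply the base extension of the universal orientor, as used explicitly in the proof of Lemma~\ref{moduli of disks orientation}. Once this is observed, $\expinv{\xi}\phi^{\target}_{k+1,l}=\expinv{\xi}\big((\tilde\phi_{k+1,l})^{\W}\big)=(\tilde\phi_{k+1,l})^{\W'}=\phi^{(\xi^*\target)}_{k+1,l}$ follows from a single application of Lemma~\ref{horizontal expinv composition} to the two adjacent pullback squares over $\W'\to\W\to pt$. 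You essentially state this observation yourself in the final paragraph (``each of these orientations is pulled back from the universal moduli of stable disks''), which renders the preceding uniqueness argument superfluous.

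A minor technical comment on your $Fb$/$Fi$ step: the lemmas you cite do not quite match the situation. Lemma~\ref{horizontal expinv composition enhanced} requires the \emph{horizontal} arrows of the stacked diagram to be relatively oriented, whereas here it is the \emph{vertical} $Fb$ that carries the orientation and the horizontal $\xi^{\mW}$ is only a fiber-product leg. The clean way to commute $\expinv{\xi}$ past $\expinv{(Fb)}$ is to unpack $\expinv{(Fb)}\phi=(-1)^{Fb\cdot\phi}\,\phi\bu\phi^{\mO^{Fb}}$, apply Lemma~\ref{vertical pullback expinv composition} to the stacked Cartesian squares, and use the stated compatibility $\pull{\xi}\mO^{Fb}=\mO^{Fb^{(\xi^*\target)}}$ from the beginning of Section~\ref{naturality of J families section}. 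This fixes the citation, though as noted the whole detour is avoidable.
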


\begin{lemma}
\label{naturality of tilde I families}
Let $\target$ be a target. Let $k\geq0,l\geq 0$ and $\b\in \Pi(\target)$.
The following diagram is commutative.
\[
\begin{tikzcd}
{\xi^{\left(\mM_{k+1,l}(\target;\b)\right)}}^*\left(evb_0^\target\right)^*\zort{L}^{\otimes (\mu(\b)+1)}\ar[d,swap,"(evb_0)^*\left(\pull\xi_{\zort{}}\right)^{\otimes (\mu(\b)+1)}"]\ar[r,"\expinv{\xi}\tilde I^{\left(\target,\b\right)}_{k+1,l}"]&\zcort{U_{k+1,l}^{(\xi^*\target;\xi^*\b)}}\\\left(evb_0^{\xi^*\target}\right)^*\zort{\xi^*L}^{\otimes( \mu(\b)+1)}\ar[ur,swap,"\tilde I^{\left(\xi^*\target,\xi^*\b\right)}_{k+1,l}"]
\end{tikzcd}
\]That is, the following equation of orientors holds.
\[
\expinv\xi \tilde I^{\left(\target;\b\right)}_{k+1,l}=\tilde I^{\left(\xi^*\target,\xi^*\b\right)}_{k+1,l}\bu (evb_0)^*\left(\pull\xi_{\zort{}}\right)^{\otimes (\mu(\b)+1)}
\]
Similarly, for $\b\in \Pi^{ad}(\target)$, the following diagram is commutative.
\[
\begin{tikzcd}
{\xi^{\left(\mM_{k+1,l}(\target;\b)\right)}}^*{\pi^{\left(\mM_{k+1,l}(\target;\b)\right)}}^*\mathcal X_L^{\mu(\b)+1}\ar[d,swap,"\left({\pi^{\left(\mM_{k+1,l}(\xi^*\target;\xi^*\b)\right)}}\right)^*\pi^{\xi^*L}_*\left(\pull\xi_{\zort{}}\right)^{\otimes (\mu(\b)+1)}"]\ar[r,"\expinv{\xi}\tilde I^{\left(\target,\b\right)}_{0,l}"]&\zcort{U_{0,l}^{\left(\xi^*\target;\xi^*\target\right)}}
\\\left({\pi^{\left(\mM_{k+1,l}(\xi^*\target;\xi^*\b)\right)}}\right)^*\mathcal X_{\xi^*L}^{\mu(\b)+1}\ar[ur,swap,"\tilde I^{\left(\xi^*\target,\xi^*\b\right)}_{0,l}"]
\end{tikzcd}
\]That is, the following equation of orientors holds.
\[
\expinv\xi \tilde I^{\left(\target;\b\right)}_{0,l}=\tilde I^{\left(\xi^*\target,\xi^*\b\right)}_{k+1,l}\left({\pi^{\left(\mM_{k+1,l}(\xi^*\target;\xi^*\b)\right)}}\right)^*\pi^{\xi^*L}_*\left(\pull\xi_{\zort{}}\right)^{\otimes (\mu(\b)+1)}
\]
\end{lemma}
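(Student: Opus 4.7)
The plan is to reduce both equations to a pointwise verification on the essential open subset $\mM^{(0)}_{k+1,l}(\xi^*\target;\xi^*\b)$ of stable maps with smooth domain $D^2$, and then invoke the naturality of the canonical orientation construction of~\cite[Section 3]{JakePhD}. By Lemma~\ref{unique extension of bundle maps} and the essentiality of $\mM^{(0)}$, it suffices to check each identity on $\mM^{(0)}$.

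Fix $(t', \fu') \in \mM^{(0)}_{k+1,l}(\xi^*\target;\xi^*\b)$ with $\fu' = (D^2, u', \vec z, \vec w)$, and set $(t, \fu) := \xi^{\mM_{k+1,l}(\target;\b)}(t', \fu')$, so that $t = \xi(t')$ and $u = \xi^X \circ u'$. The pullback diagram~\eqref{pullback along families basic diagram} provides canonical identifications ${u'}^* T^v(\xi^*X) = u^* T^v X$ and ${u'}^* T^v(\xi^*L) = u^* T^v L$, under which $\xi^*\fp$ corresponds to $\fp$, and the linearization of the $\xi^*J$-holomorphic map equation at $\fu'$ corresponds to that of the $J$-holomorphic map equation at $\fu$. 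Consequently, the Cauchy-Riemann Pin boundary value problems $\underline D_{\fu'}$ and $\underline D_\fu$ are canonically identified as quintuples. Moreover, the nine-lemma diagram in the proof of Lemma~\ref{Jake Thesis lemma} is natural under base change, so the isomorphism $\zcort{U^{\left(\xi^*\target;\xi^*\b\right)}_{k+1,l}}\big|_{(t',\fu')} \simeq \det(\underline D_{\fu'})$ matches $\pull\xi$ applied to the analogous isomorphism at $(t, \fu)$.

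The main obstacle is verifying that the canonical orientation of $\det(\underline D_\fu)$ and the canonical isomorphism $\det(\underline D_\fu) \simeq \zort{L}|_{u(z_0)}^{\otimes(\mu(\b)+1)}$ from~\cite[Section 3]{JakePhD} are natural with respect to the identification $\underline D_{\fu'} \simeq \underline D_\fu$ above, so that they intertwine $\pull\xi$ with $\left(\pull\xi_{\zort{}}\right)^{\otimes(\mu(\b)+1)}$. This should follow because the construction of~\cite{JakePhD} depends only on the intrinsic quintuple $(\S, E_\fu, F_\fu, \fp, D_\fu)$ together with the boundary marked point $z_0$, all of which are preserved under the identification above. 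Combining these observations yields the first equation. For the case $k = -1$ and $\b \in \Pi^{ad}$, the argument is essentially the same, except that one additionally invokes the base change isomorphism $(\xi^L)^* \mathcal X_L^{\mu(\b)+1} \simeq \mathcal X_{\xi^*L}^{\mu(\b)+1}$ induced by $\pi^{\xi^*L}_*\left(\pull\xi_{\zort{}}\right)^{\otimes(\mu(\b)+1)}$, which is well-defined by proper base change for $\pi^L$ with connected fibers, and which matches the canonical orientations of $\det(\underline D_\fu)$ and $\det(\underline D_{\fu'})$ provided by Lemma~\ref{Cauchy Riemann boundary value problem properties lemma}.
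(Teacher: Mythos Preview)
Your proposal is correct and is precisely the unpacking of what the paper means when it declares (just before this lemma) that ``the proofs of the following lemmas are immediate from the definitions'': the construction of $\tilde I^\b_{k+1,l}$ in Lemma~\ref{Jake Thesis lemma} uses only the vertical Cauchy--Riemann $Pin$ quintuple $\underline D_\fu$ together with $z_0$, and these are manifestly preserved under the base-change identification $\xi^*\mM_{k+1,l}(\target;\b)\simeq\mM_{k+1,l}(\xi^*\target;\xi^*\b)$. The only cosmetic remark is that your phrase ``this should follow'' undersells the point---the canonical orientation of Lemma~\ref{Cauchy Riemann boundary value problem properties lemma} and the isomorphism of~\cite[Section 3]{JakePhD} are by construction functions of the quintuple alone, so naturality is automatic once the quintuples are identified; you may assert rather than hedge.
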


\begin{lemma}
\label{naturality of J families}
Let $\target$ be a target. Let $k\geq0,l\geq 0$ and $\b\in \Pi(\target)$.
The following diagram is commutative.
\[
\begin{tikzcd}
{\xi^{\left(\mM_{k+1,l}(\target;\b)\right)}}^*\left(evb_0^\target\right)^*\zort{L}^{\otimes \mu(\b)}\ar[d,swap,"(evb_0)^*\left(\pull\xi_{\zort{}}\right)^{\otimes \mu(\b)}"]\ar[r,"\expinv{\xi}J^{\left(\target,\b\right)}_{k+1,l}"]&\zcort{evb_0^{\xi^*\target}}\\\left(evb_0^{\xi^*\target}\right)^*\zort{\xi^*L}^{\otimes \mu(\b)}\ar[ur,swap,"J^{\left(\xi^*\target,\xi^*\b\right)}_{k+1,l}"]
\end{tikzcd}
\]That is, the following equation of orientors holds.
\[
\expinv\xi J^{\left(\target;\b\right)}_{k+1,l}=J^{\left(\xi^*\target,\xi^*\b\right)}_{k+1,l}\bu (evb_0)^*\left(\pull\xi_{\zort{}}\right)^{\otimes \mu(\b)}
\]
Similarly, for $\b\in \Pi^{ad}(\target)$, the following diagram is commutative.
\[
\begin{tikzcd}
{\xi^{\left(\mM_{k+1,l}(\target;\b)\right)}}^*{\pi^{\left(\mM_{k+1,l}(\target;\b)\right)}}^*\mathcal X_L^{\mu(\b)+1}\ar[d,swap,"\left({\pi^{\left(\mM_{k+1,l}(\xi^*\target;\xi^*\b)\right)}}\right)^*\pi^{\xi^*L}_*\left(\pull\xi_{\zort{}}\right)^{\otimes (\mu(\b)+1)}"]\ar[r,"\expinv{\xi}J^{\left(\target,\b\right)}_{0,l}"]&\zcort{\pi^{\mM_{0,l}\left(\xi^*\target;\xi^*\target\right)}}
\\\left({\pi^{\left(\mM_{k+1,l}(\xi^*\target;\xi^*\b)\right)}}\right)^*\mathcal X_{\xi^*L}^{\mu(\b)+1}\ar[ur,swap,"J^{\left(\xi^*\target,\xi^*\b\right)}_{0,l}"]
\end{tikzcd}
\]That is, the following equation of orientors holds.
\[
\expinv\xi J^{\left(\target;\b\right)}_{0,l}=J^{\left(\xi^*\target,\xi^*\b\right)}_{0,l}\bu \left({\pi^{\left(\mM_{k+1,l}(\xi^*\target;\xi^*\b)\right)}}\right)^*\pi^{\xi^*L}_*\left(\pull\xi_{\zort{}}\right)^{\otimes (\mu(\b)+1)}
\]
\end{lemma}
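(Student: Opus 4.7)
The strategy is to reduce the naturality of $J^\b_{k+1,l}$ to the naturality of its constituent orientors, namely $\phi_{k+1,l}$ (Lemma~\ref{naturality of phi families}) and $\tilde I^\b_{k+1,l}$ (Lemma~\ref{naturality of tilde I families}), together with a routine naturality of the level-translation orientor $e$, and finally to propagate these identities through the two cases of Definition~\ref{fundamental orientor definition} via a downward induction on $|I|$.

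First I would establish naturality of $\tilde J^\b_{k+1,l}$. For $k=-1$ this is immediate from Lemma~\ref{naturality of tilde I families}, since $\tilde J^\b_{0,l}=\tilde I^\b_{0,l}$ by Definition~\ref{Jakethesis}. For $k\geq 0$, starting from the defining equation $\tilde I^\b_{k+1,l}={}^{\mW_{k+1,l}}e\bu {}^{\zort{}}\tilde J^\b_{k+1,l}$, I apply $\expinv{\xi}$ to both sides, distribute over the composition using Lemma~\ref{vertical pullback expinv composition}, use the evident naturality of $e$ under pullback of the Lagrangian (a direct diagram chase gives $\expinv{\xi^L}(e^L)\bu\pull\xi_{\zort{}}=e^{\xi^*L}$), and invoke Lemma~\ref{naturality of tilde I families}. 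Since $e$ is invertible as a bundle map, the uniqueness assertion of Lemma~\ref{orientor invertion lemma} forces the claimed naturality of $\tilde J^\b_{k+1,l}$.

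Next I would prove the main case $k+2|I|\geq 2$. By Definition~\ref{fundamental orientor definition}, $J^\b_{k+1,l}=\phi^L_{k+1,l}\bu\tilde J^\b_{k+1,l}$ when $k\geq 0$, and $J^\b_{0,l}=\phi_{0,l}\bu\tilde J^\b_{0,l}$ when $k=-1$. Applying $\expinv{\xi}$ to both sides and using Lemma~\ref{vertical pullback expinv composition} again distributes the pullback over the composition. For $k\geq 0$, the factor $\expinv{\xi}(\phi^L_{k+1,l})$ is rewritten as $(\expinv{\xi}\phi_{k+1,l})^{\xi^*L}$ by assembling the $\xi$-pullback square and the product-over-$\W$ square into the cube
\[
\begin{tikzcd}
\xi^*L\times_{\W'}\xi^*\mW_{k+1,l}\ar[r]\ar[d]&\xi^*\mW_{k+1,l}\ar[r]\ar[d]&\mW_{k+1,l}\ar[d]\\
\xi^*L\ar[r]&\W'\ar[r,"\xi"]&\W
\end{tikzcd}
\]
and applying Lemma~\ref{horizontal expinv composition} to the two rows of pullback squares. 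Inserting Lemma~\ref{naturality of phi families} for the first factor and the just-proved naturality of $\tilde J$ for the second yields exactly the right-hand side of the claimed identity, after using Lemma~\ref{naturality of cij families lemma} and the definition of $(evb_0)^*\pull\xi_{\zort{}}^{\otimes\mu(\b)}$ to identify the final bundle factor. The $k=-1$ argument is identical but simpler since there is no $L$-extension.

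Finally, for the recursive cases $k+2|I|<2$, I would argue by downward induction on $|I|$ using the defining identity $\expinv{(Fi^\b_{k+1,I})}J^\b_{k+1,I}=J^\b_{k+1,\hat I}$ from Definition~\ref{fundamental orientor definition} and Lemma~\ref{properties of J lemma}. Applying $\expinv{\xi}$ and invoking Lemma~\ref{pullback by composition relatively oriented functoriality lemma} together with the identifications $\xi^*Fi^{\target}=Fi^{\xi^*\target}$ and $\pull\xi\,\mO^{Fi^{\target}}=\mO^{Fi^{\xi^*\target}}$ converts the inductive hypothesis at $(k,\hat I)$ into the equation $\expinv{(Fi^{\xi^*\target})}\left(\expinv{\xi}J^{\target}_{k+1,I}\right)=\expinv{(Fi^{\xi^*\target})}\left(J^{\xi^*\target}_{k+1,I}\bu(evb_0)^*\pull\xi_{\zort{}}^{\otimes\mu(\b)}\right)$. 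Since $Fi$ is a surjective submersion with connected fibers, Lemma~\ref{orientor on pullback is pulled back} cancels $\expinv{(Fi^{\xi^*\target})}$ and gives the desired identity at $(k,I)$. The case $k=-1$ with $|I|<2$ is handled by the same argument.

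The main technical obstacle will be the diagram bookkeeping in the middle step, where one must simultaneously commute the base extension $(-)^L$ (a pullback along $\pi^L$) with the base change along $\xi$, while keeping track of the canonical isomorphisms $\pull{(r/f)}$ and the relative orientations $1\times\mO$. Once this cube-chase is in place, no new geometric input beyond Lemmas~\ref{naturality of phi families} and~\ref{naturality of tilde I families} is required.
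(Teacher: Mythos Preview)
Your proposal is correct and follows essentially the same route as the paper: first deduce naturality of $\tilde J$ from that of $\tilde I$ (Lemma~\ref{naturality of tilde I families}) and of $e$ via the uniqueness in Lemma~\ref{orientor invertion lemma}, then combine with Lemma~\ref{naturality of phi families} to obtain naturality of $J=\phi^L\bu\tilde J$. Two small remarks: your invocation of Lemma~\ref{naturality of cij families lemma} is spurious (no $c_{ij}$ enters the definition of $J$), and your explicit downward induction for $k+2|I|<2$ via Lemma~\ref{orientor on pullback is pulled back} is a welcome addition that the paper's proof leaves implicit.
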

\begin{proof} We prove for $k\geq 0$. The proof for $k=-1$ is similar and requires fewer steps.
By definition of $\expinv \xi$, it is clear that the following diagram is commutative.
\[\begin{tikzcd}
\xi^*\zort{L}\ar[r,"\expinv\xi e_L"]\ar[d,swap,"\pull\xi_{\zort{}}"]&\zcort{\xi^*L}\\
\zort{\xi^*L}\ar[ur,swap,"e_{\xi^*L}"]&
\end{tikzcd}
\]
That is,
\begin{equation}\label{naturality of e families equation}
    \expinv\xi e_L=e_{\xi^*L}\bu \pull{\xi}_{\zort{}}.
\end{equation}
Consider the following diagram.
\begin{equation}\label{trivial pi circ Ev diagram}
\begin{tikzcd}
\mM_{k+1,l}(\xi^*\target;\xi^*\b)\ar[r,"\Id_{\mM}"]\ar[d,swap,"Ev_{k+1,l}^\b"]&\mM_{k+1,l}(\xi^*\target;\xi^*\b)\ar[d,"Ev_{k+1,l}^\b"]\\
L\times \mW_{k+1,l}^{\xi^*\target}\ar[r,swap,"\Id_{L\times\mW}"]\ar[d,swap,"\pi_L"]&L\times \mW_{k+1,l}^{\xi^*\target}\ar[d,"\pi_L"]\\
L\ar[r,swap,"\Id_L"]&L
\end{tikzcd}    
\end{equation}
We calculate
\begin{equation}
\label{expinv of extended pull xi is pullback by evb0 equation}
\expinv{\left(Ev_{k+1,l}^\b/Ev_{k+1,l}^\b\right)}\left({}^{\mW_{k+1,l}^{\xi^*\target}}\left(\pull{\xi}_{\zort{}}\right)\right)\overset{\text{Lem. \ref{vertical pullback expinv composition}}}=\expinv{\left(evb_0^\b/evb_0^\b\right)}\left(\pull{\xi}_{\zort{}}\right)\overset{\text{Exp.~\ref{trivial expinv is pullback}}}=(evb_0^\b)^*\left(\pull{\xi}_{\zort{}}\right).    
\end{equation}
Applying Lemma~\ref{commutativity vertical pullback vs horizontal pullback orientors} to the upper pullback square of diagram~\eqref{trivial pi circ Ev diagram}
with $F={}^{\mW_{k+1,l}^{\xi^*\target}}\left(\pull{\xi}_{\zort{}}\right)$ and $G=\tilde J_{k+1,l}^{(\xi^*\target;\xi^*\b)},$ keeping in mind equation~\eqref{expinv of extended pull xi is pullback by evb0 equation},
one gets
\begin{equation}
    \label{commuting tilde J with pull xi equation}
    {}^{\zort{\xi^*L}} \tilde J^{\left(\xi^*\target,\xi^*\b\right)}_{k+1,l}\bu evb_0^*\left(\pull\xi_{\zort{}}\right)^{\zort{\xi^*L}^{\otimes \mu(\b)}}={}^{\mW_{k+1,l}^{\xi^*\target}}\left( \pull\xi_{\zort{}}\right)\bu{}^{\xi^*\zort{L}} \left(\tilde J^{\left(\xi^*\target,\xi^*\b\right)}_{k+1,l}\right).
\end{equation}
On the one hand,
\begin{align*}
\expinv\xi\tilde I^{(\target;\b)}_{k+1,l}\overset{\text{Lem. \ref{naturality of tilde I families}}}=&\tilde I^{\left(\xi^*\target,\xi^*\b\right)}_{k+1,l}\bu evb_0^*\left(\pull\xi_{\zort{}}\right)^{\otimes (\mu(\b)+1)}
\\\overset{\text{Def.~\ref{Jakethesis}}}=&
{}^{\mW_{k+1,l}^{\xi^*\target}}\left(e_{\xi^*L}\right)\bu {}^{\zort{\xi^*L}} \tilde J^{\left(\xi^*\target,\xi^*\b\right)}_{k+1,l}\bu evb_0^*\left(\pull\xi_{\zort{}}\right)^{\otimes (\mu(\b)+1)}\\
=\quad&{}^{\mW_{k+1,l}^{\xi^*\target}}\left(e_{\xi^*L}\right)\bu {}^{\zort{\xi^*L}} \tilde J^{\left(\xi^*\target,\xi^*\b\right)}_{k+1,l}\bu evb_0^*\left(\pull\xi_{\zort{}}\right)^{\zort{\xi^*L}^{\otimes \mu(\b)}} \bu evb_0^*\left({}^{\xi^*\zort{L}}\left(\left(\pull\xi_{\zort{}}\right)^{\otimes \mu(\b)}\right)\right)
\\\overset{\begin{smallmatrix}\text{eq. ~\eqref{commuting tilde J with pull xi equation}}\\\text{Lem. \ref{base extension is distributive expinv}}\end{smallmatrix}}=&{}^{\mW_{k+1,l}^{\xi^*\target}}\left(e_{\xi^*L}\bu \pull\xi_{\zort{}}\right)\bu{}^{\xi^*\zort{L}} \left(\tilde J^{\left(\xi^*\target,\xi^*\b\right)}_{k+1,l}\bu\left(\pull\xi_{\zort{}}\right)^{\otimes \mu(\b)}\right).
\end{align*}
On the other hand,
\begin{align*}
\expinv\xi\tilde I^{(\target;\b)}_{k+1,l}\overset{\text{Def. \ref{Jakethesis}}}=&\expinv\xi\left({}^{\mW_{k+1,l}^{\target}}e_{L}\bu\tilde J^{(\target;\b)}_{k+1,l}\right)\\\overset{\begin{smallmatrix}\text{Lem. \ref{vertical pullback expinv composition}}\\\text{Lem. \ref{base extension of pullback is pullback by base extension orientors lemma}}\end{smallmatrix}}=&{}^{\mW_{k+1,l}^{\xi^*\target}}\left(\expinv\xi e_{L}\right)\bu\expinv\xi\tilde J^{(\target;\b)}_{k+1,l}\\\overset{\begin{smallmatrix}\text{eq.~\ref{naturality of e families equation}}\\\text{Lem. \ref{base extension is distributive expinv}}\end{smallmatrix}}=&{}^{\mW_{k+1,l}^{\xi^*\target}}\left(e_{
\xi^*L}\bu\pull\xi_{\zort{}} \right)\bu\expinv\xi\tilde J^{(\target;\b)}_{k+1,l}.
\end{align*}
Thus, the uniqueness part of Lemma~\ref{orientor invertion lemma} implies
\begin{equation}\label{naturality of tilde J families}
    \expinv\xi\tilde J^{(\target;\b)}_{k+1,l}=\tilde J^{(\xi^*\target;\xi^*\b)}_{k+1,l}\bu evb_0^*\left(\pull{\xi}_{\zort{}}\right)^{\otimes \mu(\b)}
\end{equation}
We calculate,
\begin{align*}
    \expinv\xi J_{k+1,l}^{(\target;\b)}\overset{\text{Def.\ref{fundamental orientor definition}}}=&\expinv\xi\left(\phi^L_{k+1,l}\bu \tilde J^{(\target;\b)}_{k+1,l}\right)\overset{\text{Lem. \ref{vertical pullback expinv composition}}}=\expinv\xi\phi^L_{k+1,l}\bu \expinv\xi \tilde J^{(\target;\b)}_{k+1,l}\\
    \overset{\begin{smallmatrix}\text{Lem. \ref{naturality of phi families}}\\\text{eq.~\eqref{naturality of tilde J families}}\end{smallmatrix}}=&\phi^{\xi^*L}_{k+1,l}\bu \tilde J^{(\xi^*\target;\xi^*\b)}_{k+1,l}\bu evb_0^*\left(\pull{\xi}_{\zort{}}\right)^{\otimes \mu(\b)}\overset{\text{Def.\ref{fundamental orientor definition}}}=J^{\left(\xi^*\target;\xi^*\b\right)}_{k+1,l}\bu evb_0^*\left(\pull{\xi}_{\zort{}}\right)^{\otimes \mu(\b)}.
\end{align*}
This is the desired result.
\end{proof} 
\section{Orientors of local systems of algebras}\label{orientors of local systems of algebras section}
\subsection{On the Lagrangian}
\label{rort_L section}
\begin{notation}[The algebra of relative orientations]
\label{relative orientation ring notation}
Recall the following notations from Section~\ref{orientors for moduli introduction section}.
For a Lagrangian $L\subset X$ set
\[
{\rort_L}=\bigoplus_{k\in\Z}{\ort{L}}^{\otimes k},
\]
with multiplication operator
\[
m:{\rort_L}^{\otimes 2}\to {\rort_L}
\]
defined naturally by the tensor product.

Let $\efield_L = \pi^L_*{\rort_L}$ be the pushforward of sheaves along $\pi^L:L\to \W$. The multiplication $m$ of $\rort_L$ descends to $\efield_L$. Let $\eort_L = \left(\pi^L\right)^*\efield_L.$ The adjunction of $\pi^L_*$ and $\left(\pi^L\right)^*$ provides a canonical morphism $i:\eort_L\hookrightarrow \rort_L$. Moreover, there exists a morphism \[w:\rort_L\to \eort_L\] which assigns to a point $(t,x)\in{\lort_L}^{\otimes j}$ the locally-constant section over $t\in \W$ that passes through $x$, if it exists, and otherwise $0$. It holds that $w\circ i=\Id_{\eort_L}$. 
The maps $i,w$ commute with the multiplication. The map $i$ is an isomorphism exactly when $L$ is vertically orientable. Otherwise, $\rort_L$ is $2$-dimensional over $\eort_L$ with respect to the algebra structure induced by $i$.

In this section we abbreviate $\rort:={\rort_L}$, $\lort:=\ort{L}, \efield := \efield_L, \eort :=\eort_L$ and for $a\in \Z$ we abbreviate $\lort^a:=\lort^{\otimes a}$. Unless otherwise specified, we use $\mO$ for a local section of $\lort$.
\end{notation}
\begin{remark}
There exists a global section denoted $\chi^2$ of $\efield$ given locally by $\mO^2$ for any local section of $\lort$. $\chi^2$ plays the same role as the variable $e$ in the Novikov ring in \cite[Section 1.2]{Fukaya}.
\end{remark}

\begin{remark}[non-commutativity]
Note that on local sections $x,y$ of ${\rort}$, 
\[
m(x,y)=m(y,x),
\]i.e.
\begin{equation}
\label{noncommutativity}
m\circ\tau_{{\rort},{\rort}}(x\otimes y)=(-1)^{xy}m(x\otimes y).    
\end{equation}

In particular, restricted to odd degrees,
\[
m\circ\tau_{{\rort},{\rort}}=-m.
\]
\end{remark}
\begin{definition}
The \textbf{twistor} $\mS:\rort\to \rort$ is the \eorientor{\Id_L} of $\rort$ given locally by \[\mS\left(\mO_L^{r}\right)=(-1)^{r}\mO_L^{r}.\]
For $\b\in \Pi$ we abbreviate $\mS^\b:=\mS^{\mu(\b)}=\mS^{w_1(\pa\b)}$.
\end{definition}
\begin{lemma}\label{m commutes with S lemma}
As \orientor{\Id_L}s of $\rort\otimes \rort$ to $\rort$, the following equation holds.
\[
\mS\bu m=m\bu \mS^\rort\bu \,{}^\rort\mS.
\]
\end{lemma}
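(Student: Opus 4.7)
The plan is to reduce the claimed equation of \orientor{\Id_L}s to a direct check on local sections of the form $\mO^a \otimes \mO^b$, where $\mO$ is a local section of $\lort$ (so $\mO^a$ is of degree $a$ and lives in $\lort^{\otimes a}$). Both sides are \orientor{\Id_L}s of $\rort \otimes \rort$ to $\rort$ and the local decomposition $\rort = \bigoplus_k \lort^{\otimes k}$ lets us work componentwise. I will then verify the equality by evaluating both sides and comparing.

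First I would observe that $\mS$ is a \orientor{\Id_L} of degree $0$: its source and target are both $\rort$, and on the degree-$r$ summand it acts by the scalar $(-1)^r$. Since $\zcort{\Id_L}$ is canonically trivial in degree $0$, the fact that $\deg \mS = 0$ means that no Koszul signs will arise from commuting $\mS$ past other sections in the extensions $\mS^\rort$ and ${}^\rort\mS$. Unwinding Definition~\ref{tensored orientor extension} in this setting gives
\begin{align*}
\mS^\rort(\mO^a \otimes \mO^b) &= \mS(\mO^a)\otimes \mO^b = (-1)^a\,\mO^a\otimes \mO^b, \\
{}^\rort\mS(\mO^a\otimes \mO^b) &= \mO^a\otimes \mS(\mO^b) = (-1)^b\,\mO^a\otimes \mO^b.
\end{align*}

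Next I would compute both sides of the asserted identity on $\mO^a \otimes \mO^b$. The left-hand side gives
\[
\mS\bu m(\mO^a\otimes \mO^b) = \mS(\mO^{a+b}) = (-1)^{a+b}\,\mO^{a+b},
\]
while the right-hand side, applying first ${}^\rort\mS$, then $\mS^\rort$, then $m$, gives
\[
m\bu \mS^\rort\bu{}^\rort\mS(\mO^a\otimes \mO^b) = m\!\left((-1)^{a+b}\,\mO^a\otimes \mO^b\right) = (-1)^{a+b}\,\mO^{a+b}.
\]
The two expressions agree, so the equation of \orientor{\Id_L}s holds on a set of local sections spanning $\rort \otimes \rort$, and hence everywhere.

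The only step that requires any care is the Koszul bookkeeping in the definitions of $\mS^\rort$ and ${}^\rort\mS$; however, because $\deg\mS = 0$ and $\zcort{\Id_L}$ sits in degree $0$, every potential Koszul sign is trivial, which is what makes the verification reduce to the elementary identity $(-1)^a(-1)^b = (-1)^{a+b}$. I do not anticipate any real obstacle: the content of the lemma is simply that $m$ is a graded morphism with respect to the $\Z/2$-grading on $\rort$ implemented by~$\mS$.
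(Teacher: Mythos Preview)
Your proof is correct and follows essentially the same approach as the paper's own proof: both reduce to evaluating on local sections $\mO^a\otimes\mO^b$ and verify that each side equals $(-1)^{a+b}\mO^{a+b}$. Your version is somewhat more explicit about why the Koszul signs from the extensions $\mS^\rort$ and ${}^\rort\mS$ vanish (namely $\deg\mS=0$), but the argument is otherwise identical.
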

\begin{proof}
Let $\mO$ be a local section of $\lort$, and let $r,t\in \Z$. Then
\[
\left(\mS\bu m\right)\left(\mO^r\otimes \mO^t\right)=\mS(\mO^{r+t})=(-1)^{r+t}\mO^{r+t}=(-1)^{r+t}m\left(\mO^{r}\otimes \mO^t\right)=m\left(\mS\mO^r\otimes \mS\mO^t\right).
\]
\end{proof}
\begin{definition}
Let $a\in \Z$. 
The \textbf{$a$-splitor}
\[
\Psi_a:\rort\to \lort^{\otimes a}\otimes \rort
\]
is the \orientor{\Id_L} given locally by
\[
\Psi_a(\mO_L^r)=\mO_L^a\otimes \mO_L^{r-a}.
\]
If $a$ is an exponent for $L$, then $\Psi_a$ descends to a map 
\[\Psi^\efield_a:\efield\to \mathcal X^{b}\otimes \efield.\]
\end{definition}
\begin{remark}\label{exchange Psi on rort with eort remark}
Let $b\in \Z$. The following equation holds.
\[
i_b^\eort\bu \left(\pi^L\right)^*\Psi^\efield_b={}^{\lort^b}w\bu \Psi_b.
\]
\end{remark}
% \begin{definition}
% Let $x=|\otm|$.  For $a\in x\Z$, we define the \textbf{$a$-shifter} \[\Upsilon^a:\efield\to \efield\] to be the isomorphism given by multiplication by~$\otm^{\frac ax}$. Its degree is $a$. We use $\Upsilon:\eort\to\eort$ denote the pullback of $\Upsilon$ over $\pi^L$. Under the inclusion $i:\eort\to \rort$, the isomorphism $\Upsilon^a$ extends to an isomorphism $\Upsilon^a:\rort\to\rort$.
% \end{definition}
\begin{notation}
We write $m$ for the restrictions of $m$ to sub-local systems of $\rort\otimes \rort$ in the source and sub-local systems of $\rort$ in the image, when the restriction makes sense. For example, we write
\[
m:\lort^a\otimes \lort^b\to \lort^{a+b}.
\]
\end{notation}
\begin{lemma}
\label{splitor commutativity}Let $a\in \Z$. The following equation of \orientor{\Id_L} of $\rort\otimes \rort$ to $\lort^a\otimes \rort$ holds.
\[
{}^{\lort^a}m\bu \left(\tau_{\rort,\lort^{a}}\right)^{\rort}\bu\,{}^{\rort}\Psi_a=\Psi_a\bu m\bu \left(\mS^a\right)^{\rort}.
\]
\end{lemma}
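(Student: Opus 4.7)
The plan is to verify the equation by evaluating both composite orientors on local sections of the form $\mO^r\otimes\mO^s$ with $r,s\in\Z$, where $\mO$ is a local trivializing section of $\lort$. Such pure tensors span $\rort\otimes\rort$ locally, so by Lemma~\ref{unique extension of bundle maps} and linearity, agreement on them implies the desired equality of orientors. Throughout I will use the convention $|\mO^j|=-j$ (since $\lort$ is concentrated in degree $-1$) and observe that $|\Psi_a|=|\mS^a|=|m|=0$. Moreover, $\zcort{\Id_L}=\underline{\Z/2}$ is concentrated in degree $0$, so the symmetry isomorphisms appearing in the definition of the left/right extensions of Definition~\ref{tensored orientor extension} involving $\zcort{\Id_L}$ introduce no signs.

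For the right-hand side, I trace $\mO^r\otimes\mO^s$ through each factor. First, $(\mS^a)^\rort=\mS^a\otimes\Id_\rort$ extracts the scalar $(-1)^{ar}$ from the left factor, yielding $(-1)^{ar}\mO^r\otimes\mO^s$; no further Koszul sign arises because $\Id_\rort$ has degree $0$ (apply Proposition~\ref{Koszul signs} and the functoriality of $\otimes$). Then $m$ fuses the two factors to $(-1)^{ar}\mO^{r+s}$, and finally $\Psi_a$ splits this to $(-1)^{ar}\mO^a\otimes\mO^{r+s-a}$.

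For the left-hand side, I first apply ${}^\rort\Psi_a$: by Definition~\ref{tensored orientor extension} this equals $(\tau_{\rort,\zcort{\Id_L}}\otimes\Id)\circ(\Id\otimes\Psi_a)$; the symmetry factor is trivial and the functorial sign vanishes because $|\Psi_a|=0$, so the image is simply $\mO^r\otimes\mO^a\otimes\mO^{s-a}$. Next, $(\tau_{\rort,\lort^a})^\rort=\tau_{\rort,\lort^a}\otimes\Id$ swaps the first two factors, producing the graded sign $(-1)^{|\mO^r|\cdot|\mO^a|}=(-1)^{(-r)(-a)}=(-1)^{ra}$, so we arrive at $(-1)^{ra}\mO^a\otimes\mO^r\otimes\mO^{s-a}$. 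Finally ${}^{\lort^a}m$ reduces to $\Id_{\lort^a}\otimes m$ (again the symmetry with $\underline{\Z/2}$ is trivial and $|m|=0$), giving $(-1)^{ra}\mO^a\otimes\mO^{r+s-a}$, which matches the right-hand side.

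The only real point of care is the careful tracking of Koszul signs via Proposition~\ref{Koszul signs} when unpacking the extensions ${}^T(-)$ and $(-)^T$ according to Definition~\ref{tensored orientor extension}; the main obstacle -- if one can call it that -- is making sure that the single non-trivial sign, coming from the transposition of $\mO^r$ past $\mO^a$ inside $\tau_{\rort,\lort^a}$, is correctly identified and matched on both sides under the convention that $\mO^j$ sits in degree $-j$.
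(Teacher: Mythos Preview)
Your proof is correct and follows essentially the same approach as the paper: both evaluate on local sections $\mO^{r}\otimes\mO^{s}$ and trace through the two composites, finding the single nontrivial sign $(-1)^{ar}$ on each side. Your write-up is somewhat more explicit about why the various Koszul and extension signs vanish, but the argument is the same.
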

\begin{proof}
Let $\mO$ be a local section of $\lort$ and let $r_1,r_2\in \Z$. Then
\begin{align*}
{}^{\lort^a}m\bu \left(\tau_{\rort,\lort^{a}}\right)^{\rort}\bu\,{}^{\rort}\Psi_a\left(\mO^{r_1}\otimes \mO^{r_2}\right)&={}^{\lort^a}m\bu \left(\tau_{\rort,\lort^{a}}\right)^{\rort}\bu\,{}^{\rort}\left(\mO^{r_1}\otimes \mO^{a}\otimes \mO^{r_2-a}\right)\\
&=(-1)^{ar_1}\,\,{}^{\lort^a}\!\!m\bu\left(\mO^{a}\otimes\mO^{r_1}\otimes  \mO^{r_2-a}\right)\\
&=(-1)^{ar_1}\,\,\mO^a\otimes \mO^{r_1+r_2-a}\\
&=\Psi_a\bu m\bu \left(\mS^a\right)^{\rort}\left(\mO^{r_1}\otimes \mO^{r_2}\right).
\end{align*}
\end{proof}
\begin{lemma}\label{iterated splitor}
Let $a,b\in \Z$. The following equation of \orientor{\Id_L} of $\rort$ to $\lort^{a+b}\otimes \rort$ holds.
\[
\Psi_{a+b}=m^\rort\bu \,\,{}^{\lort^{a}}\Psi_{b}\bu \Psi_{a}.
\]
\end{lemma}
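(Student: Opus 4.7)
The plan is to prove the identity by direct evaluation on local sections of the form $\mO^r$ for $\mO$ a local section of $\lort$ and $r \in \Z$, in analogy with the proof of Lemma~\ref{splitor commutativity}. Since $\Id_L$ has trivial relative orientation bundle ($\zcort{\Id_L} \simeq \underline{\Z/2}$), the composition $\bu$ of Definition~\ref{orientor composition} reduces to ordinary composition of bundle maps up to Koszul signs, and the right/left extensions from Definition~\ref{tensored orientor extension} act slot-by-slot again with Koszul signs. The key simplification is that $\Psi_a$, $\Psi_b$ and $m$ all have degree $0$, so no nontrivial Koszul signs appear in either side of the equation.

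Concretely, first apply $\Psi_a$ to obtain
\[
\Psi_a(\mO^r) = \mO^a \otimes \mO^{r-a}.
\]
Next, applying ${}^{\lort^a}\Psi_b$ leaves the first factor $\mO^a$ untouched (since the left extension of a degree $0$ orientor along a map with trivial relative orientation bundle is the identity on the extending factor) and applies $\Psi_b$ to the second factor, producing
\[
\mO^a \otimes \mO^b \otimes \mO^{r-a-b}.
\]
Finally $m^\rort$ multiplies the first two factors into $\lort^{a+b}$ while leaving the $\rort$-factor alone, yielding $\mO^{a+b} \otimes \mO^{r-a-b}$, which is exactly $\Psi_{a+b}(\mO^r)$.

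Since $\rort$ is generated as a local system by elements of the form $\mO^r$ for varying $r \in \Z$, and the maps on both sides are morphisms of local systems, this local equality on generators suffices. The only step worth any care is the bookkeeping around the left extension ${}^{\lort^a}\Psi_b$: the defining composition in Definition~\ref{tensored orientor extension} involves a symmetry isomorphism $\tau \otimes \Id$, but since $\zcort{\Id_L}$ is trivially $\underline{\Z/2}$ and $|\Psi_b| = 0$, this swap contributes no sign. There is no substantial obstacle in the argument; the lemma is essentially a book-keeping verification that the two natural ways of splitting off a rank-$(a+b)$ tensor factor agree.
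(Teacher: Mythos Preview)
Your proof is correct and follows essentially the same approach as the paper: both evaluate on local sections $\mO^r$, applying $\Psi_a$, then ${}^{\lort^a}\Psi_b$, then $m^\rort$, and noting that $\deg\Psi_b=0$ ensures no Koszul sign appears at the left-extension step.
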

\begin{corollary}\label{trice iterated splitor corollary}
Let $a,b,c\in\Z$. Then
\[
\Psi_{a+b+c}=m_3^\rort\bu {}^{\lort^a\otimes \lort^b}\Psi_c\bu {}^{\lort^a}\Psi_b\bu \Psi_a.
\]
\end{corollary}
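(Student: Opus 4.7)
The plan is to iterate Lemma~\ref{iterated splitor} twice and then reorganize the result via associativity of $m$. Concretely, I would first apply Lemma~\ref{iterated splitor} to the partition $(a+b) + c$, obtaining
\[
\Psi_{a+b+c} = m^{\rort}\bu\,{}^{\lort^{a+b}}\Psi_c\bu \Psi_{a+b},
\]
and then substitute the expression for $\Psi_{a+b}$ given by another application of Lemma~\ref{iterated splitor} to the partition $a + b$:
\[
\Psi_{a+b+c} = m^{\rort}\bu\,{}^{\lort^{a+b}}\Psi_c\bu m^{\rort}\bu\,{}^{\lort^a}\Psi_b\bu \Psi_a.
\]

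The remaining step is to rewrite the middle block $m^{\rort}\bu\,{}^{\lort^{a+b}}\Psi_c\bu m^{\rort}$ as $m_3^{\rort}\bu\,{}^{\lort^a\otimes\lort^b}\Psi_c$. To do this I would first push ${}^{\lort^{a+b}}\Psi_c$ past the leftmost $m^{\rort}$: since $\Psi_c$ acts only on the rightmost $\rort$-factor and $m$ acts on the two leftmost $\lort$-powers, these operations commute up to the base-extension rules of Lemmas~\ref{base extension is distributive expinv} and~\ref{orientors tensor distributivity}, giving
\[
m^{\rort}\bu\,{}^{\lort^{a+b}}\Psi_c = {}^{\lort^{a+b}\otimes\lort^c}m\bu\,{}^{\lort^a\otimes\lort^b}\Psi_c.
\]
(No Koszul sign appears because $\Psi_c$ and $m$ are both of degree zero.) Finally, the composition $\bigl({}^{\lort^{a+b}\otimes\lort^c}m\bigr)\bu\bigl(m^{\rort}\bigr)$ on $\lort^a\otimes\lort^b\otimes\lort^c\otimes\rort$ is by definition the triple product $m_3^{\rort}$, by associativity of $m$ on $\rort$.

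The main obstacle is purely bookkeeping: tracking which tensor slots each extension ${}^{(-)}\Psi_\bullet$ or $(-)^{\rort}$ refers to, and verifying that no sign enters when sliding $m$ past $\Psi_c$. If the sign-chasing becomes unwieldy, a more direct route is to evaluate both sides on a local section $\mO^r$ of $\rort$: by Definition the right-hand side sends $\mO^r\mapsto \mO^a\otimes\mO^b\otimes\mO^c\otimes\mO^{r-a-b-c}$, which coincides with $\Psi_{a+b+c}(\mO^r)$. Since both orientors are morphisms of local systems of graded $\A$-modules and the local sections $\mO^r$ generate $\rort$ fiberwise, this pointwise agreement extends globally by the same uniqueness principle used throughout Section~\ref{rort_L section}.
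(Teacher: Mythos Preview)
Your proposal is correct, and in fact your fallback route---direct evaluation on a local section $\mO^r$---is precisely how the paper proceeds. The proof printed after the corollary in the paper is a local computation (indeed, as written it verifies the lemma $\Psi_{a+b}=m^{\rort}\bu\,{}^{\lort^a}\Psi_b\bu\Psi_a$ on $\mO^r$, with the three-fold statement left as the obvious repetition). So your alternative paragraph is already the paper's argument.

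Your primary route, iterating Lemma~\ref{iterated splitor} and then commuting, is also valid and more in the spirit of the orientor calculus, but note that the displayed intermediate equation
\[
m^{\rort}\bu\,{}^{\lort^{a+b}}\Psi_c = {}^{\lort^{a+b}\otimes\lort^c}m\bu\,{}^{\lort^a\otimes\lort^b}\Psi_c
\]
does not typecheck: the left side has domain $\lort^{a+b}\otimes\rort$ while the right side has domain $\lort^a\otimes\lort^b\otimes\rort$. What you actually need is to commute ${}^{\lort^{a+b}}\Psi_c$ with the \emph{inner} $m^{\rort}$, i.e.
\[
{}^{\lort^{a+b}}\Psi_c\bu m^{\rort} = m^{\lort^c\otimes\rort}\bu\,{}^{\lort^a\otimes\lort^b}\Psi_c,
\]
after which the two multiplications $m^{\rort}\bu m^{\lort^c\otimes\rort}$ collapse to $m_3^{\rort}$ by associativity. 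With that correction the argument goes through without signs, since all maps involved have degree zero.
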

\begin{proof}
Let $\mO$ be a local section of $\lort$ and let $r\in\Z$. Then
\begin{align*}
m^\rort\bu \,\,{}^{\lort^{a}}\Psi_{b}\bu \Psi_{a}\left(\mO^r\right)&=m^\rort\bu \,\,{}^{\lort^{a}}\Psi_{b}\left(\mO^a\otimes \mO^{r-a}\right)\\ 
&\overset{(*)}=m^\rort\left(\mO^a\otimes \mO^b\otimes \mO^{r-a-b}\right)\\
&=\mO^{a+b}\otimes \mO^{r-a-b}\\&=\Psi_{a+b}(\mO^r),
\end{align*}
where $(*)$ follows since $\deg \Psi_b=0$.
\end{proof}
\begin{notation}
For $k\geq 2$, define $m_k:\rort^{\otimes k}\to \rort$ to be the (associative) $k$-fold multiplication 
\[
m_k(x_1,...,x_k)=m(x_1,m(x_2,...,m(x_{k-1},x_{k}))).
\]
Locally it is given by
\[
m_k\left(\mO_L^{r_1}\otimes\cdots \otimes \mO_L^{r_k}\right)={\mO_L}^{r_1+\ldots +r_k}.
\]
Let $m_1:=\Id_{\rort}$ and $m_0:\underline\A\to \rort$ be the embedding of $\underline\A$ as the $0$th degree of $\rort.$
\end{notation}
Recall the \orientor{\pi^L} $e$ of $\lort$ to $\A$ from Definition~\ref{1-n translation L orientor definition} and the map $w$ from Notation~\ref{relative orientation ring notation}.
\begin{definition}
\label{Otm definition}
Denote by $\Otm=\Otm^L$ the \orientor{\pi^L} of $\rort$ to $\efield$ of degree $1-n$ given by the composition
\[
\Otm:=\Otm^L:=e^\efield\bu {}^\lort w\bu \Psi_1.
\]
Moreover, denote by $\Otm_{odd}$ the \orientor{\pi^L} which agrees with $\Otm$ on inputs of odd degree and vanishes on inputs of even degree.
\end{definition}
\subsection{On disk moduli spaces}
Recall the boundary transport orientors $c_{ij}$ from Definition~\ref{boundary transport}. We extend $c_{ij}$ to \orientor{\Id_{\mM}}s of $evb_j^*\rort$ to $evb_i^*\rort$ by $c_{ij}^{\otimes a}$ on $\lort^{\otimes a}$.

Consider the local system \[\underbrace{\rort\boxtimes ...\boxtimes \rort}_{k \,\,\text{times}}\to L^{\times_\W k}.\] 
Let
\begin{align*}
ev:\mdl{3}&\to L^{\times_\W k}
\end{align*}
be given by \begin{align*}
\left(t,\S, u, \vec z,\vec w\right)&\mapsto \left(\left(t,u(z_1)\right),...,\left(t,u(z_k)\right)\right).
\end{align*}
\begin{definition}[The operator $\mI_k$]\label{mI definition}
Let 
\[
\mI_k:ev^*\left(\rort\boxtimes ...\boxtimes \rort\right)\longrightarrow evb_0^*\left(\rort^{\otimes k}\right) 
\]
be the \orientor{\Id_{\mdl{3}}}
given by
\[
ev^* \left( \rort^{\boxtimes k}\right) =\bigotimes_{j=1}^k evb_j^*\rort\xrightarrow{\bigotimes\limits_{j=1}^k c_{0j}}evb_0^*\rort^{\otimes k}.
\]
\end{definition}

Recall the \orientor{evb_0^\b} of $\lort^{\mu(\b)}$ to $\underline{\A}$ of degree $2-k-2|I|$,
\[
J_{k+1,I}^\b:evb_0^*\big(\lort\big)^{\otimes \mu(\b)}\overset{\sim}\longrightarrow\cort{evb_0}, 
\]
and, for $\b\in \Pi^{ad}$, the \orientor{\pi^{\mM_{0,I}(\b)}} of $\left(\pi^{\mM_{0,I}(\b)}\right)^*\mathcal X^{\mu(\b)+1}$ to ${\A}$ of degree $4-n-2|I|$,
\[
J_{0,I}^{\b}:\left(\pi^{\mM_{0,I}(\b)}\right)^*\mathcal X^{\mu(\b)+1}\to \cort{\pi^{\mM_{0,I}(\b)}}
\]
from Definition~\ref{fundamental orientor definition}. 
\begin{definition}[The operator $\mJ^\b_{k+1,I}$]
\label{The operator mJ}
For $k\geq 0$, let
\begin{align*}
    \mJ_{k+1,I}^\b:evb_0^*\rort\to  \cort{evb_0}\otimes evb_0^*\rort
\end{align*}
be the \eorientor{\rort} of  degree $2-k-2|I|$,
given by
\[
\mJ_{k+1,I}^\b:=\left( J^{\b}_{k+1,I}\right)^{\rort}\bu evb_0^*\Psi_\b.
\]
\end{definition}
\begin{definition}
\label{q orientor definition}
For $k\geq 0$, let $\qor_{k,I}^\b$ be
the \orientor{evb_0^\b} of $ev^*(\rort\boxtimes\cdots \boxtimes \rort)$ to $\rort$ of degree $2-k-2|I|$, given by
\begin{align*}
\qor_{k,I}^\b:=\mJ_{k+1,I}^\b\bu evb_0^*\left(\mS^\b\bu m_k\right)\bu \mI_{k}.
\end{align*}
In particular, $\qor_{0,I}^\b$ is an \orientor{evb_0^{\b}} of $\underline \A$ to $\rort$.

For $k=-1,$ let $\qor^\b_{-1,I}$ be the \orientor{\pi^{\mM_{0,l}(\b)}} of $\underline{\A}$ to $\efield$, of degree $4-n-2|I|$, given by
\begin{align*}
\qor_{-1,I}^\b:=\left(J_{0,I}^\b\right)^{\efield}\bu \left(\pi^{\mM}\right)^*\left(\Psi^{\efield}_{\mu(\b)+1}\bu m_0\right)
\end{align*}
if $\b\in \Pi^{ad}$ and $\qor_{-1,I}^\b=0$ otherwise.
\end{definition}
\begin{lemma}\label{q for bottom cases b0 lemma}
For $(k,l,\b)\in\{(2,0,\b_0),(0,1,\b_0)\}$ we have
\[
\qor_{k,l}^{\b_0}=\expinv{evb_0}m_k.
\]
\end{lemma}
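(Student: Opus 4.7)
The plan is to unpack both sides of the claimed equation and show that every nontrivial ingredient of $\qor_{k,l}^{\b_0}$ degenerates in the degree-zero case. First, since $\mu(\b_0)=0$, we have $\lort^{\mu(\b_0)}=\underline\A$ canonically, the splitor $\Psi_{\b_0}=\Psi_0$ is the identity under the natural identification $\underline\A\otimes\rort\simeq\rort$, and the twistor $\mS^{\b_0}=\mS^0$ is the identity. Therefore the factor $evb_0^*\Psi_{\b_0}$ inside $\mJ_{k+1,l}^{\b_0}$ from Definition \ref{The operator mJ} trivializes, and $evb_0^*\!\left(\mS^{\b_0}\bu m_k\right)$ reduces to $evb_0^*m_k$.

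Second, I will use the geometry of $\mM_{k+1,l}(\b_0)$ in the two cases. Every stable map in $\mM_{3,0}(\b_0)$ or $\mM_{1,1}(\b_0)$ is a constant disk, so $evb_0^{\b_0}$ is a diffeomorphism onto $L$, the maps $evb_j^{\b_0}$ all coincide for $0\le j\le k$, and by Remark \ref{J for bottom cases b0 remark} we have $J_{k+1,l}^{\b_0}=\phi_{evb_0^{\b_0}}$. Moreover, the boundary transport map $c_{0j}$ of Definition \ref{boundary transport} is parallel transport along a constant path in $L$ and therefore equals the identity under the canonical identification $evb_j^*\rort=evb_0^*\rort$. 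In particular $\mI_2=\Id$ for $(k,l)=(2,0)$, while $\mI_0=\Id$ trivially when $(k,l)=(0,1)$.

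Combining these simplifications in Definition \ref{q orientor definition} yields
\[
\qor_{k,l}^{\b_0}=(\phi_{evb_0^{\b_0}})^\rort\bu evb_0^*m_k.
\]
It remains to match this with $\expinv{evb_0^{\b_0}}m_k=m_k\bu(\phi_{evb_0^{\b_0}})^{\rort^{\otimes k}}$ from Definition \ref{pullback of orientor}, where the prefactor $(-1)^{fG}$ vanishes since $evb_0^{\b_0}$ has relative dimension zero and $m_k$ has degree zero. Unwinding the two sides via Definition \ref{orientor composition} and the Koszul rules of Proposition \ref{Koszul signs}, one checks that both send a local section $a_1\otimes\cdots\otimes a_k$ of $evb_0^*\rort^{\otimes k}$ to $\mO_c^{evb_0^{\b_0}}\otimes m_k(a_1,\ldots,a_k)$ (and, for $k=0$, both send $1\in\underline\A$ to $\mO_c^{evb_0^{\b_0}}\otimes 1_L$). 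The main difficulty is not a genuine obstacle but purely notational bookkeeping, namely tracking which right-extension of $\phi_{evb_0^{\b_0}}$ — by $\rort$ or by $\rort^{\otimes k}$ — is applied where; once this is sorted out the two expressions are manifestly equal.
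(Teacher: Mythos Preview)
Your proof is correct and follows essentially the same route as the paper: both reduce $\qor_{k,l}^{\b_0}$ to $(\phi_{evb_0})^{\rort}\bu evb_0^*m_k$ by trivializing $\mS^{\b_0}$, $\Psi_{\b_0}$, $\mI_k$ and invoking Remark~\ref{J for bottom cases b0 remark}. The only difference is cosmetic: for the final identification with $\expinv{evb_0}m_k$ the paper cites Lemma~\ref{commutativity vertical pullback vs horizontal pullback orientors}, whereas you unwind both sides directly---which amounts to the same verification since all degrees are zero.
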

\begin{proof}
We have
\[
\mS^\b=\Id,\qquad \mI_{k}=\Id.
\]
The proof is immediate by Remark~\ref{J for bottom cases b0 remark}, followed by the calculation
\[
\qor_{k,l}^{\b_0}=\left(\phi_{evb_0}\right)^{\rort}\bu evb_0^*m_k\overset{\text{Lem. \ref{commutativity vertical pullback vs horizontal pullback orientors}}}=\expinv{evb_0}m_k.
\]
\end{proof}
\begin{notation}[Abuse of notation]\label{evb pullback drop notation}
When there is no ambiguity, we write $\Psi_\b,\mS^\b,m_k$ and $\rort,\lort$ for $evb^*\Psi_\b,evb^*\mS^\b,evb^*m_k$ and $evb^*\rort,evb^*\lort.$ If $G$ is one of $m_k,\Psi_\b,\mS^\b,\mI_k,\mJ_k^\b$ or $\qor_{k.l}^\b$, we also let $G$ denote the extension of scalars by ${\tilde\La[[t_1,...,t_N]]}$ on the right of $G.$
\end{notation}
\begin{remark}\label{forgetful maps on mI implies factorization of q remark}
Theorem~\ref{factorization q through forgetful theorem introduction} follows immediately from Lemma~\ref{properties of J lemma} and the following equation.
\[
\mI_{k+1}=(Fb)^*\mI_{k}\otimes c_{0,k+1}=\left((Fb)^*\mI_{k}\right)^{evb_0^*\rort}\bu {}^{(Fb)^*E_k}c_{0,k+1}.
\]
\end{remark}
% \subsection{Relations}

% \begin{theorem}\label{boundary of q}
% Let $k,l\geq0$ and $\b\in\Pi.$ Let $k_1,k_2\geq0$ be such that $k_1+k_2=k+1$, let $I\dot\cup J=[l]$ and let $\b_1+\b_2=\b$.
% The following equation of \orientor{\left(evb_0^{\b}\circ\iota\circ\vartheta\right)}s of $\vartheta^*\iota^*{evb^\b}^*\left(\underset{j=1}{\overset{k}\boxtimes}\rort\right)$
% to $\rort$ holds.
% \begin{equation*}
% \expinv{\vartheta}(\pa \qor_{k,l}^\b)=(-1)^s\qor_{k_1,I}^{\b_1}\bu {}^{E_1}\left(\expinv{\left(p_2/evb_i^{\b_1}\right)}\qor_{k_2,J}^{\b_2}\right)^{E_3}.
% \end{equation*}
% with
% \[
% s=i+ik_2+k+\d\mu(\b_1)\mu(\b_2).
% \]
% \end{theorem}

% \begin{theorem}\label{boundary of q k=-1}
% Let $l\geq 0$ and $\b\in \Pi$. Let $I\dot\cup J=[l]$ and $\b_1+\b_2=\b$. The following equation of \orientor{\pi^{\mM_1\times_L\mM_2}}s of $\underline{\A}$ to $\efield$ holds.
% \[\expinv{\vartheta}\left(\pa \qor_{-1,l}^\b\right)=-\Otm \bu m_2\bu
% \left(\qor_{0,I}^{\b_1}\right)^{\rort}\bu \expinv{\left(p_2/evb_0^{\b_1}\right)}\qor_{0,J}^{\b_2}
% \]
% \end{theorem}

\subsection{Proof for \texorpdfstring{$k\geq 0$}{k>-1}}
\label{proof of q-relations}
This section contains the proof of Theorem~\ref{boundary of q theorem introduction}
Let $P$ be an ordered 3-partition of $(1,...,k)$, i.e.
\begin{equation}
\label{3-partition example}
P=(1,...,i-1)\circ(i,...,i+k_2-1)\circ (i+k_2,...,k)=(1:3)\circ(2:3)\circ(3:3),
\end{equation}
and $I\dot\cup J$ be a partition of $[l]$.
Recall the following notation from Section~\ref{orientors for moduli introduction section}
\[E:=E^k:=E^k_L:=\underset{j=1}{\overset{k}\boxtimes} {\left(evb_j^{\b}\right)}^*\rort,\]
and
\begin{align*}
E_1^k:= \underset{j=1}{\overset{i-1}\boxtimes} {\left(evb_j^{\b_1}\right)}^*\rort,\qquad
E_2^k:=\underset{j=i}{\overset{k_2+i-1}\boxtimes} {\left(evb_{j+i-k_2}^{\b_2}\right)}^*\rort,\qquad
E_3^k:=\underset{j=k_2+i}{\overset{k_1}\boxtimes} {\left(evb_{j-k_2+1}^{\b_1}\right)}^*\rort.
\end{align*}
When this creates no ambiguity we write $E_i$ for $E^k_i$, for $i=1,2,3.$
Recall from Section~\ref{Moduli orientation} the gluing map $\vartheta^\b$,
\[
\mdl{1}\times_L\mdl{2}\overset{\vartheta^\b}\rightarrow B(\b),
\]
where $B(\b)$ is a vertical boundary component of $\pi^{\mM_{k+1,l}(\b)}$ from Section~\ref{Moduli spaces}.
For simplicity, we abbreviate
$\vartheta=\vartheta^\b$ and denote the inclusion $B(\b)\rightarrow \pa\mdl{3}$ by $\iota$. Moreover, we abbreviate $B:=B(\b)$. This notation replaces the notation from Section~\ref{Moduli orientation} since the latter is irrelevant in the current section.
In order to prove Theorem~\ref{boundary of q theorem introduction}. We need to consider how $\mI$ and $\mJ$ behave under partitions. Thus, we fix the following.

Let $P\in S_3[k],\quad I\dot\cup J=[l]$ be partitions, and $\b_1,\b_2\in\Pi$ such that $\b_1+\b_2=\b$. Let $\a=\a_1\otimes\cdots\otimes \a_k\in C^{\otimes k}$ and $\g=\g_1\otimes \cdots\otimes \g_l\in D^{\otimes l}$. Let $k_1=|(1:3)|+|(3:3)|+1$, $k_2=(2:3)$ and $i=|(1:3)|+1$.

Throughout this section, we abbreviate
\begin{align*}
    \begin{matrix}
    \mI=\mI_{k},&
    \mJ=\mJ_{k+1,l}^\b,& \qor=\qor_{k.l}^\b,
    \\
    \mI_1=\mI_{k_1},&
    \mJ^1=\mJ_{k_1+1,I}^{\b_1},& \qor_1=\qor_{k_1,I}^{\b_1},
    \\
    \mI_2=\mI_{k_2},&
    \mJ^2=\mJ_{k_2+1,J}^{\b_2},&
    \qor_2=\qor_{k_2,J}^{\b_2}.
    \end{matrix}
\end{align*}

\begin{remark}[Rotation around $\b_2$]
\label{rotate around b2 rort}
Recall Remark~\ref{boundary marked points on B} and Lemma~\ref{rotate around b2}:
\[
evb^\b_j|_B\circ \vartheta=
\begin{cases}
evb^{\b_1}_j\circ p_1,& 0\leq j\leq i-1,\\
evb^{\b_2}_{j-i+1}\circ p_2,& i\leq j\leq i+k_2-1,\\
evb^{\b_1}_{j-k_2+1}\circ p_1,& i+k_2\leq j\leq k.
\end{cases}
\]
\[
evb_i^{\b_1}\circ p_1=evb_0^{\b_2}\circ p_2
\]
\[
{\vartheta}^* c_{0j}=
\begin{cases}
p_1^*c_{0j}^{1},& 0\leq j\leq i-1,\\
p_1^*c^{1}_{0i}\circ p_2^*c^{2}_{0,j-i+1},& i\leq j\leq i+k_2-1,\\
\mS^{\b_2}\circ\,\, p_1^*c^{1}_{0,j-k_2+1},& i+k_2\leq j\leq k.
\end{cases}
\]
Note that we replaced $(-1)^{w_1(\pa\b_2)}$ by $\mS^{\b_2}$ to account for the degrees in $\rort$.
\end{remark}

\begin{lemma}[Boundary of $\mI_k$]
\label{Boundary of mI lemma} Let $P\in S_k[3],$ $I\dot\cup J=[l],$ $\b_1,\b_2\in \Pi$ be as in the previous notation.
Then, as \orientor{\Id_{\mM_1\times_L\mM_2}}s of $\vartheta^*\iota^*{evb^{\b}}^*\left(\underset{j=1}{\overset{k}\boxtimes}\rort\right)$ to $p_1^*evb_0^{\b_1}\left(\rort^{\otimes 3}\right)$, the following equation holds.
\[
\Big(m_{i-1}\otimes m_{k_2}\otimes m_{k_1-i}\Big)\bu \vartheta^*\iota^*\mI_{k}\!=\!\left(m_{i-1}\!\otimes \!\Id_{\rort}\!\otimes\! \left(\mS^{\b_2}\bu m_{k_1-i}\right)\right)\bu p_1^*\mI_{k_1}\bu \,\,{}^{p_1^*E_1\!\!}\left(m_{k_2}\bu p_2^*\mI_{k_2}\right)^{p_1^*E_3}
\]
That is, the following diagram of local systems is commutative without a sign.
\[
\begin{tikzcd}
p_1^* E_1\otimes p_2^*E_2\otimes p_1^* E_3
\ar[rrr,"\Id\otimes  \left(m_{k_2}\circ p_2^*\mI_{k_2}\right)\otimes \Id"]\ar[d,equal]&&&p_1^* E_1\otimes p_2^*{evb_0^{\b_2}}^*\rort\otimes p_1^* E_3\ar[d,equal]\\
\vartheta^*\iota^*{evb^\b}^*\left(\underset{j=1}{\overset{k}\boxtimes} \rort\right)\ar[d,"\vartheta^*\iota^*\mI_{k}"]&&&p_1^*{evb^{\b_1}}^*\Big(\underset{j=1}{\overset{k_1}\boxtimes}\rort\Big)\ar[d," p_1^*\mI_{k_1}"]\\
\vartheta^*\iota^*\bigotimes\limits_{j=1}^k{evb_0^{\b}}^*\rort\ar[rrd,"m_{i-1}\otimes m_{k_2}\otimes m_{k_1-i}",swap]&&&p_1^*\bigotimes\limits_{j=1}^{k_1}{evb_0^{\b_1}}^*\rort\ar[ld,"m_{i-1}\otimes \Id_\rort\otimes \left(\mS^{\b_2}\bu m_{k_1-i}\right)"]\\
&&p_1^*{evb_0^{\b_1}}^*\left(\rort^{\otimes 3}\right)
\end{tikzcd}
\]
\end{lemma}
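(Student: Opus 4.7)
The plan is to unfold both sides of the identity as tensors of explicit transport maps and verify the equality block-by-block, using the three-way split of the $k$ boundary marked points induced by the partition $P$. By Definition~\ref{mI definition}, $\mI_k=\bigotimes_{j=1}^k c_{0j}$, so $\vartheta^*\iota^*\mI_k=\bigotimes_{j=1}^k \vartheta^* c_{0j}$. Applying Remark~\ref{rotate around b2 rort} in each of the three index ranges $(1{:}3),(2{:}3),(3{:}3)$ rewrites this tensor as a product of explicit factors of $p_1^*c^1_{0\ast}$, $p_2^*c^2_{0\ast}$, and $\mS^{\b_2}$ over $\mM_1\times_L\mM_2$. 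Since every map in sight has degree zero, no Koszul signs arise, and the claim reduces to an identification of these tensor products block by block.

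First I would treat each of the three blocks separately after post-composing with the appropriate $m$. The first block, after composing with $m_{i-1}$, reads $m_{i-1}\circ\bigotimes_{j=1}^{i-1} p_1^*c^1_{0j}$, which is precisely the first tensor factor on the right-hand side (with the $c^1_{0j}$'s interpreted as the first $i-1$ factors of $p_1^*\mI_{k_1}$). For the middle block, the common factor $p_1^*c^1_{0i}$ appearing in each $\vartheta^*c_{0j}=p_1^*c^1_{0i}\circ p_2^*c^2_{0,j-i+1}$ can be pulled out of the tensor as $(p_1^*c^1_{0i})^{\otimes k_2}$; because $c^1_{0i}$ is a parallel transport and therefore an algebra homomorphism of $\rort$, it satisfies $m_{k_2}\circ (c^1_{0i})^{\otimes k_2}=c^1_{0i}\circ m_{k_2}$, so the block simplifies to $p_1^*c^1_{0i}\bu m_{k_2}\bu p_2^*\mI_{k_2}$. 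The factored-out $c^1_{0i}$ is exactly the $i$-th factor of $p_1^*\mI_{k_1}$ composed afterwards with $\Id_\rort$, as appears in the right-hand side.

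For the third block, I would similarly factor $(\mS^{\b_2})^{\otimes(k_1-i)}$ out of $\bigotimes_{j=i+k_2}^{k}(\mS^{\b_2}\circ p_1^*c^1_{0,j-k_2+1})$. Then, applying Lemma~\ref{m commutes with S lemma} iteratively, one obtains $m_{k_1-i}\circ(\mS^{\b_2})^{\otimes(k_1-i)}=\mS^{\b_2}\circ m_{k_1-i}$, so after post-composing with $m_{k_1-i}$ this block becomes $(\mS^{\b_2}\bu m_{k_1-i})\circ\bigotimes_{j'=i+1}^{k_1} p_1^*c^1_{0j'}$, matching the third tensor factor on the right-hand side (with the remaining $c^1_{0j'}$'s being the factors $j'=i+1,\ldots,k_1$ of $p_1^*\mI_{k_1}$).

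The main obstacle is almost entirely notational: tracking the reindexing between boundary labels on $\mM$ and the corresponding labels on $\mM_1$ and $\mM_2$ (using the identifications of Remark~\ref{boundary marked points on B}), and verifying that after extracting $p_1^*c^1_{0i}$ from the middle block and $p_1^*c^1_{0,j-k_2+1}$ from the third block, the leftover factors of $c^1_{0j}$ together with those from the first block reassemble precisely into $p_1^*\mI_{k_1}$ with the $i$-th factor being the extracted $c^1_{0i}$. Once the indices line up, the equation is immediate from the two algebra identities invoked above, and the commutativity of the displayed diagram follows.
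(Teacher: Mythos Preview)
Your proof is correct and is precisely an expanded version of the paper's argument. The paper's proof consists of the single sentence ``The lemma follows from Remark~\ref{rotate around b2},'' which is exactly the computation you carry out block by block: unfold $\mI_k=\bigotimes_j c_{0j}$, apply the three cases of Remark~\ref{rotate around b2 rort}, and use the degree-zero algebra identities $m_{k_2}\circ(c^1_{0i})^{\otimes k_2}=c^1_{0i}\circ m_{k_2}$ and $m_{k_1-i}\circ(\mS^{\b_2})^{\otimes(k_1-i)}=\mS^{\b_2}\circ m_{k_1-i}$ to reassemble the pieces into the right-hand side.
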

\begin{proof}
The lemma follows from Remark~\ref{rotate around b2}.
\end{proof}
\begin{lemma}
\label{boundary transport commutes with b2 splitor lemma}Let $a\in \Z$.
As \orientor{\Id_{\mM_1}}s of ${evb_0^{\b_1}}^*\rort$ to ${evb_0^{\b_1}}^*\left(\lort^{\b_2}\otimes \rort\right)$, the following equation holds.
\begin{equation}
\label{boundary transport commutes with b2 splitor equation}
\left(c_{0i}^{\rort}\bu \,\,{}^{\lort^{a}}c_{0i}\right)\bu {evb_i^{\b_1}}^*\Psi_{a}\bu c_{i0}= {evb_0^{\b_1}}^*\Psi_{a}
\end{equation}
\end{lemma}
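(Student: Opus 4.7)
Both sides are \orientor{\Id_{\mM_1}}s of ${evb_0^{\b_1}}^*\rort$ to ${evb_0^{\b_1}}^*(\lort^a\otimes\rort)$, so it suffices to check equality on local sections of the form $\mO_0^r$, $r\in\Z$, which generate ${evb_0^{\b_1}}^*\rort$ as a local system. Fix a local section $\mO_0$ of ${evb_0^{\b_1}}^*\lort$, and set $\mO_i:=c_{i0}(\mO_0)\in{evb_i^{\b_1}}^*\lort$. Since by Definition~\ref{boundary transport} and the conventions following it the orientor $c_{ij}$ acts on $\lort^r$ by $c_{ij}^{\otimes r}$, we have $c_{i0}(\mO_0^r)=\mO_i^r$ and $c_{0i}(\mO_i^r)=\mO_0^r$ for every $r$.

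The computation then runs through the four factors in order:
\[
\mO_0^r\;\xmapsto{\,c_{i0}\,}\;\mO_i^r\;\xmapsto{\,\Psi_a\,}\;\mO_i^a\otimes\mO_i^{r-a}\;\xmapsto{\,{}^{\lort^a}\!c_{0i}\,}\;\mO_i^a\otimes\mO_0^{r-a}\;\xmapsto{\,c_{0i}^\rort\,}\;\mO_0^a\otimes\mO_0^{r-a},
\]
whose net effect matches ${evb_0^{\b_1}}^*\Psi_a(\mO_0^r)=\mO_0^a\otimes\mO_0^{r-a}$ directly from Definition~\ref{q orientor definition} and the defining formula for $\Psi_a$.

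The only thing to verify is that no Koszul signs obstruct this naive calculation. Every orientor appearing in the equation is an \orientor{\Id_{\mM_1}} of degree $0$: $c_{0i}$ and $c_{i0}$ are degree-$0$ isomorphisms by construction, $\Psi_a$ preserves total degree in $\rort$ since $\mO^a\otimes\mO^{r-a}$ has the same degree as $\mO^r$, and the tensor extensions ${}^{\lort^a}(-)$ and $(-)^\rort$ from Definition~\ref{tensored orientor extension} preserve degree. Furthermore, $\zcort{\Id_{\mM_1}}$ is canonically the trivial $\A$-line concentrated in degree $0$, so the symmetry isomorphism $\tau$ appearing in the definition of the left extension introduces no sign, and by Proposition~\ref{Koszul signs} the $\bu$-composition of degree-$0$ orientors is sign-free. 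I expect this bookkeeping to be the only content of the proof; there is no conceptual obstacle, just the need to be precise about which copy of $\lort^a$ (over $evb_i$ or $evb_0$) sits at each stage of the computation.
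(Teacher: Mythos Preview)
Your proposal is correct and follows exactly the approach the paper takes: the paper's proof reads in its entirety ``This follows from an algebraic calculation on local sections $\mO^r$ of $\rort$,'' and you have spelled out that calculation explicitly, including the (correct) verification that all Koszul signs vanish because every orientor involved has degree~$0$. One minor slip: your reference to Definition~\ref{q orientor definition} for the formula $\Psi_a(\mO^r)=\mO^a\otimes\mO^{r-a}$ points to the wrong place; that formula is the defining equation of the $a$-splitor in Section~\ref{rort_L section}, not the definition of $\qor$.
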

\begin{proof}
This follows from an algebraic calculation on local sections $\mO^r$ of $\rort$.
\end{proof}
\begin{corollary}
\label{boundary transport commutes with b2 splitor corollary}
The following equation of \orientor{\Id_{\mM_1}}s from $\rort\otimes \rort$ to $\lort^{\b_2}\otimes \rort$ holds.
\begin{align*}
{}^{\lort^{\b_2}} c_{0i}\bu  {evb_i^{\b_1}}^*\Psi_{\b_2}\bu c_{i0}\bu m
={}^{\lort^{\b_2}} m\bu \left(\tau_{\rort,\lort^{\b_2}}\right)^{\rort}\bu \,\,{}^{\rort}\!\!\left({}^{\lort^{\b_2}} c_{0i}\bu {evb_i^{\b_1}}^*\Psi_{\b_2}\bu c_{i0}\right)\bu \left(\mS^{\b_2}\right)^{\rort}
\end{align*}
\end{corollary}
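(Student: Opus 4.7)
The plan is to recognize the stated identity as the conjugate by boundary transport of Lemma~\ref{splitor commutativity}, with Lemma~\ref{boundary transport commutes with b2 splitor lemma} providing the explicit conjugation. Denote by
\[
T:={}^{\lort^{\b_2}}c_{0i}\bu {evb_i^{\b_1}}^*\Psi_{\b_2}\bu c_{i0}
\]
the subexpression common to both sides of the desired equality. The preparatory step is to rewrite $T$ so that a single copy of ${evb_0^{\b_1}}^*\Psi_{\b_2}$ is exposed. Lemma~\ref{boundary transport commutes with b2 splitor lemma} with $a=\mu(\b_2)$ gives $c_{0i}^\rort\bu T = {evb_0^{\b_1}}^*\Psi_{\b_2}$, and since $c_{i0}\bu c_{0i}=\Id$ by Remark~\ref{group-like transport} (combined with $c_{00}=\Id$), composing on the left with $c_{i0}^\rort$ yields
\[
T=c_{i0}^\rort\bu {evb_0^{\b_1}}^*\Psi_{\b_2},
\]
where here $c_{i0}^\rort$ transports only the $\lort^{\b_2}$ factor from $evb_0^{\b_1}$ to $evb_i^{\b_1}$.

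Next I would substitute this expression for $T$ into both sides. Applying Lemma~\ref{splitor commutativity} with $a=\mu(\b_2)$ at the base point $evb_0^{\b_1}$ turns the LHS into
\[
T\bu m = c_{i0}^\rort\bu {}^{\lort^{\b_2}}m\bu \left(\tau_{\rort,\lort^{\b_2}}\right)^\rort\bu {}^\rort {evb_0^{\b_1}}^*\Psi_{\b_2}\bu (\mS^{\b_2})^\rort,
\]
with the central $\lort^{\b_2}$ factors living at $evb_0^{\b_1}$. For the RHS, substituting the same expression for $T$ inside ${}^\rort T$ and distributing the left-$\rort$ extension via Lemma~\ref{base extension is distributive expinv} produces the same operator string, except that $c_{i0}^\rort$ appears in its ${}^\rort$-extended form between $\left(\tau_{\rort,\lort^{\b_2}}\right)^\rort$ and ${}^\rort {evb_0^{\b_1}}^*\Psi_{\b_2}$, and the $\lort^{\b_2}$ factors inside $\tau$ and ${}^{\lort^{\b_2}}m$ then sit at $evb_i^{\b_1}$.

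The proof thus reduces to the naturality identity
\[
c_{i0}^\rort\bu {}^{\lort^{\b_2}}m\bu \left(\tau_{\rort,\lort^{\b_2}}\right)^\rort = {}^{\lort^{\b_2}}m\bu \left(\tau_{\rort,\lort^{\b_2}}\right)^\rort\bu {}^\rort c_{i0}^\rort
\]
of orientors $\rort\otimes\lort^{\b_2}\otimes\rort\to \lort^{\b_2}\otimes\rort$, with the $\lort^{\b_2}$ on the left pulled back from $evb_0^{\b_1}$ and on the right from $evb_i^{\b_1}$. Since $c_{i0}$ has degree $0$ and acts only on the $\lort^{\b_2}$ factor, while $m$ and $\tau$ act only on the two $\rort$ factors, this identity follows by direct verification on homogeneous sections: both compositions send $a\otimes\mO\otimes b$ to $(-1)^{|a|\mu(\b_2)}c_{i0}(\mO)\otimes m(a,b)$. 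The principal bookkeeping hurdle throughout is tracking which evaluation map each $\lort^{\b_2}$ factor is pulled back from; once this is kept straight, the verification is formal.
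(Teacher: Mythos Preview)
Your proof is correct and follows essentially the same approach as the paper: both use Lemma~\ref{boundary transport commutes with b2 splitor lemma} to pass between $T$ and ${evb_0^{\b_1}}^*\Psi_{\b_2}$, apply Lemma~\ref{splitor commutativity}, and then reduce to the naturality of $c_{i0}$ with respect to $m$ and $\tau$ (which the paper handles via Lemmas~\ref{orientors symmetry koszul} and~\ref{commutativity vertical pullback vs horizontal pullback orientors}, and you verify directly on sections). One small correction: where you distribute the left-$\rort$ extension over a composition, the relevant lemma is Lemma~\ref{orientors tensor distributivity}, not Lemma~\ref{base extension is distributive expinv}.
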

\begin{proof}
We calculate
\begin{align}
\notag c_{0i}^\rort\bu{}^{\lort^{\b_2}} c_{0i}\bu  {evb_i^{\b_1}}^*\Psi_{\b_2}\bu c_{i0}\bu m
\overset{\text{Lem.~\ref{boundary transport commutes with b2 splitor lemma}}}=&\,\, {evb_0^{\b_1}}^*\Psi_{\b_2}\bu m\\
\notag\overset{\text{Lem.~\ref{splitor commutativity}}}=&\,\,{}^{\lort^{\b_2}}m\bu \left(\tau_{\rort,\lort^{\b_2}}\right)^{\rort}\bu \,{}^\rort\!{evb_0^{\b_1}}^*\Psi_{\b_2}\bu \left(\mS^a\right)^\rort\\
\label{boundary transport commutes with b2 splitor corollary proof equation 1}\overset{\text{Lem.~\ref{boundary transport commutes with b2 splitor lemma}}}=&\,\,{}^{\lort^{\b_2}}m\bu \left(\tau_{\rort,\lort^{\b_2}}\right)^{\rort}\bu\\
\notag&\bu\,{}^\rort\!\left(\left(c_{0i}^{\rort}\bu \,\,{}^{\lort^{\b_2}}c_{0i}\right)\bu {evb_i^{\b_1}}^*\Psi_{\b_2}\bu c_{i0}\right)\bu \left(\mS^{\b_2}\right)^\rort.
\end{align}
However, by Lemma~\ref{orientors symmetry koszul} applied to the map $\Id_{\mM_1}$, we see that
\[
\tau_{\rort,\lort^{\b_2}}\bu \,\,{}^\rort\!c_{0i}=c_{0i}^\rort\bu \tau_{\rort,\lort^{\b_2}},
\]
and therefore,
\begin{equation}
\left(\tau_{\rort,\lort^{\b_2}}\right)^\rort\bu \,\,{}^\rort\!c_{0i}^\rort=c_{0i}^{\rort\otimes \rort}\bu \left(\tau_{\rort,\lort^{\b_2}}\right)^\rort.
\label{boundary transport commutes with b2 splitor corollary proof equation 2}    
\end{equation}
Moreover, by Lemma~\ref{commutativity vertical pullback vs horizontal pullback orientors} applied to the diagram all arrows of which are the identity on $\mM_1$, we see that
\begin{equation}
    \label{boundary transport commutes with b2 splitor corollary proof equation 3}
    {}^{\lort^{\b_2}}m\bu c_{0i}^{\rort\otimes \rort}=c_{0i}^\rort\bu {}^{\lort^{\b_2}}m.
\end{equation}
Combining equations~\eqref{boundary transport commutes with b2 splitor corollary proof equation 1},~\eqref{boundary transport commutes with b2 splitor corollary proof equation 2} and~\eqref{boundary transport commutes with b2 splitor corollary proof equation 3}, we obtain
\[
c_{0i}^\rort\bu{}^{\lort^{\b_2}} c_{0i}\bu  {evb_i^{\b_1}}^*\Psi_{\b_2}\bu c_{i0}\bu m=
c_{0i}^\rort\bu {}^{\lort^{\b_2}}m\bu \tau_{\rort,\lort^{\b_2}}\bu  \,{}^\rort\!\left({}^{\lort^{\b_2}}c_{0i}\bu {evb_i^{\b_1}}^*\Psi_{\b_2}\bu c_{i0}\right)\bu \left(\mS^{\b_2}\right)^\rort.
\]
Since $c_{0i}$ is invertible, we conclude the corollary.
\end{proof} 
\begin{lemma}
\label{extend m2 to m3 lemma}
Let $Q,K$ be local systems over $\mM$ and let $F,G$ be \orientor{\Id_{\mM}}s of $Q\otimes \rort$ to $K\otimes \rort$, such that
\[
{}^Km_2\bu F^{\rort}=G\bu {}^Qm_2.
\]
Then
\[
{}^Km_3\bu F^{\rort\otimes \rort}=G\bu {}^Qm_3.
\]
\end{lemma}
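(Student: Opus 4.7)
The plan is to exploit associativity of the multiplication on $\rort$, which yields the orientor identity $m_3 = m_2 \bu {}^\rort m_2$, and then apply the hypothesis after suitably re-bracketing. First I would use Lemma~\ref{orientors tensor distributivity} together with the ``repeated extension'' remark following Definition~\ref{tensored orientor extension} to obtain the decompositions
\[
{}^K m_3 = {}^K m_2 \bu {}^{K \otimes \rort} m_2, \qquad {}^Q m_3 = {}^Q m_2 \bu {}^{Q \otimes \rort} m_2,
\]
and the identity $F^{\rort \otimes \rort} = (F^\rort)^\rort$.

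The crucial intermediate step is to establish the commutativity
\[
{}^{K \otimes \rort} m_2 \bu (F^\rort)^\rort = F^\rort \bu {}^{Q \otimes \rort} m_2
\]
as \orientor{\Id_\mM}s of $Q \otimes \rort^{\otimes 3}$ to $K \otimes \rort^{\otimes 2}$. Intuitively, $F$ acts on the first two tensor factors while $m_2$ acts on the last two; since $m_2$ has degree zero, no Koszul sign arises when the two operations are swapped. Formally, I would verify this by evaluating on local sections: both sides send $q \otimes a \otimes b \otimes c$ to $F(q \otimes a) \otimes m_2(b \otimes c)$.

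Combining the preceding pieces, the lemma then follows from the chain
\begin{align*}
{}^K m_3 \bu F^{\rort \otimes \rort} &= {}^K m_2 \bu {}^{K \otimes \rort} m_2 \bu (F^\rort)^\rort \\
&= {}^K m_2 \bu F^\rort \bu {}^{Q \otimes \rort} m_2 \\
&= G \bu {}^Q m_2 \bu {}^{Q \otimes \rort} m_2 \\
&= G \bu {}^Q m_3,
\end{align*}
where the first and last steps use the $m_3$ decomposition, the second is the commutativity established above, and the third invokes the hypothesis. The main obstacle is the commutativity step, but it is mild because $m_2$ has degree zero and the two operations act on disjoint tensor factors, so no subtle sign bookkeeping is required.
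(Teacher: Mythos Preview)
Your proposal is correct and follows essentially the same route as the paper: decompose $m_3 = m_2 \bu {}^\rort m_2$, establish the commutativity ${}^{K\otimes\rort}m_2 \bu F^{\rort\otimes\rort} = F^\rort \bu {}^{Q\otimes\rort}m_2$, and chain through using the hypothesis. The only difference is that the paper obtains the commutativity step by invoking Lemma~\ref{commutativity vertical pullback vs horizontal pullback orientors} applied to the square all of whose arrows are $\Id_\mM$, whereas you verify it directly on local sections using that $\deg m_2 = 0$; both justifications are valid and amount to the same observation.
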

\begin{proof}
By definition $m_3:=m_2\bu \,{}^{\rort}m_2.$ Therefore, by Lemma~\ref{commutativity vertical pullback vs horizontal pullback orientors} applied to the square diagram all arrows of which are $\Id_{\mM}$, it holds that
\[
\,{}^{K\otimes \rort}m_2\bu F^{\rort\otimes \rort}= F^{\rort}\bu {}^{Q\otimes \rort}m_2.
\]
Therefore,
\[
{}^Km_3\bu F^{\rort\otimes \rort}={}^Km_2\bu \,{}^{K\otimes \rort}m_2\bu F^{\rort\otimes \rort}={}^Km_2\bu F^{\rort}\bu {}^{Q\otimes \rort}m_2=G\bu \,{}^Q m_2\bu \,{}^{Q\otimes \rort}m_2=G\bu \,{}^Qm_3.
\]
\end{proof}
\begin{corollary}
\label{boundary transport commutes with b2 splitor m3 corollary}
The following equation of \orientor{\Id_{\mM_1}}s from $\rort\otimes \rort\otimes \rort$ to $\lort^{\b_2}\otimes \rort\otimes \rort$ holds.
\begin{multline*}
{}^{\lort^{\b_2}} c_{0i}\bu  {evb_i^{\b_1}}^*\Psi_{\b_2}\bu c_{i0}\bu m_3
\\={}^{\lort^{\b_2}} m_3\bu \left(\left(\tau_{\rort,\lort^{\b_2}}\right)^{\rort}\bu \,\,{}^{\rort}\!\!\left({}^{\lort^{\b_2}} c_{0i}\bu {evb_i^{\b_1}}^*\Psi_{\b_2}\bu c_{i0}\right)\bu \left(\mS^{\b_2}\right)^{\rort}\right)^\rort
\end{multline*}
\end{corollary}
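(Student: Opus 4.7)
Abbreviate
\[
F_0 := {}^{\lort^{\b_2}} c_{0i}\bu {evb_i^{\b_1}}^*\Psi_{\b_2}\bu c_{i0},\qquad F_1 := \left(\tau_{\rort,\lort^{\b_2}}\right)^{\rort}\bu \,{}^{\rort}F_0\bu (\mS^{\b_2})^{\rort},
\]
so that Corollary~\ref{boundary transport commutes with b2 splitor corollary} reads $F_0\bu m = {}^{\lort^{\b_2}}m\bu F_1$, and the statement to be proved reads $F_0\bu m_3 = {}^{\lort^{\b_2}}m_3\bu F_1^{\rort}$. The strategy is to mimic the proof of Lemma~\ref{extend m2 to m3 lemma}: expand $m_3 = m\bu \,{}^\rort m$, apply the two-input corollary, and then commute $F_1$ past the innermost $m$ using the disjoint-factor commutativity underlying the proof of Lemma~\ref{extend m2 to m3 lemma}.

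Concretely, I would chain
\[
F_0\bu m_3 \;=\; F_0\bu m\bu \,{}^\rort m \;\stackrel{\text{Cor.~\ref{boundary transport commutes with b2 splitor corollary}}}{=}\; {}^{\lort^{\b_2}}m\bu F_1\bu \,{}^\rort m.
\]
Since $F_1$ and $m$ act on disjoint tensor factors, Lemma~\ref{commutativity vertical pullback vs horizontal pullback orientors} applied to the trivial pullback square whose arrows are all $\Id_{\mM_1}$ (exactly as in the proof of Lemma~\ref{extend m2 to m3 lemma}) yields
\[
F_1\bu \,{}^\rort m \;=\; {}^{\lort^{\b_2}\otimes \rort}m\bu F_1^{\rort},
\]
so that $F_0\bu m_3 = {}^{\lort^{\b_2}}m\bu \,{}^{\lort^{\b_2}\otimes \rort}m\bu F_1^{\rort}$.

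It then remains to collapse the two left extensions of $m$ into a single left extension of $m_3$. Applying Lemma~\ref{orientors tensor distributivity} to $m\bu \,{}^\rort m$ with extension by $\lort^{\b_2}$ gives ${}^{\lort^{\b_2}}(m\bu \,{}^\rort m) = {}^{\lort^{\b_2}}m\bu \,{}^{\lort^{\b_2}}(\,{}^\rort m)$, and the repeated-extension identity (after Definition~\ref{tensored orientor extension}) identifies the latter factor with ${}^{\lort^{\b_2}\otimes \rort}m$. Since $m_3 = m\bu \,{}^\rort m$ by definition, this shows ${}^{\lort^{\b_2}}m\bu \,{}^{\lort^{\b_2}\otimes \rort}m = {}^{\lort^{\b_2}}m_3$, completing the proof. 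I do not anticipate a serious obstacle: the only mild subtlety is that the right-hand side of Corollary~\ref{boundary transport commutes with b2 splitor corollary} is packaged as ${}^{\lort^{\b_2}}m\bu F_1$ rather than $G\bu \,{}^Q m_2$, which prevents a verbatim invocation of Lemma~\ref{extend m2 to m3 lemma}, but the same bookkeeping nonetheless goes through.
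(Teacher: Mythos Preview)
Your overall plan matches the paper's: the paper simply cites Lemma~\ref{extend m2 to m3 lemma} applied to Corollary~\ref{boundary transport commutes with b2 splitor corollary}, and you correctly unpack what that should mean and correctly flag that the lemma does not apply verbatim. However, your justification for the key commutation
\[
F_1\bu {}^\rort m \;=\; {}^{\lort^{\b_2}\otimes\rort}m\bu F_1^\rort
\]
has a gap. You invoke Lemma~\ref{commutativity vertical pullback vs horizontal pullback orientors} on the grounds that ``$F_1$ and $m$ act on disjoint tensor factors,'' but they do not. On local sections one has $F_1(a\otimes b)=\ell_b\otimes a\otimes r_b$ with $F_0(b)=\ell_b\otimes r_b$, so $F_1$ genuinely mixes both input slots; in particular $F_1$ is not of the form $H^\rort$ for any $H:\rort\to\lort^{\b_2}\otimes\rort$. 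That right-extension structure is precisely what the proof of Lemma~\ref{extend m2 to m3 lemma} uses when it feeds the \emph{underlying} $F$ (not $F^\rort$) into Lemma~\ref{commutativity vertical pullback vs horizontal pullback orientors}; without such an $H$ there is nothing to feed in here, and a generic $F_1$ would falsify the displayed identity.

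The identity you need is nevertheless true, but for a reason specific to $F_0$: the $\lort^{\b_2}$-component $\ell_b$ of $F_0(b)$ does not depend on $b$. Equivalently, $F_0\bu m={}^{\lort^{\b_2}}m\bu F_0^\rort$, which one verifies on local sections exactly as in Lemma~\ref{boundary transport commutes with b2 splitor lemma} (or directly from the definition of $\Psi_{\b_2}$). Once this is established your chain concludes as written; the remaining step ${}^{\lort^{\b_2}}m\bu{}^{\lort^{\b_2}\otimes\rort}m={}^{\lort^{\b_2}}m_3$ is exactly Lemma~\ref{orientors tensor distributivity} plus the repeated-extension identity, as you say.
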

\begin{proof}
This is a consequence of Lemma~\ref{extend m2 to m3 lemma} applied to Corollary~\ref{boundary transport commutes with b2 splitor corollary}.
\end{proof}
\begin{lemma}[Boundary of $\mJ_{k+1,l}^\b$]\label{boundary of mJ lemma}
Let $\rort_1,\rort_2,\rort_3$ be copies of $\rort.$ As \orientor{evb_0^{\b}\circ\iota\circ \vartheta}s \[
\vartheta^*\iota^*\left(evb_0^\b\right)^*\left({\rort_1}\otimes {\rort_2}\otimes {\rort_3}\right)\to \cort{evb_0^{\b}\circ\iota\circ \vartheta}\otimes \vartheta^*\iota^*\left(evb_0^\b\right)^*\rort,
\]
the following equations hold.
\begin{align*}
\label{boundary of mJ equation}
\expinv{\vartheta}(\pa \mJ)\bu m_3=&(-1)^s\mJ^1\bu m_3\bu \,{}^{{\rort_1}}\!\!\left(c_{0i}\bu \expinv{\left(p_2/evb_i^{\b_1}\right)}\mJ^2\bu c_{i0}\right){}^{{\rort_3}}\bu \left(\mS^{\b_2}\right)^{{\rort_2}\otimes {\rort_3}},\\
\expinv{\vartheta}(\pa \mJ\bu \mS^{\b})\bu m_3=&(-1)^s\mJ^1\bu\mS^{\b_1}\bu m_3\bu \\
&\quad\bu\,{}^{{\rort_1}}\!\!\left(c_{0i}\bu \expinv{\left(p_2/evb_i^{\b_1}\right)}\mJ^2\bu\mS^{\b_2}\bu c_{i0}\right){}^{{\rort_3}}\bu {}^{{\rort_1}\otimes {\rort_2}}\!\!\left(\mS^{\b_2}\right),
\end{align*}
with
\[
s=i+ik_2+k+\d\mu(\b_1)\mu(\b_2).
\]
\end{lemma}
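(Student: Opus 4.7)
The plan is to unfold the definition $\mJ = J^\rort\bu evb_0^*\Psi_\b$, reduce the boundary computation for $\mJ$ to Theorem~\ref{expinv of boundary of J by thetabeta theorem}, and then repackage the result using the splitting and commutation identities for $\Psi$, $m$, $c$ and $\mS$ from Section~\ref{rort_L section}. Since $evb_0^*\Psi_\b$ is an \orientor{\Id_\mM} of degree zero, Lemma~\ref{boundary of composition orientor} applied to $\mJ = J^\rort\bu evb_0^*\Psi_\b$ gives $\pa\mJ = \pa(J^\rort)\bu (evb_0\circ\iota)^*\Psi_\b$, and Lemma~\ref{orientors tensor distributivity} identifies $\pa(J^\rort)$ with $(\pa J)^\rort$. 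Pulling back along the local diffeomorphism $\vartheta$ via Remark~\ref{pullback of composition of orientors by local diffeomorphism} and using $evb_0\circ\iota\circ\vartheta = evb_0^{\b_1}\circ p_1$ (Remark~\ref{boundary marked points on B}), I obtain
\[
\expinv\vartheta(\pa\mJ) \;=\; (\expinv\vartheta\pa J)^\rort \bu (evb_0^{\b_1}\circ p_1)^*\Psi_\b.
\]

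To apply Theorem~\ref{expinv of boundary of J by thetabeta theorem}, which computes $\expinv\vartheta\pa J\bu m$ rather than $\expinv\vartheta\pa J$ alone, I supply an $m$ from the adjacent $\Psi_\b$ factor by splitting it via Lemma~\ref{iterated splitor}:
\[
\Psi_\b \;=\; m^\rort \bu\, {}^{\lort^{\b_1}}\Psi_{\b_2}\bu \Psi_{\b_1}.
\]
Substituting and using Lemma~\ref{orientors tensor distributivity} to pull out the outer $\rort$-extension, the composite $(\expinv\vartheta\pa J\bu m)^\rort$ is replaced by $(-1)^s\bigl(J^{\b_1}\bu(\expinv{(p_2/evb_i^{\b_1})}J^{\b_2})^{\lort^{\b_1}}\bu(p_1^*c_{i0})^{\lort^{\b_1}}\bigr)^\rort$ with $s = i+ik_2+k+\d\mu(\b_1)\mu(\b_2)$. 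Next I compose on the right with $m_3$ and distribute the residual factors $\Psi_{\b_1}$ and $\Psi_{\b_2}$ through $m_3 = m\bu\,{}^{\rort_1}m$ using Lemma~\ref{splitor commutativity} and Corollary~\ref{trice iterated splitor corollary}, extended to three factors by Lemma~\ref{extend m2 to m3 lemma}. This routes $\Psi_{\b_1}$ so that it is absorbed into $\mJ^1 = (J^{\b_1})^\rort\bu\Psi_{\b_1}$, and routes $\Psi_{\b_2}$ into the $\mM_2$-branch between the transports $c_{0i}$ and $c_{i0}$.

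To finish equation~1, I commute this residual $\Psi_{\b_2}$ past the transports $c_{0i}$ and $c_{i0}$ so that it can be absorbed into $\mJ^2$. Corollary~\ref{boundary transport commutes with b2 splitor m3 corollary} performs exactly this commutation, and it generates precisely the twistor $(\mS^{\b_2})^{\rort_2\otimes\rort_3}$ appearing on the right-hand side of equation~1. Since $\Psi_\b, \Psi_{\b_j}, m_k, c_{ij}, \mS^{\b_j}$ are all of degree zero, no additional Koszul signs are produced by any of these rearrangements, and the sign $(-1)^s$ is exactly the one inherited from Theorem~\ref{expinv of boundary of J by thetabeta theorem}. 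Then $(J^{\b_2})^\rort\bu\Psi_{\b_2}$ reassembles into $\mJ^2$ inside the fiber-product pullback, and equation~1 follows.

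Equation~2 is deduced from equation~1 by precomposing with $\mS^\b$. Since $\mu(\b)=\mu(\b_1)+\mu(\b_2)$, I have $\mS^\b=\mS^{\b_1}\bu \mS^{\b_2}$, and iterating Lemma~\ref{m commutes with S lemma} rewrites $\mS^\b\bu m_3$ as $m_3$ followed by $\mS^{\b_1}\bu\mS^{\b_2}$ applied to each of $\rort_1,\rort_2,\rort_3$ separately. The three $\mS^{\b_1}$-twists recombine at the output of $m_3$ (again by Lemma~\ref{m commutes with S lemma}) to yield the factor $\mJ^1\bu\mS^{\b_1}$. Among the three $\mS^{\b_2}$-twists, the one on $\rort_1$ cancels the pre-existing $(\mS^{\b_2})^{\rort_2\otimes\rort_3}$ from equation~1 (using $\mS^{\b_2}\bu\mS^{\b_2}=\Id$), the one on $\rort_2$ commutes past $c_{i0}$ and $\expinv{(p_2/evb_i^{\b_1})}$ (all of which preserve degree) to become $\mJ^2\bu\mS^{\b_2}$, and the one on $\rort_3$ remains as ${}^{\rort_1\otimes\rort_2}\mS^{\b_2}$. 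The principal obstacle throughout is the Koszul-sign bookkeeping in the middle step, ensuring that the $\Psi_\b$ split routes its two halves to the correct $J^{\b_j}$ operators while the residual rearrangement with the transports $c_{0i}, c_{i0}$ generates precisely the stated $\mS^{\b_2}$-twistor via Corollary~\ref{boundary transport commutes with b2 splitor m3 corollary}.
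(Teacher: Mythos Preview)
Your approach is the same as the paper's: unfold $\mJ=J^\rort\bu\Psi_\b$, take boundary and pullback, invoke Theorem~\ref{expinv of boundary of J by thetabeta theorem}, then repackage via the splitors and transports to reassemble $\mJ^1,\mJ^2$; equation~2 is then obtained from equation~1 by distributing $\mS^\b=\mS^{\b_1}\bu\mS^{\b_2}$ through $m_3$ using Lemma~\ref{m commutes with S lemma} and the commutativity of $\mS$ with $c_{0i}$.

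There is one concrete slip. Theorem~\ref{expinv of boundary of J by thetabeta theorem} composes $\expinv\vartheta\pa J$ with the multiplication $m_2:\lort^{\b_2}\otimes\lort^{\b_1}\to\lort^\b$ in the order $\b_2$ then $\b_1$ (see the statement of Lemma~\ref{axiomb1b2}). Your splitting $\Psi_\b=m^\rort\bu{}^{\lort^{\b_1}}\Psi_{\b_2}\bu\Psi_{\b_1}$ produces $m$ in the opposite order $\lort^{\b_1}\otimes\lort^{\b_2}$, so the theorem cannot be substituted directly; swapping the factors would cost a sign $(-1)^{\mu(\b_1)\mu(\b_2)}$ via equation~\eqref{noncommutativity}, which you have not accounted for and which does not in general cancel against $(-1)^{\d\mu(\b_1)\mu(\b_2)}$. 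The paper instead uses the instance $\Psi_\b=m^\rort\bu{}^{\lort^{\b_2}}\Psi_{\b_1}\bu\Psi_{\b_2}$ of Lemma~\ref{iterated splitor}. This supplies $m$ in the correct order and, after one commutation of $\Psi_{\b_1}$ past $J^{\b_2}$ (equation~\eqref{J2 commutes with Psi1 equation}), places $\Psi_{\b_1}$ adjacent to $J^{\b_1}$ so that $\mJ^1$ assembles immediately; only $\Psi_{\b_2}$ then needs to be threaded through the transports via Corollary~\ref{boundary transport commutes with b2 splitor m3 corollary}. With this ordering the citation of Corollary~\ref{trice iterated splitor corollary} becomes unnecessary. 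Finally, the step in which $\expinv{(p_2/evb_i^{\b_1})}J^{\b_2}$ is commuted past $m_3$, $\tau_{\rort,\lort^{\b_2}}$ and $c_{0i}$ to reassemble $\mJ^2$ is not automatic; the paper carries this out explicitly via equations~\eqref{J2 commutes with m3 equation}--\eqref{J2 commutes with c0i equation}, each an instance of Lemma~\ref{commutativity vertical pullback vs horizontal pullback orientors} or Lemma~\ref{orientors symmetry koszul}.
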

\begin{proof}[Proof of Lemma \ref{boundary of mJ lemma}]
Consider the first equation.
It follows from Lemma~\ref{boundary of composition orientor} and Example~\ref{trivial expinv is pullback} applied to the diagram
\[
\begin{tikzcd}
B\ar[r,"\iota"]\ar[d,equal]&\mM\ar[d,equal]\\
B\ar[r,"\iota"]&\mM\ar[r,"evb_0^\b"]&L
\end{tikzcd}
\]
that
\[
\pa\mJ=(\pa J)^{\rort}\bu \iota^*{evb_0^\b}^*\Psi_\b.
\]
Similarly, it follows from Remark~\ref{pullback of composition of orientors by local diffeomorphism} and Example~\ref{trivial expinv is pullback} applied to the diagram
\[
\begin{tikzcd}
\mM_1\times_L\mM_2\ar[r,"\vartheta"]\ar[d,equal]&B\ar[d,equal]\\
\mM_1\times_L\mM_2\ar[r,"\vartheta"]&B\ar[r,"evb_0^\b\circ \iota"]&L
\end{tikzcd}
\]
that
\begin{align}
\label{pullback of boundary of mJ initial equation}
\expinv{\vartheta}\left(\pa \mJ\right)=\left(\expinv{\vartheta}\left(\pa J\right)\right)^{\rort}\bu \vartheta^*\iota^*{evb_0^\b}^*\Psi_\b.
\end{align}

We investigate $\vartheta^*\iota^*{evb_0^\b}^*\Psi_\b.$
By Lemma~\ref{iterated splitor},
\[
\Psi_\b=m_2^\rort\bu \,\,{}^{\lort^{\b_2}}\Psi_{\b_1}\bu \Psi_{\b_2}.
\]
Keeping in mind Example~\ref{trivial expinv is pullback}, Lemma~\ref{commutativity vertical pullback vs horizontal pullback orientors} applied to the pullback diagram all arrows of which are the identity on $\mM_1$ gives
\begin{equation}
(c_{0i})^{\lort^{\b_1}\otimes\rort}\bu {evb_0^{\b_1}}^*\left({}^{\lort^{\b_2}}\Psi_{\b_1}\right)={evb_0^{\b_1}}^*\left({}^{\lort^{\b_2}}\Psi_{\b_1}\right)\bu (c_{0i})^{\rort}.
\label{boundary transport commutes with b1 splitor equation}    
\end{equation}

Therefore,
\begin{align*}
\vartheta^*\iota^* {evb_0^\b}^*\Psi_\b\overset{\text{Lem.~\ref{iterated splitor}}}=&\vartheta^*\iota^* {evb_0^\b}^*\left(m_2^\rort\bu \,\,{}^{\lort^{\b_2}}\Psi_{\b_1}\bu \Psi_{\b_2}\right)\\
\overset{\text{Rmk.~\ref{rotate around b2 rort}}}=&p_1^*{evb_0^{\b_1}}^*(m_2^{\rort})\bu p_1^*{evb_0^{\b_1}}^*\left({}^{\lort^{\b_2}}\Psi_{\b_1}\right)\bu p_1^*{evb_0^{\b_1}}^*\Psi_{\b_2}
\\
\overset{\text{Lem.~\ref{boundary transport commutes with b2 splitor lemma}}}=&
p_1^*{evb_0^{\b_1}}^*m_2^\rort\bu p_1 {evb_0^{\b_1}}^*\left({}^{\lort^{\b_2}}\Psi_{\b_1}\right)\bu p_1^*\left(c_{0i}^{\rort}\bu \,\,{}^{\lort^{\b_2}}c_{0i}\right)\bu \\&\qquad\bu p_2^*{evb_0^{\b_2}}^*\Psi_{\b_2}\bu p_1^*c_{i0}\\
\overset{\text{eq.~\eqref{boundary transport commutes with b1 splitor equation}}}=&
p_1^*{evb_0^{\b_1}}^*m_2^{\rort} \bu (p_1^*c_{0i})^{\lort^{\b_1}\otimes\rort}\bu {}^{\lort^{\b_2}}p_1^*\left({evb_0^{\b_1}}^*\left(\Psi_{\b_1}\right)\bu c_{0i}\right)\bu\\&\qquad\bu  p_2^* {evb_0^{\b_2}}^*\Psi_{\b_2}\bu p_1^*c_{i0}.
\end{align*}

Together with Theorem~\ref{expinv of boundary of J by thetabeta theorem} and equation~\eqref{pullback of boundary of mJ initial equation}, the above equation implies that
\begin{align*}
\expinv{\vartheta}\left(\pa \mJ\right)=&(-1)^{s}
\left(
J^1\bu \left(\expinv{\left(p_2/evb_i^{\b_1}\right)}J^2\right)^{\!\lort^{ \b_1}}\right)^{\!\!\rort}\bu\\&\qquad\bu  p_1^*\left({}^{\lort^{\b_2}}\left({evb_0^{\b_1}}^*\Psi_{\b_1}\bu c_{0i}\right)\right)\bu p_2^* {evb_0^{\b_2}}^*\Psi_{\b_2}\bu p_1^*c_{i0},
\end{align*}
with $s=i+ik_2+k+\d\mu(\b_1)\mu(\b_2)$. Keeping in mind Example~\ref{trivial expinv is pullback}, Lemma~\ref{commutativity vertical pullback vs horizontal pullback orientors} applied to the pullback diagram
\[
\begin{tikzcd}
\mM_1\times_L\mM_2\ar[d,"p_1"]\ar[r,equal]&\mM_1\times_L\mM_2\ar[d,"p_1"]\\
\mM_1\ar[r,equal]&\mM_1
\end{tikzcd}
\]
gives
\begin{align}
\left(\expinv{\left(p_2/evb_i^{\b_1}\right)}J^2\right)^{\lort^{ \b_1}\otimes \rort}\bu{}^{\lort^{\b_2}}\!\!\left(p_1^*{evb_0^{\b_1}}^*\Psi_{\b_1}\right)={evb_0^{\b_1}}^*\Psi_{\b_1}\bu\left(\expinv{\left(p_2/evb_i^{\b_1}\right)}J^2\right)^{ \rort},
\label{J2 commutes with Psi1 equation}    
\end{align}
and thus
\begin{align*}
\expinv{\vartheta}\left(\pa \mJ\right)=(-1)^{s}
\mJ^1\bu\left(\expinv{\left(p_2/evb_i^{\b_1}\right)}J^2\right)^{\rort}\bu\,\, p_1^*\left({}^{\lort^{\b_2}} c_{0i}\right)\bu p_2^* {evb_0^{\b_2}}^*\Psi_{\b_2}\bu p_1^*\left(c_{i0}\right).
\end{align*}
By Remark~\ref{rotate around b2 rort}, $evb_i^{\b_1}\circ p_1=evb_0^{\b_2}\circ p_2$. Therefore, Corollary~\ref{boundary transport commutes with b2 splitor m3 corollary} implies 
\begin{align}
\notag\expinv{\vartheta}(\pa\mJ)\bu m_3=(-1)^s&\mJ^1\bu\left(\left(\expinv{p_2/evb_i^{\b_1}}\right)J^2\right)^{\rort}\bu
\\&\bu{}^{\lort^a}\!m_3\bu p_1^*\left( \left(\tau_{\rort,\lort^{\b_2}}\right)^{\rort}\bu \,{}^{\rort}\!\!\left({}^{\lort^{\b_2}}c_{0i}\bu {evb_i^{\b_1}}^*\Psi_{\b_2}\bu c_{i0}\right)\bu \left(\mS^{\b_2}\right)^{\rort}\right)^{\rort}.
\label{boundary of mJ bu m3 prior commutativity with J2 equation}
\end{align}

We proceed by commuting $\left(\expinv{\left(p_2/evb_i^{\b_1}\right)}J^2\right)^{\rort}$ with orientors on the right hand side of equation~\eqref{boundary of mJ bu m3 prior commutativity with J2 equation} until it arrives at $\Psi_{\b_2}$, with which it combines to create $\mJ^2$.
By Lemma~\ref{commutativity vertical pullback vs horizontal pullback orientors} applied to the following diagram,
\begin{equation}
\begin{tikzcd}
\mM_1\times_L\mM_2\ar[r,equal]\ar[d,"p_1"]&\mM_1\times_L\mM_2\ar[d,"p_1"]\\
\mM_1\ar[r,equal]&\mM_1
\end{tikzcd}
\label{vertical p1 square diagram}
\end{equation}
we obtain an equality of \orientor{p_1}s of $p_2^*{evb_0^{\b_2}}^*\left(\lort^{\b_2}\otimes \rort\otimes \rort\right)$ to ${evb_i^{\b_1}}^*\rort$,
\begin{equation*}
\left(\expinv{\left(p_2/evb_i^{\b_1}\right)}J^2\right)^\rort\bu\,\,{}^{\lort^{\b_2}} m_2=m_2\bu \left(\expinv{\left(p_2/evb_i^{\b_1}\right)}J^2\right)^{\rort\otimes\rort}.
\end{equation*}
By Lemma~\ref{extend m2 to m3 lemma}, we obtain an equality of \orientor{p_1}s of $p_2^*{evb_0^{\b_2}}^*\left(\lort^{\b_2}\otimes \rort\otimes \rort\otimes \rort\right)$ to ${evb_i^{\b_1}}^*\rort$,
\begin{equation}
\label{J2 commutes with m3 equation}
\left(\expinv{\left(p_2/evb_i^{\b_1}\right)}J^2\right)^\rort\bu\,\,{}^{\lort^{\b_2}} m_3=m_3\bu \left(\expinv{\left(p_2/evb_i^{\b_1}\right)}J^2\right)^{\rort\otimes\rort\otimes\rort}.
\end{equation}
Applying Lemma~\ref{orientors symmetry koszul} with $B=\lort^{\b_2}$ and $C=\underline{\Z/2}$, we obtain an equality of \orientor{p_1}s of $p_1^*evb_i^*\left(\rort\otimes \lort^{\b_2}\right)$ to $evb_i^*\left(\rort\right)$,
\begin{equation}
\label{J2 commutes with tau equation} \left(\expinv{\left(p_2/evb_i^{\b_1}\right)}J^2\right)^\rort\bu {p_1^*\tau_{\rort,\lort^{\b_2}}}={}^\rort\left(\expinv{\left(p_2/evb_i^{\b_1}\right)}J^2\right).   
\end{equation}
By Lemma~\ref{commutativity vertical pullback vs horizontal pullback orientors} applied to diagram~\eqref{vertical p1 square diagram} we obtain the following equation of \orientor{p_1}s of $p_2^*{evb_0^{\b_2}}^*\left(\lort^{\b_2}\otimes \rort\right)$ to ${evb_0^{\b_1}}^*\rort,$
\begin{equation}
\label{J2 commutes with c0i equation}
\left(\expinv{\left(p_2/evb_i^{\b_1}\right)}J^2\right)^{\rort}\bu p_1^*\left({}^{\lort^{\b_2}}c_{0i}\right)=p_1^*c_{0i}\bu \left(\expinv{\left(p_2/evb_i^{\b_1}\right)}J^2\right)^{\rort}.    
\end{equation}
Combining equations~\eqref{boundary of mJ bu m3 prior commutativity with J2 equation} and~\eqref{J2 commutes with m3 equation}-\eqref{J2 commutes with c0i equation}, we conclude that
\[
\expinv{\vartheta}(\pa \mJ)\bu m_3=(-1)^s\mJ^1\bu m_3\bu \,{}^{\rort}\left(p_1^*c_{0i}\bu \expinv{\left(p_2/evb_i^{\b_1}\right)}\mJ^2\bu p_1^*c_{i0}\right)^{\rort}\bu \left(\mS^{\b_2}\right)^{\rort\otimes \rort}.
\]This proves the first equation.

The second equation follows by Lemma~\ref{m commutes with S lemma} and the commutativity of $\mS$ with $c_{0i}$.
\end{proof}

\begin{proof}[Proof of Theorem \ref{boundary of q theorem introduction}]
The associativity of $m$ implies
\begin{align}
\label{mk associativity}m_k&=m_3\bu\left(m_{i-1}\otimes m_{k_2}\otimes m_{k_1-i}\right),\\
\label{mk1 associativity} m_{k_1}&=m_3\bu\left(m_{i-1}\otimes \Id_{\rort}\otimes m_{k_1-i}\right).    
\end{align}
It follows from Lemma~\ref{boundary of composition orientor} and Example~\ref{trivial expinv is pullback} applied to the diagram
\[
\begin{tikzcd}
B\ar[r,"\iota"]\ar[d,equal]&\mM\ar[d,equal]\\
B\ar[r,"\iota"]&\mM\ar[r,"evb_0^\b"]&L
\end{tikzcd}
\]
that
\[
\pa\qor=(\pa \mJ)\bu \iota^*\left({evb_0^\b}^*\left(\mS^\b\bu m_k\right)\bu \mI_k\right).
\]
Similarly, it follows from Remark~\ref{pullback of composition of orientors by local diffeomorphism} and Example~\ref{trivial expinv is pullback} applied to the diagram
\[
\begin{tikzcd}
\mM_1\times_L\mM_2\ar[r,"\vartheta"]\ar[d,equal]&B\ar[d,equal]\\
\mM_1\times_L\mM_2\ar[r,"\vartheta"]&B\ar[r,"evb_0^\b\circ \iota"]&L
\end{tikzcd}
\]
that

\[
\expinv{\vartheta}(\pa \qor)=\expinv{\vartheta}(\pa \mJ)\bu \vartheta^*\iota^*{evb_0^\b}^*\left(\mS^\b\bu m_k\right)\bu\vartheta^*\iota^*\mI_k
\]
Keeping in mind Notation~\ref{evb pullback drop notation} and using equation~\eqref{mk associativity}, we see that
\[
\expinv{\vartheta}(\pa \qor)=\expinv{\vartheta}(\pa \mJ)\bu \mS^\b\bu m_3\bu \left(m_{i-1}\otimes m_{k_2}\otimes m_{k_1-i}\right)\bu \vartheta^*\iota^*\mI_k.
\]
Apply Lemmas~\ref{Boundary of mI lemma},~\ref{boundary of mJ lemma}, noting that ${}^{\rort\otimes \rort}\left(\mS^{\b_2}\right)$ cancels, to obtain
\begin{align*}
\expinv{\vartheta}(\pa \qor)=&(-1)^s\mJ^1\bu\mS^{\b_1}\bu m_3\bu \,{}^{{\rort_1}}\!\!\left(p_1^*c_{0i}\bu \expinv{\left(p_2/evb_i^{\b_1}\right)}\mJ^2\bu\mS^{\b_2}\bu p_1^*c_{i0}\right){}^{{\rort_3}}\bu\\
&\bu \left(m_{i-1}\otimes \Id_{E_2}\otimes  m_{k_1-i}\right)\bu p_1^*\mI_{k_1}\bu \,\,{}^{p_1^*E_1\!\!}\left(m_{k_2}\bu p_2^*\mI_{k_2}\right)^{p_1^*E_3}\\
\overset{\text{Lem.~\ref{commutativity vertical pullback vs horizontal pullback orientors}}}
=&(-1)^s\mJ^1\bu\mS^{\b_1}\bu m_3\bu \left(m_{i-1}\otimes \Id_{\rort}\otimes  m_{k_1-i}\right)\bu\\
&\bu \,{}^{\rort^{\otimes (i-1)}}\!\!\left(p_1^*c_{0i}\bu \expinv{\left(p_2/evb_i^{\b_1}\right)}\mJ^2\bu\mS^{\b_2}\bu p_1^*c_{i0}\right){}^{\rort^{\otimes (k_1-i)}}\bu \\&\bu p_1^*\mI_{k_1}\bu \,\,{}^{p_1^*E_1\!\!}\left(m_{k_2}\bu p_2^*\mI_{k_2}\right)^{p_1^*E_3}\\
\overset{\text{eq.~\eqref{mk1 associativity}}}=&(-1)^s\mJ^1\bu\mS^{\b_1}\bu m_{k_1}\bu\\
&\bu \,{}^{\rort^{\otimes (i-1)}}\!\!\left(p_1^*c_{0i}\bu \expinv{\left(p_2/evb_i^{\b_1}\right)}\mJ^2\bu\mS^{\b_2}\bu p_1^*c_{i0}\right){}^{\rort^{\otimes (k_1-i)}}\bu \\&\bu p_1^*\mI_{k_1}\bu \,\,{}^{p_1^*E_1\!\!}\left(m_{k_2}\bu p_2^*\mI_{k_2}\right)^{p_1^*E_3}.
\end{align*}
By the definition of $\mI_{k_1}$ it follows that
\begin{multline*}
\,{}^{\rort^{\otimes (i-1)}}\!\!\left(p_1^*c_{0i}\bu \expinv{\left(p_2/evb_i^{\b_1}\right)}\mJ^2\bu\mS^{\b_2}\bu p_1^*c_{i0}\right)\!{}^{\rort^{\otimes (k_1-i)}}\bu p_1^*\mI_{k_1}\\=\mI_{k_1}\bu {}^{E_{1}}\!\!\left( \expinv{\left(p_2/evb_i^{\b_1}\right)}\mJ^2\bu\mS^{\b_2}\right)\!{}^{E_{3}}.
\end{multline*}
By Example~\ref{trivial expinv is pullback}, Lemma~\ref{vertical pullback expinv composition} and Lemma~\ref{orientors tensor distributivity},
we see that
\[
{}^{E_1}\left(\expinv{\left(p_2/evb_i^{\b_1}\right)}\qor_2\right)^{E_3}={}^{E_{1}}\!\!\left( \expinv{\left(p_2/evb_i^{\b_1}\right)}\mJ^2\bu\mS^{\b_2}\right)\!{}^{E_{3}}\bu\,\,{}^{p_1^*E_1\!\!}\left(m_{k_2}\bu p_2^*\mI_{k_2}\right)^{p_1^*E_3}.
\]
We conclude that
\[
\expinv{\vartheta}(\pa \qor)=(-1)^s\qor_1\bu {}^{E_1}\left(\expinv{\left(p_2/evb_i^{\b_1}\right)}\qor_2\right)^{E_3}.
\]
\end{proof}

\subsection{Proof for \texorpdfstring{$k=-1$}{k=-1}}
\label{proof of q-relations k=-1}
This section consists of the proof of Theorem~\ref{boundary of q k=-1 introduction theorem}. We keep the notations from Section~\ref{proof of q-relations}, when applicable.
\begin{proof}[Proof of Theorem~\ref{boundary of q k=-1 introduction theorem}]
Recall Definition~\ref{q orientor definition}.
It follows from Lemma~\ref{boundary of composition orientor} and Example~\ref{trivial expinv is pullback} applied to the diagram
\[
\begin{tikzcd}
B\ar[r,"\iota"]\ar[d,equal]&\mM\ar[d,equal]\\
B\ar[r,"\iota"]&\mM\ar[r,"\pi^{\mM}"]&\W
\end{tikzcd}
\]
that
\[
\pa \qor_{-1,l}^{\b}=\left(\pa J_{0,l}^\b\right)^{\efield}\bu \left(\pi^{\pa \mM}\right)^*\left(\Psi^\efield_{\mu(\b)+1}\bu m_0\right).
\]
Similarly, it follows from Remark~\ref{pullback of composition of orientors by local diffeomorphism} and Example~\ref{trivial expinv is pullback} applied to the diagram
\[
\begin{tikzcd}
\mM_1\times_L\mM_2\ar[r,"\vartheta"]\ar[d,equal]&B\ar[d,equal]\\
\mM_1\times_L\mM_2\ar[r,"\vartheta"]&B\ar[r,"\pi^B"]&\W
\end{tikzcd}
\]
that
\begin{align}
\label{pullback of boundary of q initial equation k=-1}
\expinv{\vartheta}\pa \qor_{-1,l}^\b\overset{\text{Rmk. \ref{pullback of composition of orientors by local diffeomorphism}}}=&\left(\expinv{\vartheta}\left(\pa J_{0,l}^\b\right)\right)^{\efield}\bu \left(\pi^{\pa \mM_1\times_L\mM_2}\right)^*\left(\Psi^\efield_{\mu(\b)+1}\bu m_0\right)\\
\overset{\text{Thm. ~\ref{expinv of boundary of J by thetabeta theorem k=-1}}}=&-e^\efield\bu\left( \left(J_{1,I}^{\b_1}\bu\left(\expinv{\left(p_2/evb_0^{\b_1}\right)}J_{1,J}^{\b_2}\right)^{\lort^{\b_1}}\right)^{\lort}\bu \left(evb_0^{\b_1}\circ p_1\right)^*{i}_{\mu(\b)+1}\right)^{\eort}\\\notag &\bu \left(\pi^{\pa \mM_1\times_L\mM_2}\right)^*\left(\Psi^\efield_{\mu(\b)+1}\bu m_0\right)\\
\notag\overset{\text{Rmk. \ref{exchange Psi on rort with eort remark}}}=&-e^\efield\bu \left(J_{1,I}^{\b_1}\bu\left(\expinv{\left(p_2/evb_0^{\b_1}\right)}J_{1,J}^{\b_2}\right)^{\lort^{\b_1}}\right)^{\lort\otimes\eort}\bu\\&\bu \left(evb_0^{\b_1}\circ p_1\right)^*\left({}^{\lort^{\mu(\b)+1}} w\bu \Psi_{\mu(\b)+1}\bu m_0\right)\\
=\,\,\,\,&-e^\efield\bu \left(J_{1,I}^{\b_1}\bu\left(\expinv{\left(p_2/evb_0^{\b_1}\right)}J_{1,J}^{\b_2}\right)^{\lort^{\b_1}}\right)^{\lort\otimes\eort}\bu\\&\bu \left(evb_0^{\b_1}\circ p_1\right)^*\left({}^{\lort^{\mu(\b)+1}} w\bu {}^{\lort^{\b}}\Psi_1\bu {}^{\lort^{\b_2}}\Psi_{\b_1}\bu \Psi_{\b_2}\bu m_0\right)
\end{align}

Using Lemma~\ref{commutativity vertical pullback vs horizontal pullback orientors}, we get 
\begin{align}
    \label{boundary of q calculation}
    \expinv{\vartheta}\pa \qor_{-1,l}^\b=-e^\efield\bu {}^\lort w\bu \Psi_1\bu \left(J_{1,I}^{\b_1}\bu\left(\expinv{\left(p_2/evb_0^{\b_1}\right)}J_{1,J}^{\b_2}\right)^{\lort^{\b_1}}\right)^{\rort}\bu {}^{\lort^{\b_2}}\Psi_{\b_1}\bu\Psi_{\b_2}\bu m_0.
\end{align}
Equation~\eqref{J2 commutes with Psi1 equation} reads
\[
\left(\expinv{\left(p_2/evb_i^{\b_1}\right)}J_{1,J}^{\b_2}\right)^{\lort^{ \b_1}\otimes \rort}\bu{}^{\lort^{\b_2}}\!\!\left(p_1^*{evb_0^{\b_1}}^*\Psi_{\b_1}\right)={evb_0^{\b_1}}^*\Psi_{\b_1}\bu\left(\expinv{\left(p_2/evb_i^{\b_1}\right)}J_{1,J}^{\b_2}\right)^{ \rort}.
\]
Combining the preceding with equation~\eqref{boundary of q calculation}, and
recalling Definition~\ref{Otm definition} and Definition~\ref{The operator mJ}, we get
\[
\expinv{\vartheta}\left(\pa \qor_{-1,l}^\b\right)=-\Otm\bu \mJ_{1,I}^{\b_1}\bu \expinv{\left(p_2/evb_0^{\b_1}\right)}\mJ_{1,J}^{\b_2}\bu m_0.
\]
However,
observe that $\mS\bu m_0=m_0$, since $\A$ is of null-degree. Moreover, $\mI_0=\Id_\A$. Therefore, for all $\b\in \Pi, I\subset [l]$, 
\[
\qor_{0,I}^{\b}=\mJ^{\b}_{1,I}\bu m_0.
% \overset{\text{Def.~\ref{The operator mJ}}}=\left(J_{1,I}^\b\right)^{\rort}\bu {evb_0^{\b}}^*\Psi_\b\bu m_0.
\]
It follows that
\begin{align*}
\mJ_{1,I}^{\b_1}\bu \expinv{\left(p_2/evb_0^{\b_1}\right)}\mJ_{1,J}^{\b_2}\bu m_0 &= m_2\bu \left(\mJ_{1,I}^{\b_1}\bu m_0\right)^{\rort}\bu \expinv{\left(p_2/evb_0^{\b_1}\right)}\mJ_{1,J}^{\b_2}\bu m_0 \\&= m_2\bu
\left(\qor_{0,I}^{\b_1}\right)^{\rort}\bu \expinv{\left(p_2/evb_0^{\b_1}\right)}\qor_{0,J}^{\b_2},
\end{align*}
which gives the result.
\end{proof}

\subsection{Cyclic symmetry}\label{proof of the cyclic equation section}
Recall the cyclic isomorphism \[f^\b:\mdl{3}\to\mdl{3},\] that is the map given by 
\[
f^\b(\S,u,(z_0,...,z_k),\vec w)=(\S,u,(z_1,...z_k,z_0),\vec w).
\]
Set \begin{align*}
    ev^{cyc}:=&(evb_1,...,evb_k,evb_0):\mM_{k+1,l}(\b)\to L^{\times_\W k+1},\\
    E^{cyc}:=&\left(ev^{cyc}\right)^*\rort_L^{\boxtimes k+1}.
\end{align*}
Let
\begin{align*}
\tau:{\rort_L}^{\boxtimes k+1}\to&{\rort_L}^{\boxtimes k+1},\\ a_0\otimes\cdots\otimes a_{k}\mapsto& (-1)^{|a_{0}|\cdot\sum_{j=1}^k|a_j|}a_1\otimes\cdots\otimes a_{k}\otimes a_0
\end{align*}
denote the graded symmetry isomorphism. Recall the following definition from Section~\ref{orientors for moduli introduction section}.
\begin{definition}
Let $T$ be the \orientor{f^\b} of $E^{cyc}$ to $E^{cyc}$ given by
\[
T:=\phi_{f^\b}^{E^{cyc}}\bu \left(ev^{cyc}\right)^*\t.
\]
\end{definition}

We now turn to prove Theorem~\ref{cyclic theorem introduction}.
For simplicity, we write $f:=f^\b$ as this creates no ambiguity.
The proof of Theorem~\ref{cyclic theorem introduction} depends on the following lemmas and is given below their statements.
Denote by $\rort',\rort''$ the sub-local systems of $\underset{j=1}{\overset{k+1}{\boxtimes}}{\rort_L}$ of total degree with parity  $\mu(\b)+1,\mu(\b)$, respectively.
In particular, 
\[
\underset{j=1}{\overset{k+1}\boxtimes}\rort_L=\rort'\oplus\rort''.
\]
\begin{lemma}
\label{rotation of mI from 0 to 1}
The following equation of \orientor{\Id_{\mM_{k+1,l}(\b)}}s of ${ev^{cyc}}^*\rort'$ to $evb_1^*\rort_L$ holds.
\[
 f ^*\left(m_2\bu\left( \left(\mS^\b\bu m_k\bu \mI\right)\otimes \Id\right)\right)\bu \left(ev^{cyc}\right)^*\tau=c_{10}\bu m_{2}\bu\left(\left(\mS^\b\bu m_k\bu\mI\right)\otimes \Id\right)
\]
\end{lemma}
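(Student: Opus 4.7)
The plan is to evaluate both sides of the equation on an arbitrary local section of $(ev^{cyc})^*\rort'$, written as $y_1\otimes y_2\otimes\cdots\otimes y_k\otimes y_0$ with $y_j$ a local section of $(evb_j^\b)^*\rort$. The $\rort'$-restriction then reads $|y_0|+|y_1|+\cdots+|y_k|\equiv \mu(\b)+1\pmod 2$. The key algebraic ingredients will be: strict commutativity of $m$ on $\rort$ from equation~\eqref{noncommutativity}, inherited from the fact that $\rort$ is an algebra of tensor powers of the rank-one $\lort$; that every $c_{ij}$ is a ring homomorphism commuting with $\mS^\b$, since it is the tensor power of a $\Z/2$-bundle isomorphism; the pullback identities $f^*c_{0j}=c_{1,j+1}$ listed after Remark~\ref{group-like transport}; and the wrap-around $c_{1,k+1}=c_{10}\bu\mS^\b$, obtained by combining Remarks~\ref{full round} and~\ref{group-like transport}.

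Setting $\mathcal O:=m_2\bu((\mS^\b\bu m_k\bu\mI)\otimes\Id)$, direct unfolding yields $\mathcal O(y_1\otimes\cdots\otimes y_k\otimes y_0)=\mS^\b(c_{01}(y_1)\cdots c_{0k}(y_k))\cdot y_0$, and post-composition with $c_{10}$ gives the right-hand side as $\mS^\b(y_1\cdot c_{12}(y_2)\cdots c_{1k}(y_k))\cdot c_{10}(y_0)$, using $c_{10}\bu c_{0j}=c_{1j}$ and the commutation of $c_{10}$ with $m$ and $\mS^\b$. On the left, $(ev^{cyc})^*\tau$ contributes the sign $(-1)^{|y_1|(|y_2|+\cdots+|y_k|+|y_0|)}$ and the shifted tensor $y_2\otimes\cdots\otimes y_k\otimes y_0\otimes y_1\in f^*E^{cyc}$. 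Interpreting $f^*\mathcal O$ as $\mathcal O$ evaluated at $f(p)$, every $c_{0j}$ gets replaced by $c_{1,j+1}$; the $j=k$ slot then contributes the extra factor $(-1)^{\mu(\b)|y_0|}$ via the wrap-around identity, yielding the left-hand side in the form $(-1)^{|y_1|(|y_2|+\cdots+|y_k|+|y_0|)+\mu(\b)|y_0|}\mS^\b(c_{12}(y_2)\cdots c_{1k}(y_k)\cdot c_{10}(y_0))\cdot y_1$.

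To match both expressions, I will rewrite the right-hand side in this same canonical form: first use $\mS^\b(A)\cdot B=(-1)^{\mu(\b)|B|}\mS^\b(A\cdot B)$ to absorb $c_{10}(y_0)$ into the $\mS^\b$, then invoke strict commutativity of $m$ on $\rort$ to move $y_1$ to the rightmost position inside $\mS^\b$, and finally extract $y_1$ via $\mS^\b(y_1)=(-1)^{\mu(\b)|y_1|}y_1$. This recasts the right-hand side as $(-1)^{\mu(\b)(|y_0|+|y_1|)}\mS^\b(c_{12}(y_2)\cdots c_{1k}(y_k)\cdot c_{10}(y_0))\cdot y_1$, and comparison with the left-hand side leaves a residual sign whose exponent modulo $2$ equals $|y_1|(|y_2|+\cdots+|y_k|+|y_0|+\mu(\b))$. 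Substituting the $\rort'$-constraint $|y_2|+\cdots+|y_k|+|y_0|\equiv\mu(\b)+1-|y_1|$ reduces this exponent to $|y_1|(1+|y_1|)\equiv 0\pmod 2$, using $|y_1|^2\equiv|y_1|$. The hard part will be the careful bookkeeping of the three distinct $\mS^\b$-twists — the one built into $\mathcal O$, the wrap-around from $f^*c_{0k}$, and the one generated when commuting $y_1$ past the argument of $\mS^\b$ — and recognizing that the residual sign is precisely the combination annihilated by the $\rort'$-restriction.
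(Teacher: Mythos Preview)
Your proof is correct and uses essentially the same approach as the paper: a direct computation on a local section of $(ev^{cyc})^*\rort'$, exploiting the strict commutativity of $m$ on $\rort$, the pullback identity $f^*c_{0j}=c_{1,j+1}$, and the wrap-around $c_{1,k+1}=c_{10}\bu\mS^\b$. The paper organizes the same ingredients slightly differently---it first packages the sign coming from $\tau$ and the $\rort'$-restriction into the intermediate identity $m_{k+1}\bu\tau^{-1}=m_{k+1}\bu(\Id^{\otimes k}\otimes\mS^\b)$ on $\rort'$, and then runs a morphism-level chain of equalities---whereas you carry the element all the way through and compare sign exponents at the end; but the substance is the same.
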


\begin{lemma}
\label{rotation of mJ from 0 to 1}
The following equation of \orientor{\pi^{\mM_{k+1,l}(\b)}}s of $\rort_L$ to $\efield$ holds.
\[\expinv{f}\mJ_{k+1,l}^\b=(-1)^k
\mJ_{k+1,l}^\b\bu c_{01}
\]
\end{lemma}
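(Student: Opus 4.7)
My plan is to reduce the identity to its analogue for $J_{k+1,l}^\b$ recorded in the last assertion of Lemma~\ref{properties of J lemma}, exploiting the factorization
\[
\mJ_{k+1,l}^\b = (J_{k+1,l}^\b)^{\rort_L} \bu evb_0^*\Psi_\b
\]
of Definition~\ref{The operator mJ}. Since $f$ is a diffeomorphism, I will first distribute the pullback by $f$ over this composition. Applying Remark~\ref{pullback of composition of orientors by local diffeomorphism} to the trivial pullback square whose vertical maps are identities (followed by $evb_0$), together with Example~\ref{trivial expinv is pullback} and Lemma~\ref{orientors tensor distributivity} to commute the right $\rort_L$-extension with the pullback, and using the equality $evb_0\circ f = evb_1$, I obtain
\[
\expinv{f}\mJ_{k+1,l}^\b = (\expinv{f}J_{k+1,l}^\b)^{\rort_L} \bu evb_1^*\Psi_\b.
\]

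Second, I invoke the cyclic assertion of Lemma~\ref{properties of J lemma}, which identifies $\expinv{f}J_{k+1,l}^\b$ with $(-1)^k J_{k+1,l}^\b \bu c_{01}^{\otimes\mu(\b)}$, interpreted modulo the canonical identification of $\cort{evb_1}$ with $f^*\cort{evb_0}$ furnished by the composition isomorphism of Definition~\ref{composition isomorphism} and the canonical orientation $\phi_f$. Extending once more by $\rort_L$ via Lemma~\ref{orientors tensor distributivity} yields
\[
\expinv{f}\mJ_{k+1,l}^\b = (-1)^k (J_{k+1,l}^\b)^{\rort_L} \bu (c_{01}^{\otimes\mu(\b)}\otimes c_{01}) \bu evb_1^*\Psi_\b,
\]
where the middle factor is the extended boundary transport acting on both the $\lort^{\mu(\b)}$ slot introduced by the splitor and the auxiliary $\rort_L$ tensor slot.

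The proof then concludes with the algebraic identity
\[
(c_{01}^{\otimes\mu(\b)}\otimes c_{01}) \bu evb_1^*\Psi_\b = evb_0^*\Psi_\b \bu c_{01},
\]
which expresses the compatibility of the boundary-transport map $c_{01}$ with the splitor $\Psi_\b$. This is the direct analogue of Lemma~\ref{boundary transport commutes with b2 splitor lemma} and is verified by a pointwise calculation on local sections $\mO^r$ of $\rort_L$: both sides send $\mO^r$ to the parallel-transported factorization $\mO^{\mu(\b)}\otimes \mO^{r-\mu(\b)}$, since $c_{01}$ is a degree-zero morphism of local systems of graded algebras and therefore commutes with the tensor decomposition of $\rort_L$. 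Substituting the identity back gives $\expinv{f}\mJ_{k+1,l}^\b = (-1)^k (J_{k+1,l}^\b)^{\rort_L}\bu evb_0^*\Psi_\b\bu c_{01} = (-1)^k\mJ_{k+1,l}^\b\bu c_{01}$, as required.

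The main technical obstacle is the careful bookkeeping of the various right $\rort_L$-extensions and, crucially, the implicit identification $\cort{evb_1}\simeq f^*\cort{evb_0}$ induced by $\phi_f$ that is needed to interpret the two sides of Lemma~\ref{properties of J lemma} as \orientor{}s of the same map. This identification must be transported consistently through each tensor-extension and composition in the argument above; once it is handled, the verification of the key compatibility identity between $c_{01}$ and $\Psi_\b$ is purely algebraic.
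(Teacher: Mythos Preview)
Your proposal is correct and follows exactly the same approach as the paper, whose proof is the single sentence ``Recalling Definition~\ref{The operator mJ}, the lemma follows immediately from Lemma~\ref{properties of J lemma}.'' You have simply unpacked that line: distribute $\expinv{f}$ over the factorization $\mJ=(J)^{\rort}\bu evb_0^*\Psi_\b$, apply the cyclic identity for $J$ from Lemma~\ref{properties of J lemma}, and close with the compatibility of $c_{01}$ with $\Psi_\b$ (the direct analogue of Lemma~\ref{boundary transport commutes with b2 splitor lemma}).
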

The proof of Lemmas~\ref{rotation of mI from 0 to 1} and~\ref{rotation of mJ from 0 to 1} is given below, after the proof of Theorem~\ref{cyclic theorem introduction}. 

\begin{proof}[Proof of Theorem \ref{cyclic theorem introduction}]
Both sides vanish on $\rort''$ by Definition~\ref{Otm definition}. Therefore, we may assume that our input is in $\rort'$. In this case, Lemma \ref{rotation of mI from 0 to 1} applies.
We calculate
\begin{align*}
\notag\expinv{ f }&\left(\Otm_{odd}\bu m_2\bu\left(\qor_{k,l}^\b\otimes \Id\right)\right)=\\\overset{\text{Def. \ref{q orientor definition}}}=&\expinv{f}\left(\Otm_{odd}\bu m_2\bu \left(\left(\mJ_{k+1,I}^\b\bu evb_0^*\left(\mS^\b\bu m_k\right)\bu \mI_{k}\right)\otimes \Id\right)\right)
\\\notag
\overset{\text{Lem. \ref{commutativity vertical pullback vs horizontal pullback orientors}}}=&\expinv{f}\left(\Otm_{odd}\bu \mJ_{k+1,I}^\b\bu m_2\bu \left(\left( evb_0^*\left(\mS^\b\bu m_k\right)\bu \mI_{k}\right)\otimes \Id\right)\right)
\\\notag
\overset{\begin{smallmatrix}\text{Rmk. \ref{pullback of composition of orientors by local diffeomorphism}}\\\text{Ex. \ref{trivial expinv is pullback}}\\\text{Ex. \ref{expinv by diffeomorphism over Id}}\end{smallmatrix}}=&
\Otm_{odd}\bu\expinv{f}\mJ_{k+1,I}^\b\bu  f^* \left(m_2\bu\left(\left(evb_0^*\left(\mS^\b\bu m_k\right)\bu \mI_{k}\right)\otimes \Id\right)\right).
\end{align*}
Therefore, on the one hand
\begin{align*}\notag\expinv{ f }\left(\Otm_{odd}\bu m_2\bu\left(\qor_{k,l}^\b\otimes \Id\right)\right)&\bu \left(ev^{cyc}\right)^*\t=\\
\overset{\begin{smallmatrix}\text{Lem. \ref{rotation of mI from 0 to 1}}\\\text{Lem. \ref{rotation of mJ from 0 to 1}}\end{smallmatrix}}=&
(-1)^k\Otm_{odd}\bu \mJ_{k+1,l}^\b\bu m_2\bu\left(\left(\mS^\b\bu m_k\bu \mI\right)\otimes \Id\right)
\\\overset{\begin{smallmatrix}\text{Lem. \ref{commutativity vertical pullback vs horizontal pullback orientors}}\end{smallmatrix}}=&(-1)^k\Otm_{odd} \bu m_2\bu \left(\qor_{k,l}^\b\otimes \Id\right).
\end{align*}
On the other hand,
\begin{align*}
\expinv{f}\left(\Otm_{odd}\bu m_2\bu\left(\qor_{k,l}^\b\otimes \Id\right)\right)\bu &\left(ev^{cyc}\right)^*\t=\\\overset{\text{Def~\ref{pullback by diffeomorphism}}}=& \Otm_{odd}\bu m_2\bu\left(\qor_{k,l}^\b\otimes \Id\right)\bu \phi_f^{E^{cyc}}\bu \left(ev^{cyc}\right)^*\t \\
=&\Otm_{odd}\bu m_2\bu\left(\qor_{k,l}^\b\otimes \Id\right)\bu T.
\end{align*}

\end{proof}
\begin{proof}[Proof of Lemma \ref{rotation of mI from 0 to 1}]
We check this equation on a local section
\[
X=\bigotimes_{j=1}^{k+1}evb_j^*\mO^{r_j},
\]
where $\mO$ is a local section of $\lort$, and $r_j\in \Z$ are such that $\sum_{j=1}^{k+1}r_j=_2\mu(\b)+1$.
Observe that
\[
(m_{k+1}\bu\tau^{-1})(X)=(-1)^{r_{k+1}\left(\sum_{j=1}^kr_j\right)}m_{k+1}(X).
\]
However, 
\[
r_{k+1}\left(\sum_{j=1}^kr_j\right)=_2 r_{k+1}\left(r_{k+1}+\mu(\b)+1\right)=_2r_{k+1}\mu(\b).
\] Therefore, restricted to $\rort'$
\begin{equation}
    m_{k+1}\bu\tau^{-1}=m_{k+1}\bu\left(\Id^{\otimes k}\otimes \mS^\b\right).
\label{tau embarks mS^b}
\end{equation}
Recall that 
\begin{equation}
c_{10}\circ \mS^\b=c_{1,k+1}.\label{rotation from 0 to 1 shtifel whitney}
\end{equation}
We calculate
\begin{align*}
\notag f ^*&\left(m_2\bu\left( \left(\mS^\b\bu m_k\bu \mI\right)\otimes \Id\right)\right)\\&=\mS^\b\bu m_{k+1}\bu\left( f ^*\mI\otimes \mS^\b\right)\\\notag
&\overset{\text{Koszul \ref{Koszul signs}}}=\mS^\b\bu m_{k+1}\bu (\Id^{\otimes k}\otimes \mS^\b)\bu\left( f ^*\mI\otimes \Id\right)\\\notag
&\overset{\text{eq. \eqref{tau embarks mS^b}}}=\mS^\b\bu m_{k+1}\bu \tau^{-1}\bu \left( f ^*\mI\otimes \Id\right)
\\\notag
&\overset{\text{Koszul \ref{Koszul signs}}}=\mS^\b\bu m_{k+1}\bu\left(\Id\otimes  f ^*\mI\right)\bu \left(ev^{cyc}\right)^*\tau^{-1}\\\notag
&\overset{\text{Def. \ref{mI definition}}}=\mS^\b\bu m_{k+1}\bu\left(c_{11}\otimes c_{12}\otimes\cdots\otimes c_{1,k+1}\right)\bu\left(ev^{cyc}\right)^*\tau^{-1}\\\notag
&\overset{\text{eq. \eqref{rotation from 0 to 1 shtifel whitney}}}=\mS^\b\bu m_{k+1}\bu c_{10}^{\otimes k+1}\bu\left(c_{01}\otimes c_{02}\otimes\cdots\otimes c_{0k}\otimes \mS^\b\right)\bu \left(ev^{cyc}\right)^*\tau^{-1}\\\notag
&=c_{10}\bu\mS^\b\bu m_{k+1}\bu\left(\mI\otimes \mS^\b\right)\bu\left(ev^{cyc}\right)^*\tau^{-1}\\
&=c_{10}\bu m_{2}\bu\left(\left(\mS^\b\bu m_k\bu\mI\right)\otimes \Id\right)\bu\left(ev^{cyc}\right)^*\tau^{-1}.
\end{align*}
\end{proof}
\begin{proof}[Proof of Lemma \ref{rotation of mJ from 0 to 1}]
Recalling Definition~\ref{The operator mJ}, the lemma follows immediately from Lemma~\ref{properties of J lemma}.
\end{proof}

\subsection{Base change}
\label{proof of q-naturality families}

Let $\xi:\W'\to \W$ be a map.
Let $\target=\left(\W,X,\w,L,\pi^X,\fp,\underline\Upsilon,J\right)$ be a target over $\W$. Recall Section~\ref{families target definition section} regarding families and Section~\ref{naturality of J families section} regarding naturality. Let
\[\pull\xi_\rort:\left(\xi^{L}\right)^*\rort_L\to \rort_{\xi^*L}\]
be the map given by $\pull{\left(\xi^{L}/\xi\right)}:{\xi^{L}}^*\lort_L\to \lort_{\xi^*L}$ extended multiplicatively to ${\xi^{L}}^*\rort_L$. Set
\[\pull\xi_{\efield}:=\pi^L_*\pull\xi_{\rort}:\xi^*\efield_L\to \efield_{\xi^*L}.\]
We think of $\pull\xi_\rort$ and $\pull\xi_\efield$ as \orientor{\Id_{\xi^*L} \text{ and } \Id_{\W'}}s, respectively.
\begin{remark}\label{naturality of m families}
$\pull\xi_\rort,\pull\xi_\efield$ are algebra homomorphisms with respect to the corresponding tensor multiplication maps $m_L,m_{\xi^*L}.$ Similarly, they commute with $\Psi_b$ for $b\in \Z$.
\end{remark}
\begin{proposition}\label{naturality of Otm families}
With the above notations, the following diagram is commutative.
\[
\begin{tikzcd}
\xi^*\rort_L\ar[rr,"\pull\xi_{\rort}"]\ar[d,swap,"\expinv\xi \Otm^L"]&&\rort_{\xi^*L}\ar[d,"\Otm^{\xi^*L}"]\\
\cort{\pi^{\xi^*L}}\otimes \xi^*\eort_{L}\ar[rr,swap,"1\otimes (\pi^L)^*\pull\xi_{\efield}"]&&\cort{\xi^*L}\otimes \eort_{\xi^*L}
\end{tikzcd}
\]
That is, the following equation of \orientor{\pi^{\xi^*L}}s holds.
\[
\pull\xi_{\efield}\bu \expinv\xi \Otm^L=\Otm^{\xi^*L}\bu \pull\xi_{\rort}.
\]
\end{proposition}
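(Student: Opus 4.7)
The plan is to unfold the definition $\Otm = e^\efield \bu {}^\lort w \bu \Psi_1$ and verify the naturality of each building block separately. Since $\Psi_1$ and $w$ are morphisms of local systems on $L$---hence orientors along $\Id_L$---while $e$ is an orientor along $\pi^L$, applying Lemma~\ref{vertical pullback expinv composition} to the vertical tower of pullback squares for $\pi^{\xi^*L} = \pi^{\xi^*L} \circ \Id_{\xi^*L} \circ \Id_{\xi^*L}$ rewrites $\expinv\xi \Otm^L$ as a composition of three pullback orientors. By Example~\ref{trivial expinv is pullback}, the two pullbacks over identity maps are just ordinary pullbacks of morphisms of local systems to $\xi^*L$, so after this reduction the task is to match three honest commutative squares.

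The three naturality statements I would establish are the following. First, $\left(\pull{\xi}_{\zort{}} \otimes \pull\xi_\rort\right) \bu {\xi^L}^*\Psi_1^L = \Psi_1^{\xi^*L} \bu \pull\xi_\rort$, which is immediate from the local formula $\Psi_1(\mO^r) = \mO \otimes \mO^{r-1}$ together with the fact that $\pull\xi_\rort$ extends $\pull{\xi}_{\zort{}}$ multiplicatively, i.e.\ the statement of Remark~\ref{naturality of m families} applied to $\Psi_1$. Second, $(\pi^{\xi^*L})^*\pull\xi_\efield \bu {\xi^L}^*w_L = w_{\xi^*L} \bu \pull\xi_\rort$, which follows directly from the definition of $w$ as the projection onto the locally-constant component of $\rort$ together with the fact that $\pull\xi_\efield = \pi^L_*\pull\xi_\rort$ by construction. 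Third, $\expinv\xi e_L = e_{\xi^*L} \bu \pull{\xi}_{\zort{}}$, which is exactly equation~\eqref{naturality of e families equation} appearing inside the proof of Lemma~\ref{naturality of J families} and requires no further work.

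To finish, I would assemble these three squares: on the one hand $\Otm^{\xi^*L} \bu \pull\xi_\rort$ expands by definition and the first two naturality squares into $e^\efield_{\xi^*L} \bu {}^{\lort_{\xi^*L}}w_{\xi^*L} \bu (\pull{\xi}_{\zort{}} \otimes \pull\xi_\rort) \bu {\xi^L}^*\Psi_1^L$, which by the second square collapses the $w$-factor against $\pull\xi_\rort$; on the other hand $\expinv\xi \Otm^L$ dualizes by Lemmas~\ref{vertical pullback expinv composition},~\ref{base extension is distributive expinv} and~\ref{orientors tensor distributivity} into $\expinv\xi e^\efield_L \bu {\xi^L}^*({}^{\lort_L}w_L) \bu {\xi^L}^*\Psi_1^L$, and composing on the left with $\pull\xi_\efield$ and using the third square identifies the two expressions. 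The main technical obstacle is the bookkeeping of the left/right tensor extensions $(\cdot)^A$ and ${}^A(\cdot)$ and the Koszul signs produced as the various factors slide past each other; however, since $\deg \Psi_1 = 0$ and $w, \pull\xi_\rort, \pull{\xi}_{\zort{}}$ are all of degree zero, no nontrivial signs survive, and Lemma~\ref{commutativity vertical pullback vs horizontal pullback orientors} handles the commutations uniformly.
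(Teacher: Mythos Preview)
Your proposal is correct and is essentially the same as the paper's proof, which reads in its entirety ``This is immediate by the definitions and equation~\eqref{naturality of e families equation}.'' You have simply unpacked what ``immediate by the definitions'' means: the naturality of $\Psi_1$ and $w$ (covered by Remark~\ref{naturality of m families} and the construction of $\pull\xi_\efield$) together with the naturality of $e$ (equation~\eqref{naturality of e families equation}).
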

\begin{proof}
This is immediate by the definitions and equation~\eqref{naturality of e families equation}.
\end{proof}

Let $k\geq0,l\geq 0$ and $\b\in \Pi(\target)$. Abbreviate \[\mM:=\mM_{k+1,l}(\target;\b),\qquad \mM':=\mM_{0,l}(\xi^*\target;\xi^*\b).\] Let 
\[
\pull{\xi}_E:=\underset{j=1}{\overset{k}\boxtimes} {\left(evb_j^{\b}\right)}^*\pull\xi_\rort:{\xi^{\mM}}^*E^k_L\to E^k_{\xi^*L}.
\]
We think of $\pull\xi_E$ as an \orientor{ \Id_{\mM_{k+1,l}(\xi^*\target;\xi^*\b)}}.

\begin{theorem}\label{naturality of Q families}
The following diagram is commutative.
\[
\begin{tikzcd}
{\xi^{\mM}}^*E^k_L\ar[rr,"\pull\xi_E"]\ar[d,swap,"\expinv{\xi}\qor^{\left(\target;\b\right)}_{k+1,l}"]&&E^k_{\xi^*L}\ar[d,"\qor^{\left(\xi^*\target;\xi^*\b\right)}_{k+1,l}"]\\
\cort{evb_0^{\left(\xi^*\target\right)}}\otimes {\xi^{\mM}}^*\left(evb_0^{\target}\right)^*\rort_L\ar[rr,swap,"1\otimes evb_0^*\left(\pull{\xi}_\rort\right)"]&&\cort{evb_0^{\left(\xi^*\target\right)}}\otimes\left(evb_0^{\left(\xi^*\target\right)}\right)^*\rort_{\xi^*L}
\end{tikzcd}
\]
That is, the following equation of orientors holds.
\[
\pull\xi_\rort\bu\expinv{\xi}\qor^{\left(\target;\b\right)}_{k+1,l}=\qor^{\left(\xi^*\target;\xi^*\b\right)}_{k+1,l}\bu \pull\xi_E.
\]
Similarly, for $\b\in \Pi^{ad}(\target)$, the following diagram is commutative.
\[
\begin{tikzcd}
\underline\A\ar[drr,"Q_{-1}^{\left(\xi^*\target;\xi^*\b\right)}"]\ar[d,swap,"\expinv{\xi}\qor^{\left(\target;\b\right)}_{-1,l}"]\\\cort{\pi^{\mM'}}\otimes {\pi^{\mM'}}^*\xi^*\efield_L\ar[rr,swap,"1\otimes {\pi^{\mM'}}^*\pull\xi_{\efield}"]
&&\cort{\pi^{\mM'}}\otimes {\pi^{\mM'}}^*\efield_{\xi^*L}
\end{tikzcd}
\]That is, the following equation of orientors holds.
\[
\pull\xi_{\efield}\bu \expinv\xi Q^{\left(\target;\b\right)}_{-1,l}=Q^{\left(\xi^*\target;\xi^*\b\right)}_{-1,l}
\]
\end{theorem}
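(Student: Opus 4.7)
The plan is to unfold the definition of $\qor^{(\target;\b)}_{k+1,l}$ piece by piece and show that pullback along $\xi$ commutes with each factor, so that the naturality assembles tautologically. Recall
\[
\qor_{k,l}^\b = \mJ_{k+1,l}^\b\bu evb_0^*\bigl(\mS^\b\bu m_k\bigr)\bu \mI_k,\qquad \mJ_{k+1,l}^\b=\bigl(J^\b_{k+1,l}\bigr)^{\rort}\bu evb_0^*\Psi_\b.
\]
First I would observe that the operation $\expinv{\xi}$ is compatible with composition $\bu$ by Lemma~\ref{vertical pullback expinv composition} and with the left/right bundle extensions $F\mapsto F^B$, ${}^BF$ by Lemmas~\ref{base extension of pullback is extended pullback of base extension lemma} and~\ref{base extension is distributive expinv}. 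Combined with Example~\ref{trivial expinv is pullback}, this reduces the proof to checking naturality of each of the four atomic ingredients: $J^\b_{k+1,l}$, $\Psi_\b$, $m_k$, $\mS^\b$, and the boundary transport maps $c_{0j}$ making up $\mI_k$.

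The main computational input is Lemma~\ref{naturality of J families}, which already provides
\[
\expinv\xi J^{(\target;\b)}_{k+1,l}=J^{(\xi^*\target;\xi^*\b)}_{k+1,l}\bu (evb_0)^*\bigl(\pull\xi_\zort{}\bigr)^{\otimes\mu(\b)}.
\]
Remark~\ref{naturality of m families} shows that $\pull\xi_\rort$ is an algebra homomorphism and commutes with $\Psi_b$, and the same applies to $\mS^\b$ since $\pull\xi_{\zort{}}$ preserves the grading. Lemma~\ref{naturality of cij families lemma} handles the $c_{0j}$ and therefore yields the naturality of $\mI_k$ by tensor-multiplicativity. Putting these together, applying Lemma~\ref{vertical pullback expinv composition} repeatedly on the expression for $\qor_{k,l}^\b$, the pullback $\expinv\xi \qor^{(\target;\b)}_{k+1,l}$ rewrites as $\qor^{(\xi^*\target;\xi^*\b)}_{k+1,l}\bu \pull\xi_E$; the numerous copies of $\pull\xi_\zort{}$ which appear at intermediate stages collapse into a single $\pull\xi_E$ on the source and a single $evb_0^*\pull\xi_\rort$ on the target, because $\pull\xi_\rort$ is multiplicative and compatible with $\Psi$ and $\mS$.

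For the case $k=-1,\b\in\Pi^{ad}$, the argument is the same but shorter: $\qor^\b_{-1,l}= (J^\b_{0,l})^{\efield}\bu (\pi^\mM)^*(\Psi^\efield_{\mu(\b)+1}\bu m_0)$, and Lemma~\ref{naturality of J families} for $k=-1$ combined with Remark~\ref{naturality of m families} and the fact that $\pull\xi_\efield=\pi^L_*\pull\xi_\rort$ commutes with $i_b$ and $\Psi^\efield_b$ gives the claim after applying Lemma~\ref{vertical pullback expinv composition} and the base-extension compatibilities. The target $\rort_{\xi^*L}$ collapses to $\efield_{\xi^*L}$ exactly via Proposition~\ref{naturality of Otm families}, which ensures that $\Otm$-type contractions intertwine the two sides; here the outgoing $\pull\xi_\efield$ in the statement corresponds to the composition $\pi^L_*\pull\xi_\rort$ after the fibrewise integration implicit in $Q^\b_{-1,l}$.

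The only genuine bookkeeping obstacle is keeping straight the various base extensions $(-)^A$, ${}^A(-)$ arising when pulling back tensor products of local systems along the Cartesian diagrams involved; I would handle this uniformly by applying Lemma~\ref{base extension of pullback is extended pullback of base extension lemma} once to commute $\expinv\xi$ past every extension, so that the remainder of the computation only uses the vertical composition distributivity of Lemma~\ref{vertical pullback expinv composition}. No sign arises because all the naturality data are degree-zero isomorphisms and the Koszul signs cancel in pairs.
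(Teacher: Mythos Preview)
Your approach is essentially the same as the paper's: decompose $\qor$ into its atomic constituents $J^\b$, $\Psi_\b$, $m_k$, $\mS^\b$, $\mI_k$, and verify naturality of each via Lemma~\ref{naturality of J families}, Remark~\ref{naturality of m families}, and Lemma~\ref{naturality of cij families lemma}, using $\mu(\b)=\mu(\xi^*\b)$. The paper's proof is terser but relies on exactly the same three inputs; your explicit invocation of Lemma~\ref{vertical pullback expinv composition} and the base-extension compatibilities just makes precise what the paper leaves implicit in the phrase ``follows immediately.''

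One small misstep: in the $k=-1$ paragraph you bring in Proposition~\ref{naturality of Otm families} and speak of ``$\Otm$-type contractions'' and ``fibrewise integration implicit in $Q^\b_{-1,l}$.'' There is none. By Definition~\ref{q orientor definition}, $\qor^\b_{-1,l}=(J^\b_{0,l})^{\efield}\bu(\pi^\mM)^*(\Psi^\efield_{\mu(\b)+1}\bu m_0)$ involves neither $\Otm$ nor any pushforward; $\efield$ enters only as the target local system. The correct argument is exactly the one you state first: the $k=-1$ case of Lemma~\ref{naturality of J families} plus the compatibility of $\pull\xi_\efield$ with $\Psi^\efield_b$ and $m_0$ (Remark~\ref{naturality of m families}). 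Drop the sentences about $\Otm$ and fibrewise integration and the argument is clean.
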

\begin{proof}
By Lemma~\ref{naturality of cij families lemma} it follows that the following diagram is commutative.
\[
\begin{tikzcd}
\left(\xi^\mM\right)^*{ev^\target}^*\boxtimes \rort_L\ar[rr,"{\xi^\mM}^*\mI^{\target}_{k+1,l}"]\ar[d,swap,"\left(ev^{\xi^*\target}\right)^*\boxtimes \pull\xi_{\rort}"]&&{\xi^\mM}^*evb_0^*\rort^{\otimes k}_L\ar[d,"evb_0^*\otimes \pull\xi_\rort"]\\
\left(ev^{\xi^*\target}\right)^*\boxtimes \rort_{\xi^*L}\ar[rr,swap,"\mI^{\xi^*\target}"]&&evb_0^*\rort_{\xi^*L}
\end{tikzcd}
\]
The theorem follows immediately by Lemma~\ref{naturality of J families} and remark~\ref{naturality of m families}, keeping in mind that $\mu(\b)=\mu(\xi^*\b)$.
\end{proof}

% \bibliographystyle{amsabbrvcnobysame.bst}

% \bibliography{bibliography.bib}
\bibliography{main.bbl}
\end{document}